\newcommand{\noun}[1]{\textsc{#1}}
\providecommand{\tabularnewline}{\\}
\numberwithin{equation}{section}
\numberwithin{figure}{section}
\newcommand{\lyxrightaddress}[1]{
\par {\raggedleft \begin{tabular}{l}\ignorespaces
#1
\end{tabular}
\vspace{1.4em}
\par}
}
 \newcommand\thmsname{\protect\theoremname}
 \newcommand\nm@thmtype{theorem}
 \theoremstyle{plain}
 \newenvironment{namedthm}[1][Undefined Theorem Name]{
   \ifx{#1}{Undefined Theorem Name}\renewcommand\nm@thmtype{theorem*}
   \else\renewcommand\thmsname{#1}\renewcommand\nm@thmtype{namedtheorem}
   \fi
   \begin{\nm@thmtype}}
   {\end{\nm@thmtype}}
  \theoremstyle{plain}
  \newtheorem*{prop*}{\protect\propositionname}
  \theoremstyle{plain}
  \newtheorem*{fact*}{\protect\factname}
\theoremstyle{plain}
\newtheorem{thm}{\protect\theoremname}[section]
  \theoremstyle{definition}
  \newtheorem{defn}[thm]{\protect\definitionname}
  \theoremstyle{plain}
  \newtheorem{prop}[thm]{\protect\propositionname}
  \theoremstyle{remark}
  \newtheorem{rem}[thm]{\protect\remarkname}
  \theoremstyle{remark}
  \newtheorem*{rem*}{\protect\remarkname}
  \theoremstyle{plain}
  \newtheorem{lem}[thm]{\protect\lemmaname}
  \theoremstyle{plain}
  \newtheorem{cor}[thm]{\protect\corollaryname}
  \theoremstyle{plain}
  \newtheorem{fact}[thm]{\protect\factname}
  \providecommand{\corollaryname}{Corollary}
  \providecommand{\definitionname}{Definition}
  \providecommand{\factname}{Fact}
  \providecommand{\lemmaname}{Lemma}
  \providecommand{\propositionname}{Proposition}
  \providecommand{\remarkname}{Remark}
  \providecommand{\theoremname}{Theorem}
\providecommand{\theoremname}{Theorem}
\begin{document}
\global\long\def\re#1{\Re\left(#1\right)}
\global\long\def\im#1{\Im\left(#1\right)}
\global\long\def\tx#1{\mathrm{#1}}
\global\long\def\ww#1{\mathbb{#1}}
\global\long\def\nf#1#2{\nicefrac{#1}{#2}}
\global\long\def\pp#1#2{\frac{\partial#1}{\partial#2}}
\global\long\def\ii{\tx i}
\newcommandx\p[4][usedefault, addprefix=\global, 1=]{\frac{\partial^{#1}#2}{\partial#3^{#1}}|_{#4}}
\global\long\def\germ#1{\ww C\left\{  #1\right\}  }
\global\long\def\pol#1#2{\ww C\left[#1\right]_{#2}}
\global\long\def\sing#1{\tx{Sing}\left(#1\right)}
\global\long\def\diff#1{\tx{Diff}\left(\ww C^{#1},0\right)}
\global\long\def\frml#1{\ww C\left[\left[#1\right]\right]}
\global\long\def\dd#1{\tx d#1}
\global\long\def\adh#1{\tx{adh}\left(#1\right)}
\global\long\def\cc#1{\tx{cc}\left(#1\right)}
\global\long\def\id{\tx{Id}}
\global\long\def\norm#1#2{\left|\left|#1\right|\right|_{#2}}
\global\long\def\fgrm#1{\widehat{\germ{#1}}}
\global\long\def\igerm#1{\underrightarrow{\germ{#1}}}
\global\long\def\pgerm#1{\underleftarrow{\germ{#1}}}
\global\long\def\coef#1#2#3{\left[#1\right]_{#2}^{#3}}
\global\long\def\tang#1#2#3{\mathcal{T}_{#1}^{#2}#3}
\global\long\def\rat#1{\ww C\left(#1\right)}
\global\long\def\mero#1{\ww C\left(\left\{  #1\right\}  \right)}
\global\long\def\pmero#1{\mero{#1}_{0}}
\global\long\def\rad#1{\mathcal{R}\left(#1\right)}
\global\long\def\sone{\ww S^{1}}
\newcommandx\Tayl[2][usedefault, addprefix=\global, 1=k]{\mathcal{T}_{#1}\left(#2\right)}
\global\long\def\bb#1{\boldsymbol{#1}}
\global\long\def\haat#1#2{\triangleleft_{#1}#2}
\newcommandx\ball[3][usedefault, addprefix=\global, 1=a, 2=0, 3=r]{B_{#1}\left(#2,#3\right)}
\newcommandx\polyd[2][usedefault, addprefix=\global, 1=0, 2=r]{D\left(#1,#2\right)}
\global\long\def\hol#1{\tx{Hol}\left(#1\right)}
\global\long\def\holo#1{\frak{h}_{#1}}
\global\long\def\car#1{\mathfrak{c}_{#1}}
\global\long\def\amp#1{\mathfrak{a}_{#1}}
\global\long\def\ugerm#1{\mathcal{O}\left(\left(\ww C,0\right)\to#1\right)}
\global\long\def\flow#1#2{\Phi_{#1}^{#2}}
\global\long\def\quot{\tx{Quot}}
\global\long\def\rank#1{\tx{rank}\left(#1\right)}
\global\long\def\img#1{\tx{Im\,}#1}
\global\long\def\coef#1{\frak{C}_{#1}}

\title{Analyticity in spaces of convergent power series and applications$^{\dagger}$}

\maketitle

\lyxrightaddress{$^{\dagger}$Preprint version: May 2015}

\author{Loïc Teyssier}

\date{August 2013}

\address{Laboratoire I.R.M.A., Université de Strasbourg}

\email{\texttt{teyssier@math.unistra.fr}}

\thanks{This work has been partially supported by French national grant ANR-13-JS01-0002-01.}
\begin{abstract}
We study the analytic structure of the space of germs of an analytic
function at the origin of $\ww C^{m}$, namely the space $\germ{\mathbf{z}}$
where $\mathbf{z}=\left(z_{1},\cdots,z_{m}\right)$, equipped with
a convenient locally convex topology. We are particularly interested
in studying the properties of analytic sets of $\germ{\mathbf{z}}$
as defined by the vanishing loci of analytic maps. While we notice
that $\germ{\mathbf{z}}$ is not Baire we also prove it enjoys the
analytic Baire property: the countable union of proper analytic sets
of $\germ{\mathbf{z}}$ has empty interior. This property underlies
a quite natural notion of a generic property of $\germ{\mathbf{z}}$,
for which we prove some dynamics-related theorems. We also initiate
a program to tackle the task of characterizing glocal objects in some
situations. 
\end{abstract}

\keywords{Infinite-dimensional holomorphy, complex dynamical systems, holomorphic
solutions of differential equations, Liouvillian integrability of
foliations}

\subjclass[2000]{[MSC2010] 46G20, 58B12, 34M99, 37F75}

\section{Introduction}

This article purports to provide a context in which the following
results make sense:
\begin{namedthm}[Corollary A]
\textbf{\emph{}}Fix $m\in\ww N$.\textbf{ }The generic finitely generated
subgroup $G<\diff m$\emph{ }of germs of biholomorphisms fixing $0\in\ww C^{m}$,
identified with a tuple of generators $\left(\Delta_{1},\ldots,\Delta_{n}\right)$,
is free. Besides the set of non-solvable subgroups of $\diff{}$ generated
by two elements is Zariski-full, in the sense that it contains (and,
as it turns out, is equal to) an open set defined as the complement
of a proper analytic subset of $\diff{}^{n}$. The latter result persists
in the case of $n$-generated subgroups of $\diff{}$ everyone of
which is tangent to the identity.
\end{namedthm}
\bigskip{}

\begin{namedthm}[Corollary B]
\textbf{\emph{}}The generic germ of a holomorphic function $f\in\germ z$
near the origin of $\ww C$ is not solution of any ordinary differential
equation $f^{\left(k+1\right)}\left(z\right)=P\left(z,f\left(z\right),\ldots,f^{\left(k\right)}\left(z\right)\right)$,
where $P$ is an elementary function of all its variables, differentiable
at $\left(0,f\left(0\right),\ldots,f^{\left(k\right)}\left(0\right)\right)$,
and $k\in\ww N$. 
\end{namedthm}
\bigskip{}

\begin{namedthm}[Corollary C]
\textbf{\emph{}}The set of coprime $P,\text{ }Q\in\germ{x,y}$, with
zero first-jet, such that 
\begin{align*}
y' & =\frac{P\left(x,y\right)}{Q\left(x,y\right)}
\end{align*}
is not solvable in «closed form» constitutes a Zariski-full set.
\end{namedthm}
\bigskip{}

The main concern of the article is proposing a framework in which
the analytic properties of $\germ z$ can easily be manipulated. For
that reason the above results should be understood as showcase consequences
of more general theorems stated below. Other consequences in the realm
of dynamics will also be detailed in due time, for there lies the
original motivation of this work. In the meantime the objects involved
above must be outlined, and we postpone more formal definitions to
the body of the article.

\subsection{Statement of the main results}

~

Roughly speaking a property $\mathcal{P}$ expressed on the set of
germs of holomorphic functions is \emph{generic}\footnote{An element satisfying a generic property will also be called «generic»
for convenience, as in the three corollaries stated at the very beginning
of the introduction.} if the subset where $\mathcal{P}$ does not hold is contained in
countably many (proper) analytic sets. This concept supports a genericity
\emph{à la }Baire embodied in a metrizable, locally convex topology
on $\germ{\mathbf{z}}$, where $\mathbf{z}=\left(z_{1},\cdots,z_{m}\right)$.
The topology is induced by a convenient family of norms. Such spaces
will be referred to as \emph{normally convex} spaces. Special norms
will be of interest, those defined by
\begin{align*}
\norm{\sum_{\mathbf{n}\in\ww N^{m}}f_{\mathbf{n}}\mathbf{z^{n}}}a & :=\sum_{\mathbf{n}\in\ww N^{m}}a_{\mathbf{n}}\left|f_{\mathbf{n}}\right|\,,\,\tag{\ensuremath{\star}}
\end{align*}
 where $a=\left(a_{\mathbf{n}}\right)_{\mathbf{n}\in\ww N^{m}}$ is
a multi-sequence of positive real numbers satisfying the additional
growth condition
\begin{align*}
\lim_{\left|\mathbf{n}\right|\to\infty}a_{\mathbf{n}}^{\nf 1{\left|\mathbf{n}\right|}} & =0\,,
\end{align*}
which ensures the convergence of the series~$\left(\star\right)$.
This particular choice of a topology on $\germ{\mathbf{z}}$, instead
of the <<usual>> ones, is motivated by the theory of analyticity
between locally convex spaces, developed in its modern form during
the $70$'s by various mathematicians (\noun{S.~Dineen, L.~Nachbin
}and \noun{J.~Sebasti\~ao~e~Silva} among others) in the wake of
the works of J.-P.~\noun{Ramis} for Banach analytic spaces. A brief
history of the theory of infinite-dimensional holomorphy in locally
convex space is conducted in~\cite[p101-104]{Didi}. In that setting
a map $\Lambda\,:\,E\to F$ is \emph{analytic} if it is the sum of
a <<convergent power series>> whose term of <<homogeneous degree>>
$p\in\ww N$ is some $p$-linear, continuous mapping\footnote{We refer to the books by \noun{J.-A.~Barroso}~\cite{Barro} and
\noun{P.~Mazet}~\cite{Mazette} for a presentation of the different
(equivalent) types of analyticity and related analytic sets, of which
we give a short summary in Section~\ref{sec:usual_holo}.}. 

\bigskip{}

To keep matters brief we wish to manipulate relations like
\begin{align*}
f\circ g & =g\circ f
\end{align*}
or, for the sake of example,
\begin{align*}
\left(f''\right)^{2} & =1+\exp f'+f\times f^{\left(3\right)}\,,
\end{align*}
as analytic relations on the corresponding space of holomorphic germs.
While this is true for the inductive topology on $\germ{\mathbf{z}}$,
it is not too difficult to prove that the standard differentiation
$f\mapsto\pp fz$ or the composition mapping 
\begin{eqnarray*}
\left(f,g\right) & \longmapsto & f\circ g
\end{eqnarray*}
are analytic if one endows $\germ{\mathbf{z}}$ with a convenient
sub-family of norms $\left(\star\right)$. We would like to point
out that not only is the continuity of the linear left-composition
operator $g^{*}\,:\,f\mapsto f\circ g$ ascertained, provided $g\left(0\right)=0$,
but so is the analyticity of the right-composition mapping $f_{*}\text{ }:\text{ }g\mapsto f\circ g$,
for given $f$. The Taylor expansion
\begin{eqnarray*}
f\circ g & = & \sum_{p=0}^{\infty}\frac{f^{\left(p\right)}\left(g\left(0\right)\right)}{p!}\left(g-g\left(0\right)\right)^{p}
\end{eqnarray*}
is indeed convergent on the open set 
\begin{eqnarray*}
 & \left\{ g\in\germ{\mathbf{z}}\,:\,\left|g\left(0\right)\right|<\mathcal{R}\left(f\right)\right\} 
\end{eqnarray*}
(see Proposition~\ref{prop:compo_is_ana}) where $\mathcal{R}\left(f\right)$
stands for the radius of convergence of the Taylor series of $f$
at $0$. It should be noticed straight away that $\mathcal{R}$ is
lower semi-continuous but cannot be positively lower-bounded on any
domain of $\germ z$, which constitutes of course a source of trouble
and, as a by-product, reveals that the composition mapping $\left(f,g\right)\mapsto f\circ g$
cannot be defined on any domain of $\germ z\times\germ{\mathbf{z}}$
(for given $g$ with $g\left(0\right)\neq0$ there will always exist
$f$ not defined at $g\left(0\right)$).

\bigskip{}

Once we are granted the notion of analyticity we can speak of \emph{analytic
sets}, closed subsets locally defined by the vanishing of a collection
of analytic functions. As we recall later the space $\germ{\mathbf{z}}$
is not Baire and at first glance the notion of $G_{\delta}$-genericity
must arguably be discarded. Yet it is possible to salvage this concept
by hardening the rules Baire's closed sets are required to play by:
\begin{namedthm}[Theorem A]
\textbf{\emph{}}The space $\germ{\mathbf{z}}$ enjoys the analytical
Baire property: any \emph{analytically meager} set (at most countable
unions of proper analytic sets) of $\germ{\mathbf{z}}$ have empty
interior.
\end{namedthm}
\bigskip{}

This theorem bolsters the well-foundedness of the concept of genericity
in $\germ{\mathbf{z}}$ as introduced above. It has been originally
proved in a weaker form in~\cite{GenzTey} while encompassing issues
of glocality, which is a subject we come back to afterward. In fact
we can prove a stronger refinement of the main result of the cited
reference:
\begin{namedthm}[Theorem B]
\textbf{\emph{}}Let $\Lambda\,:\,U\subset E\to\germ{\mathbf{w}}$
be a map analytic on a relative open set in a linear subspace $E<\germ{\mathbf{z}}$
of at most countable dimension. Then the range of $\Lambda$ is analytically
meager.
\end{namedthm}
The analytically meager set $\Lambda\left(U\right)$ seems smaller
than other analytically meager sets. We call \emph{countably meager}
a set obtained in that way, that is an at-most-countable union of
images of at-most-countable-dimensional spaces by analytic mappings.
For instance the proper analytic set $\left\{ f~:~f\left(0\right)=0\right\} $
is not countably meager. By definition, the image of a countably meager
set by an analytic mapping is again countably meager. It is not known
whether the image of an arbitrary analytically meager set by an analytic
map remains analytically meager.

\bigskip{}

Such a result is typically useful in conjunction with Theorem~A to
derive existential properties by proving, say, that the image of the
set of polynomials by a given analytic map cannot cover a domain of
$\germ{\mathbf{w}}$ (in particular it cannot be locally onto). That
was the targeted objective of~\cite{GenzTey} through the use of
a different concept of analyticity (the so-called here \emph{quasi-strong
analyticity}) and analytic sets, which we also come back to later.
The topology on $\germ z$ was there induced by the norm $\norm{\bullet}{\left(\nicefrac{1}{n!}\right)_{n}}$.
This topology, though, is too coarse to be used in Corollaries~A,~B
and C. Instead one can consider the metrizable topology induced by
the collection $\left(\norm{\bullet}{\left(n!^{-\nicefrac{1}{k}}\right)_{n}}\right)_{k\in\ww N_{>0}}$,
which we name \emph{factorial topology}. Other choices, called \emph{useful}
\emph{topologies,} are possible to carry out most proofs; roughly
speaking any collection of norms making multiplication, composition
and differentiation continuous can do. Notice that no finite collection
can be useful: the presence of normally convex spaces equipped with
an infinite collection of norms (instead of \emph{e.g. }normed space)
is a necessity when dealing with the composition or the differentiation.
The major drawback is that no differential geometry (\emph{e.g.} Fixed-Point
Theorem\footnote{We explain in the course of the article why Nash-Moser theorem cannot
be applied in the generality of the spaces under study.}) exists in such a general framework, although this is precisely what
one would like to use in conjunction with the Fréchet calculus on
first-order derivatives. In order to make a constructive use of the
latter we need some rudiments of analytical geometry in $\germ{\mathbf{z}}$,
which is unfortunately not available for now.

\subsection{Deduction of the Corollaries from the Theorems}

~

\subsubsection{Corollary~A}

~

Any algebraic relation between $n$ generators $\left(\Delta_{1},\cdots,\Delta_{n}\right)$
of $G$ writes
\begin{align*}
\bigcirc_{\ell=1}^{k}\,\Delta_{j_{\ell}}^{\circ n_{\ell}}-\id & =0
\end{align*}
for a given depth $k\in\ww N_{>0}$ and a collection of couples $\left(j_{\ell},n_{\ell}\right)\in\ww N_{\leq n}\times\ww Z_{\neq0}$.
Each one of these countably many choices defines an analytic set\footnote{Since each diffeomorphism $\Delta_{j}$ fixes $0$ the composition
mappings $\left(\Delta_{j},\Delta_{k}\right)\mapsto\Delta_{j}\circ\Delta_{k}$
are analytic on the whole $\diff m^{2}$, as stated in Section~\ref{sub:anal_space}.
We also refer to this section for the definition of analytic mappings
from and into the non-linear space $\diff m$.} of $\diff m^{n}$, which is proper if the relation does not boil
down to the trivial relation $\Delta\circ\Delta^{\circ-1}-\id=0$
(after recursive simplification of said trivial relations). Abelian
groups form the analytic set defined by the (non-trivial) relations
\begin{align*}
\left[\Delta_{j},\Delta_{k}\right]-\id & =0\,\,\,,\,j\neq k\,,
\end{align*}
where as usual $\left[f,g\right]:=f^{\circ-1}\circ g^{\circ-1}\circ f\circ g$.
In the unidimensional case \noun{D.~Cerveau} and \noun{R.~Moussu}
on the one hand, \noun{Y.~Il'Yashenko} and \noun{A.~Sherbakov} on
the other hand, have studied the structure of the finitely-generated
subgroups of $\diff{}$, proving results later refined by \noun{F.~Loray}~\cite{theseFrank}
which are as follows.
\begin{itemize}
\item A finitely generated subgroup $G$ of $\diff{}$ is solvable if, and
only if, it is meta-Abelian (\emph{i.e. }its second derived group
$\left[\left[G,G\right],\left[G,G\right]\right]$ is trivial).
\item $G:=\left\langle f,g\right\rangle $ is solvable if, and only if,
\begin{align*}
\left[f,\left[f,g^{\circ2}\right]\right]-\id & =0\,.
\end{align*}
This again is an analytic relation and the complement in $\diff{}^{2}$
of the corresponding analytic set is a Zariski-full open set.
\item If all the generators of $G$ are tangent to the identity then $G$
is solvable if, and only if, it is Abelian.
\end{itemize}
All these points put together prove the corollary.

\subsubsection{Corollary~B}

~

We actually prove a stronger result than Corollary~B.
\begin{prop*}
Being given $k\in\ww N$ and $\mathcal{E}_{k}$ a countably meager
subset of $\germ{z,\delta_{0},\ldots,\delta_{k}}$, the set $\mathcal{S}$
of germs $f$ in $\germ z$ that are solutions of some differential
equation 
\begin{align*}
P\left(z,f\left(z\right)-f\left(0\right),\ldots,f^{\left(k\right)}\left(z\right)-f^{\left(k\right)}\left(0\right)\right) & =0~,
\end{align*}
with 
\begin{align*}
P & \in\mathcal{E}_{k}^{*}:=\mathcal{E}_{k}\backslash\left\{ Q\,:\,\frac{\partial Q}{\partial\delta_{k}}\left(\mathbf{0}\right)=0\right\} \,,
\end{align*}
is countably meager. 
\end{prop*}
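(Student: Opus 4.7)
The plan is to realise $\mathcal{S}$ as a countable union of images of relative open subsets of at-most-countable-dimensional linear spaces under analytic mappings, which is precisely the definition of countable meagerness (a property stable under countable unions). First, I would unfold the hypothesis: there exist at-most-countable-dimensional subspaces $E_n<\germ{z,\delta_0,\ldots,\delta_k}$, relative open subsets $U_n\subset E_n$, and analytic mappings $\Lambda_n\,:\,U_n\to\germ{z,\delta_0,\ldots,\delta_k}$ with $\mathcal{E}_k=\bigcup_{n\in\ww N}\Lambda_n\left(U_n\right)$. Since $Q\mapsto\frac{\partial Q}{\partial\delta_{k}}\left(\mathbf{0}\right)$ is a continuous linear form, the locus $V:=\left\{Q\,:\,\frac{\partial Q}{\partial\delta_{k}}\left(\mathbf{0}\right)\neq 0\right\}$ is open in $\germ{z,\delta_0,\ldots,\delta_k}$, so $U_n^{*}:=U_n\cap\Lambda_n^{-1}\left(V\right)$ remains relatively open in $E_n$ and $\mathcal{E}_k^{*}=\bigcup_n\Lambda_n\left(U_n^{*}\right)$.

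Next, I would build an analytic solving map. On $V$ the analytic implicit-function theorem produces an analytic assignment $P\mapsto\Phi_P\in\germ{z,\delta_0,\ldots,\delta_{k-1}}$ characterised by $P\left(z,\delta_0,\ldots,\delta_{k-1},\Phi_P\left(z,\delta_0,\ldots,\delta_{k-1}\right)\right)\equiv 0$ and $\Phi_P\left(\mathbf{0}\right)=-P\left(\mathbf{0}\right)/\frac{\partial P}{\partial\delta_{k}}\left(\mathbf{0}\right)$. Then I would define $\Psi\,:\,V\times\ww C^{k+1}\to\germ z$ by sending $\left(P,c_0,\ldots,c_k\right)$ to the unique analytic solution $\tilde{f}$ of the Cauchy problem
\begin{align*}
\tilde{f}^{\left(k\right)}\left(z\right) & =c_k+\Phi_P\bigl(z,\tilde{f}\left(z\right)-c_0,\ldots,\tilde{f}^{\left(k-1\right)}\left(z\right)-c_{k-1}\bigr)-\Phi_P\left(\mathbf{0}\right),\\
\tilde{f}^{\left(j\right)}\left(0\right) & =c_j\text{ for }j=0,\ldots,k-1.
\end{align*}
The subtraction of $\Phi_P\left(\mathbf{0}\right)$ is cosmetic and eliminates the compatibility obstruction at $z=0$, making $\Psi$ defined on the entire open set $V\times\ww C^{k+1}$. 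I expect the joint analyticity of $\Psi$ to be the main technical point: it should come out of a Picard-iteration argument carried out in a useful family of norms, relying on the composition, differentiation and antiderivative estimates the article develops for normally convex spaces.

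Once $\Psi$ is analytic, the inclusion $\mathcal{S}\subseteq\Psi\left(\mathcal{E}_k^{*}\times\ww C^{k+1}\right)$ is automatic: a germ $f\in\mathcal{S}$ fulfilling $P\left(z,f-f\left(0\right),\ldots,f^{\left(k\right)}-f^{\left(k\right)}\left(0\right)\right)\equiv 0$ for some $P\in\mathcal{E}_k^{*}$ must satisfy $P\left(\mathbf{0}\right)=0$ upon setting $z=0$, hence $\Phi_P\left(\mathbf{0}\right)=0$, and by uniqueness in Cauchy-Kowalevski $f=\Psi\left(P,f\left(0\right),\ldots,f^{\left(k\right)}\left(0\right)\right)$. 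Combined with the decomposition of $\mathcal{E}_k^{*}$ this gives
\begin{align*}
\mathcal{S} & \subseteq\bigcup_{n\in\ww N}\bigl(\Psi\circ\left(\Lambda_n\times\id_{\ww C^{k+1}}\right)\bigr)\bigl(U_n^{*}\times\ww C^{k+1}\bigr),
\end{align*}
a countable union of images of relative open subsets of the at-most-countable-dimensional linear spaces $E_n\times\ww C^{k+1}$ under analytic mappings. Each summand is countably meager by definition and countable meagerness is preserved under countable unions, concluding the proof.
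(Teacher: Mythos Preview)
Your global strategy coincides with the paper's: construct an analytic ``solving map'' $\Psi$ taking $(P,\text{initial data})$ to the unique solution $f$, so that $\mathcal{S}$ is contained in the image under $\Psi$ of $\mathcal{E}_k^*\times\ww C^{k+1}$, and then conclude by the stability of countable meagerness under analytic images and countable unions. The paper implements this differently: it differentiates the implicit equation once more, obtaining an \emph{explicit} $(k+1)$-th order ODE in which the highest derivative appears linearly, rewrites it as the companion vector field $\mathfrak{X}(P)$ (manifestly analytic in $P$ on the open set $\{\partial_{\delta_k}P(\mathbf{0})\neq 0\}$), and then invokes Theorem~C (analyticity of the flow mapping) to obtain the analyticity of $\Psi$.

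Your implicit-function step contains a genuine error. For $P\in V$ with $P(\mathbf{0})\neq 0$ the equation $P(z,\delta_0,\ldots,\delta_{k-1},\Phi_P)=0$ need not have any solution near the origin: the implicit function theorem applies only at a point where the equation already vanishes, and the formula $\Phi_P(\mathbf{0})=-P(\mathbf{0})/\partial_{\delta_k}P(\mathbf{0})$ is Newton's first iterate, not a root. Hence $\Psi$ is \emph{not} defined on the whole open set $V\times\ww C^{k+1}$, and your ``cosmetic'' subtraction of $\Phi_P(\mathbf{0})$ cannot repair an object that does not exist. A correct repair is to solve $P(\ldots,\tilde{\Phi}_P)=P(\mathbf{0})$ instead (for which the implicit function theorem does apply at $\mathbf{0}$, giving $\tilde{\Phi}_P(\mathbf{0})=0$); since any $P$ contributing to $\mathcal{S}$ satisfies $P(\mathbf{0})=0$ anyway, the inclusion $\mathcal{S}\subset\Psi(\mathcal{E}_k^*\times\ww C^{k+1})$ survives. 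The paper's ``differentiate once more'' device is precisely engineered to avoid this pitfall: no implicit resolution is needed once the highest derivative enters linearly.

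Finally, you defer the joint analyticity of $\Psi$ to a Picard iteration ``in a useful family of norms'', and you would also need analyticity of $P\mapsto\Phi_P$ (an infinite-dimensional implicit function theorem). These are not wrong in principle, but together they constitute essentially the full content of Theorem~C, whose proof in the paper requires careful norm estimates on iterated Lie derivatives in the factorial topology. The paper packages both difficulties into the single statement that the flow of a holomorphic vector field depends analytically on the field, which is why that route is chosen.
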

We explain at the end of this paragraph what we mean by an elementary
functions in the variable $\mathbf{z}=\left(z_{1},\ldots,z_{m}\right)$,
and why the trace of their set on $\germ{\mathbf{z}}$ is countably
meager. We thus obtain Corollary~B from the proposition by setting
$\mathbf{z}:=\left(z,\delta_{0},\ldots,\delta_{k}\right)$.

\bigskip{}

The proposition is a consequence of Theorem~B above and Theorem~C
below. For the sake of concision set 
\begin{align*}
F_{k}^{f}\left(z\right): & =\left(z,f\left(z\right)-f\left(0\right),\ldots,f^{\left(k\right)}\left(z\right)-f^{\left(k\right)}\left(0\right)\right)\,.
\end{align*}
The natural approach is to consider the vanishing locus $\Omega$
of the analytic map
\begin{align*}
\germ{z,\delta_{0},\ldots,\delta_{k}}^{*}\times\germ z & \longrightarrow\germ z\\
\left(P,f\right) & \longmapsto P\circ F_{k}^{f}
\end{align*}
(this map is analytic if both source and range spaces are given a
useful topology). The set of those germs satisfying at least one differential
equation of the requested type is therefore the sub-analytic set given
by the canonical projection of $\Omega\cap\left(\mathcal{E}_{k}^{*}\times\germ z\right)$
on the second factor $\germ z$. To guarantee that this projection
has empty interior we provide a parameterized covering by the space
of equations and initial conditions, using:
\begin{namedthm}[Theorem~C]
Fix $m\in\ww N$ and consider the space $\mathtt{VF}$ of germs at
$\mathbf{0}\in\ww C^{m}$ of a holomorphic vector field, identified
with $\germ{\mathbf{z}}^{m}$, endowed with the factorial topology.
For $X\in\mathtt{VF}$ we name $\flow X{}$ the flow of $X$, that
is the unique germ of a holomorphic mapping near $\left(\mathbf{0},0\right)$
\begin{align*}
\flow X{}\,:\,\ww C^{m}\times\ww C & \longrightarrow\ww C^{m}\\
\left(\mathbf{p},t\right) & \longmapsto\Phi_{X}^{t}\left(\mathbf{p}\right)
\end{align*}
that is solution of the differential system 
\begin{align*}
\begin{cases}
\dot{\mathbf{z}}\left(\mathbf{p},t\right) & =X\left(\mathbf{z}\left(\mathbf{p},t\right)\right)\\
\mathbf{z}\left(\mathbf{p},0\right) & =\mathbf{p}\,.
\end{cases}
\end{align*}
Then the <<flow mapping>>
\begin{align*}
\mathtt{VF} & \longrightarrow\germ{\mathbf{z},t}^{m}\\
X & \longmapsto\flow X{}\,,
\end{align*}
where the target space is also given the factorial topology, is analytic.
\end{namedthm}
Take now $f\in\germ z$ and $P\in\germ{z,\delta_{0},\ldots,\delta_{k}}^{*}$.
Using the usual trick of differentiating once more the equation we
obtain $\frac{\dd{P\circ F_{k}^{f}}}{\dd z}\left(z\right)=0$, which
we rewrite $\hat{P}\circ F_{k+1}^{f}=0$ with: 
\begin{align*}
\hat{P}\left(z,\delta_{0},\ldots,\delta_{k+1}\right) & :=\frac{\partial P}{\partial z}\left(z,\delta_{0},\ldots,\delta_{k}\right)+\sum_{j=0}^{k}\frac{\partial P}{\partial\delta_{j}}\left(z,\delta_{0},\ldots,\delta_{k}\right)\delta_{j+1}\,.
\end{align*}
From this we deduce an explicit non-trivial $\left(k+1\right)^{\tx{th}}$-order
differential equation, whose solutions are obtained through the flow
of the companion vector field 
\begin{align*}
\frak{X}\left(P\right) & =\pp{}z+\sum_{j=0}^{k-1}\delta_{j+1}\pp{}{\delta_{j}}-\frac{\frac{\partial P}{\partial z}+\sum_{j=0}^{k-1}\frac{\partial P}{\partial\delta_{j}}\delta_{j+1}}{\pp P{\delta_{k}}}\pp{}{\delta_{k}}\,,
\end{align*}
which is holomorphic for $\frac{\partial P}{\partial\delta_{k}}\left(\mathbf{0}\right)\neq0$.
Obviously $P\in\germ{z,\delta_{0},\ldots,\delta_{k}}^{*}\mapsto\frak{X}\left(P\right)$
is analytic for any useful topology. Consider the map
\begin{align*}
\Phi_{k}\,:\,\pol z{\leq k}\times\germ{z,\delta_{0},\ldots,\delta_{k}}^{*} & \longrightarrow\germ z\\
\left(J,P\right) & \longmapsto\left(z\mapsto J\left(z\right)+\Pi\circ\flow{\frak{X}\left(P\right)}z\left(\mathbf{0}\right)\right)
\end{align*}
where $\Pi$ denotes the natural projection $\left(z,\delta_{0},\ldots,\delta_{k}\right)\mapsto\delta_{0}$
and $\pol z{\leq k}$ the vector space of complex polynomials of degree
at most $k$. Theorem~C asserts the analyticity of the map if the
space $\germ z$ is equipped with the factorial topology. By construction,
if $f$ is given satisfying an equation $P\in\germ{z,\delta_{0},\ldots,\delta_{k}}^{*}$
then $f=\Phi_{k}\left(J,P\right)$ where $J$ is the $k^{\tx{th}}$-jet
of $f$ at $0$ (the canonical projection of $f\in\germ z$ on $\pol z{\leq k}$).
The set $\mathcal{S}$ must therefore be included in the countable
union of sets $\left(\Phi_{k}\left(\pol z{\leq k}\times\mathcal{E}_{k}^{*}\right)\right)_{k\in\ww N}$,
each one of which is countably meager accounting for Theorem~B. This
completes the proof of the proposition.

\bigskip{}

Let us conclude this paragraph by proving our claim regarding elementary
functions. Denote by ${\tt DiffAlg}_{\ww K}\left(x\right)$ the abstract
set of all solutions of at least one autonomous, polynomial differential
equations of finite order in the variable $x$ with coefficients in
a subfield $\ww K\leq\ww C$. By ${\tt DiffAlg}_{\ww K}\left(x\right)\cap\germ x$
we mean the concrete representation of those elements of ${\tt DiffAlg}_{\ww K}\left(x\right)$
defining actual germs of analytic functions at $0$. 

Define the increasing union of fields
\begin{align*}
\ww E\left(\mathbf{z}\right) & :=\bigcup_{n\in\ww N}\ww E_{n}\left(\mathbf{z}\right)
\end{align*}
where 
\begin{align}
\ww E_{0}\left(\mathbf{z}\right) & :=\rat{\mathbf{z}}\nonumber \\
\ww E_{n+1}\left(\mathbf{z}\right) & :=\rat{\mathbf{z},~{\tt DiffAlg}_{\ww C}\left(x\right)\circ\ww E_{n}\left(\mathbf{z}\right)}~.\label{eq:elementary_tower}
\end{align}
Hence $\ww E_{n+1}\left(\mathbf{z}\right)$ is the field generated
by all variables $z_{j}$ and all compositions of an element of ${\tt DiffAlg}\left(x\right)$
with an element of $\ww E_{n}\left(\mathbf{z}\right)$. Notice that
$\ww E\left(z\right)$ contains also most special functions, with
the notable exception of the Gamma function (Hölder's theorem, see
also~\cite{Moo}). The field ${\tt Elem}\left(\mathbf{z}\right)$
obtained by replacing ${\tt DiffAlg}_{\ww C}\left(x\right)$ with
$\left\{ \exp x,\log x,\arcsin x\right\} $ in~\eqref{eq:elementary_tower},
is the field of \emph{elementary functions} in $\mathbf{z}$, which
is naturally a subfield of $\ww E\left(\mathbf{z}\right)$. 

\bigskip{}

We wish to establish the following fact by induction on $m$.
\begin{fact*}
$\ww E\left(\mathbf{z}\right)\cap\germ{\mathbf{z}}$ is countably
meager 
\end{fact*}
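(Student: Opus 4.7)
The plan is to induct on the depth $n$ of the tower $\ww E\left(\mathbf{z}\right) = \bigcup_{n \in \ww N} \ww E_n\left(\mathbf{z}\right)$: since a countable union of countably meager sets is countably meager, it suffices to prove $\ww E_n\left(\mathbf{z}\right) \cap \germ{\mathbf{z}}$ is countably meager for every $n$. The base case $n = 0$ is immediate, as $\rat{\mathbf{z}} \cap \germ{\mathbf{z}}$ is the image under the analytic map $\left(P, Q\right) \mapsto P/Q$ of the open set $\left\{\left(P, Q\right) : Q\left(0\right) \neq 0\right\}$ of the countable-dimensional linear space $\ww C\left[\mathbf{z}\right]^{2}$, hence countably meager by definition.

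The key intermediate claim is that ${\tt DiffAlg}_{\ww C}\left(x\right) \cap \germ{x}$ is countably meager, which I would derive from the Proposition applied with $\mathcal{E}_k := \ww C\left[\delta_0, \ldots, \delta_k\right] \subset \germ{z, \delta_0, \ldots, \delta_k}$, the countable-dimensional space of complex polynomials (itself countably meager). If $f$ satisfies an autonomous polynomial ODE $P_0\left(f, \ldots, f^{\left(k\right)}\right) = 0$ with initial values $c_j := f^{\left(j\right)}\left(0\right)$, then the shifted polynomial $\tilde{P}\left(\delta_0, \ldots, \delta_k\right) := P_0\left(\delta_0 + c_0, \ldots, \delta_k + c_k\right)$ still lies in $\mathcal{E}_k$ and satisfies $\tilde{P}\left(f - f\left(0\right), \ldots, f^{\left(k\right)} - f^{\left(k\right)}\left(0\right)\right) = 0$. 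When $\partial \tilde{P}/\partial \delta_k\left(\mathbf{0}\right) = \partial P_0/\partial \delta_k\left(c\right) \neq 0$ the Proposition covers such $f$ directly; for exceptional $f$ whose jet lies on the singular locus $\left\{P_0 = \partial P_0/\partial \delta_k = 0\right\}$, a Noetherian induction — replacing $P_0$ either by $\partial P_0/\partial \delta_k$ (of strictly smaller degree in $\delta_k$) when $\partial P_0/\partial \delta_k\left(f, \ldots, f^{\left(k\right)}\right) \equiv 0$, or by a lower-order ODE arising from the vanishing of the leading coefficient — exhausts the remaining solutions through a countable union of applications of the Proposition.

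For the inductive step $n \to n+1$: every element of $\ww E_{n+1}\left(\mathbf{z}\right) \cap \germ{\mathbf{z}}$ can be written as $R\left(\mathbf{z}, h_1 \circ g_1, \ldots, h_N \circ g_N\right)$ for some $N \in \ww N$, rational $R$, $h_i \in {\tt DiffAlg}_{\ww C}\left(x\right)$ and $g_i \in \ww E_n\left(\mathbf{z}\right)$. The autonomous character of ${\tt DiffAlg}$ allows me to absorb $c_i := g_i\left(0\right)$ into $h_i$ by replacing $h_i$ with $y \mapsto h_i\left(y + c_i\right)$ and $g_i$ with $g_i - c_i$, so without loss of generality $h_i \in \germ{x}$ and $g_i\left(0\right) = 0$. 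The target vanishing subspace is hit via the linear (hence analytic) projection $g \mapsto g - g\left(0\right)$, so the set $\left\{g \in \ww E_n\left(\mathbf{z}\right) \cap \germ{\mathbf{z}} : g\left(0\right) = 0\right\}$ is the analytic image of a countably meager set, hence is itself countably meager. For fixed $N$, the evaluation map
\[
\ww C\left[\mathbf{z}, y_1, \ldots, y_N\right]^{2} \times \left({\tt DiffAlg}_{\ww C}\left(x\right) \cap \germ{x}\right)^{N} \times \left\{g \in \ww E_n\left(\mathbf{z}\right) \cap \germ{\mathbf{z}} : g\left(0\right) = 0\right\}^{N} \longrightarrow \germ{\mathbf{z}}
\]
is analytic on an open subset (by the analyticity of composition, multiplication and inversion established earlier in the paper). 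Its source is the direct product of a countable-dimensional factor with two countably meager factors, hence is countably meager; so is its image. A countable union over $N$ closes the induction.

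The main obstacle is the Noetherian bookkeeping in the ${\tt DiffAlg}$ step, where the openness condition $\partial P/\partial \delta_k\left(\mathbf{0}\right) \neq 0$ built into $\mathcal{E}_k^{*}$ of the Proposition is violated on a proper subvariety of initial conditions and requires the stratification argument above; everything else follows from a clean interplay between the tower structure, the analyticity of the basic operations on $\germ{\mathbf{z}}$, and the stability of the class of countably meager sets under analytic images and finite products with countable-dimensional factors.
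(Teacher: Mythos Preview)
Your inductive step on the tower depth has a genuine gap. When you write $f=R\left(\mathbf{z},h_{1}\circ g_{1},\ldots,h_{N}\circ g_{N}\right)\in\ww E_{n+1}\left(\mathbf{z}\right)\cap\germ{\mathbf{z}}$, nothing forces the individual building blocks $g_{i}\in\ww E_{n}\left(\mathbf{z}\right)$ to lie in $\germ{\mathbf{z}}$, nor $h_{i}$ to be holomorphic at $g_{i}\left(0\right)$: singularities of the $h_{i}\circ g_{i}$ may cancel in the rational combination $R$. For instance $1=\sin^{2}\left(\nf 1z\right)+\cos^{2}\left(\nf 1z\right)$ is a representation in $\ww E_{1}\left(z\right)$ of an element of $\germ z$ with $g_{i}=\nf 1z\notin\germ z$. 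Your parametrising map is therefore only defined on representations whose ingredients are themselves germs at the origin, and you have not argued that every $f$ admits such a representation. The shift $c_{i}:=g_{i}\left(0\right)$ and the restriction of the source to $\left\{g\in\ww E_{n}\left(\mathbf{z}\right)\cap\germ{\mathbf{z}}:g\left(0\right)=0\right\}$ already presuppose this.

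The paper avoids the issue by not inducting on the tower at all. For $m=1$ it invokes Moore's lemma $\ww E\left(z\right)={\tt DiffAlg}_{\ww C}\left(z\right)$ to collapse the whole tower, then shows ${\tt DiffAlg}_{\ww C}\left(z\right)={\tt DiffAlg}_{\ww Q}\left(z\right)$ by eliminating the (linearly occurring) complex coefficients of the polynomial ODE through successive differentiations; this yields a \emph{countable} family $\left(Q_{n}\right)_{n}$ of $\ww Q$-polynomial ODEs and a direct application of the Proposition. For $m>1$ it inducts on the number of variables: freezing $\tilde{\mathbf{z}}=\left(z_{2},\ldots,z_{m}\right)$ gives partial functions in $\ww E\left(z_{1}\right)$, a Baire-category argument on the countable family $\left(Q_{n}\right)_{n}$ singles out one $Q_{n}$ valid for all $\tilde{\mathbf{z}}$, and the boundary data along $\left\{z_{1}=0\right\}$ then lie in $\ww E\left(\tilde{\mathbf{z}}\right)\cap\germ{\tilde{\mathbf{z}}}$, to which the induction hypothesis applies.
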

Let us first deal with the case $m=1$, that is $\mathbf{z}=\left(z\right)$.
\noun{E.~H.~Moore} proved the following result~:
\begin{namedthm}[{Lemma \cite[p54-55]{Moo}}]
 $\ww E\left(z\right)={\tt DiffAlg}_{\ww C}\left(z\right)$. Moreover
this field is stable by differentiation and compositional inversion.
\end{namedthm}
Pick $P\in\pol{\delta_{0},\ldots,\delta_{k}}{}$ and $f\in\ww E\left(x\right)$
such that $P\left(f,f',\ldots,f^{\left(k\right)}\right)=0$ (for short,
let us say that $f$ is solution of $P=0$). We may write $\delta=\left(\delta_{0},\ldots,\delta_{k}\right)$,
$\delta^{f}:=\left(f,f',\ldots,f^{\left(k\right)}\right)$ and $P\left(\delta\right)=\sum_{\left|\alpha\right|\leq d}\lambda_{\alpha}\delta^{\alpha}$
with $\lambda_{\alpha}\in\ww C$ and $\alpha\in\ww N^{k+1}$. By isolating
a single variable $\lambda_{\beta}$ appearing in $P$, that is 
\begin{align*}
\lambda_{\beta} & =-\frac{\sum_{\alpha\neq\beta~,~\left|\alpha\right|\leq d}\lambda_{\alpha}\left(\delta^{f}\right)^{\alpha}}{\left(\delta^{f}\right)^{\beta}}~,
\end{align*}
and differentiating with respect to $x$ we obtain that $f$ is solution
of another polynomial, autonomous differential equation of order $k+1$
which does not depend on $\lambda_{\beta}$ and in which the coefficients
$\left(\lambda_{\alpha}\right)_{\alpha\neq\beta}$ continue to appear
linearly. By induction we actually prove that there exists $\tilde{P}\in\ww Q\left[\delta_{0},\ldots,\delta_{k'}\right]$
for some $k'\geq k$ such that $f$ is solution of $\tilde{P}=0$,
so that
\begin{align*}
{\tt DiffAlg}_{\ww Q}\left(x\right) & ={\tt DiffAlg}_{\ww C}\left(x\right)~.
\end{align*}
Let us fix an enumeration $\left\{ Q_{n}~:~n\in\ww N\right\} $ of
the countable set $\mathcal{E}:=\ww Q\left[\left(\delta_{k}\right)_{k\in\ww N}\right]$
and fix $k_{n}\in\ww N$ in such a way that $Q_{n}\in\ww Q\left[\left(\delta_{k}\right)_{k\leq k_{n}}\right]$.
According to Moore's lemma and the previous proposition, the set $\ww E\left(z\right)\cap\germ z$
is included in $\bigcup_{n\in\ww N}\Phi_{k_{n}}\left(\pol z{\leq k_{n}}\times\left\{ Q_{n}\right\} \right)$.
Hence $\ww E\left(z\right)\cap\germ z$ is countably meager, settling
the case $m=1$. 

\bigskip{}

Assume now that the claim is true at any rank less than $m\geq1$.
For $f\in\ww E\left(\mathbf{z}\right)\cap\germ{\mathbf{z}}$ and for
given $\tilde{\mathbf{z}}=\left(z_{2},\ldots,z_{m}\right)$ small
enough consider the partial function defined by $\tilde{f}_{\tilde{\mathbf{z}}}\left(z\right):=f\left(z,z_{2},\ldots,z_{m}\right)$,
which belongs to $\ww E\left(z\right)$. According to the lemma there
exists $k=k\left(\tilde{\mathbf{z}}\right)\in\ww N$ and a polynomial
$P_{\tilde{\mathbf{z}}}\in\ww Q\left[\delta_{0},\ldots,\delta_{k}\right]$
such that $\tilde{f}_{\tilde{\mathbf{z}}}$ is solution of $P_{\tilde{\mathbf{z}}}=0$.
We can split a small domain $\Omega\subset\ww C^{m-1}$ containing
$\mathbf{0}$ into countably many sets $\Omega_{n}$, consisting precisely
of those points $\tilde{\mathbf{z}}$ for which $P_{\tilde{\mathbf{z}}}$
equals the $n^{\tx{th}}$ element $Q_{n}$ of $\mathcal{E}$. According
to the principle of analytic continuation we may assume without loss
of generality that each $\Omega_{n}$ is closed in $\Omega$. From
Baire's theorem we deduce that at least one $\Omega_{n}$ has non-empty
interior, thus (again by analytic continuation) $\tilde{f}_{\tilde{\mathbf{z}}}$
is solution of $Q_{n}=0$ for any $\tilde{\mathbf{z}}\in\Omega$.
Therefore $f$ is solution of
\begin{align*}
Q_{n}\left(f,\pp f{z_{1}},\pp{^{2}f}{z_{1}^{2}},\ldots,\pp{^{k_{n}}f}{z_{1}^{k_{n}}}\right) & =0
\end{align*}
and $f$ belongs to $A_{n}:=\Phi_{k_{n}}\left(\left(\ww E\left(\tilde{\mathbf{z}}\right)\cap\germ{\tilde{\mathbf{z}}}\right)^{k_{n}+1}\times\left\{ Q_{n}\right\} \right)$,
being uniquely defined by the boundary value $\left(f\left(0,\tilde{\mathbf{z}}\right),\pp f{z_{1}}\left(0,\tilde{\mathbf{z}}\right),\ldots,\pp{^{k_{n}}f}{z_{1}^{k_{n}}}\left(0,\tilde{\mathbf{z}}\right)\right)$.
The induction hypothesis and the proposition guarantee that $A_{n}$
is countably meager, hence $\ww E\left(\mathbf{z}\right)\cap\germ{\mathbf{z}}$
also is.

\subsubsection{Corollary~C}

~

A (germ of a) meromorphic, order one differential equation in $\ww C^{2}$
\begin{align*}
y' & =\frac{P\left(x,y\right)}{Q\left(x,y\right)}\tag{\ensuremath{\Diamond}}
\end{align*}
induces a (germ of a) foliation at the origin of the complex plane.
Roughly speaking, the leaves of such a foliation are the connected
Riemann surfaces corresponding to <<maximal>> solutions. By Cauchy-Lipschitz's
theorem if $P$ or $Q$ does not vanish at some point\footnote{Such a point is called regular.}
then the foliation is locally conjugate to a product of two discs.
On the contrary at a singularity of the foliation, which we locate
at $\left(0,0\right)$ for convenience, a whole range of complex behaviors
can turn up. An obvious fact is that the generic germ of a foliation\footnote{For the sake of clarity we identify the set of germs of foliations
with $\germ{x,y}^{2}=\left\{ \left(P,Q\right)\right\} $, voluntarily
forgetting that proportional couples induce the same foliation; in
particular the singular locus of a holomorphic foliation in $\ww C^{2}$
is always isolated. This technicality will be dealt with in due time.} is regular, since singular ones correspond to the analytic set of
$\germ{x,y}^{2}$ defined by 
\begin{align*}
\mathtt{Sing} & :=\left\{ \left(P,Q\right)\in\germ{x,y}^{2}\,:\,P\left(0,0\right)=Q\left(0,0\right)=0\right\} \,.
\end{align*}
From now on we solely work in $\mathtt{Sing}$, which is given the
analytic structure of $\germ{x,y}^{2}$ through the continuous, affine
and onto mapping 
\begin{align*}
\left(P,Q\right)\mapsto\left(P-P\left(0,0\right),Q-Q\left(0,0\right)\right) & \text{ .}
\end{align*}

\bigskip{}

An important question regarding foliations is that of finding solutions
of $\left(\Diamond\right)$ in «closed form», which was originally
formulated by J.~\noun{Liouville} in terms of consecutive quadratures
and exponentiation of quadratures of meromorphic functions. In the
modern framework of differential Galois theory this notion translates
as the request that every germ of a solution near every regular points
admit an analytic continuation coinciding with a determination of
an abstract solution lying in a finite tower of consecutive extensions
of differential fields $\ww K_{0}<\cdots<\ww K_{n}$ of the following
kind:
\begin{itemize}
\item we start from the field $\ww K_{0}$ of germs of meromorphic functions
near $\left(0,0\right)$,
\item $\ww K_{n+1}=\ww K_{n}\left\langle f\right\rangle $ where $f$ is
algebraic over $\ww K_{n}$,
\item $\ww K_{n+1}=\ww K_{n}\left\langle f\right\rangle $ where $f'=a\in\ww K_{n}$
(a quadrature),
\item $\ww K_{n+1}=\ww K_{n}\left\langle f\right\rangle $ where $f'=af$
with $a\in\ww K_{n}$ (an exponentiation of a quadrature).
\end{itemize}
For the sake of example let us wander a little away from the path
we are currently treading, and consider the case of a linear differential
system, where $Q$ is the $n\times n$ identity matrix and $P\left(x,y\right)=P\left(x\right)y$
is obtained from a $n\times n$ matrix $P\left(x\right)$ with entries
rational in $x$, and $y$ is a vector in $\ww C^{n}$. To simplify
further imagine that 
\begin{align*}
P\left(x\right) & =\sum_{\ell=1}^{k}\frac{D_{\ell}}{x-x_{\ell}}
\end{align*}
for some finite collection of constant, diagonal matrices $\left(D_{\ell}\right)_{\ell\leq k}$
and distinct points $x_{\ell}$ of $\ww C$, so that the system is
Fuchsian. Then the solutions are multi-valued mappings 
\begin{align*}
x\mapsto y\left(x\right) & =\prod_{\ell=1}^{k}\left(x-x_{\ell}\right)^{D_{\ell}}\times C
\end{align*}
where $C\in\ww C^{n}$ is the vector of <<initial conditions>>.
The multi-valuedness of solutions is directly related to $\left(D_{\ell}\right)_{\ell}$
and is measured by the monodromy group, obtained by performing successive
local analytic continuations, starting from some transverse line $\left\{ x=\mbox{cst}\right\} $
outside the singular locus $\bigcup_{\ell}\left\{ x=x_{\ell}\right\} $,
and returning to it after winding around the singularities (a kind
of first-return mapping acting on $C$). The monodromy group is a
representation of the fundamental group of the punctured sphere $\overline{\ww C}\backslash\left\{ x_{\ell}\,:\,\ell\leq k\right\} $
into a linear algebraic subgroup in $\tx{GL}_{n}\left(\ww C\right)$.
\noun{E.~Kolchin} (see for instance \cite{SingVDP}) related the
Liouvillian integrability of the system to the solvability of the
(connected component of identity of the Zariski-closure of the) monodromy
group. It is well known that the generic linear algebraic group is
non-solvable.

\bigskip{}

Back to the non-linear setting we would like to generalize this non-solvability
result. The candidate replacement differential Galois theory has been
introduced in a recent past by \noun{B.~Malgrange~\cite{Malg}}
and subsequently developed by \noun{G.~Casale} (we refer to \noun{\cite{Casa}}
for matters regarding our present study\noun{)}. The monodromy group
is replaced by the groupoid of holonomy, but its geometric construction
is the same up to replacement of the fundamental group of the base
space by the fundamental groupoid. Although the tools introduced here
are not powerful enough to deal with such a generality we can nonetheless
say something in the generic case.

In this paper when we speak of a \emph{reduced} singularity we mean
that the linearized differential equation at the singularity, identified
with the $2$-dimensional square matrix
\begin{align*}
L\left(P,Q\right) & :=\left[\begin{array}{cc}
\frac{\partial P}{\partial x}\left(0,0\right) & \frac{\partial P}{\partial y}\left(0,0\right)\\
\frac{\partial Q}{\partial x}\left(0,0\right) & \frac{\partial Q}{\partial y}\left(0,0\right)
\end{array}\right]\,,
\end{align*}
possesses at least one non-zero eigenvalue. It is obvious again that
the generic element of ${\tt Sing}$ is reduced, since non-reduced
foliations can be discriminated by the characteristic polynomial of
their linear part, and therefore form the analytic set 
\begin{align*}
\mathtt{Sing}\cap\left\{ \left(P,Q\right)\,:\,\det L\left(P,Q\right)=\tx{tr}L\left(P,Q\right)=0\right\}  & \,.
\end{align*}
Liouvillian integrability of foliations with $L\left(A,B\right)\neq0$
is now a well-studied topic~\cite{BerTouze,Casa} so we dismiss this
case and consider only germs of singular foliations belonging to the
proper analytic set 
\begin{align*}
\mathtt{ZLP} & :=\left\{ \left(P,Q\right)\,:\,L\left(P,Q\right)=0\right\} 
\end{align*}
(${\tt ZLP}$ standing for \emph{zero linear part}).

An old result, formalized by \noun{A.~Seidenberg~\cite{Seiden},}
states that every germ $\mathcal{F}\in\mathtt{Sing}$ of a holomorphic
foliation with a singularity at $0\in\ww C^{2}$ can be reduced, that
is: there exists a complex surface $\mathcal{M}$ and a proper rational
morphism $\pi\,:\,\mathcal{M}\to\left(\ww C^{2},0\right)$, obtained
as successive blow-ups of singular points, such that
\begin{itemize}
\item $\mathcal{E}:=\pi^{-1}\left(0\right)$, called the exceptional divisor,
is a finite, connected union of normally-crossing copies of $\ww P_{1}\left(\ww C\right)$,
\item the restriction of $\pi$ to $\mathcal{M}\backslash\mathcal{E}$ is
a biholomorphism,
\item the pulled-back foliation $\pi^{*}\mathcal{F}$ has only reduced singularities,
located on the exceptional divisor.
\end{itemize}
Either a component $\mathcal{D}$ of the exceptional divisor is transverse
to all but finitely many leaves of $\pi^{*}\mathcal{F}$, in which
case we are confronted to a \emph{dicritic component}, or $\mathcal{D}$
is a leaf of $\pi^{*}\mathcal{F}$. To a non-dicritic component $\mathcal{D}$
and any (small enough) transversal disk $\Sigma$, not meeting the
(finite) singular locus $\sing{\mathcal{F}}$ of the foliation, we
associate the <<projective>> holonomy group $\tx{Hol}\left(\mathcal{D},\Sigma\right)$
of germs of invertible holomorphic first-return maps obtained by following
(lifting) cycles of $\mathcal{D}\backslash\tx{Sing}\left(\pi^{*}\mathcal{F}\right)$,
with base-point $\mathcal{D}\cap\Sigma$, in a leaf of $\pi^{*}\mathcal{F}$.
By endowing $\Sigma$ with an analytic chart, so that $\mathcal{D}\cap\Sigma$
correspond to $0$, the group $\tx{Hol}\left(\mathcal{D},\Sigma\right)$
is naturally identified with a finitely-generated sub-group of $\diff{}$.
As the domain of definition of an element $\Delta$ may not equal
the whole $\Sigma$ this is not a sub-group of the biholomorphisms
of $\Sigma$. For this reason such objects are usually referred to
as pseudo-groups in the literature, although we consider them as groups
of germs (that is, without considering a geometric realization). If
one chooses another transverse $\tilde{\Sigma}$ then $\tx{Hol}\left(\mathcal{D},\Sigma\right)$
and $\tx{Hol}\left(\mathcal{D},\tilde{\Sigma}\right)$ are biholomorphically
conjugate. 

In the case where $\mathcal{F}$ is reduced after a single blow-up
the (conjugacy class of the) holonomy group embodies all the information
about Liouvillian integrability. In particular it is solvable if the
equation $\left(\Diamond\right)$ is integrable. Corollary~C then
follows from combining Corollary~A with the facts that such foliations
$\mathcal{F}$ are Zariski-full and that we can build an analytic,
open mapping
\begin{align*}
\mathcal{F} & \longmapsto\tx{Hol}\left(\mathcal{D},\Sigma\right)\,.
\end{align*}
A formal proof will be given in the body of the article.

\subsection{Strong analyticity}

~

To the extent of my knowledge the notion presented now has never been
thoroughly studied so far, which is a pity since it is fairly common
in actual problems. It runs as follows : a continuous map $\Lambda\,:\,U\subset\germ{\mathbf{z}}\to\germ{\mathbf{w}}$
is \emph{strongly analytic} if for any finite-dimensional family of
functions $\left(f_{\mathbf{x}}\right)_{\mathbf{x}\in\ww D^{n}}$
such that $\left(\mathbf{x},\mathbf{z}\right)\mapsto f_{\mathbf{x}}\left(\mathbf{z}\right)$
is analytic near $\mathbf{0}\in\ww C^{n}\times\ww C^{m}$, the corresponding
family $\left(\Lambda\left(f_{\mathbf{x}}\right)\right)_{\mathbf{x}}$
is also an analytic function of $\left(\mathbf{x},\mathbf{w}\right)$
near $\mathbf{0}$. This property can be easily checked\footnote{When I say that the property is easily checked I mean informally that
in actual problems it should not be more difficult to prove strong
analyticity than plain analyticity.} and ensures that the composition of two source/range compatible strongly
analytic maps is again strongly analytic. For instance the composition
and differentiation mappings trivially satisfy this property. The
next result enables all the previous theorems to apply in the case
of strongly analytic relations:
\begin{namedthm}[Theorem D]
\textbf{\emph{}}Let $\Lambda\,:\,U\subset\germ{\mathbf{z}}\to\germ{\mathbf{w}}$
be a strongly analytic map. Then $\Lambda$ is analytic.
\end{namedthm}
We will provide a criterion characterizing strongly analytic maps
among analytic ones. Notice also that in Theorem~C the flow mapping
is actually strongly analytic.

\bigskip{}

The continuity condition on $\Lambda$ can be notably relaxed and
only \emph{ample boundedness} (analogous to local boundedness as in
the characterization of continuous linear mappings) is actually required.
This is a very natural condition to impose when dealing with analyticity
in locally convex space, as the whole theory relies on making sense
of the Cauchy's formula in an infinite dimensional space (which is
the argument the proof of this theorem also is based upon).

\subsection{Applications to dynamics}

~

Having a <<nice>> composition and differentiation comes in handy
in dynamics, as we already illustrated with the introductory corollaries.
We introduce also the \emph{glocal }problem: how can one recognize
that a local dynamical system is the local trace of a global one.
We establish a tentative program to deal with this problem, which
will demand the making of a differential geometry in $\germ{\mathbf{z}}$.

\subsubsection{Infinitely-renormalizable parabolic diffeomorphisms}

~

The usual framework of unidimensional (discrete) dynamics is the iteration
of rational maps $\Delta\,:\,\ww C\to\ww C$. However to understand
the local structure of, say, the Julia set of $\Delta$ one is often
led to study its local analytic conjugacy class, which can be quite
rich indeed. For instance near a parabolic fixed-point the space of
equivalence classes under local changes of coordinates is huge\footnote{Although it has the same cardinality as $\ww R$, its algebraic dimension
is infinite.}. It is, roughly speaking, isomorphic to a product of finitely many
copies of $\diff{}$ through a one-to-one map 
\begin{align*}
\tx{\acute{E}V}\,:\,\Delta & \longmapsto\tx{\acute{E}V}\left(\Delta\right)\,,
\end{align*}
known as the <<Écalle-Voronin invariants>> mapping. Germs of biholomorphisms
have same Écalle-Voronin invariants if, and only if, they are locally
conjugate by a germ of a biholomorphism.

Understanding how the local dynamics (\emph{i.e. }the local invariants)
varies as $\Delta$ does is usually a hard but rewarding task, undertaken
for example by M.~\noun{Shishikura~}\cite{ShiShi} to prove that
the boundary of the quadratic Mandelbrot set has Hausdorff-dimension
$2$. This citation is particularly interesting since one ingredient
of the proof consists in exploiting the strong analyticity of $\tx{\acute{E}V}$.
The result also relies on the existence of infinitely-renormalizable
maps, in the following way. Suppose for the sake of simplicity that
$\Delta$ is tangent to the identity and 
\begin{align*}
\Delta\left(z\right) & =z+\alpha z^{2}+\ldots~~~~,~\alpha\neq0
\end{align*}
 so that $\tx{\acute{E}V}\left(\Delta\right)\in\diff{}^{2}$. The
invariant $\tx{\acute{E}V}\left(\Delta\right)$ is well-defined up
to a choice of a linear chart on $\ww C$, and therefore we can always
arrange that a component $\tilde{\Delta}$ of $\tx{\acute{E}V}\left(\Delta\right)$
itself is tangent to the identity. With corresponding notations, if
$\tilde{\alpha}\neq0$ then $\tx{\acute{E}V}\left(\tilde{\Delta}\right)\in\diff{}^{2}$
is again well-defined and $\Delta$ is renormalizable once. The set
of germs of diffeomorphisms which are renormalizable $n$ times is
consequently a Zariski-full open set. We particularly derive the
\begin{namedthm}[Corollary D]
\textbf{\emph{}}The generic tangent-to-the-identity germ of a diffeomorphism
of the complex line is infinitely-renormalizable.
\end{namedthm}
I'm grateful to M.~\noun{Yampolsky} for suggesting this application
to me.

\subsubsection{Application of Corollary~A to the topology of foliations}

~

Non-solvability of finitely-generated subgroups of $\diff{}$ is a
key point in studying rigidity properties of holomorphic foliations
on compact complex surfaces. In the case of \emph{e.g. }a foliation
on $\ww P_{2}\left(\ww C\right)$ it measures (through the holonomy
representation of the line at infinity $\tx{Hol}\left(\Sigma,L_{\infty}\right)$)
how the leaves are mutually entangled: if the dynamics is sufficiently
<<mixing>> then topological conjugations between such foliations
turn out to be holomorphic (\emph{i.e. }homographies). In that respect
we should cite a consequence of the Nakai theorem exploited by \noun{A.~Lins~Neto},
\noun{P.~Sad} and \noun{B.~Sc\'{a}rdua }in~\cite{NetoSadSca}.
They show that the set of topologically rigid foliations of given
degree in $\ww P_{2}\left(\ww C\right)$ contains an open and dense
set, by proving that holonomy groups $\tx{Hol}\left(\Sigma,L_{\infty}\right)$
corresponding to those foliations are non-solvable\footnote{And therefore, according to Nakai's theorem, have dense orbits in
a <<big>> domain of the transversal $\Sigma$.}. The <<generic>> freeness of $\tx{Hol}\left(\Sigma,L_{\infty}\right)$,
for a fixed foliation degree, is proved by \noun{Y.~Il'Yashenko}
and \noun{A.~Pyartli~}\cite{IlyaPyr}. We point out that the context
of both results is of a different nature from ours: for a fixed degree
the space of foliations is a finite-dimensional complex projective
space.

In the context of germs of singular foliations \noun{J.-F.~Mattei},
\noun{J.~Rebelo} and \noun{H.~Reis~}\cite{MatRebRe} devise a result
comparable to Corollary~A. It is stronger in the sense that any subgroup
$G=\left\langle \Delta_{\ell}\right\rangle _{1\leq\ell\leq n}$ can
be perturbed by the action of $\diff{}^{n}$ 
\begin{align*}
\varphi=\left(\varphi_{\ell}\right)_{1\leq\ell\leq n} & \longmapsto\varphi^{*}G:=\left\langle \varphi_{\ell}^{*}\Delta_{\ell}\right\rangle _{1\leq\ell\leq n}\,,
\end{align*}
in such a way that for a <<generic>> choice of $\varphi$ the corresponding
subgroup $\varphi^{*}G$ is free. They deduce from this result a statement
about the corresponding pseudo-group (obtained by realizing the group
on a common domain of definition of the generators) that the generic
foliation has at most countably many non-simply-connected leaves.
Here <<generic>> refers to $G_{\delta}$-genericity in $\diff{}$
for the \emph{analytic topology}, introduced by \noun{F.~Takens}~\cite{Takens},
which we present in Section~\ref{sec:topologies}. This topology
is Baire but otherwise severely flawed: it does not turn $\diff{}$
into a topological vector space (and does not enjoy a continuous composition)
which, ironically enough, forbids any reasonably interesting analytic
structure on $\diff{}$. I believe that an effective analytic geometry
in $\germ{\mathbf{z}}$ would allow to obtain much the same kind of
result, and perhaps more constructively. Indeed the Fréchet calculus
allows to identify directions transverse with the tangent space of
an analytic set (\emph{e.g.} the set of generators satisfying a given
non-trivial algebraic relation), therefore pointing directions along
which the algebraic relation between generators of $G$ will be broken
by perturbation. One should now check that these transverse directions
can be realized as tangent spaces of curves embedded in the analytic
set describing locally the property of being in the same conjugacy
class, which is a statement reaching beyond the limits of the present
article. However it is related to what comes now.

\subsubsection{The glocal problem for diffeomorphisms}

~

It is not clear how to distinguish which local objects are actually
global objects having been processed through a local change of coordinates.
The \emph{glocal problem} in the context of germs of diffeomorphisms
refers to the following question:\bigskip{}

\begin{center}
\emph{<< Is any element of $\diff{}$ locally conjugate to a rational
one ? If not, how do we recognize that some of them are ? >>}
\par\end{center}

\bigskip{}

In his thesis \noun{A.~Epstein~} underlines that $\tx{\acute{E}V}\left(\Delta\right)$,
for rational $\Delta$, behave (dynamically) very much like a rational
map itself (a <<finite-type map>>), except for the fact that it
is transcendental and must admit a frontier for analytic continuation,
so the answer to the former question is <<no>>. Yet no answer to
the latter one is known. Somehow glocality must be readable in the
map $\tx{\acute{E}V}$, but this task is a difficult one hindered
by the fact that obtaining general properties on the invariants map
is hard work.

Another approach consists in acknowledging that being glocal is a
sub-analytic property. Indeed the set of biholomorphisms locally conjugate
to a polynomial of degree $d$ is the projection on the second factor
of the analytic set $\Omega$ defined by the zero-locus of 
\begin{align*}
\diff{}\times\diff{} & \longrightarrow\germ z\\
\left(\varphi,\Delta\right) & \longmapsto\left(\mathbf{\id}-J_{d}\right)\left(\varphi^{\circ-1}\circ\Delta\circ\varphi\right)\,,
\end{align*}
where $J_{d}$ is the $d^{\tx{th}}$-jet of a germ and $\id$ stands
for the identity mapping from $\germ z$ to $\germ z$. Therefore
it should be possible to gain knowledge from the study of the tangent
space of $\Omega$. This can be done using the Fréchet calculus detailed
in this paper, although we need a more powerful tool to derive existential
(or explicit) results by geometrical arguments. An article is in preparation
regarding the glocal problem in $\diff{}$.

\subsubsection{The glocal problem for foliations}

~

A similar problem can be stated in the realm of germs of singular
holomorphic foliations in $\ww C^{2}$:

\bigskip{}

\begin{center}
\emph{<< Is any element of $\mathtt{Sing}$ locally conjugate to
a polynomial one ? >>}
\par\end{center}

\bigskip{}

A non-constructive, negative answer is given to the question in~\cite{GenzTey}:
the generic local conjugacy class of germs of saddle-node\footnote{The linear part at $\left(0,0\right)$ of $\left(\diamond\right)$
has exactly one non-zero eigenvalue.} singularities do not contain polynomial representative. The methods
used there generalize flawlessly to the case of resonant-saddle singularities\footnote{The linear part at $\left(0,0\right)$ of $\left(\diamond\right)$
has two non-zero eigenvalues whose ratio lies in $\ww Q_{\leq0}$,
not formally linearizable.}, which are 2-dimensional counterparts to parabolic diffeomorphisms
through the holonomy correspondence. Using the enhanced theorems that
we prove here, we can be slightly more precise:
\begin{namedthm}[Corollary E]
\textbf{\emph{}}The generic germ of a $2$-dimensional resonant-saddle
foliation is not glocal. More precisely, glocal resonant-saddle foliations
form a countably meager set in the space of all foliations.
\end{namedthm}
\bigskip{}

We point out that, unlike the discrete case, no explicit example of
non glocal foliation is yet known, as no characterization (even partial
ones) of glocal foliations exists. I hope that from the developments
to come for discrete dynamics will emerge a general framework in which
explicit examples and/or criteria can be devised using perturbations
along non-tangential directions provided by the Fréchet calculus.

\subsection{Structure of the paper and table of contents}

~
\begin{itemize}
\item In Section~\ref{sec:topologies} we review <<usual>> topologies
on $\germ{\mathbf{z}}$ and introduce the factorial and <<useful>>
normally convex topologies we will use. We compare their relative
thinness. We also explicit the sequential completion of these spaces.
\item In Section~\ref{sec:usual_holo} we give a short survey of the standard
definitions and general properties of analytic maps between locally
convex spaces. We also introduce strong analyticity and Theorem~D
is proved.
\item In Section~\ref{sec:anal_sets} we introduce analytic sets and prove
the Baire analyticity and related properties of $\germ{\mathbf{z}}$,
including Theorem~A and Theorem~B.
\item In Section~\ref{sec:calculus} we introduce and give examples of
the Fréchet calculus in $\germ{\mathbf{z}}$.
\item In Section~\ref{sec:application_analysis} we present more applications
of the main results to complex analysis, particularly by equipping
the field of germs of meromorphic functions with an analytical structure
modelled on $\germ{\mathbf{z}}$. In doing so we study coprimality
in the ring $\germ{\mathbf{z}}$ and prove that coprime families $\left(f_{1},\ldots,f_{k}\right)\in\germ{\mathbf{z}}^{k}$
form an open, Zariski-dense set. 
\item In Section~\ref{sec:application_ODE} we give a complete proof to
Theorem~C and Corollary~C.
\end{itemize}
\tableofcontents{}

\subsection{Notations and conventions}

~
\begin{itemize}
\item The usual sets of numbers $\ww N,\,\ww Z,\,\ww Q,\,\ww R,\,\ww C$
are used with the convention that $\ww N$ is the set of non-negative
integers.
\item Let $m\in\ww N$ ; we use bold-typed letters to indicate $m$-dimensional
vectors $\mathbf{z}=\left(z_{1},\ldots,z_{m}\right)$ and multi-indexes
$\mathbf{j}=\left(j_{1},\ldots,j_{m}\right)$. We define as usual 

\begin{itemize}
\item $\left|\mathbf{j}\right|:=\sum_{\ell=1}^{m}j_{\ell}$,
\item $\mathbf{j}!:=\prod_{\ell=1}^{m}j_{\ell}!$,
\item $\mathbf{z}^{\mathbf{j}}:=\prod_{\ell=1}^{m}z_{\ell}^{j_{\ell}}$.
\end{itemize}
\item $\ww N^{\left(\ww N\right)}$ denotes the set of all finitely-supported
sequences of non-negative integers, that can be identified with $\coprod_{m\geq0}\ww N^{m}$.
\item We also define the \textbf{insertion symbol}
\begin{eqnarray*}
\mathbf{z}\haat ja & := & \left(z_{1},\cdots,z_{j-1},a,z_{j+1},\cdots,z_{m}\right)\,.
\end{eqnarray*}

\item We use the notation <<$\oplus$>> to concatenate vectors
\begin{align*}
\left(z_{1},\cdots,z_{k}\right)\oplus\left(z_{k+1},\cdots,z_{m}\right) & :=\left(z_{1},\cdots,z_{m}\right)\,,
\end{align*}
and by a convenient abuse of notations we set $\mathbf{z}\oplus a:=\mathbf{z}\oplus\left(a\right)=\left(z_{1},\cdots,z_{m},a\right)$.
In the same way we define
\begin{align*}
\mathbf{z}^{\oplus0} & :=\left(\right)\\
\mathbf{z}^{\oplus k+1} & :=\mathbf{z}^{\oplus k}\oplus\mathbf{z}\,.
\end{align*}

\item The symbol <<$\bullet$>> stands for the argument of a mapping,
for instance $\exp\bullet$ stands for the function $z\mapsto\exp z$.
We use it when the context renders the notation unambiguous.
\item If $X$ and $Y$ are topological spaces with the same underlying set,
we write $X\geq Y$ if the identity mapping $\id\,:\,X\to Y$ is continuous.
We write $X>Y$ if in addition the spaces are not homeomorphic.
\item A complex locally convex space whose topology is induced by a family
$\left(\norm{\bullet}a\right)_{a\in A}$ of norms will be called a
\textbf{normally convex} \textbf{space}. 
\item $\frml{\mathbf{z}}$ is the complex algebra of formal power series,
$\germ{\mathbf{z}}$ the sub-algebra consisting of those that converge
(nontrivial domain of convergence). We distinguish between the formal
power series $\sum_{n\geq0}f_{n}z^{n}\in\frml z$ with its sum, when
it exists, $z\mapsto\sum_{n=0}^{\infty}f_{n}z^{n}$ understood as
a holomorphic function on a suitable domain.
\item $\pol{\mathbf{z}}{\leq d}$ is the complex vector space of all polynomial
of degree at most $d$, while $\pol{\mathbf{z}}{=d}$ is the Zariski-full
open set consisting of those with degree exactly $d$.
\item For the sake of keeping notations as simple as possible we often identify
the symbol <<$\mathbf{z}$>> both with an element of $\ww C^{m}$
and with the identity mapping of the ambient space. This ambiguity
will resolve itself according to the context. It may not be orthodoxically
sound but it will prove quite convenient in some places. 
\item In the context of the previous item the notation $\id$ will be kept
for the linear operator of an underlying, implicit vector space such
as $\pol{\mathbf{z}}{}$ or $\germ{\mathbf{z}}$.
\item $\diff m$ is the group of germs of biholomorphisms fixing the origin
of $\ww C^{m}$.
\item Throughout the article the notation $\ww D$ stands for the open unit
disc of $\ww C$. 
\item A \textbf{domain} of a topological space is a non-empty, connected
open set.
\end{itemize}

\section{\label{sec:topologies}About useful topologies on $\protect\germ{\mathbf{z}}$}

For the sake of simplicity we only present the case $m=1$, from which
the general case is easily derived. We say that the space $\germ z$
of germ of a holomorphic function at $0\in\ww C$ is a \textbf{compositing
differential algebra} when it is endowed with a structure of topological
algebra for which the differentiation $\pp{}z$ and the right- (\emph{resp.}
left-) composition $g^{*}\,:\,f\mapsto f\circ g$ with a given germ
$g$ vanishing at $0$ (\emph{resp. }$f_{*}\,:\,g\mapsto f\circ g$
with a given germ $f$), are continuous operations. We review here
some topologies usually put on the space of convergent power series.
We prove that some of them fail to be <<useful>> in the sense that
they do not induce a structure of compositing differential algebra.
For the sake of example if $\germ z$ is given a normed topology then
it cannot be a compositing differential algebra. The sequence $z\mapsto\exp\left(kz\right)$,
for $k\in\ww N$, indeed shows that differentiation is not continuous,
while $z\mapsto z^{k}$ provides a sequence for which the right-composition
with \emph{e.g. }$z\mapsto2z$ does not satisfy the axiom of continuity. 

Notice that the <<most>> natural topology on $\germ z$, the inductive
topology, is useful. Although it is not so easy to handle as compared
to the sequential topologies we introduce (in particular because the
inductive topology is not metrizable), we establish that the inductive
topology coincides with the one obtained by taking all sequential
norms. We end this section by proving that the factorial topologies
(for instance) are useful.

\subsection{\label{sub:topo_inductive}The inductive topology}

~

The usual definition of the space of germs of holomorphic functions
at $0\in\ww C$ introduces $\germ z$ as an inductive space
\begin{eqnarray*}
\germ z & := & \underrightarrow{\lim}\left(\mathcal{B}_{r}\right)_{r>0}\,,
\end{eqnarray*}
where $\mathcal{B}_{r}$ denotes the Banach space of bounded holomorphic
functions on the disk $r\ww D$ equipped with the $\sup$ norm 
\begin{eqnarray*}
\norm f{r\ww D} & := & \sup_{\left|z\right|<r}\left|f\left(z\right)\right|\,,
\end{eqnarray*}
transition maps $\mathcal{B}_{r}\to\mathcal{B}_{r'}$ for $r'\leq r$
being defined by the restriction morphisms 
\begin{align*}
\iota_{r\to r'}\,:\,f\in\mathcal{B}_{r} & \longmapsto f|_{r'\ww D}\in\mathcal{B}_{r'}\,.
\end{align*}
 An element $f\in\germ z$ is therefore understood as en equivalence
class of all couples $\left(r,f_{r}\right)\in\coprod_{r>0}\left\{ r\right\} \times\mathcal{B}_{r}$
such that
\begin{eqnarray*}
\left(r,f_{r}\right)\leftrightsquigarrow\left(r',f_{r'}\right) & \Longleftrightarrow & \left(\,\exists\,0<\rho\leq\min\left(r,r'\right)\,\right)\,\,\,f_{r}|_{\rho\ww D}=f_{r'}|_{\rho\ww D}\,.
\end{eqnarray*}
We denote by
\begin{align*}
\underline{\left(r,f_{r}\right)}
\end{align*}
the equivalence class of $\left(r,f_{r}\right)$. The <<convergence
radius>> function
\begin{eqnarray*}
\mathcal{R}\,:\,\germ z & \longrightarrow & ]0,\infty]\\
f & \longmapsto & \sup\left\{ r>0\,:\,\exists f_{r}\in\mathcal{B}_{r},\,\underline{\left(r,f_{r}\right)}=f\right\} 
\end{eqnarray*}
 is well-defined.
\begin{defn}
\label{def_inductive_germ}The topological space obtained as the direct
limit $\underrightarrow{\lim}\left(\mathcal{B}_{r}\right)_{r>0}$
equipped with the inductive topology will be denoted by 
\begin{eqnarray*}
\igerm z & := & \underrightarrow{\lim}\left(\mathcal{B}_{r}\right)_{r>0}\,.
\end{eqnarray*}
Notice that this direct limit is the same as that obtained by considering
the countable family $\left(\mathcal{B}_{\nf 1k}\right)_{k\in\ww N_{>0}}$. 
\end{defn}
The fact that this topology is actually that of a locally convex one
is not straightforward, and can be deduced from a general result of
\noun{H.~Komatsu}~\cite[Theorem 6' p375]{Koma} regarding countable
inductive systems of Banach spaces and compact maps.
\begin{prop}
The topological space $\igerm z$ is naturally endowed with a structure
of a locally convex vector space.
\end{prop}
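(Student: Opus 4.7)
The plan is to identify $\igerm z$ as a (DFS)-space (a Silva space), which is the classical output of Komatsu's theorem, and then to observe that this yields the desired locally convex structure for free. Recall that the inductive topology on an algebraic direct limit of locally convex spaces is, by its very definition, the finest locally convex topology making all the canonical maps $\iota_r : \mathcal{B}_r \to \igerm z$ continuous; it is thus automatically locally convex. The only issue one has to worry about is whether the topology is Hausdorff (equivalently, not the trivial topology), since in general the inductive limit of Hausdorff locally convex spaces can fail to be separated.

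First I would reduce to the countable directed system $\bigl(\mathcal{B}_{1/k}\bigr)_{k \in \ww N_{>0}}$, which is cofinal in $(\mathcal{B}_r)_{r>0}$ and therefore produces the same inductive limit. This is already noted in Definition~\ref{def_inductive_germ}.

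Next I would check the key hypothesis of Komatsu's theorem: the transition maps
\begin{align*}
\iota_{r \to r'} : \mathcal{B}_r \longrightarrow \mathcal{B}_{r'}, \qquad 0 < r' < r,
\end{align*}
are \emph{compact} operators. This is simply Montel's theorem: if $(f_n)_n$ is a bounded sequence in $\mathcal{B}_r$, then by the Cauchy estimates it is uniformly bounded on $\overline{r'\ww D} \subset r\ww D$, hence normal there, and any limit point $f$ of the extracted subsequence lies in $\mathcal{B}_{r'}$ with $\norm{f}{r'\ww D} \le \limsup_n \norm{f_n}{r\ww D}$. So the image of the unit ball of $\mathcal{B}_r$ under $\iota_{r\to r'}$ is relatively compact in $\mathcal{B}_{r'}$.

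Having verified compactness of the bonding maps of a countable inductive system of Banach spaces, Komatsu's theorem~\cite[Theorem~6'~p.~375]{Koma} applies and produces a Hausdorff locally convex topology on the inductive limit (more precisely, a Silva/(DFS) space, which is in addition complete, regular, and bornological). The main obstacle, if any, is essentially the careful verification of compactness of the restrictions --- everything else is then automatic from the cited result. I would mention in passing that the (DFS) structure is what underlies the standard pleasant features of $\igerm z$: a sequence converges in $\igerm z$ if and only if it is contained in and converges within some single Banach step $\mathcal{B}_r$, and bounded sets of $\igerm z$ are precisely those contained and bounded in some $\mathcal{B}_r$.
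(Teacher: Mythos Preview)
Your argument is essentially the paper's: reduce to the countable cofinal system $(\mathcal{B}_{1/k})_k$, observe via Montel that the restriction maps $\iota_{r\to r'}$ are compact, and invoke Komatsu's theorem. One small framing mismatch: the paper takes ``inductive topology'' to mean the final topology in the category of topological spaces (see the Remark following Definition~\ref{def_inductive_germ}), so the non-trivial content is that this final topology is itself locally convex --- not merely that the locally convex inductive limit is Hausdorff --- and this is precisely what Komatsu's result supplies.
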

Such a locally convex structure will be described afterwards, when
we establish the fact that $\igerm z$ is homeomorphic to the locally
convex topology obtained by gathering all sequential norms (Proposition~\ref{prop:topo_compare}).
\begin{rem}
By construction a mapping $\Lambda\,:\,\igerm z\to X$, where $X$
is a topological space, is continuous if, and only if, for all $r>0$
the map $\Lambda\circ\underline{\bullet}\,:\,\left\{ r\right\} \times\mathcal{B}_{r}\to X$
is.
\end{rem}
It is possible to show that the inductive topology on $\igerm z$
is non-metrizable, ultrabornological and nuclear. It also satisfies
the next additional property, which will follow from its counterpart
for sequential spaces: Propositions~\ref{prop:topo_completion},~\ref{prop:convergence_radius}
and~\ref{prop:all_norms_is_useful}.
\begin{prop}
The space $\left(\igerm z,\cdot,+,\times,\pp{}z\right)$ is a compositing
differential algebra, which is complete but not Baire.
\end{prop}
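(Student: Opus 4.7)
The plan is to verify the three assertions (compositing differential algebra structure, completeness, failure of the Baire property) separately, exploiting the universal property of the inductive limit to reduce continuity checks to the Banach step level.

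First I would establish that each of the four operations is continuous. By the universal property of $\underrightarrow{\lim}\left(\mathcal{B}_{r}\right)_{r>0}$, a linear map out of $\igerm z$ is continuous iff its restriction to each $\mathcal{B}_r$ is continuous, and for bilinear maps the analogous statement holds on finite products of $\mathcal{B}_r$'s (this is standard for countable inductive limits of Banach spaces). The addition and scalar multiplication are automatic from the locally convex structure. For the product, $\left\|fg\right\|_{r\ww D}\le\left\|f\right\|_{r\ww D}\left\|g\right\|_{r\ww D}$ makes $\mathcal{B}_r\times\mathcal{B}_r\to\mathcal{B}_r$ continuous. For differentiation, the Cauchy estimate $\left\|f'\right\|_{r'\ww D}\le(r-r')^{-1}\left\|f\right\|_{r\ww D}$ shows $\pp{}z\,:\,\mathcal{B}_r\to\mathcal{B}_{r'}$ is continuous for every $0<r'<r$, hence continuous into $\igerm z$. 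For the right-composition $g^*$ with $g(0)=0$: given $f\in\mathcal{B}_r$, pick $\rho>0$ so small that $g$ is defined on $\rho\ww D$ with $\left|g\right|<r$ there (possible since $g(0)=0$), and then $\left\|f\circ g\right\|_{\rho\ww D}\le\left\|f\right\|_{r\ww D}$, giving continuity $\mathcal{B}_r\to\mathcal{B}_\rho$. The left-composition $f_*$ is handled by the same estimate, now with $f$ fixed and $g$ varying in a neighborhood of $0$ in $\mathcal{B}_\rho$ on which $\left\|g\right\|_{\rho\ww D}<r$.

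For completeness I would invoke the fact that the transition maps $\iota_{r\to r'}\,:\,\mathcal{B}_r\to\mathcal{B}_{r'}$ are compact (they are bounded and $\mathcal{B}_r$ embeds compactly into $\mathcal{B}_{r'}$ by Montel's theorem when $r'<r$), so $\igerm z$ is a compact inductive limit of Banach spaces. Such (DFS)-spaces (or Silva spaces) are known to be complete: every Cauchy net in $\igerm z$ is eventually bounded in some $\mathcal{B}_r$ (by regularity of the compact inductive limit) and then converges in $\mathcal{B}_{r'}$ for $r'<r$ by the compactness of $\iota_{r\to r'}$. Alternatively, and more in line with the paper's route, I would invoke the forward reference: $\igerm z$ is homeomorphic to the normally convex space carrying all sequential norms (Proposition~\ref{prop:topo_compare}), whose completeness is established as Proposition~\ref{prop:topo_completion}.

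Finally, to see that $\igerm z$ is not Baire, write $\igerm z=\bigcup_{k\in\ww N_{>0}}\mathcal{B}_{1/k}$. Each $\mathcal{B}_{1/k}$ is a proper linear subspace of $\igerm z$ (there exist germs with convergence radius $<1/k$). It is closed because the convergence radius $\mathcal{R}$ is lower semicontinuous (this is Proposition~\ref{prop:convergence_radius}), so $\mathcal{B}_{1/k}=\left\{f\in\igerm z\,:\,\mathcal{R}(f)\ge 1/k\right\}$ is closed; and as a proper linear subspace of a topological vector space it has empty interior. Thus $\igerm z$ is a countable union of closed nowhere-dense sets, so it fails to be a Baire space.

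The main obstacle I expect is the continuity of composition: the source/target radii must be shifted in a precise way, and making this compatible with the bilinearity of $(f,g)\mapsto f\circ g$ on the inductive limit requires one to verify that the estimate transports through the universal property when \emph{both} variables live in inductive limits, a point where one genuinely needs the compactness of the transition maps (or the identification with the normally convex sequential topology) rather than just the Banach estimates alone. Completeness, on the other hand, is subtler than it first appears: it is not automatic for arbitrary locally convex inductive limits, and the cleanest justification is to appeal either to the Silva-space structure of $\igerm z$ or to the explicit description via the family of norms $\left(\star\right)$ introduced later in the paper.
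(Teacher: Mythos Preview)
Your approach to the compositing-algebra part and to completeness is sound and, for the algebra structure, more hands-on than the paper's: the paper simply defers everything to the identification $\igerm z\simeq\germ z_{\mathcal A}$ (Proposition~\ref{prop:topo_compare}) and then invokes Propositions~\ref{prop:topo_completion}, \ref{prop:convergence_radius} and \ref{prop:all_norms_is_useful}. Your Banach-level estimates are a legitimate alternative, though the argument for continuity of $f_*$ at a general $g$ (not just $g=0$) should be spelled out via a Lipschitz bound on $f$.

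There is, however, a genuine gap in your non-Baire argument. Two things go wrong. First, the identification $\mathcal B_{1/k}=\{f:\mathcal R(f)\ge 1/k\}$ is false: a germ with $\mathcal R(f)=1/k$ need not be bounded on $\tfrac1k\ww D$ (take $f(z)=(1-kz)^{-1}$). Second, and more seriously, $\mathcal B_{1/k}$ is \emph{not} closed in $\igerm z$. Indeed, take $f(z)=\sum_n 2^n z^n$ with $\mathcal R(f)=\tfrac12$ and let $f^{(m)}=J_m(f)$ be its partial sums. Each $f^{(m)}$ is a polynomial, hence lies in $\mathcal B_1$; yet $f^{(m)}\to f$ in $\igerm z$ (convergence holds in $\mathcal B_{1/4}$, say) while $f\notin\mathcal B_1$. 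The same example shows that $\{f:\mathcal R(f)\ge 1\}$ is not closed either, so no semicontinuity of $\mathcal R$ (upper or lower) will rescue the argument. The inductive limit here is not strict, so the ``steps are closed'' heuristic from strict (LF)-spaces does not apply.

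The clean fix is the paper's: use instead the sets $F_N:=\{f:\left|T_n(f)\right|\le N^n\text{ for all }n\}$ from~\eqref{eq:F_N_not_Baire}. These cover $\germ z$, are closed as intersections $\bigcap_n\left|T_n\right|^{-1}\bigl([0,N^n]\bigr)$ of preimages of closed sets under the continuous coefficient maps $T_n$, and have empty interior because every nonempty open set contains germs of arbitrarily small convergence radius (Proposition~\ref{prop:convergence_radius}(1)).
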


\subsection{\label{sub:topo_anal}The analytic topology}

~

Another topology on $\germ z$ has been introduced by F.~\noun{Takens~}\cite{Takens}.
It is worth noticing that although $\germ z$ becomes a Baire space
it is not a topological vector space, and for that reason will not
be of great interest to us in the sequel; we only mention it for the
sake of being as complete as possible. This topology is spanned by
the following collection of neighborhoods of $0$ (and their images
by the translations of $\germ z$):
\begin{align*}
U_{\rho,\delta} & :=\left\{ f\in\germ z\,:\,\exists\left(\rho,f_{\rho}\right)\in f\mbox{ and }\norm{f_{\rho}}{\rho\ww D}<\delta\right\} \,\,\,\,\,,\,\,\,\,\,\rho,\,\delta>0\,.
\end{align*}
The resulting topological space is written $\germ z^{\omega}$. This
space is not a topological vector space since no $U_{\rho,\delta}$
is absorbing\footnote{If the radius of convergence of $f$ is strictly less than $\rho$
then $f$ does not belong to any $\lambda U_{\rho,\delta}$ whatever
$\lambda\in\ww C$ may be. }. Let us conclude this paragraph by mentioning the
\begin{prop}
$\germ z^{\omega}$ is a Baire space. The multiplication, differentiation
or the right-composition with a dilatation are not continuous operations. 
\end{prop}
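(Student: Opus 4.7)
My plan is to adapt the classical nested-balls argument for the Baire property, exploiting the fact that each basic neighborhood $U_{\rho,\delta}$ is exactly the open $\norm{\cdot}{\rho\ww D}$-ball of radius $\delta$ in the Banach space $\mathcal{B}_{\rho}$. Given a countable family $(O_{n})_{n\in\ww N}$ of dense open sets and any basic open $f+U_{\rho,\delta}$, I will build inductively triples $(f_{n},\rho_{n},\delta_{n})$ starting from $(f_{0},\rho_{0},\delta_{0}):=(f,\rho,\delta)$ such that
\[
f_{n+1}+U_{\rho_{n+1},\delta_{n+1}}\,\subset\,O_{n}\cap\bigl(f_{n}+U_{\rho_{n},\delta_{n}}\bigr)\,.
\]
Density of $O_{n}$ inside $f_{n}+U_{\rho_{n},\delta_{n}}$ furnishes $f_{n+1}$; openness of both sets at $f_{n+1}$ combined with the triangle inequality in $\mathcal{B}_{\rho_{n}}$ then provides $(\rho_{n+1},\delta_{n+1})$ with $\rho_{n+1}\geq\rho_{n}$ and $\delta_{n+1}$ small enough --- concretely $\delta_{n+1}\leq\tfrac{1}{2}\min\bigl(2^{-n},\,\delta_{n}-\norm{f_{n+1}-f_{n}}{\rho_{n}\ww D}\bigr)$ works --- to guarantee the inclusion.

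Once the construction is complete, the tail $(f_{k})_{k\geq n}$ is Cauchy in $\mathcal{B}_{\rho_{n}}$ for every $n$ because $\norm{f_{k}-f_{n}}{\rho_{n}\ww D}<\delta_{n}<2^{-n}$ when $k\geq n$; the Cauchy limits are pairwise compatible under the restriction morphisms $\iota_{\rho_{n+1}\to\rho_{n}}$ and so glue into a single holomorphic function $g$ defined on $\rho_{\infty}\ww D$, where $\rho_{\infty}:=\sup_{n}\rho_{n}\in(0,\infty]$. Letting $k\to\infty$ in $\norm{f_{k}-f_{n}}{\rho_{n}\ww D}<\delta_{n}/2$ --- the factor $\tfrac{1}{2}$ being exactly the margin the inductive step preserves --- yields $\norm{g-f_{n}}{\rho_{n}\ww D}\leq\delta_{n}/2<\delta_{n}$, so $g\in f_{n}+U_{\rho_{n},\delta_{n}}$ for every $n$ and consequently $g\in\bigcap_{n}O_{n}\cap(f+U_{\rho,\delta})$. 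The subtle point --- and what I expect to be the main obstacle --- is that $U_{\rho,\delta}$ is defined by a strict inequality, which forces one to keep a deliberate margin at each step so the limit stays in the interior of $U_{\rho_{n},\delta_{n}}$ rather than drifting to its boundary.

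To show that the algebraic structure of $\germ z^{\omega}$ fails the compositing-differential-algebra axioms it suffices to pinpoint the discontinuity of at least one of the three listed operations; multiplication is the most transparent. Take $f_{0}(z):=(1-z)^{-1}$, whose radius of convergence equals $1$, and the sequence of constants $g_{n}:=1/n$. Constants being entire with $\norm{g_{n}}{\rho\ww D}=1/n$ for every $\rho>0$, we have $g_{n}\to 0$ in $\germ z^{\omega}$. The product $f_{0}g_{n}=(1-z)^{-1}/n$ still carries a pole at $z=1$, so for any $\rho>1$ and any $\delta>0$ the germ $f_{0}g_{n}$ admits no representative in $\mathcal{B}_{\rho}$ whatsoever; in particular $f_{0}g_{n}\notin U_{\rho,\delta}$ for every $n\geq 1$, so $f_{0}g_{n}\not\to 0=f_{0}\cdot 0$ in $\germ z^{\omega}$, witnessing the discontinuity of multiplication at $(f_{0},0)$. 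The same radius-of-convergence obstruction can be pushed against the joint composition $(f,g)\mapsto f\circ g$ on any neighborhood of $(f_{0},0)$ when $\mathcal{R}(f_{0})<\infty$, which underlies the claim about a dilatation; the underlying principle in both cases is that the failure is not driven by norms being large but by the resulting germ having strictly smaller radius of convergence than the open set in which we try to land.
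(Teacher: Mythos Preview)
The paper states this proposition without proof (it is mentioned only to contrast Takens's topology with the sequential ones), so there is nothing in the text to compare against; I assess your argument on its own merits.

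Your Baire argument is the natural nested--balls construction and is essentially sound, but one step is misstated. You write that ``the tail $(f_{k})_{k\geq n}$ is Cauchy in $\mathcal{B}_{\rho_{n}}$'' and that the limits ``glue into a single holomorphic function $g$ defined on $\rho_{\infty}\ww D$''. Neither is quite right: nothing forces $f_{k}$ itself to lie in $\mathcal{B}_{\rho_{n}}$ --- already $f_{0}=f$ need not be holomorphic on $\rho_{0}\ww D$, since $0\in U_{\rho,\delta}$ regardless of $\mathcal{R}(f)$. What \emph{is} guaranteed by the nesting is that the differences $f_{k}-f_{n}$ lie in $\mathcal{B}_{\rho_{n}}$ with $\norm{f_{k}-f_{n}}{\rho_{n}\ww D}<\delta_{n}$, and it is this sequence that is Cauchy. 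Its limit $h_{n}\in\mathcal{B}_{\rho_{n}}$ yields a germ $g:=f_{n}+h_{n}$ that is independent of $n$ (since $h_{n}-h_{m}=f_{m}-f_{n}$) and satisfies $g-f_{n}\in U_{\rho_{n},\delta_{n}}$ by your margin trick; the radius of convergence of $g$ is controlled by $\mathcal{R}(f)$, not by $\rho_{\infty}$. With this correction your argument goes through.

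Your example for the discontinuity of multiplication is correct: it is precisely the paper's remark that no $U_{\rho,\delta}$ is absorbing, recast as the failure of scalar multiplication to be continuous at $(f_{0},0)$. This alone shows $\germ z^{\omega}$ is not a topological algebra, which is the operative content of the second sentence. Be cautious, however, about the remaining two operations: for a \emph{fixed} dilatation $g(z)=\lambda z$ one checks directly that $h\in U_{\rho',\delta}$ implies $h\circ g\in U_{\rho'/|\lambda|,\delta}$, so $g^{*}$ is continuous; likewise Cauchy's estimate gives $h\in U_{\rho',\delta'}\Rightarrow h'\in U_{\rho,\delta}$ whenever $\rho'>\rho$ and $\delta'\leq\delta(\rho'-\rho)$, so differentiation is continuous as well. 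Your vague claim that ``the same radius-of-convergence obstruction can be pushed'' against the dilatation therefore cannot be made precise for the single-variable map $g^{*}$, and you should not attempt it --- the proposition, read literally, appears to overreach on those two points.
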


\subsection{\label{sub:topo_proj}The projective topology}

~

Because we work in the holomorphic world the space $\germ z$ can
also be analyzed through the Taylor linear one-to-one (but not onto)
mapping 
\begin{eqnarray*}
\igerm z & \overset{\mathcal{T}}{\longrightarrow} & \frml z\\
\underline{\left(r,f_{r}\right)} & \longmapsto & \sum_{n\geq0}\frac{f_{r}^{\left(n\right)}\left(0\right)}{n!}z^{n}\,.
\end{eqnarray*}
The space $\germ z$ is therefore isomorphic to the sub-algebra of
the formal power series 
\begin{align*}
\frml z & :=\left\{ \sum_{n\geq0}f_{n}z^{n}\,:\,\left(f_{n}\right)_{n}\in\ww C^{\ww N}\right\} ~,
\end{align*}
which differs from $\ww C^{\ww N}$ by the choice of the Cauchy product
instead of the Hadamard (term-wise) product, characterized by the
condition 
\begin{eqnarray*}
 & \liminf\left|f_{n}\right|^{-\nf 1n}>0 & \,.
\end{eqnarray*}
Hadamard's formula stipulates that this value is nothing but the radius
of convergence $\rad{\sum_{n\geq0}f_{n}z^{n}}$ of the corresponding
germ. The latter is given by the evaluation map
\begin{eqnarray*}
\germ z & \overset{\mathcal{E}}{\longrightarrow} & \igerm z\\
f=\sum_{n\geq0}f_{n}z^{n} & \longmapsto & \underline{\left(\frac{\rad f}{1+\rad f}\,,\,z\mapsto\sum_{n=0}^{\infty}f_{n}z^{n}\right)}\,.
\end{eqnarray*}
There is no special meaning in taking $\frac{\rad f}{1+\rad f}$,
any finite number less than $\rad f$ would work (\emph{e.g. }$\min\left\{ 1,\frac{1}{2}\rad f\right\} $).

We can therefore equip $\germ z$ with the projective topology inherited
from $\frml z$ and defined by the $N^{\tx{th}}$-jet projectors:
\begin{eqnarray*}
J_{N}\,:\,\germ z & \longrightarrow & \pol z{\leq N}\\
\sum_{n\geq0}f_{n}z^{n} & \longmapsto & \sum_{n\leq N}f_{n}z^{n}
\end{eqnarray*}
where the topology on each $\pol z{\leq N}$ is the standard normed
one. It will also be convenient to introduce the Taylor-coefficient
map of degree $N$ as: 
\begin{align*}
T_{N}\,:\,\germ z & \longrightarrow\ww C\\
\sum_{n\geq0}f_{n}z^{n} & \longmapsto f_{N}
\end{align*}
so that $J_{N}\left(\bullet\right)=\sum_{n=0}^{N}T_{n}\left(\bullet\right)z^{n}$.
The induced topology is that of the product topology on $\prod_{N\geq0}\ww C$;
this coarseness renders the projective topology almost useless for
our purposes in this article\footnote{We also mention that $\germ z$ could be equipped with the restriction
of the normed topology offered by the Krull distance on $\frml z$,
but this topology is rougher yet and even less interesting.}. 
\begin{defn}
\label{def_projective_germ}The topological space obtained as the
restriction to $\germ z$ of the inverse limit $\underleftarrow{\lim}\left(\pol z{\leq N}\right)_{N\in\ww N}$,
equipped with the projective topology, will be denoted by 
\begin{eqnarray*}
\pgerm z & := & \underleftarrow{\lim}\left(\pol z{\leq N}\right)_{N\in\ww N}\cap\germ z\,.
\end{eqnarray*}
\end{defn}
\begin{prop}
$\left(\pgerm z,+,\times\right)$ is a non-Baire, non-complete topological
algebra. Neither is it a compositing differential algebra.\end{prop}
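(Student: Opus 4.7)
The plan addresses the four assertions in turn, using throughout that the projective topology on $\pgerm z$ is the restriction to $\germ z$ of the product topology on $\frml z\simeq\ww C^{\ww N}$ through the coordinate map $f\mapsto(T_n(f))_n$. For the topological algebra structure, addition and scalar multiplication are coordinate-wise and hence trivially jointly continuous, while the Cauchy-product formula $T_n(fg)=\sum_{k=0}^{n}T_k(f)T_{n-k}(g)$ exhibits multiplication as a polynomial in finitely many coordinates, so it is also jointly continuous. For non-completeness I would exhibit the sequence of polynomials $f_N(z):=\sum_{n=0}^{N}n!\,z^n\in\germ z$: each coordinate $T_m(f_N)$ stabilises to $m!$ once $N\geq m$, so $(f_N)_N$ is Cauchy in the metrizable projective topology, while the only candidate formal limit $\sum_{n\geq0}n!\,z^n$ has radius of convergence zero and thus lies outside $\germ z$.

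For non-Baire, I would cover $\germ z$ by the countable family
\[F_{k,M}:=\bigl\{f\in\germ z:|T_n(f)|\leq Mk^n\text{ for all }n\in\ww N\bigr\},\quad(k,M)\in\ww N_{>0}^{2},\]
invoking Cauchy's estimates on a closed disk of radius $1/k$ contained in the disk of convergence of $f$. Each $F_{k,M}$ is closed in $\pgerm z$ as an intersection of the continuous closed conditions $|T_n(\bullet)|\leq Mk^n$, and is nowhere-dense because any basic cylindrical neighbourhood $\{g:|T_j(g)-T_j(f)|<\varepsilon,\,j\leq N\}$ containing some $f\in F_{k,M}$ also contains the polynomial $f+Cz^{N+1}$ for arbitrary $C\in\ww C$, which escapes $F_{k,M}$ whenever $|C|>Mk^{N+1}$.

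For the failure of the compositing differential algebra structure, the diagnostic I would use is the discontinuity of point evaluation $f\mapsto f(z_0)$ at any $z_0\neq0$ on its natural domain $\{f:\rad f>|z_0|\}$. Taking for instance $f_k(z):=\frac{1}{1-z}+(2^{k-1}-1)z^k$, each coordinate of $f_k$ eventually stabilises to that of $1/(1-z)$ so $f_k\to 1/(1-z)$ in $\pgerm z$; however $f_k(1/2)=5/2-2^{-k}$ converges to $5/2\neq2$, witnessing the discontinuity of evaluation at $1/2$. Since point evaluation is a specialisation of composition $f\circ g$ with $g$ the constant germ $z\mapsto z_0$, this discontinuity obstructs the composition machinery needed of a compositing differential algebra: the natural domain $\{(f,g)\in\germ z^{2}:|g(0)|<\rad f\}$ of $(f,g)\mapsto f\circ g$ is not open in $\pgerm z\times\pgerm z$ (since $\rad(\cdot)$ is not lower semicontinuous), so composition cannot be made jointly continuous there. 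The main obstacle will be matching this diagnostic to the literal three axioms of the definition---each of which, when $g(0)=0$, admits a finite-dimensional polynomial expression on coordinates and is therefore continuous in $\pgerm z$---but the deeper reason $\pgerm z$ still fails is that the coordinate-wise topology is too coarse to detect coefficient growth having analytic meaning.
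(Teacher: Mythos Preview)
Your treatment of the topological-algebra structure, non-completeness, and non-Baireness is correct and essentially matches the paper's approach. The paper writes the non-Baire cover as a single-parameter family $F_N=\{f:|f_n|\le N^n\}$; your two-parameter family $F_{k,M}$ is the same idea (and in fact slightly cleaner, since the paper's $F_N$ all impose $|f_0|\le 1$ and so do not literally cover $\germ z$).

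The last assertion is where your argument has a genuine gap --- and the paper's own proof is entirely silent on this point. Your counterexample concerns the operator $f\mapsto f(z_0)$, i.e.\ right-composition with the constant germ $g\equiv z_0$ having $g(0)\ne 0$; but the paper's definition of a compositing differential algebra only asks for continuity of $g^*:f\mapsto f\circ g$ when $g(0)=0$, together with continuity of $\partial/\partial z$ and of each $f_*:g\mapsto f\circ g$. You yourself observe that when $g(0)=0$ every Taylor coefficient $T_n(f\circ g)$ is a polynomial in finitely many coordinates, hence continuous for the projective topology; the same computation shows that $f_*$ is continuous on its natural open domain $\{g:|g(0)|<\rad f\}$, since
\[
T_n(f\circ g)=\sum_{k=0}^{n}\frac{f^{(k)}(g(0))}{k!}\,T_n\bigl((g-g(0))^k\bigr)
\]
depends continuously on $(T_0(g),\dots,T_n(g))$. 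So your own observation actually argues that the literal axioms \emph{are} satisfied by $\pgerm z$, and the concluding appeal to a ``deeper reason'' is not a proof of anything. Either the paper's claim on this point is an error, or it relies on a stronger reading of the definition than the one actually given; in any case neither you nor the paper supplies an argument for it.
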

\begin{proof}
This space clearly is not complete. Besides the decomposition $\germ z=\bigcup_{N\in\ww N}F_{N}$,
where
\begin{align}
F_{N} & :=\left\{ \sum_{n\geq0}f_{n}z^{n}\,:\,\left|f_{n}\right|\leq N^{n}\right\} =\bigcap_{n\in\ww N}\left|T_{n}\right|^{-1}\left(\left[0,N^{n}\right]\right)\label{eq:F_N_not_Baire}
\end{align}
is a closed set with empty interior, shows the space cannot be Baire. 
\end{proof}

\subsection{\label{sub:topo_sequential}Sequential topologies}

~

We define a norm on $\germ z$ by making use of the Taylor coefficients
at $0$ of a germ at $0$. Being given a sequence $a=\left(a_{n}\right)_{n\in\ww N}$
of positive numbers we can formally define 
\begin{eqnarray*}
\norm{\sum_{n\geq0}f_{n}z^{n}}a & := & \sum_{n=0}^{\infty}a_{n}\left|f_{n}\right|.
\end{eqnarray*}
It is a genuine norm on $\germ z$ if, and only if, $a$ is asymptotically
sufficiently flat, \emph{i.e. }that it belongs to 
\begin{eqnarray*}
\mathcal{A} & := & \left\{ a\in\ww R_{>0}^{\ww Z_{\geq0}}~~:~~\lim_{n\to\infty}a_{n}^{\nf 1n}=0\right\} \,.
\end{eqnarray*}

\begin{defn}
\label{def_A_topo}~
\begin{enumerate}
\item For any $a\in\mathcal{A}$ the above norm will be called the \textbf{$a$-norm
}on $\germ z$.
\item The entire function
\begin{eqnarray*}
\car a\,:\,x\in\ww C & \longmapsto & \sum_{n=0}^{\infty}a_{n}x^{n}
\end{eqnarray*}
is called the \textbf{comparison function} of $a$. The \textbf{amplitude
}of $a$ is the function
\begin{eqnarray*}
\amp a\,:\,r>0 & \longmapsto & \max_{n\in\ww N}\left\{ a_{n}r^{n}\right\} <\infty.
\end{eqnarray*}

\item For every non-empty subset $A\subset\mathcal{A}$ we define the \textbf{$A$-topology}
of $\germ z$ as the normally convex topology associated to the family
of norms $\left(\norm{\bullet}a\right)_{a\in A}$. The topological
vector space $\left(\germ z,\left(\norm{\bullet}a\right)_{a\in A}\right)$
will be written $\germ z_{A}$. Such a topology is also called a \textbf{sequential
topology}.
\item Two collections $A,\,A'\subset\mathcal{A}$ will be deemed \textbf{equivalent}
if they induce equivalent topologies, that is the identity mapping
is a homeomorphism $\germ z_{A}\to\germ z_{A'}$. For all intents
and purposes we then say that both sequential topologies are the same
but we write $A\simeq A'$ for precision.
\end{enumerate}
\end{defn}
\begin{rem*}
With a similar construction it is possible to derive norms induced
by hermitian inner products on $\germ z\times\germ z$ as defined
by 
\begin{eqnarray*}
\left\langle \sum_{n\geq0}f_{n}z^{n},\sum_{n\geq0}g_{n}z^{n}\right\rangle _{a} & := & \sum_{n=0}^{\infty}a_{n}f_{n}\overline{g_{n}}\,.
\end{eqnarray*}
Since we have not felt the need to use the associated extra structure
this viewpoint provides, we will not particularly develop it (although
the results presented here should continue to hold).
\end{rem*}
Every $A$-topology is spanned by the family of finite intersections
of open $a$-balls of some radius $\varepsilon_{a}>0$ and center
$0$
\begin{eqnarray*}
\ball[a][0][\varepsilon_{a}] & := & \left\{ f\in\germ z:\norm fa<\varepsilon_{a}\right\} 
\end{eqnarray*}
for $a\in A$. We recall that a linear mapping $L\,:\,\germ z_{A}\to\germ w_{B}$
is continuous if, and only if, for all $b\in B$ there exists a finite
set $F\subset A$ and some $C_{b}\geq0$ such that
\begin{eqnarray*}
\left(\forall f\in\germ z\right) &  & \norm{L\left(f\right)}b\leq C_{b}\max_{a\in F}\norm fa.
\end{eqnarray*}
We denote by $\mathcal{L}\left(\germ z_{A}\to\germ w_{B}\right)$
the space of all linear, continuous mappings and endow it with the
natural locally convex topology induced by that of the source and
range spaces.

We finish this section by mentioning the following easy lemma, based
on the observation that if $f\left(z\right)=\sum_{n\ge\nu}f_{n}z^{n}\in\germ z$
with $f_{\nu}\neq0$ then for every $a\in\mathcal{A}$ and $g\in\ball[a][f][\frac{a_{\nu}}{\left|f_{\nu}\right|}]$
either the derivative $\frac{\dd{^{\nu}g}}{\dd{z^{\nu}}}\left(0\right)$
does not vanish or some derivative of lesser order does not.
\begin{lem}
\label{lem:valuation}For any $A$-topology on $\germ z$ the valuation
map 
\begin{align*}
\germ z_{A}\backslash\left\{ 0\right\}  & \longrightarrow\ww N\\
\sum_{n\geq0}f_{n}z^{n}\neq0 & \longmapsto\inf\left\{ n\text{ }:\text{ }f_{n}\neq0\right\} \text{ }
\end{align*}
 is lower semi-continuous. 
\end{lem}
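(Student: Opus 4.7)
The plan is to translate the hint directly into an explicit open-ball estimate, exhibiting around each nonzero $f$ a neighborhood on which the valuation cannot strictly exceed $\nu(f)$.

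Fix $f = \sum_{n \geq \nu} f_n z^n \in \germ z \setminus \{0\}$ with $f_{\nu}\neq 0$ (so $\nu(f)=\nu$), and pick any $a\in A$. I would set
\[
r := a_{\nu}\left|f_{\nu}\right|>0,
\]
which is positive because every entry of a sequence in $\mathcal{A}$ is strictly positive and $f_{\nu}\neq 0$. For any $g = \sum_n g_n z^n \in \ball[a][f][r]$, suppose for contradiction that $\nu(g) > \nu$, so that $g_n = 0$ for all $n\leq\nu$. Since $f_n = 0$ for $n < \nu$ by construction, one obtains
\[
\norm{g-f}{a} \;=\; \sum_{n\ge 0} a_n \left|g_n - f_n\right| \;\ge\; a_{\nu}\left|g_{\nu}-f_{\nu}\right| \;=\; a_{\nu}\left|f_{\nu}\right| \;=\; r,
\]
contradicting $g\in\ball[a][f][r]$. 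Hence $\nu(g)\le\nu(f)$ throughout this ball.

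To conclude, observe that $\ball[a][f][r]$ is, by definition of the $A$-topology, an open neighborhood of $f$ in $\germ z_A$; consequently, for every $k\in\ww N$ the sublevel set $\{g\in\germ z\setminus\{0\}:\nu(g)\le k\}$ is open, which is precisely the required semi-continuity statement. There is essentially no obstacle: the argument is a one-line Taylor-coefficient estimate, relying solely on the trivial fact that each norm $\norm{\bullet}{a}$ in the defining family dominates the modulus of the $\nu$-th Taylor coefficient by the single scalar $a_{\nu}$. The only thing to check is that $r>0$, which is guaranteed by the exclusion of $0$ from the domain and by the definition of $\mathcal{A}$.
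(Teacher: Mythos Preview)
Your proof is correct and follows exactly the approach outlined in the paper's own preamble to the lemma: one picks $a\in A$, takes the $a$-ball of radius $a_{\nu}\left|f_{\nu}\right|$ around $f$, and observes that any $g$ in this ball must have a nonzero Taylor coefficient of order at most $\nu$. (Incidentally, your radius $r=a_{\nu}\left|f_{\nu}\right|$ is the right one; the expression $\tfrac{a_{\nu}}{\left|f_{\nu}\right|}$ appearing in the paper's hint is a typo.)
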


\subsubsection{Naive polydiscs}

~
\begin{defn}
\label{def_naive_polydisc}Let $r:=\left(r_{n}\right)_{n\in\ww N}$
be a sequence of positive real numbers and take $f\in\germ z$. The
\textbf{naive-polydisc} of center $f$ and poly-radius $r$ is the
set
\begin{align*}
\polyd[f][r] & :=\left\{ g\in\germ z\,:\,\left(\forall n\in\ww N\right)\,\left|T_{n}\left(f-g\right)\right|<r_{n}\right\} \,.
\end{align*}
\end{defn}
\begin{lem}
\label{lem:naive-polyd_open} A naive-polydisc $\polyd[f][r]$ contains
an open $a$-ball of center $f$ if, and only if, 
\begin{align*}
\liminf_{n\to\infty}a_{n}r_{n} & >0\,.
\end{align*}
This particularly means that $\frac{1}{r}\in\mathcal{A}$ and $\lim r=\infty$.
Beside the above limit represents the maximum radius of $a$-balls
that can be included in $D\left(f,r\right)$.
\begin{proof}
We can suppose that $f=0$ and write $r=\left(r_{n}\right)_{n}$.
Assume first that there exists $\varepsilon>0$ such that $\ball[a][0][\varepsilon]\subset\polyd[0][r]$.
In particular for a given $n\in\ww N$ the polynomial $\eta z^{n}$
belongs to $\polyd[0][r]$ for every $\eta\in\ww C$ such that $\left|\eta\right|a_{n}<\varepsilon$,
which implies $\varepsilon\leq r_{n}a_{n}$. Conversely if $\liminf a_{n}r_{n}>0$
then there exists $\varepsilon>0$ such that $a_{n}r_{n}\geq\varepsilon$
for every $n\in\ww N$. If we choose $f\in\ball[][][\varepsilon]$
then for all integer $n$ we particularly have the estimate $a_{n}\left|f_{n}\right|\leq\varepsilon\leq a_{n}r_{n}$
so that $\ball[a][][\varepsilon]\subset\polyd$.
\end{proof}
\end{lem}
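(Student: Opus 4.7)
The plan is to first reduce to $f=0$ by translation (both the $a$-ball and the naive polydisc are defined by conditions on the Taylor coefficients of $g-f$, so the statement is invariant under $g \mapsto g+f$). The argument then splits naturally along the two implications, and in both directions the job is simply to compare the single scalar constraint $\sum_n a_n |g_n| < \varepsilon$ against the infinite family $|g_n| < r_n$.

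For the necessity of $\liminf a_n r_n > 0$, I would probe the alleged inclusion $\ball[a][0][\varepsilon] \subset \polyd[0][r]$ with monomials. For each fixed $n$ and each $\eta \in \ww C$ with $|\eta| a_n < \varepsilon$, the polynomial $\eta z^n$ lies in the $a$-ball, so by assumption $|\eta| < r_n$. Letting $|\eta|$ tend to $\varepsilon/a_n$ from below yields $\varepsilon/a_n \leq r_n$, i.e.\ $a_n r_n \geq \varepsilon$ for every $n \in \ww N$, which gives $\liminf a_n r_n \geq \varepsilon > 0$ and simultaneously shows that no $a$-ball of radius strictly greater than $\liminf a_n r_n$ can fit inside the polydisc.

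For the sufficiency, suppose $\ell := \liminf a_n r_n > 0$. Pick some $0 < \varepsilon < \ell$ and, using that there are only finitely many indices $n$ where $a_n r_n$ may dip below $\ell/2$, shrink $\varepsilon$ further so that $\varepsilon < a_n r_n$ holds for every $n \in \ww N$ (this is where one must be careful to keep the inequality strict, since $D(0,r)$ is defined with strict inequalities). For any $g = \sum_n g_n z^n$ with $\norm{g}{a} < \varepsilon$ and any $n$, one then has
\[
a_n |g_n| \leq \norm{g}{a} < \varepsilon < a_n r_n,
\]
hence $|g_n| < r_n$, so $g \in \polyd[0][r]$. Combined with the converse bound above, this also confirms that $\ell$ is exactly the supremum of the radii of $a$-balls contained in the polydisc.

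Finally, the two side statements $1/r \in \mathcal{A}$ and $\lim r = \infty$ drop out of the key inequality $a_n r_n \geq \varepsilon$: it rearranges to $1/r_n \leq a_n/\varepsilon$, whence $(1/r_n)^{1/n} \leq a_n^{1/n}/\varepsilon^{1/n} \to 0$ because $a \in \mathcal{A}$, and $r_n \geq \varepsilon/a_n \to \infty$ because $a_n \to 0$. There is no genuine obstacle here; the only point demanding mild care is arranging the strict inequality $\varepsilon < a_n r_n$ uniformly in $n$ in the sufficiency direction, which is handled by the reservoir provided by the $\liminf$.
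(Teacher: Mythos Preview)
Your proof is correct and follows the same approach as the paper's: reduce to $f=0$, test the inclusion with monomials $\eta z^n$ for necessity, and for sufficiency use the termwise bound $a_n|g_n|\le\norm{g}{a}<\varepsilon\le a_n r_n$. You are somewhat more careful than the paper about arranging the strict inequality, and you also spell out the consequences $1/r\in\mathcal{A}$ and $r_n\to\infty$, which the paper leaves to the reader.
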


\subsubsection{Topological completion}

~

Any sequential topology induces a uniform structure on $\germ z$,
allowing to contemplate the notion of topological completeness\footnote{In general one only needs sequential completeness, and the corresponding
completed space will be named Cauchy's completion as opposed to Hausdorff's
topological completion of the uniform space $\germ z_{A}$.}. We give without proof the following statements.
\begin{prop}
\label{prop:topo_completion}Consider some $A$-topology on $\germ z$. 
\begin{enumerate}
\item The Cauchy (sequential) completion of $\germ z_{A}$ is canonically
isomorphic, as $\ww C$-algebras, to the locally convex subspace of
$\frml z$ defined by
\begin{eqnarray*}
\fgrm z_{A} & := & \bigcap_{a\in A}\left\{ \sum_{n\geq0}f_{n}z^{n}:\sum_{n=0}^{\infty}a_{n}\left|f_{n}\right|<\infty\right\} 
\end{eqnarray*}
endowed with the family of norms $\left(\norm{\bullet}a\right)_{a\in A}$.
\item Take $f=\sum_{n}f_{n}z^{n}\in\fgrm z_{A}$ and the associated sequence
of jets $J_{N}\left(f\right)=\sum_{n=0}^{N}f_{n}z^{n}$ as $N\in\ww N$.
Then for every $a\in A$ 
\begin{eqnarray*}
\lim_{N\to\infty}\norm{f-J_{N}\left(f\right)}a & = & 0.
\end{eqnarray*}
In particular the subspace of polynomials is dense in $\fgrm z_{A}$.
\item We have
\begin{align*}
\germ z & =\fgrm z_{\mathcal{A}}\,,
\end{align*}
which means the space $\germ z_{\mathcal{A}}$ is sequentially complete.
Besides no other (non-equivalent) $A$-topology can be.
\end{enumerate}
\end{prop}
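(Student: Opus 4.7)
The plan splits naturally along the three items. For item~(1), I first observe that each Taylor projector $T_{n}$ is continuous on $\germ{z}_{A}$ since $a_{n}\left|T_{n}\left(f\right)\right|\leq\norm{f}{a}$ for every $a\in A$ with $a_{n}>0$. Thus any Cauchy sequence $\left(f^{\left(k\right)}\right)_{k}$ admits coefficient-wise limits $f_{n}:=\lim_{k}T_{n}\left(f^{\left(k\right)}\right)$ in $\ww C$, yielding a candidate formal series $f=\sum_{n\geq0}f_{n}z^{n}\in\frml{z}$. The key step is to check, for each $a\in A$, that $\norm{f^{\left(k\right)}-f}{a}\to0$: starting from a Cauchy inequality $\sum_{n\leq N}a_{n}\left|T_{n}\left(f^{\left(k\right)}-f^{\left(k'\right)}\right)\right|<\varepsilon$ valid for $k,k'\geq K$, I would let $k'\to\infty$ then $N\to\infty$, which simultaneously furnishes $f\in\fgrm{z}_{A}$. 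The reverse inclusion, namely that $\fgrm{z}_{A}$ sits inside the Cauchy completion, is handled through item~(2): the jets $J_{N}\left(f\right)$ are polynomials (hence in $\germ{z}$) converging to $f$ in every $a$-norm.

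Item~(2) is immediate once available: the identity $\norm{f-J_{N}\left(f\right)}{a}=\sum_{n>N}a_{n}\left|f_{n}\right|$ exhibits the left-hand side as the tail of a series convergent by the very definition of $\fgrm{z}_{A}$, and tails of convergent series tend to zero.

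Item~(3) unfolds in two inclusions. For $\germ{z}\subset\fgrm{z}_{\mathcal{A}}$, Cauchy estimates inside any disc of radius $0<r<\rad{f}$ yield $\left|f_{n}\right|\leq Mr^{-n}$; for $a\in\mathcal{A}$, picking $0<\varepsilon<r$ and exploiting $a_{n}\leq\varepsilon^{n}$ eventually dominates $a_{n}\left|f_{n}\right|$ by a geometric tail. For the reverse inclusion $\fgrm{z}_{\mathcal{A}}\subset\germ{z}$, I would argue by contraposition: if $\rad{f}=0$, extract a subsequence $\left(n_{k}\right)_{k}$ with $\left|f_{n_{k}}\right|^{1/n_{k}}\to\infty$ and manufacture an explicit $a\in\mathcal{A}$ by setting $a_{n_{k}}:=\left|f_{n_{k}}\right|^{-1/2}$ and $a_{n}:=n^{-n}$ on the complementary indices. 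Then $a_{n}^{1/n}\to0$ so $a\in\mathcal{A}$, yet $\sum_{n}a_{n}\left|f_{n}\right|\geq\sum_{k}\left|f_{n_{k}}\right|^{1/2}=\infty$, placing $f$ outside $\fgrm{z}_{\mathcal{A}}$.

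The final assertion, that no strictly coarser $A$-topology yields a complete space, is the delicate part and the step I expect to be the main obstacle. The idea is to translate strict coarseness into the existence of some $a\in\mathcal{A}$ not dominated on monomials by any finite maximum drawn from $A$, and then to run a diagonal construction against a suitably chosen sequence in $A$, producing $f\in\frml{z}$ with $\sum_{n}b_{n}\left|f_{n}\right|<\infty$ for every $b\in A$ but $\sum_{n}a_{n}\left|f_{n}\right|=\infty$. Such an $f$ would have zero radius of convergence by the argument of item~(3), hence witness $\germ{z}\subsetneq\fgrm{z}_{A}$ and forbid completeness.
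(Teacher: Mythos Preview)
The paper explicitly states this proposition \emph{without proof}, so there is no argument of the author's to compare against. Your treatment of items~(1) and~(2) and of the equality $\germ z=\fgrm z_{\mathcal A}$ in item~(3) is correct and is the natural route: coefficient-wise limits via the continuity of each $T_n$, Fatou-type passage to the limit in the partial sums for~(1); tails of convergent series for~(2); Cauchy estimates in one direction and an explicit witness $a\in\mathcal A$ in the other for the equality in~(3). Two cosmetic points: in your witness construction you should set $a_0$ by hand (e.g.\ $a_0:=1$) rather than via $n^{-n}$, and you do not address the phrase ``as $\ww C$-algebras'' in~(1), i.e.\ that $\fgrm z_A$ is closed under the Cauchy product --- this is not entirely automatic for arbitrary $A\subset\mathcal A$ and deserves a line.

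The genuine gap is in the last clause of~(3). Your plan is to pick $a\in\mathcal A$ not dominated on monomials by any finite maximum from $A$, and then diagonalize against a \emph{sequence} in $A$ to produce $f\in\frml z$ with $\norm f b<\infty$ for every $b\in A$ but $\norm f a=\infty$. The first step is fine and indeed characterizes non-equivalence. The second step, however, only controls countably many $b$'s, and for uncountable $A$ there is no reason a diagonal construction against a countable subfamily yields $\norm f b<\infty$ for \emph{all} $b\in A$. You flag this as ``the main obstacle'', and rightly so: as written the argument proves the claim only when $A$ is (equivalent to) a countable family --- which already excludes $\mathcal A$ itself, since $\germ z_{\mathcal A}\simeq\igerm z$ is non-metrizable. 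For general $A$ one needs a different mechanism, for instance an abstract argument exploiting that $\igerm z$ is a DFS space (closed-graph or webbed-space techniques against the continuous bijection $\germ z_{\mathcal A}\to\germ z_A$), or a direct construction of $f$ from $a$ alone that does not enumerate $A$. Until such an argument is supplied, the final assertion remains unproved in your proposal.
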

\begin{rem}
When $A$ is at most countable the space $\fgrm z_{A}$ is a special
case of a Köthe sequential space~\cite{Kotkot}, some of which have
been extensively studied (\emph{e.g.} rapidly decreasing sequences)
in particular regarding the property of tameness to be used in Nash-Moser
local inversion theorem~\cite{Hami}. Unfortunately it is known that
the Köthe spaces presented here do not fulfill Nash-Moser's theorem
hypothesis although, as we see further down, some of them are nuclear. 
\end{rem}

\subsubsection{Radius of convergence and Baire property}

~

We relate now the Baire property on $\germ z_{A}$ to the absence
of a positive lower bound for $\rad{\bullet}$ on any domain. This
relationship was suggested by R.~\noun{Schäfke.}
\begin{prop}
\label{prop:convergence_radius}Fix some $A$-topology on $\germ z$.
\begin{enumerate}
\item $\mathcal{R}$ is upper semi-continuous on $\fgrm z_{A}$ and for
all non-empty open set $U\subset\germ z_{A}$ we have $\mathcal{R}\left(U\right)=]0,\infty]$.
In particular $\mathcal{R}$ can never be positively lower-bounded
on $U$.
\item The space $\germ z_{A}$ can never be Baire (in particular not Fréchet
when $A$ is countable).
\end{enumerate}
\end{prop}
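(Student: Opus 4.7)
The plan is to reduce both halves of the proposition to a single explicit approximation lemma: any $f \in \germ z$ can be approached in the $A$-topology by genuine germs of any prescribed radius of convergence $r_0 \in ]0,\infty]$. Given such $f$, a basic open neighbourhood $U = \bigcap_{a \in F}\ball[a][f][\epsilon_a]$ with $F \subset A$ finite, and a target $r_0$, I would construct
\[
g_n := J_{n-1}(f) + \sum_{k \geq n}\frac{z^{k}}{r_0^{k}},
\]
with the empty-sum convention when $r_0 = \infty$. This $g_n$ lies in $\germ z$ and has radius of convergence exactly $r_0$ (polynomial plus explicit geometric-type series of radius $r_0$), while for any $a \in A$
\[
\|g_n - f\|_a \leq \sum_{k \geq n}\frac{a_k}{r_0^{k}} + \sum_{k \geq n}a_k|f_k|
\]
is the sum of two tails of convergent series: the first is a remainder of $\car a(1/r_0) < \infty$ because $a_k^{1/k} \to 0$, the second a remainder of $\sum_k a_k|f_k| < \infty$ because $f \in \fgrm z_A$. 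Hence $g_n \to f$, so eventually $g_n \in U$, and $\mathcal{R}(U) = ]0,\infty]$, which directly yields the corollary that $\mathcal{R}$ has no positive lower bound on any nonempty open set.

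For the non-Baire statement I would mimic the covering argument sketched above for $\pgerm z$. Set $F_N := \{f \in \germ z : |T_k(f)| \leq N^{k+1}\text{ for every }k\}$ for $N \geq 1$. The continuity of each $T_k$ in the $A$-topology (through $|T_k(f)| \leq \|f\|_a / a_k$) makes $F_N$ a countable intersection of closed sets, hence closed. Cauchy estimates $|T_k(f)| \leq M/\rho^k$ valid on any $\rho < \mathcal{R}(f)$ put each germ into $F_N$ as soon as $N \geq \max(M,1/\rho)$, so $\germ z = \bigcup_{N \geq 1}F_N$. By Cauchy-Hadamard $F_N \subseteq \{\mathcal{R} \geq 1/N\}$; the construction above with $r_0 = 1/(2N)$ places an element of $F_N^c$ inside every nonempty open set, so each $F_N$ is nowhere dense and $\germ z_A$ is a countable union of nowhere-dense closed sets, hence not Baire.

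The main obstacle I anticipate is the upper semi-continuity claim. The same approximation taken with $r_0 > \mathcal{R}(f)$ produces $g_n \to f$ with $\mathcal{R}(g_n) > \mathcal{R}(f)$, so the naive reading that $\{\mathcal{R} \geq c\}$ is closed actually fails. I would therefore interpret the assertion through the Cauchy-Hadamard identity $1/\mathcal{R}(f) = \limsup_k |T_k(f)|^{1/k}$, which realises $\mathcal{R}$ as a countable limsup of continuous functionals $f \mapsto |T_k(f)|^{1/k}$ and carries the natural Borel-type regularity needed in the sequel; pinning down the exact one-sided inequality actually used elsewhere in the paper is the step I expect to need the most care.
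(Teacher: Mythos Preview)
Your proofs of $\mathcal{R}(U)=]0,\infty]$ and of the failure of Baire are correct and coincide with the paper's arguments up to cosmetic changes. For the first, the paper first picks a polynomial $f\in U$ (density of $\pol z{}$) and then adds a small multiple $\lambda\sum_{n}r^{-n}z^{n}$ to hit the prescribed radius $r$; your variant replaces the tail of an arbitrary $f\in U$ by $\sum_{k\ge n}r_0^{-k}z^{k}$ and lets $n\to\infty$. For the second, the paper uses exactly the closed cover $F_N=\{f:|f_n|\le N^{n}\}$ already introduced for $\pgerm z$ and deduces empty interior of each $F_N$ from part~(1), just as you do.

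Your suspicion about the upper semi-continuity clause is justified, and your own construction actually refutes it: taking $r_0>\mathcal{R}(f)$ produces $g_n\to f$ with $\mathcal{R}(g_n)=r_0$, so $\mathcal{R}$ is \emph{not} upper semi-continuous in any $A$-topology. The paper's argument writes $1/\mathcal{R}$ as the limit of $R_n=\sup_{N\ge n}|T_N(\cdot)|^{1/N}$ and calls this an \emph{increasing} limit of lower semi-continuous functions; but $(R_n)_n$ is decreasing, so the inference to lower semi-continuity of $1/\mathcal{R}$ does not go through. (The introduction of the paper says ``lower semi-continuous'' instead, which your construction with $r_0<\mathcal{R}(f)$ equally disproves.) This clause is not used anywhere else in the article, so you may safely drop it and keep the remainder of your argument unchanged.
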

\begin{proof}
~
\begin{enumerate}
\item Each map $H_{N}\,:\,\sum_{n\geq0}f_{n}z^{n}\mapsto\left|f_{N}\right|^{1/N}$
is continuous and ranges in $[0,\infty[$. As a consequence each member
of the sequence of functions indexed by $n\in\ww N$ and defined by
\begin{eqnarray*}
R_{n} & := & \sup_{N\geq n}H_{N}
\end{eqnarray*}
is lower semi-continuous. Therefore $\frac{1}{\rad{\bullet}}$ is
the limit of an increasing sequence of lower semi-continuous functions
and consequently is itself lower semi-continuous. According to Theorem~\ref{prop:topo_completion}~(2)
the value $\infty$ belongs to $\rad U$, so let us now take an arbitrary
$r>0$. For all $\lambda\in\ww C_{\neq0}$ the radius of convergence
of the power series $f_{\lambda}:=\lambda\sum_{n\geq0}r^{-n}z^{n}$
is precisely $r$. By taking $\lambda$ small enough and by picking
$f\in U\cap\pol z{}$ the germ $f+f_{\lambda}$ belongs to $U$ and
its radius of convergence is precisely $r$.
\item $\germ z$ is covered by the countable family of closed sets $F_{N}$
defined in~\eqref{eq:F_N_not_Baire}. Every one of these has empty
interior because of (1).
\end{enumerate}
\end{proof}

\subsection{Comparing the topologies}

~

We particularly prove that the inductive topology is homeomorphic
to the full $\mathcal{A}$-topology. This fact was mentioned to me
by Y.~\noun{Okada} (to which I am indebted also for having spotted
some mistakes in earlier versions of this manuscript).
\begin{prop}
\label{prop:topo_compare}Let $A\subset\mathcal{A}$ be given. We
have the following ordering of topologies 
\begin{eqnarray*}
\pgerm z & <~\germ z_{A}~\leq & \igerm z\,.
\end{eqnarray*}
Moreover $\igerm z$ and $\germ z_{A}$ are homeomorphic if, and only
if, $A\simeq\mathcal{A}$.
\end{prop}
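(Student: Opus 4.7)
The plan is to establish the three inequalities of the chain, then the characterization of equality.

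First, $\pgerm z\leq\germ z_{A}$ reduces to the continuity of each Taylor-coefficient map $T_{n}$ on $\germ z_{A}$, which is immediate from $a_{n}\left|T_{n}\left(f\right)\right|\leq\norm fa$ for any $a\in A$. For strictness, a basic open neighborhood of $0$ in $\pgerm z$ is cylindrical, of the form $\left\{ f\,:\,\left|T_{n_{i}}\left(f\right)\right|<\eta_{i}\,,\,i\leq k\right\} $, and contains monomials $\lambda z^{N}$ with $N\notin\left\{ n_{1},\ldots,n_{k}\right\} $ of arbitrarily large $a$-norm, so no $a$-ball is a $\pgerm z$-neighborhood of $0$. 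The inequality $\germ z_{A}\leq\igerm z$ follows from the universal property of the inductive limit: it suffices to check each inclusion $\iota_{r}\,:\,\mathcal{B}_{r}\hookrightarrow\left(\germ z,\norm{\bullet}a\right)$ is continuous, which comes from Cauchy's estimates $\left|f_{n}\right|\leq r^{-n}\norm f{r\ww D}$ yielding $\norm fa\leq\car a\left(\nicefrac{1}{r}\right)\norm f{r\ww D}$, a finite quantity since $a\in\mathcal{A}$ makes $\car a$ entire.

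The substantial content is the reverse inequality $\igerm z\leq\germ z_{\mathcal{A}}$. Let $U$ be an open, convex, balanced neighborhood of $0$ in $\igerm z$, with gauge $p_{U}$; by definition of the inductive topology, for each integer $k\geq1$ there exists $\varepsilon_{k}>0$ such that $\norm f{\nicefrac{1}{k}\ww D}<\varepsilon_{k}$ implies $f\in U$, and up to shrinking I may assume $\varepsilon_{k}\leq2^{-k}$. I then set
\[
b_{n}\,:=\,\inf_{k\geq1}\frac{k^{-n}}{\varepsilon_{k}}\,,
\]
which is a positive real number because $k^{-n}/\varepsilon_{k}\geq k^{-n}2^{k}\to\infty$ as $k\to\infty$, so the infimum is attained at some finite index. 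The estimate $b_{n}^{\nicefrac{1}{n}}\leq k^{-1}\varepsilon_{k}^{-\nicefrac{1}{n}}$ combined with $\varepsilon_{k}^{-\nicefrac{1}{n}}\to1$ as $n\to\infty$ for fixed $k$ shows $\limsup_{n}b_{n}^{\nicefrac{1}{n}}\leq k^{-1}$ for every $k$, whence $b\in\mathcal{A}$. The crucial observation is that $p_{U}\left(z^{n}\right)\leq b_{n}$: indeed $z^{n}/\lambda$ has $\mathcal{B}_{\nicefrac{1}{k}}$-norm $k^{-n}/\lambda$, which falls below $\varepsilon_{k}$ whenever $\lambda>k^{-n}/\varepsilon_{k}$, putting $z^{n}/\lambda$ in $U$.

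Given $f=\sum_{n}f_{n}z^{n}\in\germ z$ with $\norm fb<1$, the partial sums $J_{N}\left(f\right)$ converge to $f$ in each $\mathcal{B}_{r}$ with $r<\rad f$, hence in $\igerm z$; continuity and subadditivity of $p_{U}$, together with $p_{U}\left(f_{n}z^{n}\right)=\left|f_{n}\right|p_{U}\left(z^{n}\right)$, yield
\[
p_{U}\left(f\right)\,\leq\,\sum_{n\geq0}\left|f_{n}\right|p_{U}\left(z^{n}\right)\,\leq\,\sum_{n\geq0}\left|f_{n}\right|b_{n}\,=\,\norm fb\,<\,1\,,
\]
so $f\in U$, whence $\ball[b][0][1]\subset U$ and $\germ z_{\mathcal{A}}\geq\igerm z$. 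The equivalence of topologies is then immediate: $A\simeq\mathcal{A}$ yields $\germ z_{A}=\germ z_{\mathcal{A}}=\igerm z$ by what precedes, and conversely $\germ z_{A}\simeq\igerm z$ implies $\germ z_{A}\simeq\germ z_{\mathcal{A}}$, i.e. $A\simeq\mathcal{A}$ by Definition~\ref{def_A_topo}. The main obstacle is the construction of $b\in\mathcal{A}$ encoding a given $\igerm z$-neighborhood, balancing the requirements $b_{n}>0$ (needing fast decay of $\varepsilon_{k}$), $b_{n}^{\nicefrac{1}{n}}\to0$, and the usability of the additive estimate $p_{U}\left(f\right)\leq\sum\left|f_{n}\right|b_{n}$ via continuity of $p_{U}$ along the Taylor expansion.
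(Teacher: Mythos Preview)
Your proof is correct. The easy inequalities and the equivalence characterization match the paper's treatment (the paper invokes completeness for the ``only if'' direction, but your argument via $\germ z_A\simeq\igerm z\simeq\germ z_{\mathcal{A}}$ is equally valid and arguably cleaner).

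For the substantial direction $\igerm z\leq\germ z_{\mathcal{A}}$, you take a genuinely different and more economical route. The paper first builds an auxiliary neighborhood $U(\varepsilon)=\sum_{k}\ball[k][0][\varepsilon_k]\subset U$ (a convex hull of balls in the steps $\mathcal{X}_{1/(1+k)}$), then splits an arbitrary $f\in\ball[a][0][1]$ into a high-order tail and a low-order polynomial part, handling each piece separately by placing it in $\frac{1}{2}U(\varepsilon)$. Your argument bypasses this decomposition entirely: you exploit the continuity of the Minkowski gauge $p_U$ on $\igerm z$ together with the convergence $J_N(f)\to f$ in $\igerm z$, reducing everything to the single monomial estimate $p_U(z^n)\leq b_n$. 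This is more direct; the paper's approach, on the other hand, is closer to the standard description of neighborhoods in a countable inductive limit and does not rely on knowing that polynomials are dense (in the inductive topology) in each fixed $f$, a fact you justify via uniform convergence on $r\ww D$ for $r<\rad f$.
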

On the one hand $\pgerm z\leq~\germ z_{A}$ because of the next obvious
lemma.
\begin{lem}
\label{lem:jet_is_continuous}The $N^{\tx{th}}$-jet projector $J_{N}\,:\,\germ z_{A}\to\pol z{\leq N}$
is continuous. More precisely for all $a\in\mathcal{A}$ and $f\in\germ z$
\begin{align*}
\norm{J_{N}\left(f\right)}a & \leq\norm fa\,.
\end{align*}

\end{lem}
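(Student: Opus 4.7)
The lemma is essentially a direct computation, so the plan is short. First I would unpack definitions: writing $f = \sum_{n \geq 0} f_n z^n \in \germ z$, the $N^{\tx{th}}$-jet is by definition $J_N(f) = \sum_{n=0}^{N} f_n z^n$, viewed as an element of $\pol z{\leq N}$. The $a$-norm, applied to either $f$ or $J_N(f)$, is computed term-wise on the Taylor coefficients. Then the stated inequality falls out immediately from comparing a finite sum to an infinite sum of nonnegative terms:
\begin{align*}
\norm{J_N(f)}{a} = \sum_{n=0}^{N} a_n |f_n| \leq \sum_{n=0}^{\infty} a_n |f_n| = \norm{f}{a}.
\end{align*}

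Second, I would deduce continuity. The target space $\pol z{\leq N}$ is finite-dimensional, so all norms on it are equivalent, and in particular the restriction of $\norm{\bullet}{a}$ for any fixed $a \in A$ induces its standard topology. Since the $A$-topology on the source is generated by the family $(\norm{\bullet}{a})_{a \in A}$, continuity of the linear map $J_N$ reduces (via the usual characterization of continuous linear maps between normally convex spaces recalled just before the lemma) to producing, for every seminorm on the target, a finite subfamily of source seminorms dominating it up to a constant. The displayed inequality provides exactly this with $F = \{a\}$ and $C_a = 1$, so $J_N$ is continuous. There is no obstacle here — the only thing to be a little careful about is noting that equivalence of norms on the finite-dimensional $\pol z{\leq N}$ is what connects the quantitative bound to the topological statement.
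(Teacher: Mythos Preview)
Your proof is correct and matches the paper's approach: the paper simply labels this lemma ``obvious'' and gives no further argument, and your direct computation---truncating a nonnegative series can only decrease the sum---is exactly the intended one-line justification.
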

The fact that $\pgerm z$ is not homeomorphic to $\germ z_{A}$ is
clear enough. On the other hand $\germ z_{A}\leq\igerm z$ because
of Cauchy's estimate, as stated below.
\begin{lem}
\label{lem:Taylor_is_continuous}~The Taylor map $\mathcal{T}\,:\,\igerm z\to\germ z_{A}$
is continuous. More precisely, for all $r>0$, all $f_{r}\in\mathcal{B}_{r}$
and all $a\in\mathcal{A}$: 
\begin{eqnarray*}
\norm{\mathcal{T}\left(f_{r}\right)}a & \leq & \norm{z\mapsto\frac{r}{r-z}}a\norm{f_{r}}{r\ww D}.
\end{eqnarray*}

\end{lem}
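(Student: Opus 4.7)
My plan is to reduce the statement to the classical Cauchy estimates on the disc $r\ww D$, which is essentially the only quantitative tool available for passing from the sup norm of a bounded holomorphic function to a pointwise control on its Taylor coefficients.

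Concretely, fix $r > 0$, $f_r \in \mathcal{B}_r$ and $a \in \mathcal{A}$. Write $f_r(z) = \sum_{n \geq 0} f_n z^n$ so that $\mathcal{T}(f_r) = \sum_{n \geq 0} f_n z^n$ (viewed now as a formal object in $\germ z$). The standard Cauchy inequality yields
\begin{align*}
\left| f_n \right| & \leq \frac{\norm{f_r}{r\ww D}}{r^n}
\end{align*}
for every $n \in \ww N$. Summing against the weights $a_n$ gives
\begin{align*}
\norm{\mathcal{T}(f_r)}{a} & = \sum_{n=0}^{\infty} a_n |f_n| \leq \norm{f_r}{r\ww D} \sum_{n=0}^{\infty} a_n r^{-n}.
\end{align*}
The remaining sum is exactly the $a$-norm of the geometric series $\sum_{n \geq 0} r^{-n} z^n = \frac{r}{r-z}$, which is finite because $a \in \mathcal{A}$ forces $a_n^{1/n} \to 0$, hence $\car a$ is entire and in particular converges at $r^{-1}$. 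This yields the claimed inequality.

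From this quantitative estimate the continuity of $\mathcal{T}\,:\,\igerm z \to \germ z_A$ follows by the universal property of the inductive topology recalled after Definition~\ref{def_inductive_germ}: it is enough to check that for each $r > 0$ the composition $\mathcal{T} \circ \underline{\bullet}$ restricted to $\{r\} \times \mathcal{B}_r$ is continuous into $\germ z_A$. But for every $a \in A \subset \mathcal{A}$ the inequality just established shows that this restriction is a continuous linear map from the Banach space $\mathcal{B}_r$ into the seminormed space $(\germ z, \norm{\bullet}{a})$, with operator norm at most $\norm{z \mapsto r/(r-z)}{a}$. Since this holds for every $a \in A$, it holds for the $A$-topology itself.

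There is no real obstacle here; the only point worth stressing is the consistency of the bound, namely that $\norm{r/(r-z)}{a}$ is a finite number for every $r > 0$ and every $a \in \mathcal{A}$, which is precisely the role played by the growth restriction $\lim a_n^{1/n} = 0$ defining $\mathcal{A}$. Observe also that the estimate is stated for any $a \in \mathcal{A}$ (not only $a \in A$), which is a free by-product of the proof and will be useful when one wants to extend continuity to the finer $\mathcal{A}$-topology in the subsequent comparison of topologies.
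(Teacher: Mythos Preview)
Your proof is correct and follows exactly the approach indicated in the paper, which simply attributes the lemma to Cauchy's estimate without further detail. The argument via $|f_n|\leq r^{-n}\norm{f_r}{r\ww D}$ and the recognition of $\sum_n a_n r^{-n}$ as $\norm{r/(r-z)}a$ is precisely what is intended.
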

Because $\igerm z$ is complete and $\germ z_{A}$ is only when $A\simeq\mathcal{A}$,
the spaces cannot be homeomorphic if $A$ is not equivalent to $\mathcal{A}$.
We prove now that the evaluation mapping $\mathcal{E}\,:\,\germ z_{\mathcal{A}}\to\igerm z$
is continuous. I reproduce here the argument of \noun{Y.~Okada} with
his kind permission. 

First observe the inductive limit of nested linear subspaces $\left(\mathcal{X}_{\rho}\right)_{\rho>0}$
\begin{align*}
\mathcal{X}_{\rho} & :=\left\{ \sum_{n\geq0}f_{n}z^{n}\in\germ z~:~\sup_{n\geq0}\rho^{n}\left|f_{n}\right|<\infty\right\} 
\end{align*}
results in the same topological space $\igerm z$ (again it is sufficient
to take the countable direct system $\left(\mathcal{X}_{\nf 1k}\right)_{k\in\ww N_{>0}}$). 
\begin{lem}
For any balanced, convex neighborhood $U$ of $0$ in $\igerm z$
there exists a sequence $\varepsilon=\left(\varepsilon_{k}\right)_{k\in\ww N}$
of positive numbers such that
\begin{align*}
U\left(\varepsilon\right) & :=\sum_{k\geq0}\ball[k][0][\varepsilon_{k}]\subset U~,
\end{align*}
where $\ball[k][][\varepsilon_{k}]$ stands for the ball of $\mathcal{X}_{\nf 1{1+k}}$
of center $0$ and radius $\varepsilon_{k}$.\end{lem}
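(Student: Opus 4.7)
The plan is to realize this statement as a standard feature of the locally convex inductive topology, reducing it to a careful application of convexity and balancedness of $U$. Since $\igerm z$ is the inductive limit of the nested family $\left(\mathcal{X}_{\nf 1{1+k}}\right)_{k\in\ww N}$, the intersection $U\cap\mathcal{X}_{\nf 1{1+k}}$ is, for every $k$, a neighborhood of $0$ in the Banach space $\mathcal{X}_{\nf 1{1+k}}$. Hence there exists $\eta_k>0$ with $\ball[k][0][\eta_k]\subset U$, and I would set
\[
\varepsilon_k:=\frac{\eta_k}{2^{k+1}}\,,
\]
so that the dilate $2^{k+1}\ball[k][0][\varepsilon_k]=\ball[k][0][\eta_k]$ sits inside $U$.

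Next, pick $f\in U\left(\varepsilon\right)$; by definition $f$ is a finite sum $f=\sum_{k=0}^{N}f_k$ with $f_k\in\ball[k][0][\varepsilon_k]$. Writing $u_k:=2^{k+1}f_k\in U$ for each $k\leq N$ and noting that $0\in U$ (since $U$ is balanced), I would express $f$ as the explicit convex combination
\[
f\;=\;\sum_{k=0}^{N}\frac{1}{2^{k+1}}u_k+\frac{1}{2^{N+1}}\cdot 0\,,
\]
whose coefficients sum to $1$. Convexity of $U$ then forces $f\in U$, and hence $U\left(\varepsilon\right)\subset U$, as required.

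The entire difficulty is bookkeeping: the geometric weights $2^{-(k+1)}$ are chosen precisely so that the coefficients sum to $1$ while absorbing the scaling factors $2^{k+1}$ that bring the $f_k$ back into $U$; any strictly positive summable sequence of weights would do just as well. I do not anticipate a genuine obstacle. The only subtlety is that the infinite symbol $\sum_{k\geq 0}$ in the definition of $U\left(\varepsilon\right)$ must be understood as the set of finite Minkowski sums, which is harmless since each ball contains $0$ and so every finite sum coincides with an infinite sum in which almost every summand vanishes.
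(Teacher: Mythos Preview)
Your proof is correct and follows essentially the same route as the paper: choose $\eta_k$ (the paper's $\tilde{\varepsilon}_k$) with $\ball[k][0][\eta_k]\subset U$, set $\varepsilon_k:=2^{-k-1}\eta_k$, and use convexity of $U$ together with $0\in U$ to absorb any finite sum $\sum_{k\leq N}\ball[k][0][\varepsilon_k]$ as a convex combination. Your explicit convex combination with the residual weight $2^{-N-1}$ on $0$ is exactly the mechanism behind the paper's one-line appeal to convexity.
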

\begin{proof}
There clearly exists $\tilde{\varepsilon}_{k}>0$ such that $\ball[k][0][\tilde{\varepsilon}_{k}]\subset U$
since $\igerm z\simeq\underrightarrow{\lim}\mathcal{X}_{\nf 1{1+k}}$.
Because $U$ is convex and contains $0$ the inclusion
\begin{align*}
\sum_{k=0}^{N}2^{-k-1}\ball[k][0][\tilde{\varepsilon}_{k}] & \subset U
\end{align*}
holds for any $N\in\ww N_{\geq0}$. Simply take $\varepsilon_{k}:=2^{-k-1}\tilde{\varepsilon}_{k}$.
\end{proof}
The proposition's proof is completed with the next lemma.
\begin{lem}
There exists $a\in\mathcal{A}$ such that 
\begin{align*}
\ball[a][0][1] & \subset U\left(\varepsilon\right)~.
\end{align*}
\end{lem}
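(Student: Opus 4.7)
The plan is to prescribe $a$ block-by-block on a partition $\ww N=\bigsqcup_{k\geq 0}N_{k}$ into finite consecutive intervals, tuned so that the $k$-th Taylor slice of any $f\in\ball[a][0][1]$ falls inside $\ball[k][0][\varepsilon_{k}]$. First I would fix a strictly increasing sequence $0\leq n_{0}<n_{1}<\cdots$ with $n_{k-1}\geq\log_{2}(2/\varepsilon_{k})$ for $k\geq 1$, and declare
\[
N_{0}:=\{0,1,\ldots,n_{0}\},\qquad N_{k}:=\{n_{k-1}+1,\ldots,n_{k}\}\ (k\geq 1),\qquad a_{n}:=\frac{2}{\varepsilon_{k}(1+k)^{n}}\text{ for }n\in N_{k}.
\]

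Next I would verify $a\in\mathcal{A}$. For $n\in N_{k}$ with $k\geq 1$ the lower bound on $n_{k-1}$ forces $(2/\varepsilon_{k})^{1/n}\leq 2$, hence $a_{n}^{1/n}\leq 2/(1+k)$. Since the block index $k(n)$ tends to $\infty$ with $n$ (each $N_{k}$ being finite), we get $\lim_{n\to\infty}a_{n}^{1/n}=0$. For the inclusion $\ball[a][0][1]\subset U(\varepsilon)$, pick $f=\sum_{n}f_{n}z^{n}$ with $\norm fa<1$. Each coefficient satisfies $a_{n}|f_{n}|\leq\norm fa<1$, hence $|f_{n}|<\varepsilon_{k}(1+k)^{n}/2$ for $n\in N_{k}$. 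Setting $g_{k}:=\sum_{n\in N_{k}}f_{n}z^{n}$, which is a polynomial since $N_{k}$ is finite, yields
\[
\sup_{n\in N_{k}}(1+k)^{-n}|f_{n}|\leq\varepsilon_{k}/2<\varepsilon_{k},
\]
i.e.\ $g_{k}\in\ball[k][0][\varepsilon_{k}]$. The partial sums $\sum_{k\leq K}g_{k}=J_{n_{K}}(f)$ are just Taylor polynomials of $f$, and they converge to $f$ in $\mathcal{X}_{\rho}$ for any $\rho<\mathcal{R}(f)$, hence in $\igerm z$; this exhibits $f=\sum_{k}g_{k}$ as an element of $U(\varepsilon)$.

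The main tension I foresee is that $a_{n}$ is pulled in opposite directions: at each index $n\in N_{k}$ it must dominate $2(1+k)^{-n}/\varepsilon_{k}$ to absorb the coefficient bound, yet globally it must obey $a_{n}^{1/n}\to 0$. The partitioning of $\ww N$ into disjoint blocks reconciles these requirements, because each $n$ is then tied to a single geometric rate $(1+k(n))^{-n}$ whose base $1+k(n)$ itself tends to infinity with $n$; the local rate thus shrinks at a super-geometric speed. The only remaining numerical nuisance, namely the factor $(2/\varepsilon_{k})^{1/n}$, is neutralized by choosing $n_{k-1}$ at least of order $\log_{2}(1/\varepsilon_{k})$, and no further obstruction remains.
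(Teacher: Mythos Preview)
Your construction of $a$ is clean and the verification that $a\in\mathcal{A}$ is correct, as is the bound $g_{k}\in\ball[k][0][\varepsilon_{k}]$. The gap is in the very last sentence. By the previous lemma, $U(\varepsilon)=\sum_{k\geq 0}\ball[k][0][\varepsilon_{k}]$ is the set of \emph{finite} Minkowski sums, and the inclusion $U(\varepsilon)\subset U$ was only established for finite sums (using convexity of $U$). Your decomposition $f=\sum_{k\geq 0}g_{k}$ is genuinely infinite whenever $f$ has infinitely many nonzero coefficients. Showing that the partial sums $J_{n_{K}}(f)$ lie in $U(\varepsilon)$ and converge to $f$ in $\igerm z$ only places $f$ in the \emph{closure} of $U(\varepsilon)$; since $U$ is an open (not closed) neighborhood, this does not yield $f\in U(\varepsilon)$, nor even $f\in U$.

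The missing step is precisely the one the paper supplies: you must use $\mathcal{R}(f)>0$ not merely to assert convergence, but to absorb the entire tail into a \emph{single} ball. Concretely, pick $m$ with $r_{m}<\mathcal{R}(f)$, so $f\in\mathcal{X}_{r_{m}}$; then the tail $\sum_{n>n_{K-1}}f_{n}z^{n}$ has $\mathcal{X}_{r_{m+1}}$-norm tending to $0$ as $K\to\infty$, and for $K$ large enough it lands in $\frac{1}{2}\ball[m+1][0][\varepsilon_{m+1}]$. Combined with your polynomial pieces $g_{0},\ldots,g_{K-1}$ (and noting your factor $2$ leaves room to merge the tail with $g_{m+1}$), this gives a finite-sum representation of $f$. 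Once you add this tail argument your block construction works, and is in fact somewhat more explicit than the paper's choice $a_{n}=2^{n+3}\inf_{k}r_{k}^{n}/\varepsilon_{k}$; but as written the proof is incomplete.
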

\begin{proof}
Without loss of generality we choose the sequence $\varepsilon$ of
the previous lemma in such a way that $\varepsilon_{k}\leq e^{-k-1}$.
For the sake of clarity write $r_{k}:=\frac{1}{k+1}$. Define for
$n\in\ww N$
\begin{align*}
a_{n} & :=2^{n+3}\inf_{k\geq0}\frac{r_{k}^{n}}{\varepsilon_{k}}
\end{align*}
and let us prove $\ball[a][0][1]\subset U\left(\varepsilon\right)$.
Notice first that by construction we have
\begin{align*}
\limsup_{n\to\infty}a_{n}^{\nf 1n} & \leq\frac{2}{k+1}~~~~~~~~~~~\left(\forall k\in\ww N\right)\\
a_{n} & \geq2^{n+3}\left(\frac{e}{n}\right)^{n}
\end{align*}
from which we deduce $a\in\mathcal{A}$. The latter estimates follows
from
\begin{align}
\frac{r_{k}^{n}}{\varepsilon_{k}}\geq\frac{e^{k+1}}{\left(k+1\right)^{n}}\geq & \left(\frac{e}{n}\right)^{n}~.\label{eq:estim_inductive}
\end{align}

Take now an arbitrary $f\in\ball[a][0][1]$ which we decompose as
\begin{align*}
f\left(z\right) & =\sum_{n<N}f_{n}z^{n}+\sum_{n\geq N}f_{n}z^{n}
\end{align*}
for a suitable $N$ to be determined later, and we shall prove that
both terms belong to $\frac{1}{2}U\left(\varepsilon\right)$. There
exists $k\in\ww N$ such that $f\in\mathcal{X}_{\nf 1{1+k}}$, \emph{i.e.}
\begin{align*}
\sup_{n\geq0}r_{k}^{n}\left|f_{n}\right| & =:M<\infty~.
\end{align*}
Therefore when $N$ is large enough we have
\begin{align*}
\sup_{n\geq N}r_{k+1}^{n}\left|f_{n}\right| & \leq M\left(\frac{k+1}{k+2}\right)^{N}<\frac{1}{2}\varepsilon_{k+1}~,
\end{align*}
so that $\sum_{n\geq N}f_{n}z^{n}\in\frac{1}{2}\ball[k+1][0][\varepsilon_{k+1}]\subset\frac{1}{2}U\left(\varepsilon\right)$
as claimed. We deal now with the other part $\hat{f}\left(z\right):=\sum_{n<N}f_{n}z^{n}$.

We deduce from~\eqref{eq:estim_inductive} the following statement:
for any $n\in\ww N$ there exist at least one $k\in\ww N$ for which
\begin{align*}
a_{n} & >2^{n+3}\frac{r_{k}^{n}}{\varepsilon_{k}}-2^{n+2}\left(\frac{e}{n}\right)^{n}\\
 & =2^{n+2}\frac{r_{k}^{n}}{\varepsilon_{k}}+2^{n+2}\left(\frac{r_{k}^{n}}{\varepsilon_{k}}-\left(\frac{e}{n}\right)^{n}\right)\\
 & \geq2^{n+2}\frac{r_{k}^{n}}{\varepsilon_{k}}~.
\end{align*}
For any $n<N$ pick such a $k=:k\left(n\right)$ and form the finite
set $K:=\left\{ k\left(n\right)~:~n<N\right\} $. For $k\in K$ define
the polynomial 
\begin{align*}
g_{k}\left(z\right) & :=\sum_{\begin{array}{c}
n<N\\
k=k\left(n\right)
\end{array}}f_{n}z^{n}
\end{align*}
 so that $\hat{f}\left(z\right)=\sum_{k\in K}g_{k}\left(z\right)$.
Since $\norm fa<1$ we particularly have $a_{n}\left|f_{n}\right|<1$,
hence for all $n<N$
\begin{align*}
r_{k\left(n\right)}^{n}\left|f_{n}\right| & <2^{-n-2}\varepsilon_{k\left(n\right)}~.
\end{align*}
Thus $g_{k}\in\frac{1}{2}\ball[k][0][\varepsilon_{k}]$ for each $k\in K$,
which finally yields $\hat{f}\in\frac{1}{2}\sum_{k\in K}\ball[k][0][\varepsilon_{k}]\subset\frac{1}{2}U\left(\varepsilon\right)$. 
\end{proof}

\subsection{\label{sub:topo_useful}Useful topologies}

~
\begin{defn}
\label{def_useful_topo}Let $A\subset\mathcal{A}$ be non-empty.
\begin{enumerate}
\item An $A$-topology is \textbf{finite} if it is equivalent to some $A'$-topology,
$A'$ being a finite set.
\item An $A$-topology is \textbf{useful} when $\left(\germ z_{A},\cdot,+,\times,\pp{}z\right)$
is a compositing differential topological algebra. 
\item The \textbf{factorial topology} is the metrizable sequential topology
spanned by the family 
\begin{align*}
\mathtt{AF} & :=\left\{ a\left(\frac{1}{k}\right)\,:\,k\in\ww N\right\} \,,\\
a\left(\alpha\right) & :=\left(n!^{-\alpha}\right)_{n\in\ww N}\,\,,\,\alpha\in\ww R_{>0}.
\end{align*}

\end{enumerate}
\end{defn}
\begin{rem}
\label{rem_facto_topo_vector}In case one wants to study the factorial
topology on $\germ{\mathbf{z}}$ with $\mathbf{z}=\left(z_{1},\cdots,z_{m}\right)$
then one defines for $\alpha>0$ 
\begin{align*}
\norm{\sum_{\mathbf{n}\in\ww N^{m}}f_{\mathbf{n}}\mathbf{z}^{\mathbf{n}}}{a\left(\alpha\right)} & :=\sum_{\mathbf{n}=\mathbf{0}}^{\mathbf{\infty}}\left|f_{\mathbf{n}}\right|\left(\mathbf{n}!\right)^{-\alpha}\text{ }.
\end{align*}

\end{rem}

\subsubsection{General properties of useful topologies}

~
\begin{prop}
\label{prop:all_norms_is_useful}The $\mathcal{A}$-topology is useful,
while no finite $A$-topology can be.\end{prop}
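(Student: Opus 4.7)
The plan is to exploit the homeomorphism $\germ{z}_{\mathcal{A}}\simeq\igerm z$ provided by Proposition~\ref{prop:topo_compare}. Under this identification it suffices to check that multiplication, differentiation, and both left- and right-composition are continuous on the inductive limit $\igerm z$. By the remark following Definition~\ref{def_inductive_germ}, a map with source $\igerm z$ is continuous if and only if its restriction to each Banach space $\mathcal{B}_{r}$ is, so every verification reduces to a Banach-valued estimate.

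Multiplication is handled by the fact that $\norm{\bullet}{r\ww D}$ is a Banach-algebra norm on $\mathcal{B}_{r}$: one has $\norm{fg}{r\ww D}\leq\norm{f}{r\ww D}\norm{g}{r\ww D}$, so the product $\mathcal{B}_{r}\times\mathcal{B}_{r}\to\mathcal{B}_{r}$ is continuous. Differentiation is handled by Cauchy's estimate $\norm{f'}{r'\ww D}\leq(r-r')^{-1}\norm{f}{r\ww D}$, valid for any $0<r'<r$, yielding a continuous factorization $\mathcal{B}_{r}\to\mathcal{B}_{r'}$. For right composition with a germ $g$ satisfying $g(0)=0$, continuity of $g$ at the origin yields for each $\rho>0$ some $r'>0$ with $g(r'\ww D)\subset\rho\ww D$; then $\norm{f\circ g}{r'\ww D}\leq\norm{f}{\rho\ww D}$ makes $g^{*}\colon\mathcal{B}_{\rho}\to\mathcal{B}_{r'}$ continuous. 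For left composition with a fixed $f$ of positive radius $\rad f$, around any $g_{0}$ with $\left|g_{0}(0)\right|<\rad f$ we pick $\rho<\rad f$ and $r'>0$ such that $g_{0}(\overline{r'\ww D})\subset\rho\ww D$; the same inclusion persists for $g$ in a neighborhood of $g_{0}$ in some $\mathcal{B}_{r}$ with $r\geq r'$, and the mean-value estimate $\norm{f\circ g-f\circ g_{0}}{r'\ww D}\leq\norm{f'}{\rho\ww D}\norm{g-g_{0}}{r'\ww D}$, obtained by integrating $f'$ along segments in $\rho\ww D$, yields local Lipschitz continuity of $f_{*}$.

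For the failure of finite $A$-topologies, observe that any such topology is normed, being induced by the single norm $\norm{\bullet}{}:=\max_{a\in A}\norm{\bullet}{a}$. Consider the family $e_{k}(z):=\exp(kz)=\sum_{n=0}^{\infty}\frac{k^{n}}{n!}z^{n}$ for $k\in\ww N$; for every $a\in\mathcal{A}$ the series $\sum_{n}a_{n}\frac{k^{n}}{n!}$ converges since $a_{n}^{1/n}\to0$, so each $e_{k}$ has finite norm. The identity $e_{k}'=k\,e_{k}$ gives $\norm{e_{k}'}{}=k\norm{e_{k}}{}$. If $\pp{}{z}$ were continuous there would exist $C>0$ with $k\leq C$ for all $k\in\ww N$, an absurdity. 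No finite $A$-topology can therefore be useful, already because of the failure of the continuity of the differentiation.

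The main obstacle is not the individual continuity checks, each of which is a textbook computation once we work in $\igerm z$; rather, it is the essential recourse to Proposition~\ref{prop:topo_compare}. A direct sequential proof of the continuity of $g^{*}$ appears delicate, because the natural estimate $\norm{f\circ g}{a}\leq\car a(1/r')\norm{f}{\rho\ww D}$ controls $f$ only through its sup-norm on a disc, and sequential norms $\norm{\bullet}{a'}$ with $a'\in\mathcal{A}$ cannot dominate the geometric sequence $(\rho^{n})_{n}$, which fails to lie in $\mathcal{A}$.
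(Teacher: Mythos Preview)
Your argument is essentially correct and the treatment of the finite case matches the paper's, but the usefulness of the $\mathcal{A}$-topology is handled by a genuinely different route. You transfer the question to $\igerm z$ via Proposition~\ref{prop:topo_compare} and check each operation on the Banach steps $\mathcal{B}_r$; the paper instead works entirely with sequential norms. Its key device is a combinatorial lemma: for every $a\in\mathcal{A}$ there exist $b,c\in\mathcal{A}$ with $a_{|\mathbf{j}|}\leq c_m\prod_{\ell}b_{j_\ell}$ for all $m$ and all $\mathbf{j}\in\ww N^m$. Feeding this into the Cauchy-product and composition formulae yields $\norm{fg}a\leq\norm fb\norm gb$ and $\norm{f\circ g}a\leq\norm f{\sqrt c}\,\amp{\sqrt c}\left(\norm gb\right)$ directly, while for $\pp{}z$ one simply picks $b$ with $\left(n+1\right)a_n/b_{n+1}$ bounded. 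The paper's route is self-contained and in fact logically prior in the article: the statement that $\igerm z$ is a compositing differential algebra is \emph{deduced} from this proposition, not used as an input to it.

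Two remarks on your version. First, a small gap in the multiplication step: the universal property you invoke concerns maps with source $\igerm z$, not $\igerm z\times\igerm z$, and joint continuity of a bilinear map on an inductive limit does not follow for free from continuity on each $\mathcal{B}_r\times\mathcal{B}_r$. One needs the product topology on $\igerm z\times\igerm z$ to coincide with the inductive limit of the $\mathcal{B}_r\times\mathcal{B}_{r'}$; this holds here because the space is DFS (compact transition maps, via the Komatsu result the paper cites), but it deserves a word. Second, your closing paragraph overstates the difficulty of a direct sequential argument: the paper carries one out in a few lines via the lemma above, bounding $\norm{f\circ g}a$ by another \emph{sequential} norm $\norm f{\sqrt c}$ rather than by a sup-norm $\norm f{\rho\ww D}$. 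The obstruction you describe---that no $a'\in\mathcal{A}$ dominates a geometric sequence---is real, but it is sidestepped, not confronted.
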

\begin{proof}
The fact that no finite $A$-topology may be useful has been hinted
at in the introduction of this section, the argument being the same
as in the normed case. The remaining of the proposition is mainly
a consequence of the next trivial lemma:
\begin{lem}
\label{lem:estim_for_useful}For every $a\in\mathcal{A}$ there exists
$b,\,c\in\mathcal{A}$ such that for all $m\in\ww N$ and all $\mathbf{j}\in\ww N^{m}$
we have $c_{n}\leq1$ and 
\begin{align*}
a_{\left|\mathbf{j}\right|} & \leq c_{m}\prod_{\ell=1}^{m}b_{j_{\ell}}\,.
\end{align*}

\end{lem}
Back to our proposition, let us first address the continuity of the
multiplication. For $a\in\mathcal{A}$: 
\begin{eqnarray*}
\norm{f\times g}a & = & \sum_{n=0}^{\infty}a_{n}\left|\sum_{p+q=n}f_{p}g_{q}\right|\,.
\end{eqnarray*}
According to the lemma there exist $b,\,c\in\mathcal{A}$ such that
$a_{n}\leq b_{p}b_{q}$ for every $p+q=n$, which means that
\begin{align*}
\norm{f\times g}a & \leq\norm fb\norm gb\,.
\end{align*}
Now consider the action of the derivation: 
\begin{align}
\pp fz & =\sum_{n\geq0}\left(n+1\right)f_{n+1}z^{n}\label{eq:formula_deriv}
\end{align}
so that for every $a,\,b\in\mathcal{A}$
\begin{align*}
\norm{\pp fz}a & =\sum_{n=0}^{\infty}a_{n}\left(n+1\right)\left|f_{n+1}\right|\leq\sum_{n=0}^{\infty}b_{n+1}\left|f_{n+1}\right|\times\frac{\left(n+1\right)a_{n}}{b_{n+1}}\,.
\end{align*}
We can always find $b$ such that the sequence $\left(\frac{\left(n+1\right)a_{n}}{b_{n+1}}\right)_{n\in\ww N}$
is bounded by some\footnote{Take for instance $b_{0}:=1$ and $b_{n+1}:=\sqrt{a_{n}}$.}
$C>0$ so that
\begin{align*}
\norm{\pp fz}a & \leq C\norm fb\,.
\end{align*}
We end the proof by using the composition formula, being given $g\in\germ z$,
\begin{eqnarray}
f\circ g & = & \sum_{n}\left(\sum_{m\leq n}f_{m}\sum_{\mathbf{j}\in\ww N^{m}\,,\,\left|\mathbf{j}\right|=n}\prod_{\ell=1}^{m}g_{j_{\ell}}\right)z^{n}\,.\label{eq:formula_compo}
\end{eqnarray}
For $a\in\mathcal{A}$ w e have
\begin{align*}
\norm{f\circ g}a & \leq\sum_{n=0}^{\infty}a_{n}\sum_{m\leq n}\left|f_{m}\right|\sum_{\mathbf{j}\in\ww N^{m}\,,\,\left|\mathbf{j}\right|=n}\prod_{\ell=1}^{m}\left|g_{j_{\ell}}\right|\,.
\end{align*}
Invoking once more the previous lemma we conclude the existence of
$b,\,c\in\mathcal{A}$ such that 
\begin{align*}
\norm{f\circ g}a & \leq\sum_{m=0}^{\infty}c_{m}\left|f_{m}\right|\sum_{n=0}^{\infty}\sum_{\mathbf{j}\in\ww N^{m}\,,\,\left|\mathbf{j}\right|=n}\prod_{\ell=1}^{m}b_{j_{\ell}}\left|g_{j_{\ell}}\right|\\
 & \leq\sum_{m=0}^{\infty}\sqrt{c_{m}}\left|f_{m}\right|\sqrt{c_{m}}\norm gb^{m}\leq\norm f{\sqrt{c}}\amp{\sqrt{c}}\left(\norm g{}\right)
\end{align*}
where the amplitude function $\amp{\bullet}$ is introduced in Definition~\ref{def_A_topo}.
\end{proof}
We show now a nice feature of useful topologies:
\begin{prop}
\label{prop:sequential_is_nuclear}Assume that the sequential topology
$\germ z_{A}$ is useful. Then $\germ z_{A}$ is a nuclear space.
In particular if $A$ is countable then $\fgrm z_{A}$ is a Montel
space (\emph{i.e. }every closed, bounded\footnote{We recall that a subset $\Omega$ of a locally convex space is bounded
if \emph{every} neighborhood of $0$ can be rescaled to contain $\Omega$.} set of $\fgrm z_{A}$ is compact).\end{prop}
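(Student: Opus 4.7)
My plan is to invoke the classical Grothendieck--Pietsch criterion for nuclearity of Köthe $\ell^1$-type spaces: the space $\germ z_A$, whose defining seminorms $\norm{\sum f_n z^n}a = \sum a_n |f_n|$ are of $\ell^1$-type indexed by $A \subset \mathcal{A}$, is nuclear if and only if for every $a \in A$ there exists a continuous seminorm $\norm{\bullet}c$ (with $c$ a finite pointwise maximum of elements of $A$, or coming from the saturated family) such that $\sum_{n\geq 0} a_n/c_n < \infty$. My task thus reduces to producing such a dominating sequence $c$ from the usefulness hypothesis.

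The key inputs are the continuity of differentiation and of multiplication. Continuity of $\pp{}z$ means: for every $a\in A$ there exist $b\in A$ and $C>0$ with $\norm{\pp fz}a \leq C\norm fb$ for all $f$; reading this off coefficient by coefficient using~\eqref{eq:formula_deriv} yields the elementary bound
\begin{align*}
a_n (n+1) & \leq C\, b_{n+1} \quad (\forall n\in\ww N)\,.
\end{align*}
Applying the same argument a second time to $b$, I obtain $b' \in A$ and $C'>0$ with $b_{n+1}(n+2) \leq C' b'_{n+2}$. Composing these two inequalities produces, for every $a\in A$, some $b' \in A$ such that
\begin{align*}
a_n (n+1)(n+2) & \leq CC'\, b'_{n+2} \quad (\forall n \in \ww N)\,.
\end{align*}
The continuity of multiplication, applied to $g\mapsto z^{2}\cdot g$, yields a further $c\in A$ and $K>0$ with $b'_{n+2} \leq K c_n$ for all $n \in \ww N$ (reading the coefficients of $z^2 g$). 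Combining everything:
\begin{align*}
\frac{a_n}{c_n} & \leq \frac{CC'K}{(n+1)(n+2)}\,,
\end{align*}
so $\sum a_n/c_n < \infty$. This is precisely the Grothendieck--Pietsch criterion, hence $\germ z_A$ is nuclear.

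For the second assertion, when $A$ is at most countable, the completion $\fgrm z_A$ is a Fréchet space by Proposition~\ref{prop:topo_completion}. Nuclearity is inherited by the completion (the defining seminorms extend, and the criterion above depends only on the sequences $a\in A$). It is then a standard fact that every Fréchet nuclear space is Montel: every bounded set is precompact by nuclearity, and completeness upgrades precompactness to relative compactness, so closed bounded subsets are compact. The only real technical subtlety I foresee is the bookkeeping with index shifts and with passing between $A$ and its finite-maxima saturation (which defines an equivalent topology), but this is cosmetic once the above chain of inequalities is in place.
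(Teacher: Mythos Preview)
Your argument is correct and reaches the same endpoint as the paper---the summability condition $\sum_n a_n/c_n<\infty$ for some $c$ dominating $a$---but it exploits a different piece of the ``useful'' hypothesis. The paper tests the continuity of the right-composition $f\mapsto f\circ(2z)$ on monomials $z^n$, obtaining in one stroke the geometric bound $2^n a_n\leq C b_n$, whence $\sum a_n/b_n<\infty$; it then writes out the nuclear representation $\iota_a=\sum_n T_n(\bullet)\,z^n$ explicitly rather than citing Grothendieck--Pietsch by name. Your route instead chains two applications of the differentiation estimate with one application of multiplication by $z^2$, yielding the quadratic bound $a_n/c_n\leq C/((n+1)(n+2))$. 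Both suffice; the paper's version is shorter and gives sharper decay, while yours has the mild advantage of avoiding composition altogether (so it would still work under a weaker hypothesis where only the differential-algebra structure is continuous). Your caveat about passing to the saturation of $A$ under finite pointwise maxima is exactly right and is the only bookkeeping needed. The Montel conclusion is handled identically in both proofs by citing the standard fact that nuclear Fr\'echet spaces are Montel.
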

\begin{proof}
To show that $\germ z_{A}$ is nuclear we must show that for every
$a\in A$ the natural embedding $\iota_{a}\,:\,\germ z_{A}\hookrightarrow\fgrm z_{a}$
is nuclear. By definition this boils down to proving the existence
of a sequence $\left(g_{n}\right)_{n\in\ww N}$ of $\fgrm z_{a}$
and a sequence $\left(\varphi_{n}\right)_{n\in\ww N}$ of continuous
linear forms $\germ z_{A}\to\ww C$ such that 
\begin{itemize}
\item there exists $b\in A$ with $\sum_{n=0}^{\infty}\norm{\varphi_{n}}b\norm{g_{n}}a<\infty$
\item $\iota_{a}\left(\bullet\right)=\sum_{n=0}^{\infty}\varphi_{n}\left(\bullet\right)g_{n}$
for the above normal convergence.
\end{itemize}
The natural choice is $g_{n}:=z^{n}$ and $\varphi_{n}:=T_{n}$, so
that $\norm{g_{n}}a=a_{n}$ and $\norm{T_{n}}b\leq\frac{1}{b_{n}}$.
Since $A$ is useful there exists $b\in A$ and $C>0$ such that for
all $f\in\germ z$
\begin{align*}
\norm{f\circ\left(2z\right)}a & \leq C\norm fb\,.
\end{align*}
Take now $f\left(z\right):=z^{n}$ so that for all $n\in\ww N$ 
\begin{align*}
2^{n}a_{n} & \leq Cb_{n}\,,
\end{align*}
and $\sum_{n=0}^{\infty}\frac{a_{n}}{b_{n}}<\infty$ as expected.
It is besides well known~\cite[Section 50]{Treves} that nuclear
Fréchet spaces are Montel spaces.
\end{proof}

\subsubsection{Factorial topology}

~

We study now the factorial topology, which we will use in most applications
because of its nice combinatorial and analytical properties. The choice
of the exponents $\nf 1k$ is rather arbitrary and one could chose
any strictly decreasing to zero sequence of positive numbers in what
follows. The topological completion of the space $\fgrm z_{\mathtt{AF}}$
corresponds to the Köthe sequence space of finite order $\Lambda_{0}\left(n!^{-\nf 1k}\right)$
which has been well studied, and the choice of exponent we make is
therefore <<standard>>\footnote{Another reason why this choice may be deemed natural is that $\fgrm z_{\mathtt{AF}}$
is the space of <<sub-Gevrey>> formal power series, that is those
power series $\sum_{n\geq0}f_{n}z^{n}$ such that $\sum_{n\geq0}\left(n!\right)^{-\alpha}f_{n}z^{n}$
converges for all $\alpha>0$.}. 
\begin{prop}
\label{prop:facto_useful}The factorial topology is useful. Moreover
the next three additional quantitative properties hold.
\begin{enumerate}
\item For every $f,\,g\in\germ z$ and all $\alpha>0$: 
\begin{align*}
\norm{f\times g}{a\left(\alpha\right)} & \leq\norm f{a\left(\alpha\right)}\norm g{a\left(\alpha\right)}\,.
\end{align*}

\item For all $\alpha>\beta>0$ and every $f,\,g\in\germ z$ with $g=0$:
\begin{eqnarray*}
\norm{f\circ g}{a\left(\alpha\right)} & \leq & \norm f{a\left(\beta\right)}\amp{a\left(\alpha-\beta\right)}\left(\norm g{a\left(\alpha\right)}\right)
\end{eqnarray*}
where the  amplitude $\amp{\bullet}$ is defined in Definition~\ref{def_A_topo}.
It is equal to $\sup_{n\in\ww N}n!^{\beta-\alpha}\norm g{a\left(\alpha\right)}^{n}$,
whose maximum is reached for some rank 
\begin{align*}
n_{0} & \in\left\{ -1,0,1\right\} +\left\lceil \norm g{a\left(\alpha\right)}^{\nf 1{\alpha-\beta}}\right\rceil \,.
\end{align*}
This constant is optimal.
\item For all $\alpha>\beta>0$ there exists a sequence of positive real
numbers $D_{k,\alpha,\beta}$ such that for every $f\in\germ z$:
\begin{align*}
\norm{\frac{\partial^{k}f}{\partial z^{k}}}{a\left(\alpha\right)} & \leq D_{k,\alpha,\beta}\norm{f-J_{k}\left(f\right)}{a\left(\beta\right)}\,.
\end{align*}
The optimal constant is given by 
\begin{align*}
D_{k,\alpha,\beta} & =\sup_{n\in\ww N}\frac{\left(n+k\right)!^{\beta+1}}{n!^{\alpha+1}}
\end{align*}
whose maximum is reached for some rank 
\begin{align*}
n_{0} & \in\left\{ -1,0,1\right\} +\left\lceil r_{k,\alpha,\beta}\right\rceil \,,
\end{align*}
where $r_{k,\alpha,\beta}$ is the positive solution of the equation
$\left(x+k\right)^{\beta+1}=x^{\alpha+1}$. We also have
\begin{align*}
r_{k,\alpha,\beta}\sim_{k\to\infty}k^{\nf{\beta+1}{\alpha+1}} & \,\mbox{and }r_{k,\alpha,\beta}>k^{\nf{\beta+1}{\alpha+1}}\,.
\end{align*}
Besides there exists $\chi_{k,\alpha,\beta}\in]r_{k,\alpha,\beta},r_{k,\alpha,\beta}+1[$
such that for all $k$, writing $\chi$ instead of $\chi_{k,\alpha,\beta}$,
\begin{align*}
D_{k,\alpha,\beta} & \leq\frac{e^{\beta+1}}{\left(2\pi\right)^{\nf{\alpha+1}2}}\chi^{k\left(\alpha+1\right)}\exp\left(\left(\alpha-\beta\right)\chi-\left(\beta+1\right)k\right)\,.
\end{align*}
One has 
\begin{align*}
k^{\nf{\beta+1}{\alpha+1}}<\chi_{k,\alpha,\beta} & <e^{\sqrt{\nf{\beta+1}{\alpha-\beta}}}k^{\nf{\beta+1}{\alpha+1}}+1\,.
\end{align*}

\end{enumerate}
\end{prop}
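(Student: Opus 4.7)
The usefulness of the factorial topology follows immediately from the three quantitative estimates: (1) yields joint continuity of the multiplication, (2) majorates $\norm{f\circ g}{a(1/k)}$ by a continuous function of $\norm{f}{a(1/(k+1))}$ and $\norm{g}{a(1/k)}$, and (3) gives the analogous estimate for differentiation. I therefore focus on the three inequalities.

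For (1), I would expand $f\times g$ via the Cauchy product; the trivial bound $\binom{n}{p}\geq1$, equivalent to $n!\geq p!\,q!$ for $p+q=n$, gives $n!^{-\alpha}\leq p!^{-\alpha}q!^{-\alpha}$, after which Fubini on the non-negative double sum produces submultiplicativity.

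The crux of (2) is the combinatorial estimate
\[
|\mathbf{j}|!\geq m!\prod_{\ell=1}^{m}j_{\ell}!,\qquad \mathbf{j}\in\ww N_{\geq 1}^{m},
\]
which I would establish by induction on $m$: with $n=|\mathbf{j}|$, in the inductive step $n\geq m+1$ forces $1\leq j_{m+1}\leq n-1$, hence $\binom{n}{j_{m+1}}\geq n\geq m+1$, upgrading the previous bound to rank $m+1$. Raising to the $\alpha$-th power, applying it to the composition formula~\eqref{eq:formula_compo} restricted to $\ww N_{\geq 1}^{m}$ (by $g(0)=0$), and using Fubini, I obtain
\[
\norm{f\circ g}{a(\alpha)}\leq\sum_{m\geq 0}m!^{-\alpha}|f_{m}|\norm{g}{a(\alpha)}^{m}.
\]
The splitting $m!^{-\alpha}=m!^{-\beta}\cdot m!^{-(\alpha-\beta)}$ and the pointwise control $m!^{-(\alpha-\beta)}\norm{g}{a(\alpha)}^{m}\leq\amp{a(\alpha-\beta)}(\norm{g}{a(\alpha)})$ factor out the amplitude, leaving $\norm{f}{a(\beta)}$ as the residual sum. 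The location of $n_{0}$ is extracted from the ratio $u_{n+1}/u_{n}=\norm{g}{a(\alpha)}/(n+1)^{\alpha-\beta}$, which crosses $1$ exactly at $n+1=\norm{g}{a(\alpha)}^{1/(\alpha-\beta)}$. Optimality is certified by $f=z^{n_{0}}$ paired with a $g$ saturating the combinatorial inequality at $\mathbf{j}=(1,\ldots,1)$.

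For (3), starting from formula~\eqref{eq:formula_deriv} iterated $k$ times, I rewrite
\[
n!^{-\alpha}\frac{(n+k)!}{n!}=\frac{(n+k)!^{1+\beta}}{n!^{1+\alpha}}\,(n+k)!^{-\beta},
\]
so that factoring out $D_{k,\alpha,\beta}:=\sup_{n}(n+k)!^{1+\beta}/n!^{1+\alpha}$ isolates the tail norm of $f$ after the index shift $m=n+k$. The optimal rank comes from the ratio $u_{n+1}/u_{n}=(n+k+1)^{1+\beta}/(n+1)^{1+\alpha}$, equal to $1$ precisely when $n+1=r_{k,\alpha,\beta}$, the positive root of $(x+k)^{1+\beta}=x^{1+\alpha}$. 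Testing $x\sim k^{a}$ in the defining equation and balancing exponents produces the asymptotic $r_{k,\alpha,\beta}\sim k^{(1+\beta)/(1+\alpha)}$, and a direct implicit-function comparison against $x=k^{(1+\beta)/(1+\alpha)}$ yields the two-sided estimate on $\chi_{k,\alpha,\beta}$. The hardest step will be the explicit Stirling bookkeeping that produces the final closed-form bound on $D_{k,\alpha,\beta}$: substituting $n!\sim\sqrt{2\pi n}(n/e)^{n}$ at $n_{0}\approx\chi_{k,\alpha,\beta}$, using $(\chi+k)^{1+\beta}=\chi^{1+\alpha}$ to collapse the dominant exponentials, and carefully tracking the $(2\pi)^{1/2}$ prefactors is what drives the stated $e^{\beta+1}/(2\pi)^{(\alpha+1)/2}$ constant.
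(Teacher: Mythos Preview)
Your argument is correct and tracks the paper's proof closely: the same combinatorial lemma $|\mathbf{j}|!\ge m!\,\mathbf{j}!$ for strictly positive multi-indices, the same intermediate bound $\sum_m m!^{-\alpha}|f_m|\,\norm{g}{a(\alpha)}^m$ in (2), and the same rewriting of the derivative norm in (3) before extracting the sup. The one methodological difference is how the maximizing rank is located. You use the discrete ratio $u_{n+1}/u_n$ and read off where it crosses $1$; the paper instead interpolates continuously by $\Gamma$, differentiates, and uses the Digamma bounds $\ln(x-1)\le\psi(x)\le\ln x$ to trap the critical point in an interval of length $1$. Both yield the same window $\{-1,0,1\}+\lceil\cdot\rceil$, and your route is arguably more direct for that step. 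The payoff of the paper's approach appears in (3): having set up the Digamma equation $\delta\psi(\chi)=\sum_{j<k}(\chi+j)^{-1}$, it can squeeze $\chi$ between $r$ and $r+1$ and then run an explicit algebraic argument on $x^{\delta+1}-x=k$ to get the two-sided bound $k^{(\beta+1)/(\alpha+1)}<\chi<e^{\sqrt{(\beta+1)/(\alpha-\beta)}}k^{(\beta+1)/(\alpha+1)}+1$; your ``implicit-function comparison'' would have to reproduce this bookkeeping from scratch. The final Stirling step is the same in both: the paper uses $\sqrt{2\pi}\,x^{x-1/2}e^{-x}\le\Gamma(x)\le e^{1-x}x^{x-1/2}$ together with $\chi^{\alpha+1}>(\chi+k)^{\beta+1}$ to collapse the exponentials, exactly as you outline.
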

\begin{proof}
The proof first relies on the following trivial combinatorial estimate,
which is a counterpart to Lemma~\ref{lem:estim_for_useful}:
\begin{lem}
\label{lem:facto_estim}Let $m\in\ww N_{>0}$ and $\mathbf{j}\in\ww N^{m}$.
Then
\begin{eqnarray*}
\left|\mathbf{j}\right|! & \geq & \mathbf{j}!
\end{eqnarray*}
and if moreover $j_{\ell}>0$ for all $\ell$ we have 
\begin{eqnarray*}
\left|\mathbf{j}\right|! & \geq & m!\mathbf{j}!\,.
\end{eqnarray*}

\end{lem}
The addition and scalar multiplication are of course continuous. The
three estimates (1), (2) and (3) guarantee the usefulness of the factorial
topology. Take two elements $f$ and $g$ in $\germ z$ and write
them down respectively as $\sum_{n}f_{n}z^{n}$ and $\sum_{n}g_{n}z^{n}$. 
\begin{enumerate}
\item According to the lemma we have $\left(p+q\right)!\geq p!q!$ so that
\begin{eqnarray*}
\norm{f\times g}{a\left(\alpha\right)} & = & \sum_{n=0}^{\infty}n!^{-\alpha}\left|\sum_{p+q=n}f_{p}g_{q}\right|\\
 & \leq & \sum_{n=0}^{\infty}\sum_{p+q=n}p!^{-\alpha}q!^{-\alpha}\left|f_{p}g_{q}\right|=\norm f{a\left(\alpha\right)}\norm g{a\left(\alpha\right)}\,.
\end{eqnarray*}

\item Because of the lemma we can show in the same spirit as Proposition~\ref{prop:all_norms_is_useful}
that 
\begin{eqnarray*}
\norm{f\circ g}{a\left(\alpha\right)} & \leq & \norm f{a\left(\beta\right)}\sup_{m\in\ww N}m!^{\beta-\alpha}\norm g{a\left(\alpha\right)}^{m}\,.
\end{eqnarray*}
We can try to estimate $\amp{a\left(\beta-\alpha\right)}\left(\norm g{a\left(\alpha\right)}\right)$
using the Gamma and Digamma functions, through the study of the auxiliary
function
\begin{align*}
\phi\,:\,\ww R_{>1} & \longrightarrow\ww R\\
x & \longmapsto\Gamma\left(x\right)^{-\delta}r^{x}
\end{align*}
 where $\delta,\,r>0$ are given. This function admits a unique maximum,
located at the zero of its logarithmic derivative (expressed in terms
of the Digamma function $\psi=\frac{\Gamma'}{\Gamma}$)
\begin{align*}
\frac{\phi'}{\phi}\left(x\right) & =-\delta\psi\left(x\right)+\ln r\,.
\end{align*}
It is well known that for $x>1$
\begin{align}
\ln\left(x-1\right) & \leq\psi\left(x\right)\leq\ln x\label{eq:digamma_estim}
\end{align}
so that the equality $\psi\left(x\right)=\frac{1}{\delta}\ln r$ happens
only when
\begin{align*}
x & \in\left[r^{1/\delta},r^{1/\delta}+1\right]\,.
\end{align*}
Since this interval has length $1$ the result follows by setting
$r:=\norm g{a\left(\alpha\right)}$ and $\delta:=\alpha-\beta$.
\item We apply formula~\eqref{eq:formula_deriv} repeatedly so that
\begin{align*}
\norm{\pp{^{k}f}{z^{k}}}{a\left(\alpha\right)} & =\sum_{n=0}^{\infty}a\left(\alpha\right)_{n}\frac{\left(n+k\right)!}{n!}\left|f_{n+k}\right|\\
 & \leq\sum_{n=0}^{\infty}\left(n+k\right)!^{-\beta}\left|f_{n+k}\right|\times\frac{\left(n+k\right)!^{\beta+1}}{n!^{\alpha+1}}\\
 & \leq D_{k,\alpha,\beta}\norm{f-J_{k}\left(f\right)}{a\left(\beta\right)}\,.
\end{align*}
The constant is optimal: take for $f$ the monomial $z^{d}$ where
$d$ is any integer such that $\frac{\left(d+k\right)!^{\beta+1}}{d!^{\alpha+1}}$
equals $D_{k,\alpha,\beta}$. In order to derive the final estimate
we need to determine $d$; for this we study the auxiliary function
\begin{align*}
\varphi\,:\,\ww R_{>1} & \longrightarrow\ww R\\
x & \longmapsto\frac{\Gamma\left(x+k\right)^{\beta+1}}{\Gamma\left(x\right)^{\alpha+1}}
\end{align*}
much in the same way as we did just before. This function admits a
unique maximum, which is the positive zero of its logarithmic derivative
\begin{align*}
\frac{\varphi'}{\varphi}\left(x\right) & =\left(\beta+1\right)\psi\left(x+k\right)-\left(\alpha+1\right)\psi\left(x\right)\\
 & =\left(\beta-\alpha\right)\psi\left(x\right)+\left(\beta+1\right)\sum_{j=0}^{k-1}\frac{1}{x+j}\,.
\end{align*}
We therefore seek the positive real $\chi$ such that
\begin{align}
\delta\psi\left(\chi\right) & =\sum_{j=0}^{k-1}\frac{1}{\chi+j}\label{eq:digamma_eq}
\end{align}
where 
\begin{align*}
\delta: & =\frac{\alpha-\beta}{\beta+1}>0\\
\delta+1 & =\frac{\alpha+1}{\beta+1}\,.
\end{align*}
It is completely elementary that for $x>1$ we have 
\begin{align*}
\ln\frac{x+k}{x} & <\sum_{j=0}^{k-1}\frac{1}{x+j}<\ln\frac{x+k-1}{x-1}\,.
\end{align*}
This estimate, coupled with~\eqref{eq:digamma_estim}, implies particularly
that the equality~\eqref{eq:digamma_eq} is possible only if $\ln\frac{x+k}{x}<\delta\ln x$
and $\ln\frac{x+k-1}{x-1}<\delta\ln\left(x-1\right)$. As a consequence
$x_{*}$ lies in the interval $]r,r+1[$ where $r$ is the positive
solution of
\begin{align*}
\left(x+k\right)^{\beta+1} & =x^{\alpha+1}\,
\end{align*}
or, equivalently,
\begin{align*}
x^{\delta+1}-x & =k\,.
\end{align*}
Since this interval has length $1$ one has
\begin{align*}
d & \in\left\{ -1,0,1\right\} +\left\lceil r\right\rceil \,.
\end{align*}
Plugging $x:=k^{\nf 1{\delta+1}}$ in the previous expression shows
that 
\begin{align*}
k^{\nf 1{\delta+1}} & <r
\end{align*}
(in particular the sequence $r$ is unbounded). The relation $\frac{k}{r}=r^{\delta}-1$
proves $r=o\left(k\right)$. Both relations hold also for $\chi$
instead of $r$. Moreover let $\sigma>0$ be given such that
\begin{align*}
\sigma\geq e^{\nf 1{\sqrt{\delta}}}>1 & \,.
\end{align*}
Then 
\begin{align*}
\delta & \geq\frac{1}{\left(\ln\sigma\right)^{2}}>\frac{1}{\sigma\ln\sigma}\geq\frac{\ln\left(1+\nf 1{\sigma}\right)}{\ln\sigma}\\
\delta+1 & >\frac{\ln\left(\sigma+1\right)}{\ln\sigma}\\
1 & >\frac{\sigma}{\sigma^{\delta+1}-1}\\
k\geq1 & >\left(\frac{\sigma}{\sigma^{\delta+1}-1}\right)^{\nf{\delta+1}{\delta}}\\
k\left(\sigma^{\delta+1}-1\right) & >\sigma k^{\nf 1{\delta+1}}\\
\left(\sigma k^{\nf 1{\delta+1}}\right)^{\delta+1}-\sigma k^{\nf 1{\delta+1}} & >k
\end{align*}
so that 
\begin{align*}
r & <\sigma k^{\nf 1{\delta+1}}\,.
\end{align*}
We use finally the classical relation for $x>0$
\begin{align}
\sqrt{2\pi}x^{x-\nf 12}e^{-x}\leq & \Gamma\left(x\right)\leq e^{1-x}x^{x-\nf 12}\label{eq:stirling_estim}
\end{align}
and the fact that $\chi^{\alpha+1}>\left(\chi+k\right)^{\beta+1}$
to derive
\begin{align*}
\varphi\left(\chi\right) & \leq\frac{e^{\beta+1}}{\left(2\pi\right)^{\nf{\alpha+1}2}}\chi^{k\left(\alpha+1\right)}\exp\left(\left(\alpha-\beta\right)\chi-\left(\beta+1\right)k\right)\,.
\end{align*}

\end{enumerate}
\end{proof}

\section{\label{sec:usual_holo}Analyticity}

In this section we consider two Hausdorff, locally convex spaces $\left(E,\left(\norm{\bullet}{a\in A}\right)\right)$
and $\left(F,\left(\norm{\bullet}{b\in B}\right)\right)$, \emph{i.e.
}topological linear spaces whose topologies are induced by a collection
of separating semi-norms. We begin with giving general definitions
and properties, summarizing some important results of the references
\cite{Barro} and \cite{Mazette}. We then present specifics for the
spaces $\germ z_{A}$ and $\igerm z$, as well as introducing the
notion of strong analyticity. Notice that because of Proposition~\ref{prop:topo_compare}
and Definition~\ref{def_anal} below, as soon as a map $\Lambda\text{ }:\text{ }\germ z\to\germ x$
is proved analytic for some sequential $A$-topology, which is comparatively
an easier task to perform, it will automatically be analytic for the
inductive topology.

A notion we will need is that of ample boundedness, which mimics the
criterion for the continuity of linear mappings. 
\begin{defn}
\label{def_amply_bounded}A mapping $\Lambda~:~U\to F$ from an open
set $U$ of $E$ is \textbf{amply bounded} if for every $f\in U$
and all $b\in B$ there exists a neighborhood $W\subset E$ of $0$
such that 
\begin{eqnarray*}
\sup_{h\in W}\norm{\Lambda\left(f+h\right)}b & < & \infty.
\end{eqnarray*}

\end{defn}
Continuous mappings are amply bounded.

\subsection{Polynomials, power series and analyticity}

~
\begin{defn}
\label{def_anal}~
\begin{enumerate}
\item A \textbf{polynomial} on $E$ of degree at most $d$ with values in
$F$ is a finite sum 
\begin{eqnarray*}
P\left(f\right) & = & \sum_{p=0}^{d}P_{n}\left(f^{\oplus p}\right)
\end{eqnarray*}
of continuous, symmetric $p$-linear mappings $P_{p}\in\mathcal{L}_{p}\left(E\to F\right)$.
The least $d$ for which the above expansion holds for $P\neq0$ is
its degree $\deg P$. As usual we conventionally set $\deg0:=-\infty$,
and recall that 
\begin{align*}
f^{\oplus p} & =\left(f,\cdots,f\right)\in E^{p}\,.
\end{align*}

\item A \textbf{formal power series} $\Phi$ from $E$ to $F$ is a series
built from a sequence of continuous, symmetric \textbf{$p$}-linear
mappings $\left(P_{p}\right)_{p\in\ww N}\in\prod_{p\in\ww N}\mathcal{L}_{p}\left(E\to F\right)$
:
\begin{eqnarray*}
\Phi\left(f\right) & := & \sum_{p\geq0}P_{p}\left(f^{\oplus p}\right).
\end{eqnarray*}
The space of all such objects is a complex algebra with the standard
sum and Cauchy product operations on series.
\item A mapping $\Lambda$ defined on some neighborhood $U$ of $f\in E$
, with values in $F$, is said to be \textbf{analytic at $f$} if
for all $b\in B$ there exists a neighborhood $W$ of $0\in E$ such
that 
\begin{eqnarray*}
\lim_{N\to\infty}\norm{\Lambda\left(f+h\right)-\sum_{p=0}^{N}P_{p}\left(h^{\oplus p}\right)}b & = & 0
\end{eqnarray*}
uniformly in $h\in W$.
\item We say that $\Lambda$ is \textbf{analytic} on $U$ if it is analytic
at any point of $U$. If $\Lambda$ is analytic on the whole $E$
we say that $\Lambda$ is \textbf{entire}.
\item The $p$-linear mapping $p!P_{p}$ is called the $p$-th \textbf{Taylor
coefficient} of $\Lambda$ at $f$ and is written as
\begin{eqnarray*}
\p[p]{\Lambda}ff & := & p!P_{p}.
\end{eqnarray*}
The formal power series $\sum_{p\geq0}\frac{1}{p!}\p[p]{\Lambda}ff$
is called the \textbf{Taylor series} of $\Lambda$ at $f$.
\end{enumerate}
\end{defn}
\begin{thm}
\label{thm:anal_gene_prop} Let $U\subset E$ be an open set. In the
following hatted spaces denotes their Hausdorff topological completion
(as uniform spaces).
\begin{enumerate}
\item \emph{(}\textbf{\emph{\cite[p177]{Barro}}}\emph{)} If $\Lambda$
is analytic at some point $f\in U$ then its Taylor coefficients are
unique. In that case $\Lambda\left(f\right)=\p[0]{\Lambda}ff$.
\item \emph{(}\cite[p195]{Barro}\emph{)} Analytic mappings $U\to F$ are
continuous. 
\item \emph{(}\cite[p196]{Barro}\emph{)} An amply bounded mapping $U\to F$
is analytic if, and only if, for every $f\in U$ there exists a sequence
$\left(P_{p}\right)_{p\in\ww N}$ of $p$-linear mappings, not necessarily
continuous but satisfying nonetheless the rest of condition (3) in
the above definition. Particularly in that case each $P_{p}$ is \emph{a
posteriori} continuous.
\item \emph{(}\cite[p187]{Barro}\emph{)} Polynomials with values in $F$
are entire. If $\Phi$ is a polynomial then $\p[p]{\Phi}ff=0$ for
all $p>\deg P$ and $f\in E$.
\item \emph{(}\cite[p192]{Barro}\emph{)} Let $\Lambda:U\to F$ be analytic
and $\Xi:F\to G$ be a continuous linear mapping with values in a
normally convex space $G$. Then $\Xi\circ\Lambda$ is analytic and
\begin{eqnarray*}
\p[p]{\Xi\circ\Lambda}ff & = & \Xi\circ\p[p]{\Lambda}ff.
\end{eqnarray*}

\item \emph{(}\cite[p63]{Mazette}\emph{)} More generally the composition
of source/range compatible analytic maps remains analytic.
\item \emph{(}\cite[p68]{Mazette}\emph{) }Any analytic map $\Lambda\,:\,U\to F$
extends in a unique fashion to an analytic map $\hat{\Lambda}\,:\,\hat{U}\to\hat{F}$
from an open set of $\hat{E}$. Notice that $\hat{U}$ might not be
a topological completion of $U$.
\end{enumerate}
\end{thm}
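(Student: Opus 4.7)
The plan is to treat the seven items as a coherent package built around two central devices: the one-complex-variable slicing $\zeta\mapsto\Lambda(f+\zeta h)$ for $\zeta\in\ww C$ and $h\in E$, together with Cauchy's integral formula for $F$-valued holomorphic functions of one complex variable. First I would dispose of uniqueness (1) by the scaling trick: if two expansions agree on a neighborhood of $0$, apply $h\mapsto \zeta h$ and read off an equality of two convergent $F$-valued power series in $\zeta$; then either test against continuous linear forms on $F$ or invoke the locally convex Cauchy formula to identify coefficients in each degree, and use the polarization identity to recover each symmetric $p$-linear map from its homogeneous diagonal polynomial. The identity $\Lambda(f)=P_0$ drops out at $h=0$. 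Continuity (2) is then immediate: the partial sums are continuous (finite sums of continuous multilinear maps) and the defining convergence is uniform on a neighborhood.

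The substantive item is (3). Assume $\Lambda$ is amply bounded and admits a pointwise expansion $\Lambda(f+h)=\sum_p P_p(h^{\oplus p})$ with a priori only algebraic $p$-linear $P_p$. Pick a balanced neighborhood $W$ of $0$ with $\sup_{h\in W}\norm{\Lambda(f+h)}{b}\le M_b$ for each $b\in B$. For $h\in W$ the map $\zeta\mapsto \Lambda(f+\zeta h)$ is $F$-valued holomorphic on a neighborhood of $\overline{\ww D}$, with Taylor coefficients $P_p(h^{\oplus p})$. Cauchy's formula
\begin{equation*}
P_p(h^{\oplus p})=\frac{1}{2\pi\ii}\oint_{|\zeta|=1}\frac{\Lambda(f+\zeta h)}{\zeta^{p+1}}\dd{\zeta}
\end{equation*}
then yields $\norm{P_p(h^{\oplus p})}{b}\le M_b$ for every $h\in W$, and polarization transfers this bound to the multilinear form $P_p$, proving continuity of each $P_p$.

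Parts (4)--(6) follow as consequences. For (4), expand $P(f+h)=\sum_p P_p((f+h)^{\oplus p})$ by multilinearity and regroup by homogeneity in $h$; all homogeneous components of degree exceeding $\deg P$ vanish. For (5), the linear continuity of $\Xi$ commutes with the uniform convergence, giving $\Xi\circ\Lambda(f+h)=\sum_p (\Xi\circ P_p)(h^{\oplus p})$, uniformly in every seminorm of $G$. For (6), substitute the expansion of $\Lambda$ at $f$ into the expansion of $\Xi$ at $\Lambda(f)$ and reorganize the double series by total degree in $h$; summability of the rearrangement is obtained via (3) applied to the composition, whose ample boundedness follows from continuity of $\Lambda$ and of $\Xi$. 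Finally (7) uses that analytic mappings are continuous and locally uniformly limits of polynomials, hence uniformly continuous on a neighborhood; this gives a unique extension $\hat\Lambda$ to an open set of $\hat E$ with values in $\hat F$, and each $P_p$ extends as a continuous symmetric multilinear map $\hat E^p\to\hat F$ by the standard extension of uniformly continuous maps, so the extended Taylor series still converges uniformly on neighborhoods.

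The principal obstacle is (3), and within it the justification that $\zeta\mapsto\Lambda(f+\zeta h)$ is genuinely a holomorphic $F$-valued function of one complex variable (Gateaux analyticity) with legitimate Cauchy integral formula in the locally convex target. I would handle this by testing against continuous linear forms to reduce to classical scalar Cauchy theory, then reconstruct the vector-valued integral by duality; alternatively one invokes the standard Hahn-Banach-based calculus for holomorphic functions with values in a Hausdorff locally convex space, which is exactly the machinery developed in the cited references of Barroso and Mazet.
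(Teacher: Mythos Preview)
The paper does not supply a proof of this theorem: each of the seven items is stated with a page reference to Barroso~\cite{Barro} or Mazet~\cite{Mazette} and left at that. Your sketch reproduces the standard arguments one finds in those references---one-variable slicing $\zeta\mapsto\Lambda(f+\zeta h)$ together with the locally convex Cauchy formula for item~(3), followed by polarization to recover continuity of the multilinear coefficients---and is correct. There is nothing in the paper itself to compare against.

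One small remark on your treatment of~(6): invoking~(3) for the composition requires that you already have a candidate expansion satisfying the uniform convergence condition before you know the coefficients are continuous. The formal substitution of one Taylor series into another produces the candidate multilinear maps, but the uniform convergence of the regrouped series needs its own estimate (typically via the Cauchy bounds you established in~(3) applied to each factor, plus a geometric majorization). This is routine but not quite automatic from ample boundedness alone, so it deserves one more line.
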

\begin{quote}
Notice that from (1) is deduced the identity theorem, using a standard
connectedness argument:\end{quote}
\begin{cor}
\label{cor:anal_PPA}\textbf{\emph{(Identity Theorem \cite[p241]{Barro})}}
Let $U\subset E$ be a domain and $\Lambda$ analytic on $U$ such
that $\Lambda$ vanishes on some open subset of $U$. Then $\Lambda$
is the zero map.
\end{cor}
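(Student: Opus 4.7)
The plan is to run the standard connectedness argument adapted to the locally convex setting. Define
\[
V := \{f \in U : \Lambda \text{ vanishes on some neighborhood of } f\}.
\]
By hypothesis $V \neq \emptyset$, and $V$ is open by construction. Since $U$ is connected, it will suffice to show that $V$ is also closed in $U$, whence $V = U$ and $\Lambda \equiv 0$.

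To show closedness, I would fix $f_0 \in \overline{V} \cap U$ and exploit local convexity: since $E$ is locally convex and $U$ is open, there is a convex open neighborhood $U_0$ of $f_0$ contained in $U$. Pick any $f_1 \in V \cap U_0$, which exists because $f_0 \in \overline{V}$. For each $g \in U_0$ the set $\Omega_g := \{t \in \mathbb{C} : f_1 + t(g-f_1) \in U_0\}$ is convex (hence a domain of $\mathbb{C}$) and contains the segment $[0,1]$. By Theorem~\ref{thm:anal_gene_prop}(6), the composition $\phi(t) := \Lambda(f_1 + t(g-f_1))$ is analytic on $\Omega_g$, and by construction $\phi$ vanishes on a neighborhood of $0$ (since $\Lambda \equiv 0$ near $f_1$).

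It remains to conclude that $\phi(1) = \Lambda(g) = 0$, which I would obtain via a Hahn--Banach reduction to the scalar one-dimensional identity theorem: for every continuous linear form $\varphi \in F^*$, Theorem~\ref{thm:anal_gene_prop}(5) shows that $\varphi \circ \phi : \Omega_g \to \mathbb{C}$ is analytic and vanishes on a non-empty open subset of the connected domain $\Omega_g$, hence is identically zero by the classical one-variable identity theorem; since $F$ is Hausdorff locally convex, Hahn--Banach guarantees that $F^*$ separates points and therefore $\phi \equiv 0$ on $\Omega_g$. Evaluating at $t=1$ yields $\Lambda(g)=0$, so $\Lambda \equiv 0$ on $U_0$ and $f_0 \in V$. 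The only real obstacle is geometric: one must control a full convex neighborhood of the boundary point $f_0$ inside $U$, so that segments from a nearby $f_1 \in V$ to arbitrary target points $g$ stay inside the domain of analyticity — local convexity of $E$ handles this for free, and the rest is a routine slicing argument.
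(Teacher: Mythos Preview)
Your argument is correct and follows the same connectedness strategy the paper alludes to. The paper does not spell out a proof but remarks that the result is deduced from Theorem~\ref{thm:anal_gene_prop}(1) (uniqueness of Taylor coefficients) by a standard connectedness argument; your execution differs only in the mechanism used to propagate vanishing across a neighborhood of a boundary point: rather than arguing that the set where all Taylor coefficients vanish is closed (which uses (1) together with the continuity of the coefficients furnished by Cauchy's formula), you slice along complex affine lines inside a convex neighborhood and reduce to the scalar one-variable identity theorem via Hahn--Banach. Both are classical ways to run the same open--closed argument; yours has the mild advantage of not appealing to continuity of the Taylor coefficients in the base point, at the cost of invoking Hahn--Banach separation.
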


\subsection{\label{sub:anal_space}Analytical spaces}

~

As usual with analyticity, the definition is given for maps defined
on open sets. Yet in practice we would like to speak of analyticity
of maps defined on sets with empty interior (\emph{e.g.} proper analytic
sets). Although there exists a notion of analytic variety~\cite{Mazette}
we do not want to venture into those territories. As we will be in
general able to parameterize the sets of interest to us, we introduce
instead the notion of induced analytic structure:
\begin{defn}
\label{def_anal_space}Let $X$ be a topological space, $\Psi\,:\,U\subset E\twoheadrightarrow X$
be continuous and onto from an open set $U$ of $E$. 
\begin{enumerate}
\item We say that a map $\Lambda\,:\,X\to F$ is \textbf{analytic with respect
to the analytic structure induced by} $\Psi$ if $\Psi^{*}\Lambda=\Lambda\circ\Psi\,:\,U\to F$
is analytic.
\item Let $V\subset F$ be an open set. We say that a map $\Lambda\,:\,V\to X$
is \textbf{analytic with respect to the analytic structure induced
by} $\Psi$ if there exists an analytic map $\tilde{\Lambda}\,:\,V\to U$
such that $\tilde{\Lambda}^{*}\Psi=\Lambda$.
\item We say in this case that $X$ is an \textbf{analytic space} \textbf{modeled
on} $E$, equipped with the analytic structure induced by $\Psi$.
\item These definitions allow to speak of analytic maps between analytic
spaces $X$ and $Y$, each one equipped with an induced analytic structure,
respectively $\Psi_{X}~:~U_{X}\twoheadrightarrow X$ and $\Psi_{Y}~:~U_{Y}\twoheadrightarrow Y$.
A map $\Lambda\,:\,X\to Y$ is \textbf{analytic} if there exists an
analytic map $\tilde{\Lambda}\,:\,U_{X}\to U_{Y}$ such that the following
diagram commutes: 
\begin{align*}
\xymatrix{X\ar[r]^{\Lambda} & Y\\
U_{X}\ar[r]_{\tilde{\Lambda}}\ar@{->>}[u]^{\Psi_{X}} & U_{Y}\ar@{->>}[u]_{\Psi_{Y}}
}
\end{align*}

\end{enumerate}
\end{defn}
Basic examples include vector subspaces $F<E$ with a continuous projector
$E\twoheadrightarrow F$ and, more generally, quotient spaces with
respect to equivalence relations $\sim$ whose canonical map $E\twoheadrightarrow\nf E{\sim}$
is continuous. Also the space of diffeomorphisms $\diff{}$ is the
range of the continuous map (see Proposition~\ref{prop:compo_is_ana})
\begin{align*}
\Psi~:~\germ z & \longrightarrow\diff{}\subset\germ z\\
f & \longmapsto\left(z\mapsto z\exp f\left(z\right)\right)
\end{align*}
and is therefore an analytic space for the analytic structure induced
by $\Psi$. According to Proposition~\ref{prop:compo_is_ana} the
group law
\begin{align*}
\left(\Delta_{1},\Delta_{2}\right)\in\diff{}^{2} & \longmapsto\Delta_{1}\circ\Delta_{2}\in\diff{}
\end{align*}
is continuous since it corresponds \emph{via }$\Psi$ to the entire
mapping
\begin{align*}
\left(f_{1},f_{2}\right) & \longmapsto f_{2}+f_{1}\circ\left(z\exp f_{2}\right)~.
\end{align*}
Corresponding statements for the $m$-dimensional case follow.

\subsection{Cauchy integrals and estimates}

~

In the finite-dimensional setting a key element of the theory of analytic
functions is the Cauchy's formula and related estimates. The infinite-dimensional
setting is no exception to that rule and those estimates likewise
play a major role in all the theory. If $\Lambda$ is analytic on
some open set $U$ then, for fixed $f\in U$ and $h$ small enough
we can pick $\eta>0$ in such a way that the mapping 
\begin{eqnarray*}
x\in\eta\ww D & \longmapsto & \Lambda\left(f+xh\right)\in F
\end{eqnarray*}
is also analytic (for the standard normed topology on $\ww C$). In
particular it is continuous and the integral $\oint_{\left|x\right|=\eta}\Lambda\left(f+xh\right)\frac{\dd x}{x^{p+1}}$
makes perfectly sense in some topological completion of $F$. Yet
the very nature of the formula below implies \emph{a posteriori} that
this integral belongs to $F$.
\begin{prop}
\label{prop:anal_Cauchy_integ} \cite[p210-212]{Barro} Let $\Lambda~:~U\to F$
be analytic and pick $f\in U$. For every $h\in E$, $p\in\ww N$
and $\eta>0$ such that $f+\eta\overline{\ww D}h\subset U$ we have
\begin{eqnarray*}
\p[p]{\Lambda}ff\left(h^{\oplus p}\right) & = & \frac{1}{2\ii\pi}\oint_{\eta\sone}\Lambda\left(f+xh\right)\frac{\dd x}{x^{p+1}}.
\end{eqnarray*}
Besides if $a\in A$ and $b\in B$ then for all $h\in E$ and all
$\eta>0$ so that $\ball[a][f][\eta]\subset U$ we have\footnote{We point out that the right-hand side can be infinite if the choice
of $a,b$ is not done so that $\norm{\Lambda}b$ is bounded on $\ball[a][f][\eta]$.} 
\begin{eqnarray*}
\norm{\frac{1}{p!}\p[p]{\Lambda}ff\left(h^{\oplus p}\right)}b & \leq & \frac{\norm ha^{p}}{\eta^{p}}\sup_{\norm ua=\eta}\norm{\Lambda\left(f+u\right)}b\,.
\end{eqnarray*}

\end{prop}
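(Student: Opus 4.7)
The plan is to reduce both assertions to the classical one-variable Cauchy formula and estimate, applied along the complex line through $f$ in the direction $h$. Fix $f\in U$, $h\in E$ and $\eta>0$ with $f+\eta\overline{\ww D}h\subset U$. Since $U$ is open and $\{f+xh:|x|\leq\eta\}$ is compact, the preimage of $U$ under the entire map $x\mapsto f+xh$ is an open neighborhood $V\subset\ww C$ of $\eta\overline{\ww D}$, on which
$$\gamma:x\in V\longmapsto\Lambda(f+xh)\in F$$
is well defined. The map $x\mapsto f+xh$ is a polynomial of degree one, hence entire by item~(4) of Theorem~\ref{thm:anal_gene_prop}, so item~(6) guarantees that $\gamma$ is analytic on $V$. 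Using $p$-linearity and the analyticity of $\Lambda$ at $f$, its local Taylor expansion at $x=0$ reads
$$\gamma(x)=\sum_{p\geq 0}P_{p}(h^{\oplus p})\,x^{p},\qquad P_{p}:=\tfrac{1}{p!}\p[p]{\Lambda}{f}{f}.$$

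Because $\gamma$ is continuous on the compact circle $\eta\sone$, the vector-valued Riemann integral
$$I_p:=\frac{1}{2\ii\pi}\oint_{\eta\sone}\gamma(x)\frac{\dd x}{x^{p+1}}$$
exists in the sequential completion $\widehat{F}$. For any continuous linear form $\phi$ on $F$, item~(5) of Theorem~\ref{thm:anal_gene_prop} says that $\phi\circ\gamma:V\to\ww C$ is classically analytic with Taylor coefficients $\phi\circ P_{p}$. Exchanging $\phi$ with the Riemann sums by continuity and invoking the classical one-variable Cauchy formula for $\phi\circ\gamma$ yields $\phi(I_p)=\phi(P_{p}(h^{\oplus p}))$ for every such $\phi$. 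Because continuous linear forms on the Hausdorff locally convex space $\widehat{F}$ separate its points (Hahn--Banach), one concludes $I_p=P_{p}(h^{\oplus p})$. This both establishes the integral representation and certifies that $I_p$, a priori only an element of $\widehat{F}$, actually lies in $F$ --- which is precisely the remark preceding the proposition.

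For the Cauchy estimate, fix $a\in A$, $b\in B$ with $\ball[a][f][\eta]\subset U$ and assume $\norm{h}{a}>0$ (the degenerate case is treated by the same argument applied along a direction where the $a$-norm is positive, or by a standard scaling limit). Set $\eta':=\nf{\eta}{\norm{h}{a}}$, so that $\norm{xh}{a}=\eta$ for each $x\in\eta'\sone$ and consequently $f+xh\in U$. Applying the integral formula just established at radius $\eta'$ and passing the continuous seminorm $\norm{\bullet}{b}$ under the Riemann sum gives
$$\norm{P_{p}(h^{\oplus p})}{b}\leq\frac{1}{2\pi}\oint_{\eta'\sone}\norm{\Lambda(f+xh)}{b}\frac{\left|\dd x\right|}{|x|^{p+1}}\leq\frac{1}{(\eta')^{p}}\sup_{|x|=\eta'}\norm{\Lambda(f+xh)}{b}\leq\frac{\norm{h}{a}^{p}}{\eta^{p}}\sup_{\norm{u}{a}=\eta}\norm{\Lambda(f+u)}{b},$$
the last inequality using that the circle $\{xh:x\in\eta'\sone\}$ is contained in the $a$-sphere $\{u:\norm{u}{a}=\eta\}$.

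The genuine obstacle is the middle step: certifying that the $F$-valued contour integral, which is naturally constructed in the completion $\widehat{F}$, returns an element of $F$ itself. Arbitrary locally convex spaces admit no Riemann integration theory valued in $F$, so one must work in $\widehat{F}$ and recover membership in $F$ a posteriori. The Hahn--Banach separation argument combined with the explicit identification $I_p=P_p(h^{\oplus p})$ does exactly this; everything else is a straightforward transposition of the standard one-variable theory.
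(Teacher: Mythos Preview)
The paper does not give its own proof of this proposition; it is quoted from Barroso's monograph with a bare citation. Your argument---restricting to the complex line $x\mapsto f+xh$, composing with continuous linear forms to reduce to the scalar Cauchy formula, and recovering the identity in $F$ via Hahn--Banach separation in the completion---is correct and is precisely the standard route one finds in the cited reference; the rescaling $\eta'=\eta/\norm{h}{a}$ for the estimate is likewise the textbook move.
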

Notice that a variation on this presentation of Cauchy's formula can
be used to define completely the values of the $p$-symmetric linear
mapping (see \textbf{\emph{\cite[p229]{Barro}}})
\begin{align}
\p[p]{\Lambda}ff\left(h_{1},\cdots,h_{p}\right) & =\frac{1}{2\ii\pi}\oint_{\nf{\eta}p\ww S^{1}}\cdots\oint_{\nf{\eta}p\ww S^{1}}\Lambda\left(f+\sum_{j=1}^{p}x_{j}h_{j}\right)\prod_{j=1}^{p}\frac{\dd x_{j}}{x_{j}^{2}}\,,\label{eq:Cauchy_multilin}
\end{align}
so that
\begin{align}
\norm{\frac{1}{p!}\p[p]{\Lambda}ff\left(h_{1},\cdots,h_{p}\right)}b & \leq\frac{p^{p}}{p!}\times\frac{\prod_{j=1}^{p}\norm{h_{j}}a}{\eta^{p}}\sup_{\norm ua\leq\eta}\norm{\Lambda\left(f+u\right)}b\,.\label{eq:estim_Cauchy_multilin}
\end{align}
This estimate is optimal, and can still be useful for studying convergence
of power series since $\frac{p^{p}}{p!}$ is sub-geometric.

\subsection{Fréchet- and Gâteaux-holomorphy}

~
\begin{defn}
\label{def_G-_F-_holo}Let $\Lambda~:~U\to F$ be a mapping defined
on a non-empty open set $U\subset E$. In the following $\hat{F}$
represents the topological Hausdorff completion of $F$ (as a uniform
space).
\begin{enumerate}
\item We say that it is \textbf{G-holomorphic} (meaning Gâteaux-holomorphic)
if for all finite dimensional linear subspace $S<E$ the restriction
$\Lambda|_{S\cap U}$ is analytic. This is equivalent (\cite[p51]{Mazette})
to requiring that for all $f\in U$ and $h\in E$ the map $x\mapsto\Lambda\left(f+xh\right)$
be analytic at $0\in\ww C$.
\item We say that it is \textbf{F-holomorphic} (meaning Fréchet-holomorphic)
if it is differentiable at every point $f\in U$ in the following
sense : there exists a continuous linear map $\partial_{f}\Lambda\in\mathcal{L}\left(E\to\hat{F}\right)$
such that for all $b\in B$ we can find $a\in A$ and some function
$\epsilon\,:\,U-f\to\ww R_{\geq0}$ with

\begin{itemize}
\item for all $h\in U-f$ we have 
\begin{eqnarray*}
\norm{\Lambda\left(f+h\right)-\Lambda\left(f\right)-\partial_{f}\Lambda\left(h\right)}b & \leq & \varepsilon\left(h\right)\norm ha\,,
\end{eqnarray*}

\item $\lim_{h\to0}\varepsilon\left(h\right)=0$.
\end{itemize}
\end{enumerate}
\end{defn}
\begin{rem}
If $\Lambda$ is analytic then it is F-holomorphic and
\begin{eqnarray*}
\p[1]{\Lambda}ff & = & \partial_{f}\Lambda.
\end{eqnarray*}
If $\Lambda$ is F-holomorphic then it is continuous.\end{rem}
\begin{thm}
\label{thm:anal_G_F_eqv}\cite[p246]{Barro}\cite[p62]{Mazette}Consider
a mapping $\Lambda~:~U\to F$ with $U$ an open set of $E$. The following
propositions are equivalent
\begin{enumerate}
\item $\Lambda$ is analytic,
\item $\Lambda$ is amply bounded and G-holomorphic,
\item $\Lambda$ is F-holomorphic.
\end{enumerate}
\end{thm}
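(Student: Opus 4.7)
The plan is to establish the cycle $(1)\Rightarrow(3)\Rightarrow(2)\Rightarrow(1)$, with almost all of the technical content concentrated in the last implication; the other two are direct consequences of already-stated results. First I would prove $(1)\Rightarrow(3)$: if $\Lambda$ is analytic at $f$, set $\partial_{f}\Lambda:=\p[1]{\Lambda}ff$, which is continuous linear by definition, and use the Cauchy estimates from Proposition~\ref{prop:anal_Cauchy_integ} applied to $p\geq 2$ to bound the remainder $\Lambda(f+h)-\Lambda(f)-\partial_{f}\Lambda(h)$ by $C_{b}\norm{h}{a}^{2}/\eta^{2}\cdot(1-\norm{h}{a}/\eta)^{-1}$, which is $o(\norm{h}{a})$. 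Next, F-holomorphy forces continuity (hence ample boundedness) by the remark following Definition~\ref{def_G-_F-_holo}, so only G-holomorphy remains for $(3)\Rightarrow(2)$.

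To obtain G-holomorphy from F-holomorphy, fix $f\in U$ and $h\in E$ and consider $\gamma\,:\,x\mapsto\Lambda(f+xh)$ on a small disk $\eta\ww D\subset\ww C$. The F-differential at $f+xh$ evaluated at $h$ provides a complex derivative $\gamma'(x)=\partial_{f+xh}\Lambda(h)$ at every point; composing with any continuous linear form $\varphi$ on $F$ produces a classically holomorphic scalar function $\varphi\circ\gamma$. Since $F$ is a Hausdorff locally convex space, Hahn--Banach guarantees enough such $\varphi$ to apply the Cauchy integral formula in the completion $\hat{F}$ and recover the Taylor expansion of $\gamma$, yielding analyticity of $\Lambda$ along any complex affine line, hence along any finite-dimensional subspace.

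The heart of the proof is $(2)\Rightarrow(1)$. Fix $f\in U$; ample boundedness provides a balanced neighborhood $W$ of $0$ and, for every $b\in B$, a constant $M_{b}:=\sup_{u\in W}\norm{\Lambda(f+u)}{b}<\infty$. Choose $a\in A$ and $\eta>0$ with $\ball[a][0][\eta]\subset W$. Following~\eqref{eq:Cauchy_multilin}, define candidate Taylor coefficients by
\begin{align*}
P_{p}(h_{1},\ldots,h_{p}) & :=\frac{1}{(2\ii\pi)^{p}}\oint_{\nf{\eta}p\sone}\cdots\oint_{\nf{\eta}p\sone}\Lambda\!\left(f+\sum_{j=1}^{p}x_{j}h_{j}\right)\prod_{j=1}^{p}\frac{\dd{x_{j}}}{x_{j}^{2}}\,,
\end{align*}
\emph{a priori} in $\hat{F}$. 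Applying G-holomorphy to the finite-dimensional subspace $\mathrm{span}(h_{1},\ldots,h_{p})$ identifies $P_{p}$ with the genuine polarized $p$-th Taylor coefficient of the analytic restriction $\Lambda|_{\mathrm{span}(h_{1},\ldots,h_{p})\cap U}$, which is a symmetric $p$-linear form with values in $F$. Estimate~\eqref{eq:estim_Cauchy_multilin} then furnishes
\begin{align*}
\norm{P_{p}(h^{\oplus p})}{b} & \leq\frac{M_{b}}{\eta^{p}}\norm{h}{a}^{p}\,,
\end{align*}
so the series $\sum_{p\geq 0}P_{p}(h^{\oplus p})$ converges normally on $\ball[a][0][\eta/2]$, uniformly in every $b$-seminorm. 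Its pointwise sum equals $\Lambda(f+h)$: applying the classical scalar Taylor expansion to the holomorphic map $x\mapsto\Lambda(f+xh)$ on a disk slightly larger than the unit disk and evaluating at $x=1$ does the job, since G-holomorphy guarantees analyticity of this scalar-parameterized restriction.

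The main obstacle is ensuring that each $P_{p}$ is an honest continuous $p$-linear map valued in $F$ (rather than in $\hat{F}$), and that the candidate series actually converges for the topology of $F$. This is exactly where ample boundedness enters, and it is the one hypothesis that cannot be dispensed with: without it, the Cauchy bound above fails and only convergence in $\hat{F}$ can be claimed. Once uniform convergence in each $\norm{\bullet}{b}$ is secured, Theorem~\ref{thm:anal_gene_prop}(3) certifies \emph{a posteriori} continuity of every $P_{p}$ and upgrades the expansion to analyticity in the sense of Definition~\ref{def_anal}, closing the cycle.
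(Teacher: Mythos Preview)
The paper does not supply its own proof of this theorem: it is quoted as a known result with pointers to \cite[p246]{Barro} and \cite[p62]{Mazette}, and the text moves directly to a remark. There is therefore nothing to compare against in the paper itself.

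That said, your argument is the standard one and is essentially what one finds in the cited references. The cycle $(1)\Rightarrow(3)\Rightarrow(2)\Rightarrow(1)$ is the right organization; the use of Cauchy estimates for $(1)\Rightarrow(3)$, of weak holomorphy plus Hahn--Banach for $(3)\Rightarrow(2)$, and of G-holomorphy on finite-dimensional slices to identify the integral-defined $P_{p}$ with genuine $F$-valued Taylor coefficients for $(2)\Rightarrow(1)$ are all correct. Two small points worth tightening: first, in your $(2)\Rightarrow(1)$ step the neighborhood $W$ (hence $a$ and $\eta$) furnished by ample boundedness may depend on $b$, so you should make explicit that the $P_{p}$ themselves are independent of this choice (which follows, as you note, from their identification with the intrinsic Taylor coefficients on finite-dimensional restrictions); second, the diagonal estimate $\norm{P_{p}(h^{\oplus p})}{b}\leq M_{b}\eta^{-p}\norm{h}{a}^{p}$ is cleanest obtained from the single-variable Cauchy integral of Proposition~\ref{prop:anal_Cauchy_integ} rather than from the polarized formula~\eqref{eq:estim_Cauchy_multilin}, which carries the extraneous factor $p^{p}/p!$. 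Your final appeal to Theorem~\ref{thm:anal_gene_prop}(3) to recover continuity of the $P_{p}$ a posteriori is exactly the right closing move.
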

\begin{rem}
Non-continuous linear maps provide examples of G-holomorphic functions
which are not analytic. It is likewise possible to build two G-holomorphic
mappings whose composition is not G-holomorphic anymore (\cite[p63]{Mazette}).
In view of the results presented earlier in this section these maps
obviously fail to be amply bounded.
\end{rem}

\subsection{Multivariate analyticity}

~

Many good properties of finite dimensional complex analysis persist.
In particular we dispose of Hartogs theorems regarding separate analyticity
for analytic mappings defined on a Cartesian product of finitely many
locally convex spaces equipped with the product topology. This is
indeed a direct consequence of the characterization of Theorem~\ref{thm:anal_G_F_eqv}.
\begin{thm}
\label{thm:anal_Hartogs}\textbf{\emph{(Hartogs lemma)}} Let $\Lambda\,:\,W_{1}\times\cdots\times W_{m}\to F$
be a map from a product of open sets in locally convex spaces. Then
$\Lambda$ is analytic if, and only if, $\Lambda$ is separately analytic,
meaning that for all choices of $p\in\left\{ 1,\cdots,m\right\} $
and of $\mathbf{f}\in\prod_{j}W_{j}$ the mapping $f\mapsto\Lambda\left(\mathbf{f}\haat pf\right)$
is analytic on $W_{p}$.
\end{thm}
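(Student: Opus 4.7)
The $(\Rightarrow)$ direction is immediate: the embedding $f \mapsto \mathbf{f} \haat p f$ from $W_p$ into $\prod_j W_j$ is continuous and affine, hence analytic by Theorem~\ref{thm:anal_gene_prop}~(4), and composition with the analytic $\Lambda$ is analytic by Theorem~\ref{thm:anal_gene_prop}~(6).

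For the $(\Leftarrow)$ direction I would argue by induction on $m$, the case $m=1$ being tautological, so that it suffices to handle $m=2$. Fix $(f_1^0, f_2^0) \in W_1 \times W_2$; by Theorem~\ref{thm:anal_G_F_eqv} it is enough to establish G-holomorphy and ample boundedness of $\Lambda$ near this point.

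G-holomorphy follows from the classical finite-dimensional Hartogs theorem. Given any direction $(h_1, h_2) \in E_1 \times E_2$, the auxiliary $F$-valued map
\begin{eqnarray*}
\Phi(x_1, x_2) & := & \Lambda(f_1^0 + x_1 h_1,\, f_2^0 + x_2 h_2)
\end{eqnarray*}
is defined on a small bidisc $D \subset \mathbb{C}^2$ and is separately analytic in $x_1$ and $x_2$ by the hypothesis. Composing with continuous linear forms $\varphi \in F'$ (which separate points by Hahn-Banach, since $F$ is Hausdorff locally convex) reduces to scalar separately analytic functions on $D$, to which the classical scalar Hartogs theorem applies; lifting the resulting joint scalar analyticity back to $F$ via the weak-strong analyticity equivalence and restricting to the diagonal $x_1 = x_2 = x$ delivers analyticity of $x \mapsto \Lambda(\mathbf{f}^0 + x\mathbf{h})$ at $0$.

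The main obstacle is ample boundedness. My plan is to build the full Taylor expansion of $\Lambda$ at $(f_1^0, f_2^0)$ through iterated Cauchy integrals,
\begin{eqnarray*}
Q_{p,q}(h_1^{\oplus p}, h_2^{\oplus q}) & := & \frac{1}{(2\pi\ii)^2}\oint_{|x_1|=\eta_1}\oint_{|x_2|=\eta_2}\Lambda(f_1^0 + x_1 h_1,\, f_2^0 + x_2 h_2)\,\frac{dx_1\,dx_2}{x_1^{p+1}\,x_2^{q+1}},
\end{eqnarray*}
for $(h_1, h_2)$ in a common seminorm-ball. The delicate step is establishing a bound on $\|\Phi\|_b$ over the compact torus $|x_1|=\eta_1,\,|x_2|=\eta_2$ that is uniform in $(h_1, h_2)$ ranging in the ball; this is where the separate ample boundedness of the two partial analytic maps $f \mapsto \Lambda(f, f_2^0)$ and $f \mapsto \Lambda(f_1^0, f)$ must be chained through one-variable Cauchy estimates, exploiting the continuity of $\Phi$ already granted by the G-holomorphy just proved. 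Once this uniform bound is secured, Proposition~\ref{prop:anal_Cauchy_integ} controls each $Q_{p,q}$ geometrically, normal convergence of the double series $\sum_{p,q \geq 0} Q_{p,q}(h_1^{\oplus p}, h_2^{\oplus q})$ on a smaller polydisc follows, the sum is identified with $\Lambda(f_1^0 + h_1, f_2^0 + h_2)$ by two applications of the one-variable Taylor theorem, and the geometric bound on the series is precisely the ample boundedness sought.
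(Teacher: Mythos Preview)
Your approach is essentially the one the paper has in mind: the paper's entire argument is the single sentence preceding the statement, namely that Hartogs is ``a direct consequence of the characterization of Theorem~\ref{thm:anal_G_F_eqv}'', i.e.\ of the equivalence analytic $\Leftrightarrow$ G-holomorphic and amply bounded $\Leftrightarrow$ F-holomorphic. You have correctly unpacked this into the two subgoals (G-holomorphy via finite-dimensional Hartogs on slices, then ample boundedness) and correctly flagged the second as the substantive point.

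Two places in your write-up deserve tightening. First, in the G-holomorphy step you invoke a ``weak--strong analyticity equivalence'' to pass from scalar joint analyticity of $\varphi\circ\Phi$ back to $F$-valued joint analyticity of $\Phi$; this equivalence is not stated anywhere in the paper and is not automatic for arbitrary Hausdorff locally convex $F$ (it typically needs local boundedness or sequential completeness). A cleaner route, and the one the references \cite{Barro,Mazette} actually follow, is to appeal directly to the vector-valued Hartogs theorem on finite-dimensional domains, which is established there and does not pass through the dual. Second, your plan for ample boundedness is honest about being a plan: the phrase ``exploiting the continuity of $\Phi$ already granted by the G-holomorphy just proved'' is circular as written, since G-holomorphy alone does not give continuity (cf.\ the remark after Theorem~\ref{thm:anal_G_F_eqv}); you need the joint analyticity of $\Phi$ on the bidisc, which is exactly what the finite-dimensional vector-valued Hartogs provides. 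Once you grant that, the iterated Cauchy integral and geometric control you describe do go through. In short: same strategy as the paper, more detail than the paper supplies, but the two steps you mark as delicate are genuinely delicate and are ultimately discharged by citing the finite-dimensional vector-valued Hartogs theorem from the references rather than by an \emph{ab initio} argument.
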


\subsection{The case of ultrabornological spaces}

~

The following universal characterization will be useful in practice:
\begin{thm}
\textbf{\emph{\label{thm:inductive_anal}(Universal property of the
analytic inductive limit \cite[p254]{Barro})}} Let $E=\lim_{\rightarrow}\left(E_{k}\right)_{k\in\ww N}$
be ultrabornological, each $E_{k}$ being endowed with its Banach
topology; denote by $\iota_{r}\,:\,E_{r}\hookrightarrow E$ the canonical
(continuous) embedding. Then a map $\Lambda\,:\,U\subset E\to F$,
where $F$ is any locally convex space, is analytic if, and only if,
every $\Lambda\circ\iota_{r}$ is analytic for $r>0$.
\end{thm}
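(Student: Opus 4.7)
The forward direction is immediate from Theorem~\ref{thm:anal_gene_prop}~(6): each canonical embedding $\iota_{r}\,:\,E_{r}\hookrightarrow E$ is continuous and linear, hence entire, so $\Lambda\circ\iota_{r}$ inherits analyticity from $\Lambda$. The bulk of the proof is therefore the converse.

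For the converse I would verify the criterion of Theorem~\ref{thm:anal_G_F_eqv}, showing that $\Lambda$ is both G-holomorphic and amply bounded on $U$. G-holomorphy is the easy half: any finite-dimensional subspace $S<E$ is spanned by finitely many vectors, each of which belongs to some $E_{k}$; because the family $\left(E_{k}\right)_{k\in\ww N}$ is directed and countable there is a single index $r$ with $S\subset\iota_{r}\left(E_{r}\right)$. Then $\Lambda|_{S\cap U}$ coincides with the restriction of the analytic map $\Lambda\circ\iota_{r}$ to the finite-dimensional subspace $\iota_{r}^{-1}\left(S\right)\subset E_{r}$, and this restriction is automatically analytic since the Taylor coefficients of $\Lambda\circ\iota_{r}$, being continuous symmetric multilinear maps on $E_{r}$, restrict to continuous symmetric multilinear maps on a finite-dimensional subspace.

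The substantive work is ample boundedness. Fix $f\in U$ and a seminorm $\norm{\bullet}b$ on $F$. For each $r$, applying Theorem~\ref{thm:anal_G_F_eqv} to the analytic map $\Lambda\circ\iota_{r}$ at $f_{r}:=\iota_{r}^{-1}\left(f\right)$ furnishes an open ball $W_{r}\subset E_{r}$ around $0$ and a finite constant $M_{r}$ with $\norm{\Lambda\left(f+\iota_{r}\left(h\right)\right)}b\leq M_{r}$ for $h\in W_{r}$. Uniqueness of Taylor expansions (Theorem~\ref{thm:anal_gene_prop}~(1)), applied \emph{via} restriction to finite-dimensional subspaces shared between different $E_{r}$'s, shows that the Taylor coefficients $P_{p}^{\left(r\right)}$ of $\Lambda\circ\iota_{r}$ at $f_{r}$ are compatible across $r$ and therefore assemble into symmetric $p$-linear maps $P_{p}\,:\,E^{p}\to F$ whose restriction to each $E_{r}^{p}$ is continuous. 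The ultrabornological hypothesis on $E$ is then used to promote this step-wise continuity of each $P_{p}$ to genuine continuity on $E$. Once this is achieved, the Cauchy estimates of Proposition~\ref{prop:anal_Cauchy_integ} applied on each $W_{r}$ give uniform control of the remainders $\Lambda\left(f+h\right)-\sum_{p\leq N}P_{p}\left(h^{\oplus p}\right)$ on the absolutely convex hull of $\bigcup_{r}\iota_{r}\left(W_{r}\right)$, which is a neighborhood of $0$ in $E$ by construction of the inductive-limit topology. This delivers ample boundedness and the analytic expansion at $f$ in one shot.

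The step I expect to be the main obstacle is precisely the promotion of the polynomials $P_{p}$ from step-wise continuous to globally continuous on $E$, because the universal property of ultrabornological spaces is usually formulated for \emph{linear} maps. I would bypass this by a bornological argument: a $p$-homogeneous polynomial on a bornological space is continuous if and only if it is bounded on bounded sets, and in a countable inductive limit of Banach spaces subject to the ultrabornological hypothesis the bounded sets are absorbed by some step $E_{r}$, so continuity on each $E_{r}$ suffices. A subsidiary concern is arranging the Cauchy estimates on the different $E_{r}$'s compatibly enough for normal convergence of the glued Taylor series on a single neighborhood of $0$, which is what ultimately grants ample boundedness and closes the argument \emph{via} Theorem~\ref{thm:anal_G_F_eqv}.
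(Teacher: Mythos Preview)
The paper does not supply its own proof of this statement: it is quoted verbatim as a result from \cite[p254]{Barro} and is used as a black box thereafter. There is therefore no in-paper argument against which to compare your proposal.

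That said, your outline follows the standard route and is broadly sound; the characterization of Theorem~\ref{thm:anal_G_F_eqv} (G-holomorphy plus ample boundedness) is exactly the tool one uses here. Two small points you would need to tidy in a full write-up: first, when you set $f_{r}:=\iota_{r}^{-1}(f)$ you are implicitly assuming $f\in\iota_{r}(E_{r})$, which only holds for $r$ large enough, so the argument should be restricted to such indices; second, your ``subsidiary concern'' about gluing the Cauchy estimates across the steps into a single $E$-neighborhood is in fact the heart of the matter, and deserves more than a closing sentence --- this is precisely where the ultrabornological/regularity hypothesis on the inductive limit earns its keep (bounded sets of $E$ are absorbed by some step), and without making that explicit the passage from step-wise normal convergence to genuine analyticity on $U$ remains a gap.
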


\subsection{\label{sub:analyticity_germs}Specifics for spaces of germs}

~

From now on $\left(E,\left(\norm{\bullet}a\right)_{a\in A}\right)$
(\emph{resp.} $\left(F,\left(\norm{\bullet}b\right)_{b\in B}\right)$)
stands for the vector space $\germ z$ (\emph{resp. $\germ w$}) endowed
with a Hausdorff, locally convex topology such as a sequential topology
or the inductive topology. Before diving into more details, let us
first state once and for all that the analytic structure put on the
finite-dimensional vector subspaces $\pol z{\leq N}\simeq\ww C^{N+1}$
is that induced, in the sense of Definition~\ref{def_anal_space},
by the (continuous) jets projectors $J_{N}\,:\,\germ z\twoheadrightarrow\pol z{\leq N}$.
It coincides with the standard analytic structure, and the space of
analytic functions with respect to $J_{N}$ is the same as the usual
holomorphic functions on open sets of $\ww C^{N+1}$.

\subsubsection{Uniform convergence of the Taylor series}

~
\begin{prop}
\label{prop:ana_cvg_norm_unif} Let $\Lambda$ be analytic on a neighborhood
of some $f\in E$, with Taylor coefficients $\Lambda_{p}\in\mathcal{L}_{p}\left(E\to F\right)$.
For any $b\in B$ there exists $a\in A$ such that the formal power
series $\sum_{p}\Lambda_{p}\left(h^{\oplus p}\right)$ converges $\norm{\bullet}b$-normally
towards $\Lambda\left(f+h\right)$, uniformly in $\norm ha$ small
enough. Besides for such values of $h$
\begin{eqnarray*}
\Lambda\left(f+h\right) & = & \lim_{m\to\infty}\sum_{\mathbf{j}\in\ww N^{m}}\frac{\left|\mathbf{j}\right|!}{\mathbf{j}!}\Lambda_{\left|\mathbf{j}\right|}\left(\oplus_{n}\left(z^{n}\right)^{\oplus j_{n}}\right)\prod_{n}h_{n}^{j_{n}}\\
 & = & \sum_{\mathbf{j}\in\ww N^{\left(\ww N\right)}}\frac{\left|\mathbf{j}\right|!}{\mathbf{j}!}\Lambda_{\left|\mathbf{j}\right|}\left(\oplus_{n}\left(z^{n}\right)^{\oplus j_{n}}\right)\prod_{n}h_{n}^{j_{n}}\,.
\end{eqnarray*}
\end{prop}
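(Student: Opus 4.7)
The plan is to derive everything from Cauchy's estimates (Proposition~\ref{prop:anal_Cauchy_integ}) combined with the density of polynomial jets in $E$.

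Fix $b\in B$. Ample boundedness (Theorem~\ref{thm:anal_G_F_eqv}) provides $a\in A$ and $\eta>0$ such that $\ball[a][f][\eta]\subset U$ and $M:=\sup_{\norm{u}{a}\leq\eta}\norm{\Lambda(f+u)}{b}<\infty$. The scalar Cauchy estimate of Proposition~\ref{prop:anal_Cauchy_integ} immediately yields
\begin{align*}
\norm{\Lambda_{p}(h^{\oplus p})}{b} & \leq M\Bigl(\frac{\norm{h}{a}}{\eta}\Bigr)^{p},
\end{align*}
which proves the first assertion: $\norm{\bullet}{b}$-normal and uniform convergence of $\sum_{p}\Lambda_{p}(h^{\oplus p})$ on the ball $\norm{h}{a}<\eta$.

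For the combinatorial formula, the idea is to apply the above to the truncated jets $h^{[N]}:=J_{N}(h)=\sum_{n=0}^{N}h_{n}z^{n}$, for which the relevant expansion is purely finite-dimensional. Lemma~\ref{lem:jet_is_continuous} gives $\norm{h^{[N]}}{a}\leq\norm{h}{a}$, and $h^{[N]}\to h$ in $E$ either by Proposition~\ref{prop:topo_completion}~(2) in the sequential case, or by the very definition of $\igerm{z}$. Since $h^{[N]}$ is a finite linear combination of the monomials $z^{n}$, the $p$-linearity and symmetry of $\Lambda_{p}$ produce the multinomial identity
\begin{align*}
\Lambda_{p}\bigl((h^{[N]})^{\oplus p}\bigr) & =\sum_{\mathbf{j}\in\ww N^{N+1},\,\left|\mathbf{j}\right|=p}\frac{p!}{\mathbf{j}!}\Bigl(\prod_{n}h_{n}^{j_{n}}\Bigr)\,\Lambda_{p}\bigl(\oplus_{n}(z^{n})^{\oplus j_{n}}\bigr).
\end{align*}
Summing over $p\geq0$ (the series converges by the first paragraph applied to $h^{[N]}$) turns the left-hand side into $\Lambda(f+h^{[N]})$ and the right-hand side into $\sum_{\mathbf{j}\in\ww N^{N+1}}\frac{\left|\mathbf{j}\right|!}{\mathbf{j}!}\Lambda_{\left|\mathbf{j}\right|}(\oplus_{n}(z^{n})^{\oplus j_{n}})\prod_{n}h_{n}^{j_{n}}$, i.e.\ the expression of the statement with $m=N+1$. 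Continuity of $\Lambda$ (Theorem~\ref{thm:anal_gene_prop}~(2)) and $h^{[N]}\to h$ deliver the first equality upon letting $N\to\infty$.

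To upgrade to the unconditional sum over $\ww N^{(\ww N)}$, I would apply the multilinear Cauchy bound~\eqref{eq:estim_Cauchy_multilin} to the tuple $\oplus_{n}(z^{n})^{\oplus j_{n}}$, using $\norm{z^{n}}{a}=a_{n}$. Grouping by $p=\left|\mathbf{j}\right|$ and invoking the multinomial theorem gives
\begin{align*}
\sum_{\mathbf{j}\in\ww N^{(\ww N)}}\frac{\left|\mathbf{j}\right|!}{\mathbf{j}!}\Bigl\|\Lambda_{\left|\mathbf{j}\right|}\bigl(\oplus_{n}(z^{n})^{\oplus j_{n}}\bigr)\Bigr\|_{b}\prod_{n}\left|h_{n}\right|^{j_{n}} & \leq M\sum_{p\geq0}\frac{p^{p}}{p!}\Bigl(\frac{\norm{h}{a}}{\eta}\Bigr)^{p},
\end{align*}
which converges for $\norm{h}{a}<\eta/e$ by Stirling. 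Countability of $\ww N^{(\ww N)}$ together with this normal convergence yields unconditional convergence in $F$, so the order of summation is immaterial and the second equality follows from the first after shrinking $\eta$ if necessary. The main technical point is this combinatorial repackaging; the density of jets and the passage to the limit are routine.
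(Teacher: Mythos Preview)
Your proof is correct and follows essentially the same route as the paper: ample boundedness plus the Cauchy estimates of Proposition~\ref{prop:anal_Cauchy_integ}, the multinomial expansion on finite jets, and the multilinear bound~\eqref{eq:estim_Cauchy_multilin} with Stirling for the unconditional sum. The only cosmetic difference is the order of the limiting argument: the paper first passes to the limit in $m$ inside each homogeneous term $\Lambda_{p}(h^{\oplus p})$ using continuity of $\Lambda_{p}$, then sums in $p$, whereas you sum in $p$ first to obtain $\Lambda(f+h^{[N]})$ and then invoke continuity of $\Lambda$; both variants rest on the same estimates and yield the same conclusion.
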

\begin{proof}
Take a sequence $\left(f_{n}\right)_{n\in\ww N}\subset\germ z$. Using
the multinomial formula we derive for every $m\in\ww N$ 
\begin{eqnarray*}
\Lambda_{p}\left(\left(\sum_{n=0}^{m}f_{n}\right)^{\oplus p}\right) & = & \sum_{\mathbf{j}\in\ww N^{m}\,,\,\left|\mathbf{j}\right|=p}\frac{p!}{\mathbf{j}!}\Lambda_{p}\left(\oplus_{n=0}^{m}\left(f_{n}\right)^{\oplus j_{n}}\right)\,.
\end{eqnarray*}
Since $\Lambda$ is analytic for all $b\in B$ there exists $\eta>0$
small enough and $a\in A$ such that $\sup_{\norm ua=\eta}\norm{\Lambda\left(f+u\right)}b$
is finite ; let us write $K$ this value. We have for for all $m\in\ww N$
and all $h=\sum_{n}h_{n}z^{n}\in\germ z$, invoking Cauchy's extended
estimate~\eqref{eq:estim_Cauchy_multilin} and Stirling~estimate~\eqref{eq:stirling_estim}:
\begin{eqnarray*}
\norm{\Lambda_{p}\left(\left(\sum_{n=0}^{m}h_{n}z^{n}\right)^{\oplus p}\right)}b & = & \norm{\sum_{\mathbf{j}\in\ww N^{m}\,,\,\left|\mathbf{j}\right|=p}\frac{p!}{\mathbf{j}!}\Lambda_{p}\left(\oplus_{n=0}^{m}\left(h_{n}z^{n}\right)^{\oplus j_{n}}\right)}b\\
 & \leq & K\left(\frac{e}{\eta}\norm ha\right)^{p}\,.
\end{eqnarray*}
The left-hand side thereby admits a limit as $m\to\infty$ and because
$\Lambda_{p}$ is continuous we obtain
\begin{eqnarray*}
\Lambda_{p}\left(h^{\oplus p}\right) & = & \lim_{m\to\infty}\sum_{\mathbf{j}\in\ww N^{m}\,,\,\left|\mathbf{j}\right|=p}\frac{p!}{\mathbf{j}!}\Lambda_{p}\left(\oplus_{n}\left(h_{n}z^{n}\right)^{\oplus j_{n}}\right)\\
 & = & \sum_{j\in\ww N^{\ww N}\,,\,\left|j\right|=p}\frac{p!}{j!}\Lambda_{p}\left(\oplus_{n=0}^{\infty}\left(h_{n}z^{n}\right)^{\oplus j_{n}}\right)
\end{eqnarray*}
with convergence of the right-hand side in $\germ z$, and if $\norm ha<\frac{\eta}{e}$
we have 
\begin{eqnarray*}
\sum_{p}\norm{\Lambda_{p}\left(h^{\oplus p}\right)}b & \leq & K\frac{\eta}{\eta-e\norm ha}\,.
\end{eqnarray*}

\end{proof}

\subsubsection{Quasi-strong analyticity}

~

We introduce now a notion of holomorphy between spaces of germs that
does not require any \emph{a priori }topology on $\germ z$. This
notion is traditionally referred to as «the» notion of holomorphy
in $\germ z$ in the context of dynamical systems.
\begin{defn}
Let $U$ be a non-empty subset of $\germ z$.
\begin{enumerate}
\item A \textbf{germ of a holomorphic} map $\lambda\,:\:\left(\ww C^{m},0\right)\to U$
is a mapping defined on some neighborhood of $\mathbf{0}\in\ww C^{m}$
and such that the map
\begin{eqnarray*}
\lambda^{*}\,:\,\left(\mathbf{x},z\right)\longmapsto\lambda\left(\mathbf{x}\right)\left(z\right)
\end{eqnarray*}
belongs to $\germ{\mathbf{x},z}$. The set of all such germs will
be denoted by $\mathcal{O}\left(\left(\ww C^{m},0\right)\to U\right)$.
\item A map $\Lambda\,:\,U\to\germ w$ is said to be \textbf{quasi-strongly
analytic} on $U$ if for any $m\in\ww N_{>0}$ and any $\lambda\in\mathcal{O}\left(\left(\ww C^{m},0\right)\to U\right)$
the composition $\Lambda\circ\lambda$ belongs to $\mathcal{O}\left(\left(\ww C^{m},0\right)\to\germ w\right)$.
\item We extend in the obvious way these definitions to analytic maps between
analytic spaces as in Definition~\ref{def_anal_space}.
\end{enumerate}
\end{defn}
\begin{rem*}
~
\begin{enumerate}
\item The notion~(2) is called <<strong analyticity>> in \cite{GenzTey}.
Here we reserve this name to the yet stronger notion we introduce
below.
\item Notice that by definition the composition of source/range-compatible
quasi-strongly analytic mappings is again quasi-strongly analytic.
\item Each affine map $x\mapsto f+xh$ is quasi-strongly analytic for fixed
$f\in U$ and $h\in\germ z$.
\item It is \emph{not }sufficient to consider the case $m=1$, as opposed
to what happens for G-holomorphic mappings.
\end{enumerate}
\end{rem*}
We begin with a characterization of quasi-strongly analytic maps in
terms of power-wise holomorphy:
\begin{prop}
\label{prop:coef_holo}Let a mapping $\Lambda~:~U\to F$ be given,
where $U\subset\germ z$, and write $\Lambda\left(f\right)=\sum_{n\geq0}T_{n}\left(\Lambda\left(f\right)\right)w^{n}$.
The following assertions are equivalent:
\begin{enumerate}
\item $\Lambda$ is quasi-strongly analytic,
\item for all $\lambda\in\mathcal{O}\left(\left(\ww C^{m},0\right)\to U\right)$
the following two conditions are fulfilled

\begin{itemize}
\item $T_{n}\left(\Lambda\circ\lambda\right)\in\mathcal{O}\left(\left(\ww C^{m},0\right)\to\ww C\right)$
is analytic on a disc whose size does not depend on $n$,
\item ~
\begin{eqnarray*}
\liminf_{\mathbf{x}\to0}\rad{\Lambda\left(\lambda\left(\mathbf{x}\right)\right)} & > & 0.
\end{eqnarray*}

\end{itemize}
\end{enumerate}
\end{prop}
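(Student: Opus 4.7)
The plan is to treat the two implications separately, the forward being a one-line Cauchy computation and the reverse requiring the extension of a locally obtained joint holomorphy.

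For $\left(1\right)\Rightarrow\left(2\right)$, set $\Phi\left(\mathbf{x},w\right):=\Lambda\left(\lambda\left(\mathbf{x}\right)\right)\left(w\right)$; by quasi-strong analyticity $\Phi\in\germ{\mathbf{x},w}$, so $\Phi$ is holomorphic on some polydisc $\polyd[\mathbf{0}][\rho]\times\polyd[0][r]$. The slice $w\mapsto\Phi\left(\mathbf{x},w\right)$ is holomorphic on $\polyd[0][r]$ for every $\mathbf{x}\in\polyd[\mathbf{0}][\rho]$, whence $\rad{\Lambda\left(\lambda\left(\mathbf{x}\right)\right)}\geq r>0$ throughout the polydisc, giving the $\liminf$ condition; Cauchy's integral formula in $w$,
\begin{align*}
T_{n}\left(\Lambda\circ\lambda\right)\left(\mathbf{x}\right) & =\frac{1}{2\pi\ii}\oint_{\left|w\right|=r/2}\Phi\left(\mathbf{x},w\right)\frac{\dd w}{w^{n+1}}\,,
\end{align*}
exhibits each coefficient as holomorphic in $\mathbf{x}$ on $\polyd[\mathbf{0}][\rho]$, a disc whose size does not depend on $n$.

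For $\left(2\right)\Rightarrow\left(1\right)$ the task is to show that the pointwise-defined $\Phi\left(\mathbf{x},w\right):=\Lambda\left(\lambda\left(\mathbf{x}\right)\right)\left(w\right)$ is holomorphic on a neighborhood of $\left(\mathbf{0},0\right)$. Using the $\liminf$ assumption together with the common disc of $\left(2\right)$, choose $r>0$ and an open connected neighborhood $V$ of $\mathbf{0}$, contained in that common disc, on which $\rad{\Lambda\left(\lambda\left(\mathbf{x}\right)\right)}>r$; pick $r_{0}$ with $0<r_{0}<r$. For every $\mathbf{x}\in V$ the series $S\left(\mathbf{x}\right):=\sum_{n}\left|T_{n}\left(\Lambda\left(\lambda\left(\mathbf{x}\right)\right)\right)\right|r_{0}^{n}$ converges, so
\begin{align*}
V & =\bigcup_{k\in\ww N}\left\{ \mathbf{x}\in V\,:\,S\left(\mathbf{x}\right)\leq k\right\}
\end{align*}
expresses $V$ as a countable union of sets closed in $V$ (each $T_{n}\left(\Lambda\circ\lambda\right)$ being continuous, every sub-level $\left\{ \left|T_{n}\right|\leq kr_{0}^{-n}\right\} $ is closed, as is every partial-sum sub-level). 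Baire's theorem produces $k_{0}\in\ww N$ and an open ball $B\subset V$ on which $S\leq k_{0}$; consequently $\left|T_{n}\left(\Lambda\left(\lambda\left(\mathbf{x}\right)\right)\right)\right|\leq k_{0}r_{0}^{-n}$ uniformly for $\mathbf{x}\in B$, and the series $\sum_{n}T_{n}\left(\Lambda\left(\lambda\left(\mathbf{x}\right)\right)\right)w^{n}$ converges normally on $B\times\polyd[0][r_{0}]$, realizing $\Phi$ as jointly holomorphic there.

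The main obstacle is to propagate this joint holomorphy from the off-center product $B\times\polyd[0][r_{0}]$ to a neighborhood of $\left(\mathbf{0},0\right)$, since the Baire ball $B$ need not contain $\mathbf{0}$. I would exploit the translation-invariance of hypothesis $\left(2\right)$: for every $\mathbf{x}_{0}\in V$ the shifted germ $\tilde{\lambda}\left(\mathbf{x}\right):=\lambda\left(\mathbf{x}_{0}+\mathbf{x}\right)$ again belongs to $\mathcal{O}\left(\left(\ww C^{m},0\right)\to U\right)$ and satisfies $\left(2\right)$ at $\mathbf{0}$, so the Baire construction applied around every $\mathbf{x}_{0}\in V$ furnishes a ball near $\mathbf{x}_{0}$ on which $\Phi$ is jointly holomorphic. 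The set of points of $V\times\polyd[0][r_{0}]$ at which $\Phi$ is locally jointly holomorphic is thereby open and dense. The partial sums
\begin{align*}
g_{N}\left(\mathbf{x},w\right) & :=\sum_{n\leq N}T_{n}\left(\Lambda\left(\lambda\left(\mathbf{x}\right)\right)\right)w^{n}
\end{align*}
are jointly holomorphic on the common disc of $\left(2\right)$ times $\ww C$, converge pointwise to $\Phi$ on $V\times\polyd[0][r]$, and are locally uniformly bounded on each Baire ball; Vitali's theorem upgrades their convergence to locally uniform on the dense open subset, while the pointwise convergence on the remaining points of $V\times\polyd[0][r]$ combined with Hartogs' theorem on separate analyticity closes them and certifies joint holomorphy of $\Phi$ on the whole $V\times\polyd[0][r]$. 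In particular $\Phi\in\germ{\mathbf{x},w}$ and $\Lambda\circ\lambda\in\mathcal{O}\left(\left(\ww C^{m},0\right)\to\germ w\right)$, which is the quasi-strong analyticity of $\Lambda$.
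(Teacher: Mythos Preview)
Your $(1)\Rightarrow(2)$ is correct and coincides with the paper's argument: both extract the Taylor coefficients via Cauchy's formula in $w$ from the jointly holomorphic $\Phi(\mathbf{x},w)=\Lambda(\lambda(\mathbf{x}))(w)$.

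For $(2)\Rightarrow(1)$ you are considerably more explicit than the paper, whose proof is the bare assertion that once $\gamma$ is chosen so that the radius of convergence exceeds $\eta$ on $\gamma\overline{\ww D}^{m}$, the series $\sum_{n}T_{n}(\Lambda(\lambda(\mathbf{x})))w^{n}$ is absolutely convergent on $\gamma\ww D^{m}\times\tfrac{\eta}{2}\ww D$ and therefore lies in $\germ{\mathbf{x},w}$. Your Baire step, producing a ball $B\subset V$ with uniform bounds $|T_{n}|\le k_{0}r_{0}^{-n}$, is correct and is the first genuine ingredient of any rigorous argument; it already goes beyond what the paper writes out.

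The gap is in your last paragraph. To invoke Hartogs' theorem on separate analyticity you would need $\mathbf{x}\mapsto\Phi(\mathbf{x},w)$ to be holomorphic on \emph{all} of $V$ for each fixed $w$; you have established this only on a dense open subset, since a pointwise limit of holomorphic functions (your $g_{N}(\cdot,w)$) is, by Osgood's theorem, holomorphic merely on a residual set. Vitali's theorem cannot close the remaining points either: it requires local uniform boundedness on the domain where you want the conclusion, not just on a dense open part of it. Your translation trick makes the good set dense but never forces it to contain $\mathbf{0}$, so joint holomorphy at $(\mathbf{0},0)$ remains unproved. The passage from the Baire ball $B$ to a polydisc about $\mathbf{0}$ is exactly the delicate step hidden behind the paper's one-line conclusion (and behind the classical proof of Hartogs' separate-analyticity theorem itself, via the Hartogs lemma on sequences of plurisubharmonic functions applied to $u_{n}=\tfrac{1}{n}\log|T_{n}|$); your proposed Vitali/Hartogs mechanism does not perform that propagation as stated.
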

In that sense quasi-strongly analytic maps are precisely those who
are uniformly power-wise quasi-strongly analytic and who respect lower-boundedness
of the radius of convergence, a fact that was implicitly used in~\cite{GenzTey}.
\begin{proof}
Let $\lambda\in\mathcal{O}\left(\left(\ww C^{m},0\right)\to U\right)$
with $\lambda\left(0\right)=f\in U$ be given. If (1) holds then $\Lambda^{*}\,:\,\left(\mathbf{x},w\right)\mapsto\Lambda\left(\lambda\left(\mathbf{x}\right)\right)\left(w\right)$
is analytic, and
\begin{align*}
T_{n}\left(\Lambda\left(\lambda\left(\mathbf{x}\right)\right)\right) & =\frac{1}{n!}\pp{^{n}\Lambda^{*}}{w^{n}}\left(\mathbf{x},0\right)
\end{align*}
so that $T_{n}\circ\Lambda\,:\,U\to\ww C$ is quasi-strongly analytic
on $\varepsilon\ww D^{m}$ for some $\varepsilon>0$ independent on
$n$. Besides $\inf_{\norm{\mathbf{x}}{}<\varepsilon}\rad{\Lambda\left(\lambda\left(\mathbf{x}\right)\right)}>0$
by definition. Assume now that (2) holds and take $\eta>0$ strictly
less than $\liminf_{\mathbf{x}\to0}\rad{\Lambda\left(\lambda\left(\mathbf{x}\right)\right)}$,
together with $\varepsilon$ for which $T_{n}\circ\Lambda\circ\lambda$
is holomorphic on $\varepsilon\ww D^{m}$. Then there exists $\varepsilon\geq\gamma>0$
such that
\begin{align*}
\inf_{\norm{\mathbf{x}}{}=\gamma}\liminf_{n\to+\infty}\left|T_{n}\circ\Lambda\left(\lambda\left(\mathbf{x}\right)\right)\right|^{-\nf 1n} & >\eta>0\,.
\end{align*}
In particular $\sum_{n\geq0}T_{n}\left(\Lambda\left(\lambda\left(\mathbf{x}\right)\right)\right)w^{n}$
is absolutely convergent on $\gamma\ww D^{m}\times\frac{\eta}{2}\ww D$.
\end{proof}
We can establish a link between this notion of holomorphy and that
of G-holomorphy, which is only natural since the concepts look alike.
\begin{prop}
\label{prop:holo_is_G-anal}Let $\Lambda\,:\,U\to F$ be quasi-strongly
analytic and assume that the ordering between topologies $\igerm z\geq F$
holds. Then $\Lambda$ is G-holomorphic.\end{prop}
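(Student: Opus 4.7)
The plan is to verify the pointwise characterization of G-holomorphy recalled in Definition~\ref{def_G-_F-_holo}: for every $f\in U$ and $h\in E$, show that $\phi\,:\,x\mapsto\Lambda(f+xh)$ is analytic at $0\in\ww C$ as a map into $F$ (I tacitly assume $U$ is open, as is required for G-holomorphy to even make sense). First I consider the affine map $\lambda\,:\,x\mapsto f+xh$. The evaluation $\lambda^{*}(x,z)=f(z)+xh(z)$ lies manifestly in $\germ{x,z}$, and since $U$ is open and $x\mapsto\lambda(x)$ is continuous into $E$, one has $\lambda\in\ugerm U$. Applying quasi-strong analyticity yields $\Lambda\circ\lambda\in\ugerm{\germ w}$, which is to say that $\Lambda^{*}(x,w):=\Lambda(\lambda(x))(w)$ is a germ of a holomorphic function at $(0,0)\in\ww C\times\ww C$. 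Consequently there exist $r,R>0$ such that $\Lambda^{*}$ is holomorphic and bounded on the bi-disc $r\ww D\times R\ww D$.

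For each $x\in r\ww D$ the slice $w\mapsto\Lambda^{*}(x,w)$ lies in the Banach space $\mathcal{B}_{R}$; I next claim that $\tilde{\phi}\,:\,r\ww D\to\mathcal{B}_{R}$, $x\mapsto\Lambda^{*}(x,\bullet)$, is analytic in the standard Banach-valued sense. Picking $r'<r$ and setting
\begin{align*}
c_{k}(w) & :=\frac{1}{2\ii\pi}\oint_{|x|=r'}\Lambda^{*}(x,w)\frac{\dd x}{x^{k+1}}\,,
\end{align*}
the uniform bound $M:=\sup_{r\ww D\times R\ww D}|\Lambda^{*}|$ delivers the estimate $\norm{c_{k}}{R\ww D}\leq Mr'^{-k}$, whence the normal convergence of $\sum_{k\geq0}c_{k}x^{k}$ towards $\tilde{\phi}(x)$ in $\mathcal{B}_{R}$ for $|x|<r'$. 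Hence $\tilde{\phi}$ is analytic on $r'\ww D$ with values in the Banach space $\mathcal{B}_{R}$.

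To conclude, observe that $\phi$ is precisely the composition of $\tilde{\phi}$ with two continuous linear maps: the canonical embedding $\mathcal{B}_{R}\hookrightarrow\igerm w$ (continuous by the very definition of the inductive topology, Definition~\ref{def_inductive_germ}), followed by the identity $\igerm w\to F$ (continuous by the standing hypothesis $\igerm w\geq F$, which is what the ordering assumption in the statement has to mean once applied to the target space). Theorem~\ref{thm:anal_gene_prop}(5), applied twice, then certifies that $\phi\,:\,r'\ww D\to F$ is analytic, proving the G-holomorphy of $\Lambda$.

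The single technical step is the Banach-valued analyticity of $\tilde{\phi}$; this is essentially routine once the uniform bound on the bi-disc produced in the first paragraph is in hand, so no genuine obstacle is expected. The only conceptual ingredient beyond that is the transfer of analyticity along continuous linear maps, which is packaged by Theorem~\ref{thm:anal_gene_prop}(5) and is precisely where the comparability hypothesis between the target topology and the inductive one is used.
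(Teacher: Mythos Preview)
Your proof is correct and follows essentially the same route as the paper: both start from the affine family $x\mapsto f+xh$, invoke quasi-strong analyticity to obtain joint holomorphy of $(x,w)\mapsto\Lambda(f+xh)(w)$ on a bi-disc, build the Taylor coefficients in $x$ via Cauchy integrals, and use the hypothesis $\igerm w\geq F$ to transfer the resulting estimates into $F$. The only difference is packaging: the paper computes the coefficients $\Lambda_{p}(h^{\oplus p})$ directly (Lemma~\ref{lem:strong_ana_Cauchy}) and bounds each $\norm{\Lambda_{p}(h^{\oplus p})}_{b}$ by first dominating $\norm{\bullet}_{b}$ by $C\norm{\bullet}_{\rho\ww D}$ and then applying the scalar Cauchy estimate, whereas you factor the whole argument through the Banach space $\mathcal{B}_{R}$, establish Banach-valued analyticity of $\tilde\phi$ there, and push the result into $F$ via Theorem~\ref{thm:anal_gene_prop}(5); this is a clean way to isolate where the topological comparison is actually used.
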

\begin{proof}
We exploit the fact that it possible to use the usual Cauchy integral
formula of $\left(x,w\right)\mapsto\Lambda\left(f+xh\right)\left(w\right)$
to derive the Taylor coefficients of $x\mapsto\Lambda\left(f+xh\right)$.
\begin{lem}
\label{lem:strong_ana_Cauchy}Assume $\Lambda\,:\,U\to F$ is quasi-strongly
analytic. Let $f\in U$ and $h\in E$; we can choose $\eta>0$ such
that $f+2\eta\ww Dh\subset U$. Then for all $p\in\ww N$ 
\begin{eqnarray*}
\Lambda_{p}\left(h^{\oplus p}\right) & := & w\mapsto\oint_{\eta\sone}\Lambda\left(f+xh\right)\left(w\right)\frac{\dd x}{x^{p+1}}
\end{eqnarray*}
defines a $p$-linear symmetric mapping, whose values do not depend
on the choice of $\eta$ provided it is kept sufficiently small.
\end{lem}
The $p$-linear mappings $\Lambda_{p}$ need not be continuous and
Theorem~\ref{thm:anal_gene_prop}~(3) will be of help here. Because
of the extended Cauchy estimates in $F$ (Proposition~\ref{prop:anal_Cauchy_integ})
we have for all $b\in B$
\begin{eqnarray*}
\norm{\Lambda_{p}\left(h^{\oplus p}\right)}b & \leq & \eta^{-p}\sup_{\left|x\right|=\eta}\norm{\Lambda\left(f+xh\right)}b\,.
\end{eqnarray*}
Since $F\leq\igerm z$ for every $b\in B$ and every 
\begin{eqnarray*}
0 & <\rho< & \inf_{\left|x\right|=\eta}\rad{\Lambda\left(f+xh\right)}
\end{eqnarray*}
there exists a constant $C=C_{b,\rho}>0$ such that 
\begin{align*}
\norm{\Lambda\left(f+xh\right)}b & \leq C\norm{\Lambda\left(f+xh\right)}{\rho\ww D}\,.
\end{align*}
For example if $F$ is given by a sequential $B$-topology then the
Cauchy estimates in $\ww C$ (Lemma~\ref{lem:Taylor_is_continuous})
states that $C:=\norm{w\mapsto\frac{\rho}{\rho-w}}b$. As a consequence
\begin{eqnarray*}
\norm{\Lambda_{p}\left(h^{\oplus p}\right)}b & \leq & C\eta^{-p}\norm{\left(x,w\right)\mapsto\Lambda\left(f+xh\right)\left(w\right)}{\eta\ww D\times\rho\ww D}
\end{eqnarray*}
defines the general term of a normally convergent power series 
\begin{eqnarray*}
x & \longmapsto & \sum_{p=0}^{\infty}\Lambda_{p}\left(h^{\oplus p}\right)x^{p}\,,
\end{eqnarray*}
uniformly in $\left|x\right|\leq\frac{\eta}{2}$ . By construction
this mapping is analytic at $0$ and its sum is $\Lambda\left(f+xh\right)$.
This is the required property of G-holomorphy. 
\end{proof}

\subsubsection{\label{sub:strong_holomorphy}Strong analyticity}

~

It is tempting to declare that the Taylor coefficient $\Lambda_{p}$
computed in Lemma~\ref{lem:strong_ana_Cauchy} coincides with the
value of the integral
\begin{align*}
\oint_{\eta\ww S^{1}}\Lambda\left(f+xh\right)\frac{\dd x}{x^{p+1}} & .
\end{align*}
Yet nothing guarantees that this integral exists and belongs to $\germ w$.
If we require that $\Lambda$ be amply bounded and quasi-strongly
analytic then it does and equals $\Lambda_{p}\left(h^{\oplus p}\right)$.
For instance its value can be obtained as the limit of the sequence
of Riemann sums
\begin{align*}
I_{N} & :=\frac{2\ii\pi\eta}{N+1}\sum_{j=0}^{N}\frac{\Lambda\left(f+x_{j}h\right)}{x_{j}^{p}}
\end{align*}
where
\begin{align*}
x_{j} & :=\eta\exp\left(j\frac{2\ii\pi}{N+1}\right)\,.
\end{align*}
There exists $\varepsilon>0$ such that $\left(x,w\right)\in\frac{3}{2}\eta\ww D\times\varepsilon\ww D\mapsto\Lambda\left(f+xh\right)\left(w\right)$
is holomorphic and bounded, in which case
\begin{align*}
I_{N}\left(w\right) & =\frac{2\ii\pi\eta}{N+1}\sum_{j=0}^{N}\frac{\Lambda\left(f+x_{j}h\right)\left(w\right)}{x_{j}^{p}}
\end{align*}
converges as $N\to\infty$ toward $\oint_{\eta\sone}\Lambda\left(f+xh\right)\left(w\right)\frac{\dd x}{x^{p+1}}$
on the one hand, while it converges toward $\oint_{\eta\sone}\Lambda\left(f+xh\right)\frac{\dd x}{x^{p+1}}\left(w\right)$
on the other hand. This motivates the
\begin{defn}
\label{def_complete_holo}Assume $F\leq\igerm z$. An amply bounded
map $\Lambda\,:\,U\subset E\to F$ which is also quasi-strongly analytic
will be called \textbf{strongly analytic}.\end{defn}
\begin{rem*}
~
\begin{itemize}
\item Notice again that the composition of source/range compatible strongly
analytic mappings remains strongly analytic.
\item We recall that ample boundedness derives from continuity, and the
latter property will in fact be automatically guaranteed in virtue
of Theorem~\ref{thm:anal_G_F_eqv} and as a consequence of Theorem~C,
which we prove now.
\end{itemize}
\end{rem*}
\begin{prop}
\label{prop:anal_and_radius_minor_iff_strong_anal}Let a map $\Lambda\,:\,U\subset E\to F$
be given, where $U$ is an open set of $E$ and $F\leq\igerm z$.
The following assertions are equivalent:
\begin{enumerate}
\item $\Lambda$ is strongly analytic,
\item $\Lambda$ is analytic and for every $\lambda\in\mathcal{O}\left(\left(\ww C^{m},0\right)\to U\right)$
we have simultaneously
\begin{align*}
\liminf_{\mathbf{x}\to0}\rad{\Lambda\circ\lambda\left(\mathbf{x}\right)} & >0\\
\limsup_{\mathbf{x}\to0\,,\,r\to0}\norm{\Lambda\circ\lambda\left(\mathbf{x}\right)}{r\ww D} & <\infty\,.
\end{align*}

\end{enumerate}
\end{prop}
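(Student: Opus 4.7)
The plan is to treat the two implications separately, both times combining the characterization of quasi-strong analyticity of Proposition~\ref{prop:coef_holo} with standard composition results for analytic maps.

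For the direct implication (1) $\Rightarrow$ (2), I would first check that strong analyticity entails analyticity: since $F\leq\igerm z$, Proposition~\ref{prop:holo_is_G-anal} applied to the quasi-strongly analytic map $\Lambda$ yields G-holomorphy, which together with ample boundedness gives analyticity via Theorem~\ref{thm:anal_G_F_eqv} (this is essentially Theorem~D). Now fix $\lambda\in\mathcal{O}\left(\left(\ww C^{m},0\right)\to U\right)$; quasi-strong analyticity means that the function $\left(\mathbf{x},w\right)\mapsto\Lambda\left(\lambda\left(\mathbf{x}\right)\right)\left(w\right)$ lies in $\germ{\mathbf{x},w}$, hence is holomorphic and bounded, say by a constant $M$, on some polydisc $\varepsilon\ww D^{m}\times\rho\ww D$. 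Then $\rad{\Lambda\circ\lambda\left(\mathbf{x}\right)}\geq\rho$ for $\mathbf{x}\in\varepsilon\ww D^{m}$ yields the $\liminf$ estimate, while $\norm{\Lambda\circ\lambda\left(\mathbf{x}\right)}{r\ww D}\leq M$ for $\left(\mathbf{x},r\right)\in\varepsilon\ww D^{m}\times\left]0,\rho\right[$ yields the finite $\limsup$ estimate.

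For the reverse implication (2) $\Rightarrow$ (1), analyticity of $\Lambda$ immediately gives continuity by Theorem~\ref{thm:anal_gene_prop}(2), hence ample boundedness. It remains to produce quasi-strong analyticity, which by Proposition~\ref{prop:coef_holo} amounts to verifying two facts for each $\lambda\in\mathcal{O}\left(\left(\ww C^{m},0\right)\to U\right)$: the lower bound $\liminf_{\mathbf{x}\to0}\rad{\Lambda\circ\lambda\left(\mathbf{x}\right)}>0$, which is hypothesis (2), and the holomorphy of $T_{n}\circ\Lambda\circ\lambda$ on a disc independent of $n$. For the latter, I would argue that $\lambda$ itself is analytic viewed as a map from a neighborhood of $\mathbf{0}\in\ww C^{m}$ into $E$: indeed $\lambda^{*}\in\germ{\mathbf{x},z}$ forces $\lambda^{*}$ to be bounded on some polydisc $\varepsilon\ww D^{m}\times\rho\ww D$, so Cauchy's estimate in $z$ makes the Taylor expansion of $\lambda^{*}$ with respect to $\mathbf{x}$ converge absolutely in $\mathcal{B}_{\rho}$, hence in $\igerm z$, and thereby in $E$ since $E\leq\igerm z$ by Proposition~\ref{prop:topo_compare}. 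Shrinking $\varepsilon$ if necessary to ensure $\lambda\left(\varepsilon\ww D^{m}\right)\subset U$, the composite $\Lambda\circ\lambda$ is analytic from $\varepsilon\ww D^{m}$ to $F$ by Theorem~\ref{thm:anal_gene_prop}(6). Post-composing with the continuous linear form $T_{n}:F\to\ww C$ gives analyticity of $T_{n}\circ\Lambda\circ\lambda$ on $\varepsilon\ww D^{m}$ for every $n$, a neighborhood independent of $n$ as required.

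The main obstacle is the technical verification that $\lambda$ is analytic into the locally convex space $E$, since the notion $\mathcal{O}\left(\left(\ww C^{m},0\right)\to U\right)$ is phrased only through pointwise holomorphy of the evaluation $\lambda^{*}$; here the comparison ordering $E\leq\igerm z$ from Proposition~\ref{prop:topo_compare} together with the Banach-space analyticity inside each $\mathcal{B}_{\rho}$ are what bridge the gap. Apart from this, the proof is a fairly mechanical combination of the results already assembled in the earlier sections.
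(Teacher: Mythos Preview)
Your argument is correct, and for the implication $(1)\Rightarrow(2)$ it coincides with the paper's: both invoke Proposition~\ref{prop:holo_is_G-anal} together with Theorem~\ref{thm:anal_G_F_eqv} to obtain analyticity, then read off the two estimates directly from the fact that $(\mathbf{x},w)\mapsto\Lambda(\lambda(\mathbf{x}))(w)$ belongs to $\germ{\mathbf{x},w}$.

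For $(2)\Rightarrow(1)$, however, you take a genuinely different route. The paper argues directly with the Cauchy integral: after reducing to $\lambda(0)=0$, it introduces the auxiliary family $(u,\mathbf{x})\mapsto u\lambda(\mathbf{x})$, expresses the Taylor coefficients $\Lambda_p(\lambda(\mathbf{x})^{\oplus p})(w)$ as contour integrals in $u$, and then uses \emph{both} the $\liminf$ and the $\limsup$ hypotheses to obtain uniform geometric decay of these coefficients, thereby assembling the holomorphic germ $(\mathbf{x},w)\mapsto\sum_p\Lambda_p(\lambda(\mathbf{x})^{\oplus p})(w)$ by hand. Your approach is more conceptual: you observe that $\lambda$ is itself analytic from a polydisc into $E$ (because the Taylor series of $\lambda^*$ in $\mathbf{x}$ converges in some $\mathcal{B}_\rho$, hence in $\igerm z$, hence in $E$ via $E\leq\igerm z$), compose with the analytic $\Lambda$ to get $\Lambda\circ\lambda$ analytic into $F$ on a fixed polydisc, and then simply post-compose with the continuous linear forms $T_n$ to feed Proposition~\ref{prop:coef_holo}. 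A pleasant by-product is that your proof never uses the $\limsup$ hypothesis, so you have in fact shown it to be redundant in the characterization~(2). One minor quibble: when you write ``Cauchy's estimate in $z$'' you presumably mean the Cauchy estimate in the $\mathbf{x}$-variable applied to the $\mathcal{B}_\rho$-valued map, which is what actually yields $\norm{g_{\mathbf{k}}}{\rho\ww D}\leq M(\varepsilon')^{-|\mathbf{k}|}$.
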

\begin{proof}
(1)$\Rightarrow$(2) is a direct consequence of the definition of
holomorphy and of the application of Theorem~\ref{thm:anal_G_F_eqv}
to Proposition~\ref{prop:holo_is_G-anal}. Let us prove (2)$\Rightarrow$(1).
Since an analytic map is amply bounded we only need to prove the holomorphy
of $\Lambda$. Let $\lambda\in\mathcal{O}\left(\left(\ww C^{m},0\right)\to U\right)$
be given; without loss of generality we can assume that $\lambda\left(0\right)=0\in U$.
Since $\left(u,\mathbf{x}\right)\mapsto u\lambda\left(\mathbf{x}\right)$
belongs to $\mathcal{O}\left(\left(\ww C^{m+1},0\right)\to U\right)$
we can consider a positive number $0<r'<\liminf_{\left(u,\mathbf{x}\right)\to0}\rad{\Lambda\left(u\lambda\left(\mathbf{x}\right)\right)}$.
Repeating the construction of Lemma~\ref{lem:strong_ana_Cauchy}
and of the beginning of this paragraph we deduce that there exists
$\rho,\,\rho'>0$ such that for all integer $p$ 
\begin{align*}
\left(\forall\left|w\right|<r'\,,\,\forall\norm{\mathbf{x}}{}<\rho'\right)\,\,\,\,\,\Lambda_{p}\left(\lambda\left(\mathbf{x}\right)^{\oplus p}\right)\left(w\right) & =\frac{1}{2\ii\pi}\oint_{\rho\ww S^{1}}\Lambda\left(u\lambda\left(\mathbf{x}\right)\right)\left(w\right)\frac{\dd u}{u^{p+1}}\,,
\end{align*}
where $\Lambda_{p}$ is the Taylor coefficient of $\Lambda$. This
formula implies particularly that $\Lambda_{p}\left(\lambda\left(\mathbf{x}\right)^{\oplus p}\right)\in\mathcal{B}_{r'}$
and 
\begin{align*}
\left|\Lambda_{p}\left(\lambda\left(\mathbf{x}\right)^{\oplus p}\right)\left(w\right)\right| & \leq\rho^{-p}\sup_{\left|u\right|=\rho}\left|\Lambda\left(u\lambda\left(\mathbf{x}\right)\right)\left(w\right)\right|\,.
\end{align*}
By choosing $\rho>\eta>0$, and using that $\lim\sup_{\mathbf{x}\to0,\,r\to0}\norm{\Lambda\left(\lambda\left(\mathbf{x}\right)\right)}{r\ww D}<\infty$
we deduce that, at the expense of decreasing $r'$ and $\rho'$, there
exists $K>0$ such that
\begin{align*}
\left|\Lambda_{p}\left(\left(\eta\lambda\left(\mathbf{x}\right)\right)^{\oplus p}\right)\left(w\right)\right| & \leq K\left(\frac{\eta}{\rho}\right)^{p}\,.
\end{align*}
Hence the functional series $\left(\mathbf{x},w\right)\mapsto\sum_{p\geq0}\Lambda_{p}\left(\lambda\left(\mathbf{x}\right)^{\oplus p}\right)\left(w\right)$
is uniformly convergent and the sum is thereby analytic on $\rho'\ww D\times r'\ww D$. 
\end{proof}
Let us conclude this part by proving the
\begin{prop}
\label{prop:compo_is_ana}Assume that $\germ z$ is endowed with a
useful topology and pick $f\in\germ z$ with radius of convergence
$\rad f$. The right-composition mapping
\begin{align*}
f_{*}~:~\germ{\mathbf{z}} & \longrightarrow\germ{\mathbf{z}}\\
g & \longmapsto f\circ g
\end{align*}
is strongly analytic on the domain $U:=J_{0}^{-1}\left(\rad f\ww D\right)=\left\{ g~:~\left|g\left(0\right)\right|<\rad f\right\} $. 
\end{prop}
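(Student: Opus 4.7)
The plan is to verify strong analyticity directly through Definition~\ref{def_complete_holo} (ample boundedness combined with quasi-strong analyticity) and then upgrade to plain analyticity via Theorem~D, bypassing the need to expand $f_{*}$ as a Taylor series.

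For quasi-strong analyticity, any $\lambda\in\mathcal{O}((\ww C^{k},0)\to U)$ yields a germ $\lambda^{*}(\mathbf{x},\mathbf{z}):=\lambda(\mathbf{x})(\mathbf{z})$ in $\germ{\mathbf{x},\mathbf{z}}$, and $\lambda^{*}(\mathbf{0},\mathbf{0})=\lambda(\mathbf{0})(\mathbf{0})\in\rad f\ww D$ because $\lambda(\mathbf{0})\in U$. By continuity $|\lambda^{*}|$ stays strictly below $\rad f$ on a polydisc neighborhood of the origin, on which $f\circ\lambda^{*}$ is thus an honest holomorphic function of $(\mathbf{x},\mathbf{z})$. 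This is precisely what the definition of quasi-strong analyticity of $f_{*}$ asks for.

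For ample boundedness, fix $g_{0}\in U$, put $w_{0}:=g_{0}(\mathbf{0})$, and choose $r$ with $|w_{0}|<r<\rad f$. Writing $f_{w}(\zeta):=f(w+\zeta)$, we have the identity
\[
f\circ(g_{0}+h)\;=\;f_{w_{0}+h(\mathbf{0})}\circ\bigl((g_{0}-w_{0})+(h-h(\mathbf{0}))\bigr)
\]
in which the inner germ vanishes at $\mathbf{0}$, so the composition estimate guaranteed by usefulness of the topology (Proposition~\ref{prop:all_norms_is_useful}) applies. The evaluation map $h\mapsto h(\mathbf{0})$ being continuous (consequence of Lemma~\ref{lem:jet_is_continuous}), we may restrict to a neighborhood $W$ of $0$ in $\germ{\mathbf{z}}$ on which $|h(\mathbf{0})|<r-|w_{0}|$; standard Cauchy estimates for $f$ on a disc slightly larger than $r$ dominate every Taylor coefficient of $f_{w_{0}+h(\mathbf{0})}$ at the origin, and therefore dominate each useful norm of $f_{w_{0}+h(\mathbf{0})}$ uniformly in $h\in W$. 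A further shrinking of $W$ keeps $(g_{0}-w_{0})+(h-h(\mathbf{0}))$ bounded in the relevant norm. Feeding both uniform bounds into the composition estimate yields $\sup_{h\in W}\|f\circ(g_{0}+h)\|_{b}<\infty$, as required.

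Combining the two previous steps places $f_{*}$ under the hypotheses of Definition~\ref{def_complete_holo}, so $f_{*}$ is strongly analytic on $U$, and Theorem~D then delivers plain analyticity. The main obstacle lies in the ample boundedness estimate: the composition estimate for useful topologies only applies when the inner germ vanishes at $\mathbf{0}$, forcing one to split off the constant part of $g_{0}+h$ cleanly and then to control both the shifted outer germ $f_{w_{0}+h(\mathbf{0})}$ and the remainder uniformly as $h$ ranges over a neighborhood of $0$.
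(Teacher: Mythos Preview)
Your quasi-strong analyticity argument is identical to the paper's: $(f_*\circ\lambda)^*(\mathbf{x},\mathbf{z})=f(\lambda^*(\mathbf{x},\mathbf{z}))$ is a convergent power series because $\lambda^*(\mathbf 0,\mathbf 0)$ lies in the disc of convergence of $f$.

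For ample boundedness, however, the paper dispatches the matter in a single clause --- ``by definition of a useful topology $f_*$ is continuous'' --- whereas you reprove it from scratch via the decomposition $f\circ(g_0+h)=f_{w_0+h(\mathbf 0)}\circ\bigl((g_0-w_0)+(h-h(\mathbf 0))\bigr)$. You seem to have read the constraint $g(\mathbf 0)=0$ in the definition of a compositing differential algebra (opening of Section~\ref{sec:topologies}) as applying to the domain of $f_*$ as well, but it is attached only to the left-composition $g^*$; the right-composition $f_*$ is declared continuous on its full natural domain, which is precisely $U$. Once that is granted, continuity (hence ample boundedness) is immediate and your decomposition becomes superfluous. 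Your longer route is not wasted effort, though: it effectively shows how continuity of $f_*$ on $U$ can be recovered from continuity on $\{g(\mathbf 0)=0\}$ together with Cauchy estimates controlling the shifted outer germ $f_{w_0+h(\mathbf 0)}$, a reduction the paper leaves implicit. One caveat: the joint composition bound you invoke via Proposition~\ref{prop:all_norms_is_useful} is proved there only for the full $\mathcal A$-topology; for an arbitrary useful $A$-topology the definition gives only separate continuity of $g^*$ and $f_*$, not an \emph{a priori} estimate of the form $\|F\circ G\|_a\le\Psi(\|F\|_b,\|G\|_c)$ with $b,c\in A$. So for the general statement the paper's direct appeal to the definition is the cleanest way to close the argument.
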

We refer to Proposition~\ref{prop:compo_derive} for the computation
of the coefficients of the power series representing $f_{*}$.
\begin{proof}
By definition of a useful topology $f_{*}$ is continuous, so we only
need to show that it is quasi-strongly analytic. The latter property
is trivial because being given $\lambda\in\mathcal{O}\left(\left(\ww C^{m'},0\right)\to U\right)$
with $\lambda\left(0\right)=g\in U$, the map $f_{*}\circ\lambda$
also belongs to $\mathcal{O}\left(\left(\ww C^{m'},0\right)\to\germ{\mathbf{z}}\right)$
as $\left(f_{*}\circ\lambda\right)^{*}\left(\mathbf{x},\mathbf{z}\right)=f\left(\lambda^{*}\left(\mathbf{x},\mathbf{z}\right)\right)$
defines a convergent power series in $\left(\mathbf{x},\mathbf{z}\right)$.
\end{proof}

\section{\label{sec:anal_sets}Analytic sets}

This section is devoted to introducing and studying properties of
analytic sets, as defined by the vanishing locus of analytic maps
$E\to F$ where $\left(E,\left(\norm{\bullet}a\right)_{a\in A}\right)$
and $\left(F,\left(\norm{\bullet}b\right)_{b\in B}\right)$ are Hausdorff,
locally convex spaces. 
\begin{defn}
\label{def_anal_set}~
\begin{enumerate}
\item A closed subset $\Omega\subset E$ is an \textbf{analytic set} if
for all $f\in\Omega$ there exists a domain $U\ni f$ and some collection
$\left(\Lambda_{\iota}\right)_{\iota\in I}$ of analytic mappings
$\Lambda_{\iota}\,:\,U\to F$ such that $\Omega\cap U=\bigcap_{\iota\in I}\Lambda_{\iota}^{-1}\left(0\right)$. 
\item An analytical set is called \textbf{proper} at some point $f\in\Omega$
if for all collection $\left(\left(\Lambda_{\iota}\right)_{i\in I},U\right)$,
with $U\ni f$ and $\Omega\cap U=\bigcap_{\iota\in I}\Lambda_{\iota}^{-1}\left(0\right)$,
we have $\Omega\cap U\neq U$. We say that $\Omega$ is proper if
it is proper at one of its points. 
\item A subset $M\subset E$ is said to be \textbf{analytically meager}
if $M$ is included in a countable union of proper analytic sets.
$M$ is \textbf{countably meager} if it is included in an at-most-countable
union of images by analytic maps of at-most-countable-dimensional
linear spaces.
\item Let $X$ be an analytic space whose analytic structure is induced
by a continuous and onto map $\varphi\,:\,U\subset E\to X$ as in
Definition~\ref{def_anal_space}. A closed subset $\Omega\subset X$
is an analytic set if $\varphi^{-1}\left(\Omega\right)$ is an analytic
set of $U$. We likewise extend to this setting the notion of analytically
meager subsets of $X$, and say that $X$ is an \textbf{analytical
Baire space} if every analytically meager subset of $X$ has empty
interior.
\end{enumerate}
\end{defn}
\begin{rem}
~
\begin{enumerate}
\item There is no \emph{a priori }hypothesis on the cardinality of the set
$I$ in the definition of analytic sets.
\item It is obvious from the definition that finite unions and unspecified
intersections of analytic sets still are analytic sets.
\item We will show shortly that a proper analytic set is actually proper
at each one of its points (Proposition~\ref{prop:proper_anal_set}).
\item In (4) above we expressly do \emph{not} define analytic sets\textbf{
}of $X$ as the vanishing loci of analytic maps. This condition is
indeed far too restrictive, as will be illustrated in Section~\ref{sec:application_analysis}
when speaking about the analytic space of meromorphic functions (Remark~\ref{rem_mero}).
\end{enumerate}
\end{rem}
Because taking the $N^{\tx{th}}$-jet of a germ is an analytic operation,
each subspace of polynomials with given upper-bound $N$ on their
degree 
\begin{eqnarray*}
\pol z{\leq N} & = & \ker\left(\sum_{n}a_{n}z^{n}\longmapsto\sum_{n>N}a_{n}z^{n}\right)=\ker\left(\id-J_{N}\right)
\end{eqnarray*}
is a proper analytic set. The countable union $\pol z{}$ of all these
subspaces is consequently analytically meager (and dense) in $\germ z$,
and therefore $\pol z{}$ is not an analytical Baire space.

\bigskip{}

A nice feature of analytic sets is that they are negligible, in the
sense that the removable singularity theorem holds:
\begin{thm}
\label{thm:removable_sing}\textbf{\emph{(Removable Singularity Theorem
\cite{Mazette})}} Let $U\subset E$ be an open set, $\Omega\subset U$
a proper analytic set and $\Lambda$ be analytic on $U\backslash\Omega$.
We assume that for every $a\in A$ and every $f\in\Omega$ there exists
a neighborhood $V\subset U$ of $f$ such that $\norm{\Lambda|_{V\backslash\Omega}}a$
is bounded. Then $\Lambda$ admits a unique analytic extension to
$U$.
\end{thm}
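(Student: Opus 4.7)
Since analyticity is a local property and the identity theorem (Corollary~\ref{cor:anal_PPA}) takes care of uniqueness on the dense open set $U\backslash\Omega$, it suffices to construct an analytic extension near each $f\in\Omega$. The plan is to reduce to a one-dimensional Riemann removable singularity argument along a well-chosen complex line through $f$.

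The first step is geometric: Proposition~\ref{prop:proper_anal_set} ensures $\Omega$ is proper at $f$, so there is a connected neighborhood $V\subset U$ of $f$ on which $\Omega$ is cut out by a family of analytic maps, at least one of which, say $\Psi\,:\,V\to F$, does not vanish identically. Picking $g\in V$ with $\Psi\left(g\right)\neq0$ and setting $h:=g-f$, the map $\psi\,:\,x\mapsto\Psi\left(f+xh\right)$ is a nonzero $F$-valued holomorphic function on a small disc around $0$. Via Hahn--Banach I choose a continuous linear form $\ell$ on $F$ with $\ell\left(\Psi\left(g\right)\right)\neq0$; the scalar holomorphic function $\ell\circ\psi$ is nonzero, hence its zero set is discrete near $0$. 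Since $\left\{ x\,:\,f+xh\in\Omega\right\} $ is contained in $\psi^{-1}\left(0\right)\subset\left(\ell\circ\psi\right)^{-1}\left(0\right)$, it is also discrete; shrinking $\eta>0$ if necessary, I arrange that $f+\eta\overline{\ww D}h\subset V$ and $f+xh\in U\backslash\Omega$ for all $x\in\eta\overline{\ww D}\backslash\left\{ 0\right\} $.

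The candidate extension is then defined by the Cauchy integral
\begin{eqnarray*}
\tilde{\Lambda}\left(f\right) & := & \frac{1}{2\ii\pi}\oint_{\eta\sone}\Lambda\left(f+xh\right)\frac{\dd x}{x}\,,
\end{eqnarray*}
well-defined in the Hausdorff completion $\hat{F}$ of $F$ because the integrand is continuous on $\eta\sone\subset U\backslash\Omega$ with values in $F\leq\hat{F}$. The $F$-valued analytic map $x\mapsto\Lambda\left(f+xh\right)$ on the punctured disc $\eta\ww D\backslash\left\{ 0\right\} $ is, by hypothesis, bounded in every seminorm $\norm{\bullet}b$, so the classical one-variable Riemann extension theorem (applied through continuous linear forms on $F$ and reassembled by Cauchy's formula) yields a holomorphic extension across $0$ whose value there is precisely $\tilde{\Lambda}\left(f\right)$. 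Independence of the choices of $h$ and $\eta$ follows from freely deforming admissible cycles within $U\backslash\Omega$, where the integrand is genuinely analytic.

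It remains to verify that $\tilde{\Lambda}$, which coincides with $\Lambda$ on $U\backslash\Omega$ and is defined by the above formula on $\Omega$, is analytic on $U$. By Theorem~\ref{thm:anal_G_F_eqv} this reduces to checking G-holomorphy and ample boundedness. Ample boundedness near $f\in\Omega$ follows from Cauchy's estimate
\begin{eqnarray*}
\norm{\tilde{\Lambda}\left(f\right)}b & \leq & \sup_{\left|x\right|=\eta}\norm{\Lambda\left(f+xh\right)}b
\end{eqnarray*}
together with the locally uniform bound on $\Lambda$ hypothesized in the statement. G-holomorphy along any direction $k$ at any $g\in U$ amounts to analyticity of $x\mapsto\tilde{\Lambda}\left(g+xk\right)$: off the discrete set $\left\{ x\,:\,g+xk\in\Omega\right\} $ this coincides with the analytic map $x\mapsto\Lambda\left(g+xk\right)$, itself bounded in every seminorm around each exceptional point, and the classical scalar Riemann theorem bridges the isolated singularities. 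The chief obstacle is the first step -- ensuring that for every $f\in\Omega$ the line can be chosen to meet $\Omega$ discretely -- together with managing the distinction between $F$ and its completion when integrating; the latter issue is absorbed by Theorem~\ref{thm:anal_gene_prop}~(7), which extends analytic maps uniquely to the completion.
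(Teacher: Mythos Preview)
The paper does not give its own proof of this statement: it is quoted from \cite{Mazette} and then used as a black box. So there is nothing to compare your argument against in the text itself; what follows is an assessment of your sketch on its own merits.

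Your overall strategy is the standard one and is sound: choose a direction $h$ along which the defining function $\Psi$ is not identically zero, so that the complex line $f+\ww Ch$ meets $\Omega$ only at isolated points; define the extension by a one-variable Cauchy integral; then verify analyticity through the characterization ``amply bounded $+$ G-holomorphic'' of Theorem~\ref{thm:anal_G_F_eqv}. The ample-boundedness step is fine as written.

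There is, however, a genuine gap in your G-holomorphy verification. You assert that for \emph{any} direction $k$ and any $g\in U$ the set $\left\{ x\,:\,g+xk\in\Omega\right\}$ is discrete. This is false in general: a complex line through $g\in\Omega$ may lie entirely inside $\Omega$ (think of $\Omega=\left\{ \Psi=0\right\}$ and a line contained in the zero set of $\Psi$). For such $k$ your one-variable Riemann argument does not apply, and you have not explained why $x\mapsto\tilde{\Lambda}\left(g+xk\right)$ is analytic.

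The fix is to work one dimension higher. Given $g\in\Omega$ and an arbitrary direction $k$, embed the line $g+\ww Ck$ in the two-dimensional affine plane $g+\ww Ck+\ww Ch$, where $h$ is your good direction. Since $\Psi$ does not vanish identically on $g+\ww Ch$, the trace of $\Omega$ on this plane is a \emph{proper} analytic subset of a two-dimensional domain. The finite-dimensional removable singularity theorem (for bounded holomorphic functions off a proper analytic set in $\ww C^{2}$) then extends $\Lambda$ analytically across the whole plane, and restricting to $g+\ww Ck$ gives the required analyticity. One must also check that this two-dimensional extension agrees with the value $\tilde{\Lambda}\left(g\right)$ you defined by the Cauchy integral; this follows because both are obtained as the value at $0$ of the analytic continuation of $x\mapsto\Lambda\left(g+xh\right)$. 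The same two-plane argument, incidentally, is what cleanly justifies the independence of $\tilde{\Lambda}\left(f\right)$ from the choice of $h$, which you handled somewhat informally by ``deforming cycles''.

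A smaller point: your appeal to Theorem~\ref{thm:anal_gene_prop}~(7) to ``absorb'' the $F$ versus $\hat{F}$ issue is misdirected---that item is about extending the \emph{domain} to its completion, not about values landing in $\hat{F}$. If $F$ is not sequentially complete the Cauchy integral a priori takes values only in $\hat{F}$; one then either assumes sequential completeness, or observes that the extended map is analytic into $\hat{F}$ and argues separately (when needed) that its values actually lie in $F$.
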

\bigskip{}

The aim of this section is mainly to show the next result:
\begin{thm}
\label{thm:anal_baire}Assume there exists a Banach space $\mathcal{V}$
and a continuous, linear mapping $\iota\,:\,\mathcal{V}\to E$ such
that $\iota\left(\mathcal{V}\right)$ is dense in $E$. Then any analytic
space $X$ modeled on $E$ is an analytical Baire space.
\end{thm}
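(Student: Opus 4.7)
The plan is to transfer the problem to the Banach space $\mathcal{V}$ via $\iota$ and invoke the classical Baire theorem there. By unwinding Definitions~\ref{def_anal_space} and~\ref{def_anal_set}, it suffices to prove the statement directly inside $E$: namely that if $W \subset U$ is a non-empty open set and $(\Omega_n)_{n \in \ww N}$ a countable family of proper analytic subsets of $U$, then $W \not\subset \bigcup_n \Omega_n$. The reduction from $X$ down to $U \subset E$ is immediate, since $\varphi$ is continuous and surjective and the proper analytic subsets of $X$ pull back under $\varphi$ to proper analytic subsets of $U$ (invoking Proposition~\ref{prop:proper_anal_set}).

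Set $\tilde{\Omega}_n := \iota^{-1}(\Omega_n)$. Because $\iota$ is continuous linear, hence entire, and composition of analytic maps is analytic (Theorem~\ref{thm:anal_gene_prop}), each $\tilde{\Omega}_n$ is an analytic subset of $\iota^{-1}(U) \subset \mathcal{V}$. The open subset $\iota^{-1}(W) \subset \mathcal{V}$ is non-empty, by density of $\iota(\mathcal{V})$ in $E$. Everything reduces to showing that each $\tilde{\Omega}_n$ has empty interior in $\mathcal{V}$: once this is in hand, the classical Baire theorem in the Banach space $\mathcal{V}$ ensures that $\bigcup_n \tilde{\Omega}_n$ has empty interior in $\mathcal{V}$, contradicting the inclusion $\iota^{-1}(W) \subset \bigcup_n \tilde{\Omega}_n$.

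Assume for contradiction that $\tilde{\Omega}_n$ contains an open ball $B(f_0, \rho)$, and set $g_0 := \iota(f_0) \in \Omega_n$. Select a convex open neighborhood $V \subset U$ of $g_0$ (which exists by local convexity of $E$) together with analytic maps $\Lambda_j : V \to F$ satisfying $\Omega_n \cap V = \bigcap_j \Lambda_j^{-1}(0)$; each composition $\Lambda_j \circ \iota$ is then analytic on the convex open set $\iota^{-1}(V) \subset \mathcal{V}$. For every $g \in \mathcal{V}$ with $f_0 + g \in \iota^{-1}(V)$, the scalar-valued function $x \mapsto \Lambda_j(\iota(f_0 + xg))$ is analytic on the convex open set $\{ x \in \ww C : f_0 + xg \in \iota^{-1}(V) \} \supset [0,1]$ and vanishes on a neighborhood of $x = 0$ (because $f_0 + xg \in B(f_0, \rho) \subset \tilde{\Omega}_n$ for $|x|$ small, so $\iota(f_0 + xg) \in \Omega_n$). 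The one-dimensional identity theorem forces its value at $x = 1$ to vanish, whence $\Lambda_j \circ \iota \equiv 0$ on the open set $\iota^{-1}(V)$. Continuity of $\Lambda_j$ combined with density of $\iota(\mathcal{V}) \cap V$ in $V$ then yield $\Lambda_j \equiv 0$ on $V$, so $\Omega_n \cap V = V$, contradicting the properness of $\Omega_n$.

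The main obstacle lies squarely in the empty-interior claim: density of $\iota(\mathcal{V})$ alone does not suffice, because $\iota$ is generally neither open nor surjective and $E$ itself need not be Baire. The essential device is the hybrid argument combining the one-dimensional identity theorem along complex affine lines $f_0 + \ww C g$ in $\mathcal{V}$ (propagating vanishing throughout the convex open set $\iota^{-1}(V)$) with the density/continuity transfer from $\iota(\mathcal{V}) \cap V$ to $V$ in $E$ (propagating vanishing from the dense range of $\iota$ to all of $V$).
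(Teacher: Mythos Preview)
Your proof is correct and follows essentially the same route as the paper: push the problem into the Banach space via $\iota$, apply Baire there, and use the identity theorem together with density of $\iota(\mathcal V)$ to force the defining maps $\Lambda_j$ to vanish identically. The only cosmetic differences are that the paper works in the affine translate $f+\iota(\mathcal V)$ for an arbitrary $f\in W$ (rather than using density to land directly in $\iota(\mathcal V)$) and invokes the infinite-dimensional identity theorem on that Banach slice, whereas you spell out the one-dimensional reduction along complex lines; note also that $\Lambda_j$ is $F$-valued, not scalar-valued, though this changes nothing in the argument.
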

We deduce Theorem~A from this statement and the next trivial lemma:
\begin{lem}
Let $\mathcal{V}$ be the vector subspace of $\mathbb{C}\left\{ z\right\} $
defined by
\[
\mathcal{V}:=\left\{ \sum_{n}f_{n}z^{n}\,:\,\left|f_{n}\right|\textup{ is bounded}\right\} 
\]
equipped with the norm $\left|\left|\bullet\right|\right|_{\infty}$
:
\[
\left|\left|f\right|\right|_{\infty}:=\sup_{n}\left|f_{n}\right|\,.
\]
Then $\left(\mathcal{V},\left|\left|\bullet\right|\right|_{\infty}\right)$
is a Banach space and the inclusion $\left(\mathcal{V},\norm{\bullet}{\infty}\right)\hookrightarrow\germ z_{\mathcal{A}}$
is continuous. More precisely for each $f\in\mathcal{V}$ and each
$a\in\mathcal{A}$ 
\begin{eqnarray*}
\norm fa & \leq & \norm f{\infty}\norm{\frac{1}{1-z}}a\,.
\end{eqnarray*}

\end{lem}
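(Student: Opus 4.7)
The plan is straightforward since everything reduces to recognizing $\mathcal{V}$ as (isometric to) the classical sequence space $\ell^{\infty}\!\left(\mathbb{N}\right)$ via the Taylor coefficient map $f = \sum_n f_n z^n \mapsto (f_n)_n$. First I would verify that $\mathcal{V}$ is actually a subset of $\germ{z}$: if $\sup_n|f_n|<\infty$ then Hadamard's formula gives $\mathcal{R}(f)\geq 1$, so $\sum_n f_n z^n$ defines a bona fide element of $\germ{z}$. The map $f\mapsto (f_n)_n$ is then a linear bijection from $\mathcal{V}$ onto $\ell^{\infty}(\mathbb{N})$ which transports $\|\bullet\|_\infty$ to the usual sup norm. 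Completeness of $\ell^{\infty}$ is classical, giving the Banach space statement.

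For the quantitative bound, I would simply compute: for any $a\in\mathcal{A}$ and $f=\sum_n f_n z^n\in\mathcal{V}$,
\begin{align*}
\norm{f}{a}\;=\;\sum_{n=0}^{\infty}a_n|f_n|\;\leq\;\Bigl(\sup_{n\in\mathbb{N}}|f_n|\Bigr)\sum_{n=0}^{\infty}a_n\;=\;\norm{f}{\infty}\norm{\tfrac{1}{1-z}}{a},
\end{align*}
the last equality coming from the fact that the Taylor coefficients of $\frac{1}{1-z}$ at $0$ are all equal to $1$. The right-hand side is finite since $a\in\mathcal{A}$ forces $a_n^{1/n}\to 0$, so $\sum_n a_n<\infty$ (geometric comparison).

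The continuity of the inclusion $(\mathcal{V},\norm{\bullet}{\infty})\hookrightarrow\germ{z}_{\mathcal{A}}$ follows at once: the $\mathcal{A}$-topology is generated by the family $(\norm{\bullet}{a})_{a\in\mathcal{A}}$, and for each such $a$ the estimate above shows that $\norm{\iota(f)}{a}\leq C_a\norm{f}{\infty}$ with $C_a:=\norm{\tfrac{1}{1-z}}{a}$, which is exactly the criterion for continuity of a linear map into a locally convex space recalled in Section~\ref{sub:topo_sequential}.

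There is essentially no obstacle here; the only point that deserves a line is noting $\sum_n a_n<\infty$ for $a\in\mathcal{A}$, and unwinding that the induced topology on $\germ{z}_{\mathcal{A}}$ is defined precisely by this separating family of norms, so a uniform bound by $\norm{f}{\infty}$ for each individual semi-norm is exactly what continuity demands.
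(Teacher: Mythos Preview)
Your proof is correct. The paper does not actually prove this lemma---it calls it ``trivial'' and simply states it---so your argument fills in exactly the natural computation the author had in mind, namely the identification of $\mathcal{V}$ with $\ell^\infty$ and the termwise bound $\sum_n a_n|f_n|\le\|f\|_\infty\sum_n a_n$.
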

We also intend to prove the Theorem~B:
\begin{thm}
\label{thm:anal_range_meager}Let $X$ be an analytic space modeled
on $\germ z$ and consider an analytic map $\Lambda\,:\,U\to X$ on
an open set $U\subset E$. The image by $\Lambda$ of the trace on
$U$ of any linear subspace of $E$ with at most countable dimension
is analytically meager. In particular countably meager sets are analytically
meager.\end{thm}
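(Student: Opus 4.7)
The plan is to reduce the claim to analytic maps from finite-dimensional balls into $\germ z$ and then exploit the rank-deficiency of the associated jet maps, together with Lindel\"of covering arguments, to exhibit countably many proper analytic sets whose union contains the image.

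First I would reduce to the case $X=\germ z$. By Definition~\ref{def_anal_space}, $\Lambda$ factors as $\varphi\circ\tilde\Lambda$ where $\tilde\Lambda\,:\,U\to U_{X}\subset\germ z$ is analytic and $\varphi\,:\,U_{X}\twoheadrightarrow X$ is the structure map; proper analytic sets of $X$ correspond to $\varphi$-saturated proper analytic sets of $U_{X}$, so it suffices to cover $\tilde\Lambda(V\cap U)$ in $\germ z$ by an analytically meager collection. Next, writing the at-most-countable-dimensional subspace $V$ as a nested union $V=\bigcup_{k\in\ww N}V_{k}$ of finite-dimensional subspaces and identifying each $V_{k}\simeq\ww C^{d_{k}}$, one covers $V_{k}\cap U$ by countably many open balls via Lindel\"of. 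Since countable unions of analytically meager sets are analytically meager, the problem reduces to the key lemma: \emph{for any analytic map $\Phi\,:\,B\subset\ww C^{d}\to\germ z$ on an open ball, the image $\Phi(B)$ is analytically meager in $\germ z$.}

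To prove the key lemma, fix $N\geq d$ and consider the holomorphic composite $\Psi_{N}:=J_{N}\circ\Phi\,:\,B\to\pol z{\leq N}\simeq\ww C^{N+1}$. The Jacobian of $\Psi_{N}$ has rank at most $d<N+1$ everywhere, so upper-semicontinuity of rank stratifies $B$ into countably many constant-rank strata, and on each stratum the rank theorem ensures that the image of $\Psi_{N}$ is locally a complex submanifold of $\ww C^{N+1}$ of dimension $\leq d$, locally cut out by holomorphic equations. Pulling these equations back via the continuous linear jet map $J_{N}$ yields local analytic functions on $\germ z$ of the form $g\circ J_{N}$ whose zero sets contain the corresponding pieces of $\Phi(B)$.

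The main obstacle lies in upgrading these local defining equations into bona fide proper analytic sets of $\germ z$ in the global sense of Definition~\ref{def_anal_set}, which demands closedness in $\germ z$ together with local analytic cut-outs. A transcendental image (as illustrated by simple one-parameter families like $z\mapsto(e^{z},e^{iz})$) need not sit inside any single globally-defined proper analytic subvariety of $\ww C^{N+1}$. I would overcome this by combining the Cauchy estimates of Proposition~\ref{prop:anal_Cauchy_integ} with the inductive structure of $\igerm z$ as a direct limit of Banach spaces $\mathcal{B}_{r}$: on any relatively compact subdomain $B'\Subset B$, the compact image $\Phi(B')$ lies in a single Banach level $\mathcal{B}_{r}$, and within $\mathcal{B}_{r}$ one may apply finite-dimensional rank-theoretic arguments to express $\Phi(B')$ locally as an analytic subset defined by finitely many holomorphic equations. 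Because these equations involve only finitely many continuous Taylor coefficients, they extend to analytic functions on all of $\germ z$, producing proper analytic sets in the required sense. Iterating over a countable exhaustion $B=\bigcup_{j}B'_{j}$ then yields the analytic meagerness of $\Phi(B)$ and completes the proof.
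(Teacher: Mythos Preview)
Your reduction to finite-dimensional domains and the appearance of the jet maps $J_{N}\circ\Phi$ run parallel to the paper's argument. The genuine gap is in your final paragraph, where you try to resolve the closedness obstacle via the Banach levels $\mathcal{B}_{r}$. That step does not go through: $\mathcal{B}_{r}$ is an infinite-dimensional Banach space, so there are no ``finite-dimensional rank-theoretic arguments'' available there, and nothing forces the local defining equations you would produce to ``involve only finitely many continuous Taylor coefficients''. You have correctly identified the obstacle but not removed it; as it stands, you only know that the image of $J_{N}\circ\Phi$ in $\pol z{\leq N}$ is locally a submanifold, and this does not yield closed proper analytic subsets of $\germ z$.

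The paper avoids the obstacle entirely by writing down an explicit defining map rather than pulling back abstract local equations. At a point where $\lambda$ has maximal rank $m$, one first proves (via a compactness argument on the unit sphere of $\ww C^{m}$) that some $N$ makes $J_{N}\circ\lambda$ immersive there. Choosing a basis $b_{0},\ldots,b_{N}$ of $\pol z{\leq N}$ adapted to $\img{\partial_{\mathbf{0}}\left(J_{N}\circ\lambda\right)}$, the map $\tilde\lambda\,:\,\mathbf{x}\mapsto\sum_{j<m}\lambda_{j}(\mathbf{x})\,b_{j}$ is a biholomorphism from a small $W\ni\mathbf{0}$ onto an open set, with holomorphic inverse $\Phi$. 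With $P\,:\,\germ w\to\ww C\langle b_{0},\ldots,b_{m-1}\rangle$ the natural projector, the single analytic map
\[
\Lambda(f)\;:=\;f-\lambda\bigl(\Phi\circ P(f)\bigr)
\]
is defined on the open set $U:=P^{-1}\bigl(\tilde\lambda(W)\bigr)\subset\germ w$ and vanishes \emph{exactly} on $\lambda(W)$. This is the proper analytic set you were seeking, obtained directly without patching. The locus where $\lambda$ fails to have maximal rank is a proper analytic subset of the source, and one disposes of it by induction on $m$ after decomposing it into countably many lower-dimensional discs.
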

\begin{rem}
~
\begin{enumerate}
\item It is worth noticing that the analyticity condition on $\Lambda$
can be loosened a little. Indeed we are only considering the countable
union of range of analytic maps restricted to finite-dimensional open
sets, so that only G-holomorphy (or quasi-strong analyticity) is actually
required. In this form the spine of the proof is borrowed from \cite{GenzTey},
but some flesh is added to cover the more general context.
\item Be careful that <<analytically meager>> does not imply <<empty
interior>> in general, according to Theorem~\ref{thm:anal_baire}.
\end{enumerate}
\end{rem}

\subsection{Proper analytic sets}

~
\begin{prop}
\label{prop:proper_anal_set}Let $\Omega$ be a connected, non-empty
analytic set. The following assertions are equivalent :
\begin{enumerate}
\item $\Omega$ is proper,
\item $\Omega$ is proper at any one of its points,
\item the interior of $\Omega$ is empty,
\item for all $f\in\Omega$ there exists a domain $U\ni f$ and a non-empty
collection $\left(\Lambda_{\iota}\right)_{\iota\in I}$ such that
$U\cap\Omega=\bigcap_{\iota\in I}\Lambda_{\iota}^{-1}\left(0\right)$
while $\Lambda_{\iota}\neq0$,
\item there exists a domain $U$ meeting $\Omega$ and a non-empty collection
$\left(\Lambda_{\iota}\right)_{\iota\in I}$ such that $U\cap\Omega=\bigcap_{\iota\in I}\Lambda_{\iota}^{-1}\left(0\right)$
while $\Lambda_{\iota}\neq0$.
\end{enumerate}
\end{prop}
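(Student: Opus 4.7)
The strategy is to reduce all five statements to the single condition $\Omega^\circ=\emptyset$ by first establishing the dichotomy $\Omega^\circ\in\{\emptyset,\Omega\}$ valid for any connected analytic set $\Omega$. I will prove this by showing $\Omega^\circ$ is clopen in $\Omega$: openness is immediate, and for closedness I fix $g\in\Omega$ lying in the $\Omega$-closure of $\Omega^\circ$ and pick a local representation $\Omega\cap U=\bigcap_{\iota}\Lambda_\iota^{-1}(0)$ at $g$ with $U$ a domain. Since $U$ is an open neighborhood of a point in $\overline{\Omega^\circ}$, it meets $\Omega^\circ$, so there is a non-empty open set $V\subset\Omega^\circ\cap U\subset\bigcap_\iota\Lambda_\iota^{-1}(0)$ on which every $\Lambda_\iota$ vanishes; the Identity Theorem (Corollary~\ref{cor:anal_PPA}) applied on the domain $U$ then forces $\Lambda_\iota\equiv 0$ on $U$ for each $\iota$, hence $\Omega\cap U=U$ and $g\in\Omega^\circ$.

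With the dichotomy in hand the three ``quantifier'' statements collapse. I will first observe that $\Omega$ is proper at $f$ iff $f\notin\Omega^\circ$: for $f\in\Omega^\circ$ the trivial representation $\Omega\cap U=U=\Lambda^{-1}(0)$ with $\Lambda\equiv 0$, taken on a domain $U\subset\Omega^\circ$ containing $f$, witnesses that $\Omega$ is not proper at $f$; conversely, if $f\notin\Omega^\circ$ then no open $U\ni f$ can satisfy $U\subset\Omega$, so $\Omega\cap U\neq U$ in every local representation. Thus (2) and (3) both read $\Omega^\circ=\emptyset$, while (1) reads $\Omega^\circ\subsetneq\Omega$, which by the dichotomy is the same, yielding (1)$\Leftrightarrow$(2)$\Leftrightarrow$(3).

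For the representation-based statements (4) and (5), I note that in any local representation one may discard the $\Lambda_\iota$ that vanish identically on $U$ without changing the intersection, so the existence of a non-empty sub-family of non-zero $\Lambda_\iota$'s is equivalent to $\Omega\cap U\neq U$ in the original representation. Applying this at each $f\in\Omega$ to the representation guaranteed by the definition of an analytic set gives (2)$\Rightarrow$(4), and (4)$\Rightarrow$(5) is immediate. For (5)$\Rightarrow$(3), a representation $\Omega\cap U=\bigcap_\iota\Lambda_\iota^{-1}(0)$ with some $\Lambda_\iota\not\equiv 0$ on the domain $U$ forces $\Omega\cap U\subset\Lambda_\iota^{-1}(0)$, whose interior in $U$ is empty by the Identity Theorem; picking $f\in\Omega\cap U$ (non-empty since $U$ meets $\Omega$) one obtains $f\notin\Omega^\circ$, so $\Omega^\circ\subsetneq\Omega$, and the dichotomy concludes $\Omega^\circ=\emptyset$. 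All five statements thus coincide. The sole analytic ingredient is the Identity Theorem, whose connectedness hypothesis is built into the definition of an analytic set, so there is no real technical obstacle beyond carefully tracking the ``for all local representations'' quantifier hidden in the definition of properness.
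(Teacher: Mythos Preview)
Your proof is correct and follows essentially the same route as the paper's: both hinge on showing that the set of points where $\Omega$ is not proper (which you explicitly identify with $\Omega^\circ$) is clopen in $\Omega$ via the Identity Theorem, and then invoke connectedness. Your presentation is slightly more streamlined in that you isolate the dichotomy $\Omega^\circ\in\{\emptyset,\Omega\}$ and the characterization ``proper at $f\Leftrightarrow f\notin\Omega^\circ$'' upfront, whereas the paper weaves these into a chain of implications, but the substance is identical.
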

A consequence of this result is that proper analytical meager subsets
of $\germ z$ are genuine meager subsets in the sense of Baire. 
\begin{proof}
The implications (3)$\Rightarrow$(2)$\Rightarrow$(4)$\Rightarrow$(5)$\Rightarrow$(1)
are trivial. Because of the identity theorem (Corollary~\ref{cor:anal_PPA})
we also have (4)$\Rightarrow$(3). Let us show (1)$\Rightarrow$(2)
by a connectedness argument; for this consider the set
\begin{eqnarray*}
C & := & \left\{ f\in\Omega\,:\,\Omega\mbox{ is not proper at }f\right\} \,,
\end{eqnarray*}
which is an open subset of $\Omega$. Let us consider a sequence $\left(f_{n}\right)_{n\in\ww N}\subset C$
converging in $\Omega$ to some $f_{\infty}$ and prove that $f_{\infty}\in C$.
Pick a domain $U\ni f_{\infty}$ such that $U\cap\Omega=\bigcap_{\iota\in I}\Lambda_{\iota}^{-1}\left(0\right)$
for some collection of maps $\left(\Lambda_{\iota}\right)_{\iota\in I}$
analytic on $U$, acknowledging that for $n$ big enough we have $f_{n}\in U$.
Therefore $\Lambda_{\iota}$ must vanish on some small ball around
$f_{n}$ and the identity theorem applies once more proving that $\Lambda_{\iota}$
vanishes on the whole $U$, which is $f_{\infty}\in C$. Because by
hypothesis $\Omega$ is connected and $\Omega\backslash C\neq\emptyset$
we showed $C=\emptyset$, which is (2), as required.
\end{proof}

\subsection{Analytical Baire property : proof of Theorem~\ref{thm:anal_baire}}

By definition of the analytic sets of $X$ it is sufficient to show
the property for $E$. Assume then that there exists a countable collection
of analytic sets $\left(C_{n}\right)_{n\in\ww N}$ such that $\bigcup_{n}C_{n}$
has non-empty interior $W$, and show that at least one $C_{n}$ is
not proper. Pick some point $f$ in $W$; the affine subspace 
\begin{eqnarray*}
\mathcal{V}_{f}: & = & f+\iota\left(\mathcal{V}\right)
\end{eqnarray*}
is Baire. Since $W$ is open the intersection $W\cap\mathcal{V}_{f}$
is a non-void open subset in $\mathcal{V}_{f}$ (\emph{i.e. }for the
topology induced by $\norm{\bullet}{\mathcal{V}}$), and $C_{n}\cap\mathcal{V}_{f}$
is closed in $\mathcal{V}_{f}$. Therefore at least one $C_{n}\cap\mathcal{V}_{f}$
has a non-empty interior $W_{n}$. Let $\Lambda\,:\,U\to F$ be an
analytic map on a neighborhood $U$ of some point $\tilde{f}\in W_{n}$
such that $C_{n}\cap U\subset\Lambda^{-1}\left(0\right)$. Because
$\Lambda|_{\mathcal{V}_{f}}$ is analytic for the analytic structure
of $\mathcal{V}_{f}$ inherited from its Banach topology, necessarily
$\Lambda$ vanishes on $\mathcal{V}_{f}$. By denseness of $\mathcal{V}_{f}$
in $E$ we deduce that $\Lambda=0$ and $C_{n}$ cannot be proper,
according to the characterization given in Proposition~\ref{prop:proper_anal_set}.

\subsection{Range of a finite-dimensional analytic map : proof of Theorem~B}

~
\begin{prop}
\label{prop:finite_range_is_analytic}Take an analytic map $\lambda\,:\,V\to\germ w$
defined on some neighborhood $V$ of $\mathbf{0}\in\ww C^{m}$ such
that $\tx{rank}\left(\partial_{\bb 0}\lambda\right)=m$. There exists
an open neighborhood $W\subset V$ of the origin such that the following
properties hold:
\begin{enumerate}
\item $\lambda|_{W}$ is one-to-one,
\item $\lambda\left(W\right)$ is the trace of a proper analytic set on
some domain $U\subset\germ w$ containing $\lambda\left(\mathbf{0}\right)$,
defined by the vanishing of a non-zero analytic mapping $\Lambda~:~U\to\germ w$.
\end{enumerate}
\end{prop}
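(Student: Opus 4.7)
The plan is to reduce the statement to the classical finite-dimensional inverse function theorem by selecting a well-chosen linear projection onto $\mathbb{C}^m$ built out of Taylor coefficient functionals. Since the Taylor coefficient maps $T_n\,:\,\germ w\to\ww C$ are continuous linear forms that separate points of $\germ w$ (a germ being determined by its power series), and since $S:=\img{\partial_{\bb 0}\lambda}$ is an $m$-dimensional linear subspace of $\germ w$ by the rank hypothesis, a standard finite-dimensional argument yields indices $n_1<\cdots<n_m$ such that the restriction of $\Pi:=(T_{n_1},\ldots,T_{n_m})\,:\,\germ w\to\ww C^m$ to $S$ is a linear isomorphism. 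Explicitly, picking a basis $(v_j)_{j\leq m}$ of $S$, the infinite matrix $(T_n(v_j))_{n,j}$ has rank $m$, so some $m\times m$ minor is invertible.

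With that projection in hand I form the composition $\Psi:=\Pi\circ\lambda\,:\,V\to\ww C^m$, which is analytic by Theorem~\ref{thm:anal_gene_prop}~(5). Its differential $\partial_{\bb 0}\Psi=\Pi|_S\circ\partial_{\bb 0}\lambda$ is invertible by construction, so the classical holomorphic inverse function theorem supplies an open neighborhood $W\subset V$ of $\bb 0$ such that $\Psi|_W$ is a biholomorphism onto an open subset $\Psi(W)\subset\ww C^m$. The one-to-one character of $\lambda|_W$, claim~(1), is then immediate: if $\lambda(\mathbf{x})=\lambda(\mathbf{y})$ for $\mathbf{x},\mathbf{y}\in W$ then applying $\Pi$ gives $\Psi(\mathbf{x})=\Psi(\mathbf{y})$, hence $\mathbf{x}=\mathbf{y}$.

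For claim~(2) take $U$ to be the connected component of $\lambda(\bb 0)$ in the open set $\Pi^{-1}(\Psi(W))$ and define
\begin{align*}
\Lambda\,:\,U & \longrightarrow\germ w\\
g & \longmapsto g-\lambda\!\left(\Psi^{-1}\!\left(\Pi(g)\right)\right)\,,
\end{align*}
which is analytic as a composition of analytic maps (Theorem~\ref{thm:anal_gene_prop}~(6)). If $g=\lambda(\mathbf{x})$ with $\mathbf{x}\in W$ then $\Pi(g)=\Psi(\mathbf{x})$ and so $\Lambda(g)=g-\lambda(\mathbf{x})=0$; conversely $\Lambda(g)=0$ rewrites as $g=\lambda(\mathbf{y})$ for $\mathbf{y}:=\Psi^{-1}(\Pi(g))\in W$. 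Hence $U\cap\Lambda^{-1}(0)=\lambda(W)$ exactly. To see that $\Lambda$ is not the zero map, pick any integer $N\notin\{n_1,\ldots,n_m\}$ and, for small $\varepsilon\in\ww C$, test on $g_\varepsilon:=\lambda(\bb 0)+\varepsilon w^N$: one has $\Pi(g_\varepsilon)=\Pi(\lambda(\bb 0))=\Psi(\bb 0)\in\Psi(W)$, so $g_\varepsilon\in U$, yet $\Lambda(g_\varepsilon)=\varepsilon w^N\neq 0$. Thus $\Lambda\ne 0$ and $\lambda(W)$ is the trace on $U$ of the proper analytic set $\Lambda^{-1}(0)$ by Proposition~\ref{prop:proper_anal_set}.

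The only delicate point is the extraction of the projection $\Pi$ from the rank condition; everything else is a book-keeping exercise once $\Psi$ has been inverted in finite dimension. No use is made of the specific topology of $\germ w$ beyond the continuity of the $T_n$'s, so the argument works uniformly for any useful or inductive topology.
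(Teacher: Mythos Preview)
Your proof is correct and follows essentially the same strategy as the paper: reduce to the finite-dimensional inverse function theorem via a continuous linear projection $\germ w\to\ww C^{m}$ that is an isomorphism on $\img{\partial_{\mathbf 0}\lambda}$, then define $\Lambda(g)=g-\lambda\circ(\text{local inverse})\circ(\text{projection})(g)$. The only cosmetic difference is that the paper first passes through a jet projector $J_{N}$ (using a compactness argument to find $N$ such that $J_{N}\circ\partial_{\mathbf 0}\lambda$ has rank $m$) and then chooses a basis of $\pol w{\leq N}$ adapted to the image, whereas you directly select $m$ Taylor-coefficient functionals $T_{n_{1}},\ldots,T_{n_{m}}$; your choice is a particular instance of the paper's construction with $N=\max_{j}n_{j}$.
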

\begin{proof}
We can assume, without loss of generality, that $\lambda\left(\mathbf{0}\right)=0$. 
\begin{enumerate}
\item We adapt here the proof done in \cite{GenzTey}. If there exists $N\in\ww N$
such that the map $J_{N}\circ\lambda$ is one-to-one, then so is $\lambda$.
We claim that a family $f_{1},\ldots,f_{k}\in\germ w$ is free (over
$\mathbb{C}$) if, and only if, there exists $N\in\mathbb{N}$ such
that their $N$-jets are free (as elements of the vector space $\pol w{\leq N}$).
Suppose that for any $N\in\mathbb{N}$ there exists a non-trivial
relation 
\[
L_{N}:=\left(\lambda_{1,N},\ldots,\lambda_{k,N}\right)\neq0
\]
 for the family $C_{N}:=\left(J_{N}\left(f_{1}\right),\ldots,J_{N}\left(f_{k}\right)\right)$,
that is 
\begin{eqnarray*}
J_{N}\left(\sum_{j=1}^{k}\lambda_{j,N}f_{j}\right) & = & 0\,.
\end{eqnarray*}
Up to rescaling of $L_{N}$ one can suppose that it belongs to the
unit sphere of $\mathbb{C}^{k}$ and thereby one can consider some
accumulation point $L_{\infty}:=\left(\lambda_{1,\infty},\ldots,\lambda_{k,\infty}\right).$
Because $J_{p+1}\left(f\right)=0$ implies $J_{p}\left(f\right)=0$,
by taking the limit $N\to\infty$ while fixing an arbitrary $p$ we
obtain that $L_{\infty}$ is a non-trivial relation for $C_{p}$ (by
continuity of $J_{N}$), and thus is a non-trivial relation for $\left(f_{1},\cdots,f_{k}\right)$,
proving our claim. If $\lambda$ is of maximal rank at $\mathbf{0}$,
\emph{i.e.} the rank of $\partial_{\mathbf{0}}\lambda$ is $m$, there
accordingly exists $N\in\mathbb{N}$ such that the mapping $J_{N}\circ\lambda$
is of maximal rank. Therefore the mapping $J_{N}\circ\lambda$ is
locally one-to-one around $\mathbf{0}$, say on some domain $V'\subset V$,
and so is $\lambda$.
\item Let $N\in\ww N$ and $V'$ be given as above, so that $J_{N}\circ\lambda|_{V'}$
is one-to-one. Let $\left(b_{0},\cdots,b_{m-1}\right)\subset\pol z{\leq N}$
be a basis of $\tx{im}\left(\partial_{0}\left(J_{N}\circ\lambda\right)\right)$,
which we complete with a basis $\left(b_{m},\cdots,b_{N}\right)\subset\pol z{\leq N}$
of the cokernel of $\partial_{0}\left(J_{N}\circ\lambda\right)$,
so that $\pol z{\leq N}=\ww C\left\langle b_{0},\cdots,b_{N}\right\rangle $.
Write $J_{N}\circ\lambda\left(\mathbf{x}\right)=\sum_{j=0}^{N}\lambda_{j}\left(\mathbf{x}\right)b_{j}$
with $\lambda_{j}~:~V'\to\ww C$ holomorphic. According to the local
inversion theorem there exists a domain $W\subset V'$ containing
$\mathbf{0}$ such that $\tilde{\lambda}\,:\,\mathbf{x}\in W\mapsto\sum_{j<m}\lambda_{j}\left(\mathbf{x}\right)b_{j}$
is a biholomorphism onto its image. Let us write $\Phi$ the inverse
map, holomorphic from the open set $\tilde{\lambda}\left(W\right)$
of $\ww C\left\langle b_{0},\ldots,b_{m-1}\right\rangle $ into $W$
and introduce the natural (continuous) projector
\begin{eqnarray*}
P\,:\,\germ w & \longrightarrow & \ww C\left\langle b_{0},\ldots,b_{m-1}\right\rangle \\
\sum_{n\leq N}f_{n}b_{n}+\sum_{n>N}f_{n}\id^{n} & \longmapsto & \sum_{n<m}f_{n}b_{n}\,,
\end{eqnarray*}
so that $P\circ\lambda=\tilde{\lambda}$. Consider the (analytic)
mapping
\begin{eqnarray*}
\Lambda\,:\,J_{N}^{-1}\left(\tilde{\lambda}\left(W\right)\right)\subset\germ w & \longrightarrow & \germ w\\
f & \longmapsto & f-\lambda\circ\Phi\circ P\left(f\right)\,.
\end{eqnarray*}
By construction $\Lambda\left(f\right)=0$ if, and only if, $f\in\lambda\left(W\right)$.
In particular $\lambda\left(W\right)$ is closed in $U:=J_{N}^{-1}\left(\tilde{\lambda}\left(W\right)\right)$.
\end{enumerate}
\end{proof}
\begin{cor}
Let $\lambda\,:\,V\subset\ww C^{m}\to\germ w$ be analytic. Then $\lambda\left(V\right)$
is analytically meager.
\end{cor}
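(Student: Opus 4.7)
The plan is to proceed by induction on $m$. The base case $m = 0$ is immediate: $\lambda(V)$ reduces to a single germ $f_{0}$, and the affine map $f \mapsto f - f_{0}$ is a non-zero analytic self-map of $\germ{w}$ whose zero set is $\{f_{0}\}$, hence a proper analytic set.

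For the inductive step, I would first stratify by rank. Let $r := \max_{\mathbf{x} \in V} \mathrm{rank}(\partial_{\mathbf{x}} \lambda)$, which by the jet-truncation argument in the proof of Proposition~\ref{prop:finite_range_is_analytic}(1) coincides with $\mathrm{rank}(\partial_{\mathbf{x}}(J_{N} \circ \lambda))$ at some point for all sufficiently large $N$. The locus $V' := \{\mathbf{x} \in V : \mathrm{rank}(\partial_{\mathbf{x}} \lambda) < r\}$ is then a proper analytic subset of $V \subset \ww C^{m}$ of complex dimension at most $m - 1$, cut out by the vanishing of the $r \times r$-minors of the Jacobian of $J_{N} \circ \lambda$. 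By standard local parameterizations of analytic subsets of $\ww C^{m}$ together with the Lindel\"of property, $V'$ is a countable union of images of analytic maps from open subsets of $\ww C^{k}$ with $k \leq m - 1$; applying the induction hypothesis to each shows that $\lambda(V')$ is analytically meager.

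On the open complement $V_{r} := V \setminus V'$ the rank is constantly equal to $r$, so the classical constant-rank theorem furnishes, around each point, local holomorphic coordinates $(y_{1}, \ldots, y_{m})$ in which $\lambda(\mathbf{y}) = \mu(y_{1}, \ldots, y_{r})$ for some analytic map $\mu : W' \subset \ww C^{r} \to \germ{w}$ of maximal rank $r$. I would cover $V_{r}$ by countably many such charts. If $r < m$, the induction hypothesis applied to every such $\mu$ yields that $\lambda(V_{r})$ is analytically meager. If $r = m$, then $\lambda$ itself is of maximal rank on each chart, and Proposition~\ref{prop:finite_range_is_analytic} provides, for every $\mathbf{x} \in V_{r}$, a neighborhood $W$ and a domain $U \subset \germ{w}$ on which $\lambda(W)$ coincides with the zero set of a non-zero analytic map $\Lambda : U \to \germ{w}$; Lindel\"of once more reduces to countably many such local pieces.

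The main obstacle in the $r = m$ case is the globalization of the local description provided by Proposition~\ref{prop:finite_range_is_analytic}: the zero set $\Lambda^{-1}(0)$ is merely relatively closed in the domain $U$, whereas a proper analytic set of $\germ{w}$ must be closed in the ambient space. I would resolve this by choosing the parameter neighborhoods $W \Subset V$ to be relatively compact, so that $\lambda(\overline{W})$ is a continuous image of a compact and therefore compact (and globally closed, since $\germ{w}$ is Hausdorff); at each point of $\lambda(\overline{W})$ interior to $\lambda(W)$ the local analytic-set description is given by Proposition~\ref{prop:finite_range_is_analytic}, while at boundary points one recovers such a description by re-centering the proposition at that point inside a slightly enlarged parameter neighborhood. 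This upgrades each local piece to a genuine proper analytic set of $\germ{w}$, completing the argument.
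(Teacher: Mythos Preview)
Your inductive scheme---stratify by rank, dispose of the low-rank locus via the induction hypothesis on lower-dimensional parametrizations, then invoke Proposition~\ref{prop:finite_range_is_analytic} at points of maximal rank, covering by countably many such local charts---is precisely the paper's argument; your use of the constant-rank theorem to factor $\lambda$ through $\ww C^{r}$ when $r<m$ is just a rephrasing of the paper's covering by $r$-dimensional analytic discs.

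Where you depart from the paper is in your final paragraph, and there the proposed fix is incorrect. You rightly observe that Proposition~\ref{prop:finite_range_is_analytic} only realizes $\lambda(W)$ as a zero-set inside a domain $U$, and the paper indeed asserts without further comment that each $\lambda(W_k)$ is ``included in a proper analytic set''. But your remedy---declaring $\lambda(\overline{W})$ itself a proper analytic set because it is compact---fails: near a boundary point $f=\lambda(\mathbf{x}_0)$ with $\mathbf{x}_0\in\partial W$, re-centering the proposition at $\mathbf{x}_0$ produces a local analytic description of $\lambda(W_0)$ for some small $W_0\ni\mathbf{x}_0$, \emph{not} of $\lambda(\overline{W})$. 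The set $\lambda(\overline{W})\cap U_0$ sits inside $\lambda(W_0)$ bounded by the image of the \emph{real} hypersurface $\partial W$, which admits no local complex-analytic description. Already with $m=1$ and $\lambda$ an affine embedding, $\lambda(\overline{W})$ is a closed disc in a complex line---closed, but certainly not an analytic set. So while your diagnosis of a loose point is sound, the cure you offer does not work.
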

Since a countable union of analytically meager subsets remains analytically
meager Theorem~B derives from this result.
\begin{proof}
It is done by induction on $m$, the result being trivially true for
$m=0$. The reasoning is essentially the same as in \cite{GenzTey},
and consists in splitting $V$ into at most countably many sets on
which the restriction of $\lambda$ has relative maximal rank. Outside
a proper analytic subset $\Sigma$ of $V$ the rank of the map $\lambda$
is constant; let us call $\mu\left(\lambda\right)$ this generic rank.
If $\mu\left(\lambda\right)<m$ then we can find at most countably
many analytic discs $D_{k}\subset V$ of dimension $\mu\left(\lambda\right)$
such that $\bigcup_{k}\lambda\left(D_{k}\right)=\lambda\left(V\right)$.
Thanks to our induction hypothesis this case has already been dealt
with and $\lambda\left(V\right)$ is analytically meager. We therefore
assume that $\mu\left(\lambda\right)=m$. The set $\Sigma$ admits
a decomposition $\Sigma=\bigcup_{k}C_{k}$ into at most countably
many analytic discs of dimension $p_{k}$ with $0\leq p_{k}<m$ (see
\emph{e.g. }\cite{coste}). We apply the induction hypothesis to each
$\lambda|_{C_{k}}$ as before to obtain that $\lambda\left(\Sigma\right)$
is analytically meager. We can apply Proposition~\ref{prop:finite_range_is_analytic}
at each point of the open set $V\backslash\Sigma$, which is therefore
covered by countably many domains $\left(W_{k}\right)_{k}$ such that
each $\lambda\left(W_{k}\right)$ is included in a proper analytic
set. This settles the induction.
\end{proof}

\subsection{Remarks on tangent spaces}

~

As in the finite dimensional case we can build (at least) two candidate
tangent spaces.
\begin{defn}
\label{def_tang_space}Let $\Omega$ be a non-empty analytic set and
pick $f\in\Omega$, together with a collection of locally defining
maps $\left(\Lambda_{\iota}\right)_{\iota}$.
\begin{enumerate}
\item We define the \textbf{algebraic tangent space }of $\Omega$ at $f$
\begin{align*}
\tang fA{\Omega} & :=\bigcap_{\iota}\ker\p{\Lambda^{\iota}}ff\,.
\end{align*}

\item We define the \textbf{geometric tangent space} of $\Omega$ at $f$
\begin{align*}
\tang fG{\Omega} & :=\left\{ \lambda'\left(0\right)\,:\,\lambda\mbox{ holomorphic }\left(\ww C,0\right)\to\left(\Omega,f\right)\right\} \,.
\end{align*}

\end{enumerate}
\end{defn}
~

Obviously $\tang fG{\Omega}<\tang fA{\Omega}$ and equality is equivalent,
for finite-dimensional spaces and irreducible $\Omega$, to the fact
that $f$ is a regular point of $\Omega$. Therefore we propose the
following definition:
\begin{defn}
\label{def_reg_point}Assume $\Omega$ is irreducible. We say that
$f$ is a \textbf{regular point} of $\Omega$ if $\tang fG{\Omega}=\tang fA{\Omega}$.
\end{defn}
We conjecture that regular points of (at least strongly) analytic
sets enjoy «nice» geometric features, maybe reaching as far as the
existence of a local analytic parameterization.

\section{\label{sec:calculus}Fréchet calculus}

In this section we fix a choice of a useful $A$-topology on $\germ z$.
For $a\in\germ z$ the notation $a\times\bullet$ stands for the endomorphism
$h\mapsto a\times h$ of $\germ z$. More generally in the formulas
bellow the symbol <<$\bullet$>> will stand for the argument of
a continuous linear mapping.

\subsection{Actually computing derivatives}

~

Computing actual derivatives of <<simple>> operations can be performed
easily. In fact one can compute them in many less simple cases using
the following formula, which is only a consequence of the continuity
of the derivative:
\begin{lem}
Let $\Lambda\,:\,U\to\germ w$ be analytic and $f\in U$. Then 
\begin{eqnarray*}
\p{\Lambda}ff\left(\sum_{n\geq0}h_{n}z^{n}\right) & = & \sum_{n\geq0}h_{n}\p{\Lambda}ff\left(z^{n}\right)\\
 & = & \sum_{n\geq0}h_{n}\frac{\partial\Lambda\left(f+x\times z^{n}\right)}{\partial x}\left(0\right)\,.
\end{eqnarray*}

\end{lem}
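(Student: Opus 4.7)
The plan is to combine two ingredients: the continuity of the first Taylor coefficient $\p{\Lambda}ff$ as a linear map, and the density of polynomials in any useful sequential topology. Both are already established in the excerpt.

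First, since $\Lambda$ is analytic, Theorem~\ref{thm:anal_G_F_eqv} tells us that $\Lambda$ is F-holomorphic, so $\p{\Lambda}ff \in \mathcal{L}(\germ z \to \germ w)$ is a \emph{continuous} linear mapping. By Proposition~\ref{prop:topo_completion}(2) applied to the useful $A$-topology at hand, the truncated Taylor sums $J_N(h)=\sum_{n=0}^{N} h_n z^n$ converge to $h$ in $\germ z_A$ as $N\to\infty$. Applying continuity and linearity of $\p{\Lambda}ff$ yields
\begin{align*}
\p{\Lambda}ff(h) \;=\; \p{\Lambda}ff\!\left(\lim_{N\to\infty}\sum_{n=0}^{N}h_n z^n\right) \;=\; \lim_{N\to\infty}\sum_{n=0}^{N}h_n\,\p{\Lambda}ff(z^n) \;=\; \sum_{n\geq 0}h_n\,\p{\Lambda}ff(z^n),
\end{align*}
which is the first equality of the lemma.

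For the second equality, fix $n\in\ww N$ and consider the one-complex-variable map $x\mapsto \Lambda(f+x\times z^n)$, which is defined and analytic on a small disc around $0\in\ww C$ because $\Lambda$ is analytic at $f$ (equivalently, because $\Lambda$ is G-holomorphic by Theorem~\ref{thm:anal_G_F_eqv}). Its Taylor expansion at $0$ reads $\Lambda(f)+x\,\p{\Lambda}ff(z^n)+O(x^2)$ by the very definition of the first Taylor coefficient of $\Lambda$ at $f$ (one can also read this off Cauchy's formula in Proposition~\ref{prop:anal_Cauchy_integ} with $p=1$ and $h:=z^n$). Comparing with the standard Taylor expansion of the complex-analytic function $x\mapsto \Lambda(f+x\times z^n)$ gives
\begin{align*}
\p{\Lambda}ff(z^n) \;=\; \frac{\partial\Lambda(f+x\times z^n)}{\partial x}(0),
\end{align*}
which, plugged in the previous identity, closes the proof.

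No real obstacle is expected: the entire argument is essentially a ``continuity plus density'' statement, and the only thing to be careful about is that the series $\sum_n h_n z^n$ does converge to $h$ in the ambient topology, which is precisely the content of Proposition~\ref{prop:topo_completion}(2) for the useful topology fixed at the start of the section.
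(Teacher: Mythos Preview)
Your proof is correct and is exactly the argument the paper has in mind: the text introduces the lemma as being ``only a consequence of the continuity of the derivative'' and gives no further proof, so your expansion via continuity of $\p{\Lambda}ff$ together with Proposition~\ref{prop:topo_completion}(2) is precisely the intended justification.
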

We shall present an example: differentiating the strongly analytic
map (Proposition~\ref{prop:compo_is_ana}) $g\mapsto f\circ g$ with
respect to $g$ (\emph{i.e.} for fixed $f$). This allows to compute
$\p{f\circ g}gg$ for any $g\in U$:
\begin{eqnarray*}
\p{f\circ g}g{\left(f,g\right)}\left(z^{n}\right) & = & \frac{\partial f\circ\left(g+xz^{n}\right)}{\partial x}\left(0\right)=z^{n}\times f'\circ g
\end{eqnarray*}
so that
\begin{prop}
\label{prop:compo_derive}Let $f\in\germ z$ be given. The right-composition
mapping $g\mapsto f\circ g$ is analytic on the domain $J_{0}^{-1}\left(\rad f\ww D\right)$
and for all $g$ within this domain we have 
\begin{eqnarray}
\p{f\circ g}g{\left(f,g\right)} & = & \left(f'\circ g\right)\times\bullet\,.\label{eq:deriv_compos}
\end{eqnarray}
More generally we have, for all $p\in\ww N$, 
\begin{eqnarray*}
\p[p]{f\circ g}gg\left(\bullet^{\oplus p}\right) & = & \left(f^{\left(p\right)}\circ g\right)\times\bullet^{p}\,.
\end{eqnarray*}

\end{prop}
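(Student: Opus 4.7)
The analyticity of $g\mapsto f\circ g$ on $U:=J_{0}^{-1}(\rad f\ww D)$ is already at hand from Proposition~\ref{prop:compo_is_ana}, so the only task left is to identify the successive Taylor coefficients at an arbitrary $g\in U$. The plan is to fix $h\in\germ z$ and compute the $\germ z$-valued power expansion of $x\in\ww C\mapsto f\circ(g+xh)$ at $x=0$, then apply the Cauchy integral formula of Proposition~\ref{prop:anal_Cauchy_integ} to extract the Taylor homogeneous terms. By strong analyticity, $f\bigl(g(z)+xh(z)\bigr)$ is the germ at $(0,0)\in\ww C^{2}$ of a holomorphic function of $(x,z)$; since $\left|g(0)\right|<\rad f$, for $\left|z\right|$ small enough the point $g(z)$ stays in a compact subset of $\rad f\ww D$, and the Taylor series of $f$ centered at $g(z)$ converges on a disc of positive radius independent of~$z$. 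Expanding $f$ at $g(z)$ and regrouping as a series in $x$ yields the normally convergent bivariate expansion
\begin{align*}
f\bigl(g(z)+xh(z)\bigr) & =\sum_{p=0}^{\infty}\frac{f^{(p)}(g(z))\,h(z)^{p}}{p!}\,x^{p}\,,
\end{align*}
whose coefficient of $x^{p}$ is the germ $\frac{1}{p!}(f^{(p)}\circ g)\,h^{p}\in\germ z$.

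To convert this into the Taylor expansion of $g\mapsto f\circ g$, denote by $P_{p}$ its $p$-th homogeneous Taylor term at $g$. Proposition~\ref{prop:anal_Cauchy_integ} gives
\begin{align*}
P_{p}(h^{\oplus p}) & =\frac{1}{2\ii\pi}\oint_{\eta\sone}\bigl(f\circ(g+xh)\bigr)\,\frac{\dd x}{x^{p+1}}
\end{align*}
for sufficiently small $\eta>0$, so $P_{p}(h^{\oplus p})$ equals the coefficient of $x^{p}$ computed above, namely $\frac{1}{p!}(f^{(p)}\circ g)\,h^{p}$. Since $\p[p]{f\circ g}{g}{g}=p!\,P_{p}$ by the very definition of the Taylor coefficient, this yields $\p[p]{f\circ g}{g}{g}(h^{\oplus p})=(f^{(p)}\circ g)\,h^{p}$, as claimed; specialising to $p=1$ delivers the multiplication operator $(f'\circ g)\times\bullet$.

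The mildly technical step, which I expect to be the main obstacle, is upgrading the convergence of the bivariate expansion from pointwise convergence in $z$ to convergence in the useful $A$-topology on $\germ z$, which is the setting needed for Proposition~\ref{prop:anal_Cauchy_integ} to legitimately extract Taylor coefficients. This follows without extra work from Proposition~\ref{prop:ana_cvg_norm_unif}: the homogeneous Taylor terms of an analytic map sum $\norm{\bullet}{b}$-normally on a neighborhood of $g$ for every continuous seminorm, so that identifying $P_{p}(h^{\oplus p})$ with the germ $\frac{1}{p!}(f^{(p)}\circ g)\,h^{p}$ is immediate from the uniqueness of Taylor coefficients stated in Theorem~\ref{thm:anal_gene_prop}~(1).
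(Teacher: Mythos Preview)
Your proof is correct and follows essentially the same underlying idea as the paper: reduce to the one-complex-variable expansion of $x\mapsto f\circ(g+xh)$ and read off the coefficients. The paper's presentation is slightly different in execution: it invokes the monomial-basis lemma stated just before the proposition, computing $\p{f\circ g}gg(z^{n})=\frac{\partial}{\partial x}\big|_{x=0}f\circ(g+xz^{n})=z^{n}\,(f'\circ g)$ and then summing over $n$ by continuity of the derivative, whereas you go straight through the Cauchy integral of Proposition~\ref{prop:anal_Cauchy_integ}. Your route has the advantage of delivering all $p\in\ww N$ in one stroke, while the paper only spells out $p=1$ and leaves the general case implicit; conversely the paper's argument is marginally lighter for $p=1$ since it avoids the bivariate-convergence discussion you flag in your last paragraph. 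Either way, the content is the same.
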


\subsection{The chain rule}

~

A corollary of Theorem~\ref{thm:anal_Hartogs}, and of the usual
rules of calculus, is that if $\Lambda\,:\,W_{1}\times W_{2}\subset\germ z\times\germ z\to\germ w$
is (strongly) analytic, and if $\Lambda_{j}\,:\,U\to W_{j}$ are (strongly)
analytic maps, $j\in\left\{ 1,2\right\} $, then $\Lambda\left(\Lambda_{1},\Lambda_{2}\right)$
is also (strongly) analytic and, for all $f\in U$, 
\begin{eqnarray*}
\p{\Lambda\left(\Lambda_{1},\Lambda_{2}\right)}ff & = & \p{\Lambda}{f_{1}}{\left(\Lambda_{1}\left(f\right),\Lambda_{2}\left(f\right)\right)}\left(\p{\Lambda_{1}}ff\right)+\p{\Lambda}{f_{2}}{\left(\Lambda_{1}\left(f\right),\Lambda_{2}\left(f\right)\right)}\left(\p{\Lambda_{2}}ff\right)\,.
\end{eqnarray*}

Let us give applications of this calculation and of Proposition~\ref{prop:compo_derive}.
\begin{prop}
\label{prop:differentiate_usual_op}~
\begin{enumerate}
\item The diffeomorphing map
\begin{eqnarray*}
\mathcal{D}\,:\,g\in\germ z & \longmapsto & z\times\exp\circ g\in\diff{}\,,
\end{eqnarray*}
 is strongly analytic and for all $g\in\germ z$ 
\begin{eqnarray*}
\p{\mathcal{D}}gg & = & \mathcal{D}\left(g\right)\times\bullet\,.
\end{eqnarray*}
More generally, with convergence in $\germ z$,
\begin{eqnarray*}
\mathcal{D}\left(g\right) & = & z\times\sum_{p=0}^{\infty}\frac{g^{p}}{p!}\,.
\end{eqnarray*}

\item The inversion map 
\begin{eqnarray*}
\iota\,:\,\germ z & \longrightarrow & \diff{}\\
g & \longmapsto & \mathcal{D}\left(g\right)^{\circ-1}
\end{eqnarray*}
is strongly analytic and for all $g\in\germ z$ we have 
\begin{eqnarray*}
\p{\iota}gg & = & -\left(\frac{z}{1+z\times g'}\times\bullet\right)\circ\iota\left(g\right)\,.
\end{eqnarray*}

\item There exists a unique strongly analytic map 
\begin{eqnarray*}
\mathcal{H}\,:\,\germ z & \longrightarrow & \germ z
\end{eqnarray*}
such that $\mathcal{H}\left(0\right)=0$ and 
\begin{eqnarray*}
\iota & = & \mathcal{D}\circ\mathcal{H}\,.
\end{eqnarray*}
 If $\tx{Log}$ denotes the principal determination of the logarithm
then 
\begin{eqnarray*}
\mathcal{H}\left(g\right) & = & \tx{Log}\circ\frac{\iota\left(g\right)}{z}
\end{eqnarray*}
for all $g\in\germ z$. We have also
\begin{eqnarray*}
\p{\iota}gg & = & \iota\left(g\right)\times\p{\mathcal{H}}gg\,,
\end{eqnarray*}
that is
\begin{eqnarray*}
\p{\mathcal{H}}gg & = & -\left(\frac{\bullet}{1+z\times g'}\right)\circ\iota\left(g\right)\,.
\end{eqnarray*}
Every other holomorphic map $\tilde{\mathcal{H}}$ such that $\iota=\mathcal{D}\circ\tilde{\mathcal{H}}$
is obtained from $\mathcal{H}$ by adding an element of $2i\pi\ww Z$.
\end{enumerate}
\end{prop}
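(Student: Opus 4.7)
For part~(1), the reasoning is almost immediate from earlier results. Since $\exp$ is entire, $\mathcal{R}(\exp)=+\infty$ and Proposition~\ref{prop:compo_is_ana} gives strong analyticity of $g\mapsto\exp\circ g$ on all of $\germ z$; multiplication by $z$ is continuous and linear, hence strongly analytic, so $\mathcal{D}$ is strongly analytic. Its image lies in $\diff{}$ as $\mathcal{D}(g)(0)=0$ and $\mathcal{D}(g)'(0)=\exp(g(0))\neq 0$. Proposition~\ref{prop:compo_derive} applied to $f=\exp$ yields $\p{(\exp\circ g)}gg=(\exp\circ g)\times\bullet$, and multiplying by $z$ produces $\p{\mathcal{D}}gg=\mathcal{D}(g)\times\bullet$. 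The higher-order identity $\p[p]{(\exp\circ g)}{g}{0}(h^{\oplus p})=h^p$ (again by Proposition~\ref{prop:compo_derive}) combined with Proposition~\ref{prop:ana_cvg_norm_unif} yields the Taylor expansion $\mathcal{D}(g)=z\sum_{p}g^p/p!$ converging in $\germ z$.

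For part~(2), I verify strong analyticity of $\iota$ directly from Definition~\ref{def_complete_holo}. Given any $\lambda\in\mathcal{O}((\ww C^n,0)\to\germ z)$, the finite-dimensional holomorphic map $F(\mathbf{x},w):=w\exp(\lambda(\mathbf{x})(w))$ satisfies $\partial_w F(\mathbf{0},0)=\exp(\lambda(\mathbf{0})(0))\neq 0$, so the classical holomorphic inverse function theorem produces an analytic local inverse $\phi:U_0\times W_0\to\ww C$ with $F(\mathbf{x},\phi(\mathbf{x},\zeta))=\zeta$, necessarily coinciding with $(\mathbf{x},\zeta)\mapsto\iota(\lambda(\mathbf{x}))(\zeta)$ by uniqueness of local inverses in $\diff{}$. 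This establishes quasi-strong analyticity, and the uniform bound $|\phi|\leq M$ on $\overline{U_0\times W_0}$ combined with Cauchy's coefficient estimates supplies ample boundedness in any useful norm. The derivative formula is obtained by differentiating $\mathcal{D}(g)\circ\iota(g)=z$ with the chain rule (linearity of left composition in the first slot together with Proposition~\ref{prop:compo_derive} in the second), yielding $(\p{\mathcal{D}}gg(h))\circ\iota(g)+((\mathcal{D}(g))'\circ\iota(g))\times\p{\iota}gg(h)=0$; substituting $\p{\mathcal{D}}gg(h)=\mathcal{D}(g)\times h$, using $\mathcal{D}(g)\circ\iota(g)=z$ and $\mathcal{D}(g)/(\mathcal{D}(g))'=z/(1+zg')$, and right-composing by $\iota(g)$ delivers the claimed formula.

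For part~(3), observe that $\iota(g)/z\in\germ z$ takes the nonzero value $\iota(g)'(0)=\exp(-g(0))$ at $0$, so the map $g\mapsto e^{g(0)}\iota(g)/z-1$ is strongly analytic and lands in $\{f\in\germ z:f(0)=0\}$. Applying Proposition~\ref{prop:compo_is_ana} to the germ $u\mapsto\tx{Log}(1+u)$ at $0$ (radius of convergence~$1$) yields strong analyticity of $g\mapsto\tx{Log}(e^{g(0)}\iota(g)/z)$ on all of $\germ z$, so that
\begin{align*}
\mathcal{H}(g) & :=-g(0)+\tx{Log}\bigl(e^{g(0)}\iota(g)/z\bigr)
\end{align*}
defines a strongly analytic map with $\exp\circ\mathcal{H}(g)=\iota(g)/z$, equivalently $\iota=\mathcal{D}\circ\mathcal{H}$, and $\mathcal{H}(0)=0$. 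The derivative formula follows by differentiating $\iota=\mathcal{D}\circ\mathcal{H}$ using part~(1), which gives $\p{\iota}gg=\iota(g)\times\p{\mathcal{H}}gg$; dividing and invoking the formula from part~(2) produces the stated expression. For uniqueness, any other $\tilde{\mathcal{H}}$ with $\mathcal{D}\circ\tilde{\mathcal{H}}=\iota$ satisfies $\exp\circ(\tilde{\mathcal{H}}(g)-\mathcal{H}(g))\equiv 1$, forcing $\tilde{\mathcal{H}}(g)-\mathcal{H}(g)$ to be a constant germ valued in $2\ii\pi\ww Z$; the continuous integer-valued map $g\mapsto(\tilde{\mathcal{H}}(g)-\mathcal{H}(g))(0)/(2\ii\pi)$ is then constant on the connected space $\germ z$, and the normalization at $g=0$ forces this constant to vanish.

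The principal difficulty lies in part~(2), where strong analyticity of $\iota$ must be extracted from the classical holomorphic implicit function theorem applied to every finite-dimensional slice of $\germ z$, together with uniform polydisc estimates yielding ample boundedness in any useful topology. Everything else reduces to chain-rule bookkeeping and to careful tracking of the logarithm's branch in part~(3).
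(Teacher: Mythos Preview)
Your argument follows essentially the same route as the paper's: part~(1) via Proposition~\ref{prop:compo_derive} applied to $\exp$, part~(2) by differentiating the identity $\mathcal{D}(g)\circ\iota(g)=z$ (the paper differentiates $\iota(g)\circ\mathcal{D}(g)=z$ instead, which amounts to the same thing after the substitution $z\mapsto\iota(g)(z)$), and part~(3) by composing $\iota(g)/z$ with the logarithm series. You are in fact more thorough than the paper on two points: you attempt to justify the strong analyticity of $\iota$ directly (the paper's proof only computes derivatives and is silent on this), and your normalization $\mathcal{H}(g)=-g(0)+\tx{Log}\bigl(e^{g(0)}\iota(g)/z\bigr)$ makes the branch issue transparent for all $g$, whereas the paper's series $L(h)=\sum_{p>0}(-1)^{p+1}h^{p}/p$ applied to $\iota(g)/z-1$ is only manifestly convergent when $g(0)$ is small.

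One point deserves care. In part~(2) your justification of ample boundedness is not complete: the bound $|\phi|\leq M$ on $\overline{U_0\times W_0}$ controls $\iota\circ\lambda$ only along the fixed finite-dimensional family $\lambda$, which is exactly what quasi-strong analyticity asks, but Definition~\ref{def_amply_bounded} requires a bound uniform over a full neighborhood of $g_0$ in $\germ z$. Since (Proposition~\ref{prop:convergence_radius}) the radius of convergence of $g_0+h$ cannot be bounded below on any such neighborhood, the implicit-function estimate does not transfer automatically. The paper's proof does not address this either, so you are not worse off; but if you want a self-contained argument you should invoke a uniform Lagrange-inversion bound on the Taylor coefficients of $\iota(g)$, or alternatively observe that once quasi-strong analyticity is established one may appeal to Proposition~\ref{prop:anal_and_radius_minor_iff_strong_anal} after checking analyticity separately.
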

\begin{proof}
We use the rules established in the previous sections. For the sake
of clarity we use the notation $\mathcal{C}_{f}$ to denote the right
composing mapping $g\mapsto f\circ g$.
\begin{enumerate}
\item We have 
\begin{eqnarray*}
\p{\mathcal{D}}gg & = & z\times\p{\exp g}gg=\mathcal{D}\left(g\right)\times\bullet\,.
\end{eqnarray*}

\item Using the previous lemma to differentiate with respect to $g$ the
relation
\begin{eqnarray*}
\mathcal{C}_{\iota\left(g\right)}\left(\mathcal{D}\left(g\right)\right) & =z
\end{eqnarray*}
 we derive the formula:
\begin{eqnarray*}
0 & = & \p{\mathcal{C}_{f}\left(\mathcal{D}\left(g\right)\right)}f{\iota\left(g\right)}\left(\p{\iota}gg\right)+\p{\mathcal{C}_{\iota\left(g\right)}\left(h\right)}h{\mathcal{D}\left(g\right)}\left(\p{\mathcal{D}}gg\right)\\
 & = & \mathcal{C}_{\p{\iota}gg}\left(\mathcal{D}\left(g\right)\right)+\mathcal{C}_{\iota\left(g\right)'}\left(\mathcal{D}\left(g\right)\right)\times\mathcal{D}\left(g\right)\times\bullet\,.
\end{eqnarray*}
The same relation differentiated with respect to $z$ yields the usual
formula 
\begin{eqnarray*}
\mathcal{D}\left(g\right)'\times\mathcal{C}_{\iota\left(g\right)'}\left(\mathcal{D}\left(g\right)\right) & = & 1\,,
\end{eqnarray*}
completing the proof since $\mathcal{D}\left(g\right)'=\left(1+\id\times g'\right)\times\exp\circ g$.
\item The equality regarding the derivatives, whenever they exist, comes
from differentiating $\iota=\mathcal{D}\circ\mathcal{H}$. Let us
now prove the existence of $\mathcal{H}$. Consider
\begin{eqnarray*}
L\,:\,h\in\germ z & \longmapsto & \sum_{p>0}\frac{\left(-1\right)^{p+1}}{p}h^{p}\,,
\end{eqnarray*}
which is a convergent series for the topology of $\germ z$. Obviously
$\mathcal{C}_{\exp}\circ L\left(h\right)=1+h$ for all $h\in\fgrm h$.
Therefore setting
\begin{eqnarray*}
\mathcal{H}\left(g\right) & := & L\left(\frac{\iota\left(g\right)}{z}\right)-1
\end{eqnarray*}
 does the trick.
\end{enumerate}
\end{proof}
\begin{cor}
\label{cor:diffeo_univ_covering}The universal covering of $\diff{}$
can be represented by the analytic covering
\begin{eqnarray*}
\mathcal{D}\,:\,\germ z & \longrightarrow & \diff{}\\
g & \longmapsto & z\exp\circ g\,.
\end{eqnarray*}
The fiber is canonically isomorphic to $2i\pi\ww Z$: the Galois group
of the covering is generated by the shift $g\mapsto g+2i\pi$.
\end{cor}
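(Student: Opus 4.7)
The plan is to exhibit $\mathcal{D}$ as a covering map with fiber acting freely by deck transformations, then invoke the simple connectedness of the topological vector space $\germ z$ to conclude that $\mathcal{D}$ is the universal covering.

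First I would identify the fibers. If $\mathcal{D}(g_1)=\mathcal{D}(g_2)$, then $z\exp g_1 = z\exp g_2$, so $\exp(g_1-g_2)=1$ as a germ in $\germ z$. The image of $g_1-g_2$ under the exponential being the constant germ $1$ forces $g_1-g_2$ to be a locally constant function with value in $2\ii\pi\ww Z$; since germs are connected, $g_1-g_2\in 2\ii\pi\ww Z$. Conversely, each shift $g\mapsto g+2\ii\pi n$ clearly preserves $\mathcal{D}$, so the fiber over any $\Delta\in\diff{}$ is a principal homogeneous space under $2\ii\pi\ww Z$. Each translation $\tau_n\colon g\mapsto g+2\ii\pi n$ is an analytic automorphism of $\germ z$ commuting with $\mathcal{D}$, so $2\ii\pi\ww Z$ acts by deck transformations.

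Next I would establish local triviality by constructing explicit analytic local sections built from the series
\begin{align*}
L(h) & :=\sum_{p>0}\frac{(-1)^{p+1}}{p}h^{p}
\end{align*}
already used in Proposition~\ref{prop:differentiate_usual_op}, which satisfies $\exp\circ L(h)=1+h$ whenever $h$ is small enough in $\germ z$. Given $\Delta_0\in\diff{}$ and a chosen lift $g_0$ with $\mathcal{D}(g_0)=\Delta_0$, observe that $\Delta_0/z\in\germ z$ is a unit (its value at $0$ equals $\Delta_0'(0)\neq 0$), so the quotient $\Delta/\Delta_0:=(\Delta/z)\cdot(z/\Delta_0)$ makes sense for every $\Delta\in\diff{}$ and depends continuously on $\Delta$ for any useful topology. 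On a sufficiently small open neighbourhood $V\subset\diff{}$ of $\Delta_0$ the germ $\Delta/\Delta_0-1$ lies in the domain of convergence of $L$, and one sets
\begin{align*}
\sigma(\Delta) & := g_0 + L\bigl(\Delta/\Delta_0-1\bigr).
\end{align*}
A direct check using $\exp L(h)=1+h$ yields $\mathcal{D}(\sigma(\Delta))=\Delta$. Hence $\sigma$ is a continuous (in fact strongly analytic) section on $V$, and $W_0:=\sigma(V)$ is a neighbourhood of $g_0$ on which $\mathcal{D}$ restricts to a homeomorphism onto $V$.

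To obtain the covering trivialization I would then argue that $\mathcal{D}^{-1}(V)=\bigsqcup_{n\in\ww Z}\tau_n(W_0)$: any $g\in\mathcal{D}^{-1}(V)$ differs from $\sigma(\mathcal{D}(g))\in W_0$ by an element of $2\ii\pi\ww Z$ by the fiber computation above, and the translates $\tau_n(W_0)$ are pairwise disjoint because the real-part of the constant term separates them. Shrinking $V$ if necessary guarantees that each $\tau_n(W_0)$ is mapped homeomorphically onto $V$. Finally, $\germ z$ is a Hausdorff topological vector space, hence contractible via the homotopy $(s,g)\mapsto (1-s)g$ and in particular simply connected and locally path-connected, so $\mathcal{D}$ is the universal covering. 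The Galois (deck) group is then precisely the fiber-preserving subgroup acting freely and transitively on a fiber, which is exactly $\{\tau_n\colon n\in\ww Z\}\simeq 2\ii\pi\ww Z$.

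The main technical obstacle is the local triviality step, specifically verifying that the construction of $\sigma$ is valid in the topology on $\diff{}$ induced by $\mathcal{D}$: one must confirm that division by $\Delta_0/z$ and application of $L$ yield strongly analytic operations with the appropriate convergence estimates on a genuine open neighbourhood of $\Delta_0$. Once that is granted, everything else follows formally from the fiber calculation and the contractibility of $\germ z$.
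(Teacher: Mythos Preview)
Your proposal is correct and follows essentially the same route the paper implicitly relies on: the corollary is stated without its own proof, as an immediate consequence of Proposition~\ref{prop:differentiate_usual_op}, whose part~(3) already builds the local analytic section via the logarithm series $L$ and identifies the ambiguity as $2\ii\pi\ww Z$. Your argument simply makes explicit the covering-space verification (fiber computation, local triviality through $\sigma$, contractibility of $\germ z$) that the paper leaves to the reader.
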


\section{\label{sec:application_analysis}Application to complex analysis}

For a family of $k$ germs $F:=\left(f_{\ell}\right)_{1\leq\ell\leq k}$
we let $\mathcal{I}\left(F\right)$ be the ideal of $\germ{\mathbf{z}}$
spanned by the family. Define the \textbf{Milnor number} of $F$ as
\begin{align*}
\mu\left(F\right) & :=\dim_{\ww C}\nf{\germ{\mathbf{z}}}{\mathcal{I}\left(F\right)}\,\,\,\,\,\in\,\ww N\cup\left\{ \infty\right\} \,.
\end{align*}
It is a well-known consequence of the \emph{Nullstellensatz} for complex
analytic functions that the following statements are equivalent:
\begin{itemize}
\item $\mu\left(F\right)<\infty$ ,
\item $\mathcal{I}\left(F\right)$ contains a power of $\frak{Z}$, the
maximal ideal of $\germ{\mathbf{z}}$,
\item no element of $\frak{Z}$ is a common factor to all the $f_{\ell}$'s.
\end{itemize}
As it turns out the set of collections $F\in\germ{\mathbf{z}}^{m}$
with infinite Milnor number has the structure of an analytic set.
\begin{thm}
\label{thm:Milnor_analytic}Let $m$ and $k$ be integers greater
than $1$. We understand $\mu$ here as the mapping
\begin{align*}
\mu\,:\,\germ{\mathbf{z}}^{k} & \longrightarrow\ww N\cup\left\{ \infty\right\} \\
F & \longmapsto\mu\left(F\right)\,.
\end{align*}
Endow each factor $\germ{\mathbf{z}}$ with a sequential topology.
Then the set $\frak{I}:=\mu^{-1}\left(\infty\right)$ is a proper
analytic set of $\frak{Z}^{k}$.
\end{thm}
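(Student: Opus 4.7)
The plan is to describe $\frak{I}$ locally as the zero locus of an analytic map, using parameterized Weierstrass preparation together with classical elimination theory. The starting point is the analytic Nullstellensatz recalled before the theorem: $\mu(F) < \infty$ is equivalent to $\frak{Z}^N \subset \mathcal{I}(F)$ for some $N \in \ww N$, equivalently to the common zero-germ $\{f_1 = \cdots = f_k = 0\}$ at $\mathbf{0}$ being reduced to the point $\mathbf{0}$. Properness is immediate (provided tacitly $k \geq m$, without which $\frak{I} = \frak{Z}^k$ for elementary dimension reasons and the claim is vacuous): the explicit family $F_* = (z_1, z_2, \ldots, z_m, 0, \ldots, 0)$ lies in $\frak{Z}^k \setminus \frak{I}$ since $\mu(F_*) = 1$, so once analyticity has been established the identity theorem (Corollary~\ref{cor:anal_PPA}) forces $\frak{I}$ to have empty interior on the connected component of $F_*$.

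To construct local analytic equations for $\frak{I}$, fix $F_0 \in \frak{Z}^k$. Composing with a generic linear automorphism of $\ww C^m$, which is an entire transformation of $\germ{\mathbf{z}}^k$ preserving $\frak{I}$ setwise, one may assume every component $f_{0,\ell}$ is $z_m$-regular of its generic order $d_\ell$. Regularity in $z_m$ is the non-vanishing of a prescribed Taylor coefficient, hence persists on some neighborhood $U \ni F_0$. On $U$, parameterized Weierstrass preparation produces $f_\ell = u_\ell(F)\, P_\ell(F)$ with $u_\ell(F)$ a unit of $\germ{\mathbf{z}}$ and
\begin{align*}
P_\ell(F)(\mathbf{z}', z_m) & = z_m^{d_\ell} + \sum_{j=0}^{d_\ell - 1} c_{\ell, j}(F)(\mathbf{z}')\, z_m^j \,, \qquad \mathbf{z}' := (z_1, \ldots, z_{m-1})\,.
\end{align*}
Since each $u_\ell$ is invertible, $\mathcal{I}(F) = \mathcal{I}(P_1(F), \ldots, P_k(F))$, so $F \in \frak{I}$ if and only if the distinguished polynomials $(P_\ell(F))_\ell$ admit a common non-unit divisor in $\germ{\mathbf{z}'}[z_m]$. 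For $k = 2$ this is detected precisely by the vanishing of the $z_m$-resultant $\tx{Res}_{z_m}(P_1(F), P_2(F)) \in \germ{\mathbf{z}'}$; for $k \geq 3$ one reduces to the pairwise case by testing $\tx{Res}_{z_m}\bigl(P_1(F), \sum_{\ell > 1} \lambda_\ell P_\ell(F)\bigr)$ for all $(\lambda_\ell) \in \ww C^{k-1}$, so $\frak{I} \cap U$ becomes the intersection of the zero loci of a $(k-1)$-parameter continuous family of analytic maps into $\germ{\mathbf{z}'}$. This endows $\frak{I}$ with the required analytic-set structure, globalized by noting that the choice of good linear coordinates can be varied analytically along an open cover of $\frak{Z}^k$.

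The principal technical obstacle is the analyticity of parameterized Weierstrass preparation $F \mapsto (u_\ell(F), P_\ell(F))_\ell$ in the sense of Section~\ref{sec:usual_holo}. The cleanest route is via strong analyticity (Definition~\ref{def_complete_holo}): given any finite-parameter holomorphic family $\lambda \in \mathcal{O}\left(\left(\ww C^{n},0\right)\to U\right)$, the classical Weierstrass preparation with holomorphic parameters yields $u_\ell$ and $P_\ell$ depending jointly holomorphically on $(\mathbf{x}, \mathbf{z})$, while the uniform estimates underlying the constructive proof of preparation supply ample boundedness. Theorem~D then promotes strong analyticity to analyticity, closing the argument.
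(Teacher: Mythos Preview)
Your approach via Weierstrass preparation and resultants is genuinely different from the paper's. The paper produces \emph{global} equations that are polynomial in the Taylor jets: for each $d\in\ww N$ one forms the linear map $\varphi_d \colon (h_1,\ldots,h_k) \mapsto \bigl(J_{d+1}(f_p h_p - f_q h_q)\bigr)_{p<q}$ on $\pol{\mathbf{z}}{\leq d}^k$ and proves (Proposition~\ref{prop:composite_eqv} and Section~\ref{sub:general_case}) that $F$ fails to be coprime exactly when $\rank{\varphi_d}\leq (k-1)\binom{m+d}{d}$ for every $d$. The vanishing of the super-threshold minors of an explicit Macaulay-like matrix in the Taylor coefficients of $F$ is manifestly polynomial in those coefficients, hence analytic for any sequential topology, with no appeal to Weierstrass preparation or to Theorem~D. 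Your route is more geometric and closer to classical elimination theory, at the price of being only local and of leaning on the (plausible but nontrivial) strong analyticity of the preparation map.

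There is, however, a genuine gap: you need local analytic equations for $\frak{I}$ at \emph{every} $F_0\in\frak{I}$, but Weierstrass preparation is unavailable whenever some component $f_{0,\ell}$ vanishes identically, since such a germ has no finite $z_m$-order in any coordinates. Near such an $F_0$ a small perturbation may produce $f_\ell\neq 0$ of arbitrarily high order, so neither $d_\ell$ nor the associated Sylvester resultant is defined uniformly on any neighborhood; your sketch simply does not cover this locus (and with it the closedness of $\frak{I}$ in $\frak{Z}^k$ is left unaddressed as well). The paper's jet-rank criterion sidesteps the difficulty because the matrices $M_d(F)$ are defined for every $F$ at once and degenerate gracefully when some $f_\ell=0$. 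A secondary, minor point: the phrase ``regularity in $z_m$ \ldots\ persists on some neighborhood'' is correct only in the weak sense that the $z_m^{d_\ell}$-coefficient stays nonzero --- the actual $z_m$-order may drop for $F\neq F_0$, so $P_\ell(F)$ is no longer a Weierstrass polynomial in the classical sense, though parametric preparation still yields a canonical monic factor of degree $d_\ell$ whose coefficients are the elementary symmetric functions of the $d_\ell$ zeros of $f_\ell(\mathbf{z}',\cdot)$ lying in a fixed small disc about $z_m=0$. That last issue is cosmetic; the zero-component gap is not.
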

We actually show that $\mu^{-1}\left(\infty\right)$ is «algebraic»
in the sense that for each $N\in\ww N$ there exists a polynomial
map $\Lambda_{N}\,:\,\pol{\mathbf{z}}{\leq N}^{k}\to\ww C^{d\left(N\right)}$
such that 
\begin{align*}
\mu^{-1}\left(\infty\right) & =\bigcap_{N\in\ww N}\left(\Lambda_{N}\circ J_{N}\right)^{-1}\left(0\right)\,,
\end{align*}
where $J_{N}$ is the Cartesian product of the $N^{\tx{th}}$-jet
operator of each copy of $\germ{\mathbf{z}}$. From this particular
decomposition stems the fact that the theorem derives from the same
result in the factorial ring $\frml{\mathbf{z}}$. 

We show, in the rest of this section, that determining if $k\geq2$
formal power series have a non-trivial common factor, \emph{i.e. }belonging
to the maximal ideal (still written) $\frak{Z}$ of $\frml{\mathbf{z}}$,
is an «algebraic» condition. Observe the set $\frak{I}\subset\frak{Z}^{k}$
formed by non-coprime families is never empty since $\left(f,\cdots,f\right)\in\frak{I}$
whenever $f\in\frak{Z}$. The case $k=2$ encompasses all technical
difficulties so it is completed first, in Section~\ref{sub:Coprime-families}.
We finally reduce the general case $k\geq2$ to the latter study,
in Section~\ref{sub:general_case}. In Section~\ref{sub:Computations}
we present an effective computable process which stops in finite time
if, and only if, the given family $\left(f_{\ell}\right)_{\ell\leq k}$
is not coprime, using a growing sequence of Macaulay-like matrices.

\bigskip{}

Before performing all these tasks we shall use the above structure
theorem to equip the space of germs of meromorphic functions with
an analytic structure.

\subsection{The analytic space of meromorphic germs}

~

Let $\mero{\mathbf{z}}$ stand for the space of germs of meromorphic
functions (for short, a meromorphic germ) at the origin of $\ww C^{m}$.
This space is by definition the fractions field of the ring $\germ{\mathbf{z}}$,
that is the set of equivalence classes of couples $\left(P,Q\right)\in\germ{\mathbf{z}}\times\left(\germ{\mathbf{z}}\backslash\left\{ 0\right\} \right)$
given by
\begin{align}
\left(P_{1},Q_{1}\right)\leftrightsquigarrow\left(P_{2},Q_{2}\right) & \Longleftrightarrow P_{1}Q_{2}=P_{2}Q_{1}\,.\label{eq:eqv_mero}
\end{align}
Of course we write $\frac{P}{Q}$ the equivalence class of $\left(P,Q\right)$,
and name $\quot$ the canonical projection 
\begin{align*}
\quot\,:\,\germ{\mathbf{z}}\times\left(\germ{\mathbf{z}}\backslash\left\{ 0\right\} \right) & \longrightarrow\mero{\mathbf{z}}\\
\left(P,Q\right) & \longmapsto\frac{P}{Q}\,.
\end{align*}

\begin{defn}
~\label{def_pure_mero}
\begin{enumerate}
\item We say that $\left(P,Q\right)$ is a \textbf{proper representative}
of $\frac{P}{Q}$ if $\mu\left(P,Q\right)<\infty$. Two proper representatives
differ by the multiplication with a multiplicatively invertible holomorphic
germ.
\item We say that a meromorphic germ is \textbf{purely meromorphic} if it
does not admit a representative of the form $\left(P,1\right)$ or
$\left(1,Q\right)$. This is equivalent to requiring that any proper
representative belongs to $\mathfrak{Z}^{2}$. The set of all purely
meromorphic germs is written 
\begin{align*}
\pmero{\mathbf{z}} & :=\left\{ \frac{P}{Q}\,:\,P\left(0\right)=Q\left(0\right)=0\text{ },\text{ }\mu\left(P,Q\right)<\infty\right\} \,.
\end{align*}

\end{enumerate}
\end{defn}
Since $\germ{\mathbf{z}}$ is factorial any meromorphic germ admits
a proper representative. Therefore we can equip the space $\mero{\mathbf{z}}$
with the analytic structure (as in Definition~\ref{def_anal_space})
induced by the quotient map $\tx{Quot}$ restricted to the complement
of $\mu^{-1}\left(\infty\right)$, which is a nonempty open set by
Theorem~\ref{thm:Milnor_analytic}. 
\begin{rem}
\label{rem_mero}~
\begin{enumerate}
\item Working with proper representative is quite natural since a lot of
constructions involving meromorphic germs (\emph{e.g.} analytic maps
$\Lambda$ from some open set $U\subset\mero{\mathbf{z}}$) fail to
be possible for non-proper representatives of meromorphic germs. Besides
if $\quot^{*}\Lambda$ is amply bounded near $\mu^{-1}\left(\infty\right)$
then Theorem~\ref{thm:removable_sing} applies and allows to extend
analytically $\quot^{*}\Lambda$ to the whole $\quot^{-1}\left(U\right)$. 
\item Let us say a few words about the definition of analytic sets of $\mero{\mathbf{z}}$
as in Definition~\ref{def_anal_set}~(4). If analytic sets\textbf{
}of $\mero{\mathbf{z}}$ were defined solely as the vanishing loci
of analytic maps then the set of germs $\frac{P}{Q}$ where, for instance,
$J_{N}\left(P\right)=J_{N}\left(Q\right)=0$ would not form an analytic
set, since the natural map $j_{N}~:~\left(P,Q\right)\mapsto\left(J_{N}\left(P\right),J_{N}\left(Q\right)\right)$
is neither left- nor right-invariant by $\quot$. Yet the vanishing
locus of $j_{N}$ is $\quot$-invariant, in the sense that $J_{N}\left(P\right)=J_{N}\left(Q\right)=0$
implies $J_{N}\left(uP\right)=J_{N}\left(uQ\right)=0$ for any $u\in\germ{\mathbf{z}}$. 
\item The map
\begin{align*}
M\,:\,\germ{\mathbf{z}}\times\left(\germ{\mathbf{z}}\backslash\ww C\right)\backslash\mu^{-1}\left(\infty\right) & \longrightarrow\germ{\mathbf{z}}\times\left(\germ{\mathbf{z}}\backslash\ww C\right)\\
\left(P,Q\right) & \longmapsto\left(P-J_{0}\left(P\right),Q-J_{0}\left(Q\right)\right)
\end{align*}
is analytic and onto, so we can equip $\pmero{\mathbf{z}}$ with the
analytic structure induced by 
\begin{align*}
M_{0} & :=\tx{Quot}\circ M\text{ .}
\end{align*}

\end{enumerate}
\end{rem}

\subsection{\label{sub:Coprime-families}Coprime pairs of $\protect\frml{\mathbf{z}}^{2}$}

~

\subsubsection{Reduction of the proof of Theorem~\ref{thm:Milnor_analytic} when
$k=2$}

~

The result is «only» an exercise in linear algebra. Assume there exists
$\left(h_{1},h_{2}\right)\in\frml{\mathbf{z}}^{k}$ such that 
\begin{align}
f_{1}h_{1} & =f_{2}h_{2}\label{eq:composite}
\end{align}
with $\nu\left(h_{1}\right)<\nu\left(f_{2}\right)$ and $\nu\left(h_{2}\right)<\nu\left(f_{1}\right)$,
where $\nu$ is the valuation associated to the gradation by homogeneous
degree on $\frml{\mathbf{z}}$. We say in that case, for short, that
the collection $\left(f_{1},f_{2}\right)$ is \textbf{composite}.
It follows that $\frak{I}$ is the set of all composite collections;
notice in particular that if $\nu\left(h_{\ell}\right)<\nu\left(f_{3-\ell}\right)$
for some $\ell\in\left\{ 1,2\right\} $ then the relation equally
holds for the other one. We set
\begin{align}
\nu & :=\min\left(\nu\left(f_{1}\right),\nu\left(f_{2}\right)\right)>0,\label{eq:def_of_mu}
\end{align}
and write $f_{\ell}=:\sum_{\mathbf{n}\in\ww N^{m}}f_{\ell,\mathbf{n}}\mathbf{z^{n}}$
with $f_{\ell,\mathbf{n}}:=0$ for $\left|\mathbf{n}\right|<\nu$.
We set likewise $h_{\ell}=:\sum_{\mathbf{n}\in\ww N^{m}}h_{\ell,\mathbf{n}}\mathbf{z^{n}}$
then express the relationship~\eqref{eq:composite} coefficients-wise:
\begin{align}
\left(\forall\mathbf{n}\in\ww N^{m}\right)\,\,\,\,\,\,\,\,\,\sum_{\mathbf{p}+\mathbf{q}=\mathbf{n}}\left(f_{1,\mathbf{p}}\times h_{1,\mathbf{q}}-f_{2,\mathbf{p}}\times h_{2,\mathbf{q}}\right) & =0.\label{eq:composite_coeff}
\end{align}
We say that $\left(f_{1},f_{2}\right)$ is \textbf{composite at rank}
$d\in\ww N$ if, and only if, one can find a tuple $\left(h_{\ell,\mathbf{q}}\right)_{\left|\mathbf{q}\right|\leq d,\,\ell\leq k}$
such that 
\begin{itemize}
\item the previous relation holds for every $\left|\mathbf{n}\right|\leq d+1$,
\item at least one $h_{\ell,\mathbf{q}}$ is non-zero for $\left|\mathbf{q}\right|<\nu$.
\end{itemize}
If a collection $\left(f_{\ell}\right)_{\ell}$ is composite then
it is composite at every rank $d\in\ww N$. If a collection fails
to be composite at some rank then it also does at every other bigger
rank. 

We work with the family of linear spaces $E_{d}:=\pol{\mathbf{z}}{\leq d}^{k}$,
indexed by $d\in\ww N$. Define the linear mapping 
\begin{align*}
\varphi_{d}\text{ : }E_{d}\longrightarrow & \pol{\mathbf{z}}{\leq d+1}\cap\frak{Z}\\
\left(h_{1},h_{2}\right)\longmapsto & J_{d+1}\left(f_{1}h_{1}-f_{2}h_{2}\right)\,.
\end{align*}

\begin{prop}
\label{prop:composite_eqv}The following properties are equivalent:
\begin{enumerate}
\item The family $\left(f_{\ell}\right)_{\ell}$ is composite.
\item The family $\left(f_{\ell}\right)_{\ell}$ is composite at every rank
$d\geq0$.
\item For all $d\geq0$ we have
\begin{align*}
\rank{\varphi_{d}} & \leq\binom{m+d}{d}\,.
\end{align*}

\end{enumerate}
\end{prop}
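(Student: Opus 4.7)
The implication (1)$\Rightarrow$(2) I would prove by truncation. If $(h_1, h_2)$ is a composite witness with $\nu(h_\ell) < \nu(f_{3-\ell})$, the pair $(J_d(h_1), J_d(h_2))$ lies in $E_d$; since $\nu(f_\ell) \geq \nu \geq 1$, the products $f_\ell(h_\ell - J_d(h_\ell))$ have valuation at least $d+2$, so $\varphi_d(J_d(h_1), J_d(h_2)) = J_{d+1}(f_1 h_1 - f_2 h_2) = 0$. The valuation balance $\nu(f_1)+\nu(h_1) = \nu(f_2)+\nu(h_2)$ combined with the compositeness inequalities forces some coefficient $h_{\ell,\mathbf{q}}^{(d)}$ with $|\mathbf{q}| < \nu$ to be non-zero, so the truncation is a valid witness at rank~$d$.

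For (1)$\Rightarrow$(3), I would let $c := \gcd(f_1, f_2)$ in the factorial ring $\frml{\mathbf{z}}$ and set $s := \nu(c) \geq 1$. Every element of $\img\varphi_d$ is of the form $J_{d+1}(cp)$ for some $p \in \pol{\mathbf{z}}{\leq d+1-s}$, and the map $p \mapsto J_{d+1}(cp)$ is injective (its kernel would force $\nu(p) > d+1-s$, hence $p=0$ since $\frml{\mathbf{z}}$ is a domain). Therefore
\[
\rank{\varphi_d} \;\leq\; \dim\pol{\mathbf{z}}{\leq d+1-s} \;=\; \binom{m+d+1-s}{m} \;\leq\; \binom{m+d}{m} \;=\; \binom{m+d}{d}.
\]

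For (2)$\Rightarrow$(1) I would construct $(h_1, h_2)$ by a finite-dimensional Mittag-Leffler argument. Write $W := \pol{\mathbf{z}}{<\nu}^2$ for the low-order part (so $E_d = N_d \oplus W$) and let $P : E_d \twoheadrightarrow W$ be the canonical projection. Truncation sends $\ker\varphi_{d+1}$ into $\ker\varphi_d$ (by the same valuation estimate as above) and commutes with $P$, so the images $L_d := P(\ker\varphi_d)$ form a decreasing chain of subspaces of the finite-dimensional $W$, stabilizing to some $L_\infty$; hypothesis~(2) yields $L_\infty \neq \{0\}$. For any $w \in L_\infty \setminus \{0\}$ the affine fibers $\ker\varphi_d \cap P^{-1}(w)$ are non-empty, and their underlying directions $\ker\varphi_d \cap N_d$ also stabilize; a standard nested-affine-subspace argument then produces a coherent family $(x_d)$ with $J_{d-1}(x_d) = x_{d-1}$ and $P(x_d) = w$. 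The coefficient-wise limit in $\frml{\mathbf{z}}^2$ gives $(h_1, h_2)$ satisfying $f_1 h_1 = f_2 h_2$ with a non-zero coefficient below degree $\nu$, and the valuation balance promotes this to the full compositeness inequalities.

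The main obstacle is (3)$\Rightarrow$(1), which I would handle by contrapositive: assuming $(f_1, f_2)$ coprime, I would exhibit $d$ with $\rank{\varphi_d} > \binom{m+d}{d}$. First one shows $\img\varphi_d = J_{d+1}((f_1, f_2))$: given any representation $g = f_1 H_1 + f_2 H_2$ with $J_{d+1}(g)=g$, the truncations $h_\ell := J_{d+1-\nu(f_\ell)}(H_\ell)$ produce the same $(d+1)$-jet. Hence $\rank{\varphi_d} = \binom{m+d+1}{d+1} - H_d$ with $H_d := \dim_{\ww C}\frml{\mathbf{z}}/\bigl((f_1, f_2) + \frak{Z}^{d+2}\bigr)$, and~(3) is equivalent to requiring $H_d \geq \binom{m+d}{d+1}$ for every $d$. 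The case $m=1$ is vacuous, as any two non-units of $\frml{z}$ share a power of $z$ and so are composite. For $m \geq 2$ coprimality makes $(f_1, f_2)$ a regular sequence of height~$2$ in the regular local ring $\frml{\mathbf{z}}$, and the Koszul resolution---equivalently, $V(f_1, f_2)$ has codimension exactly $2$ by Krull's height theorem---yields the Hilbert-function estimate $H_d = O(d^{m-2})$ as $d \to \infty$, which eventually falls strictly below $\binom{m+d}{d+1} \sim d^{m-1}/(m-1)!$, contradicting~(3). This Koszul/Hilbert-function estimate is the main technical step; in the critical case $m=2$ it reduces to the elementary observation that $(f_1, f_2)$ is then $\frak{Z}$-primary with finite codimension, so $H_d$ stabilizes to a constant while $\binom{m+d}{d+1} = d+2$ grows linearly.
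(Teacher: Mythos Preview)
Your proof is correct and takes a genuinely different route from the paper's, particularly for the two ``hard'' implications.

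For (1)$\Rightarrow$(3), the paper does not use the gcd directly. Instead it works through the inverse-limit space $K_\infty$ produced in (2)$\Rightarrow$(1), and bounds $\dim A_d$ from below by injecting $J_{d+1-\nu}(\frak{Z}K_\infty)$ into the kernel of $\varphi_d$. Your argument---factor out $c=\gcd(f_1,f_2)$ with $s=\nu(c)\geq1$, observe $\img\varphi_d\subset J_{d+1}(c\cdot\pol{\mathbf{z}}{\leq d+1-s})$, and use injectivity of multiplication by $c$---is a one-step estimate that is considerably shorter and more transparent.

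For (3)$\Rightarrow$(1), the paper argues the contrapositive (3)$\Rightarrow$(2) by hand: if $\dim K_{d+1}<\dim K_d$ at some rank, it tracks an auxiliary decomposition $A_d=\kappa_d(A_{d+1})\oplus C_d$, injects $\frak{Z}_{=p}C_d$ into $C_{d+p}$, bounds $\dim\ker\hat\kappa_d$ from the block-triangular matrix shape, and finally compares the growth in $p$ of $\coef{\hat d+p+1\leq\bullet\leq d+p}$ against $\coef{\leq p}$. Your approach replaces this explicit bookkeeping by the identification $\rank{\varphi_d}=\binom{m+d+1}{d+1}-H_d$ with $H_d=\dim\frml{\mathbf{z}}/((f_1,f_2)+\frak{Z}^{d+2})$, and then appeals to the standard fact that a coprime pair in a UFD is a regular sequence, so $\dim\frml{\mathbf{z}}/(f_1,f_2)=m-2$ and the Hilbert--Samuel function $H_d$ grows only like $d^{m-2}$, eventually falling below $\binom{m+d}{d+1}\sim d^{m-1}/(m-1)!$.

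The trade-off is clear: your argument is shorter and conceptually cleaner, but imports commutative algebra (Krull's height theorem, Hilbert--Samuel asymptotics). The paper's argument is self-contained linear algebra, which dovetails with the explicit Macaulay-type matrices it builds in the subsequent computational section and with the extension to $k$-tuples. One small point: in your (2)$\Rightarrow$(1), the phrase ``their underlying directions $\ker\varphi_d\cap N_d$ also stabilize'' is imprecise (these spaces live in growing ambient spaces); what you need, and what does hold, is the Mittag--Leffler condition that for each fixed $d_0$ the images $j_{d_0,d}(\ker\varphi_d\cap P^{-1}(w))$ stabilize in the finite-dimensional $E_{d_0}$. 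With that clarification your inverse-limit construction goes through.
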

We deduce that composite collections form a proper analytic subset
of $\frml{\mathbf{z}}^{2}$, expressing for all $d$ the vanishing
of every minor of size greater than $\binom{m+d}{d}$ of a given matrix
of $\varphi_{d}$.

\subsubsection{Proof of (2)$\Rightarrow$(1)}

~

Only the case $f_{1}f_{2}\neq0$ is non-trivial, for which $0<\nu<\infty$
(see~\eqref{eq:def_of_mu}). We prove that if $\left(f_{1},f_{2}\right)$
is composite at every rank $d\in\ww N$ then there exists two formal
power series $h_{1},\text{ }h_{2}\in\frml{\mathbf{z}}$ such that
$f_{1}h_{1}=f_{2}h_{2}$ and $\nu\left(h_{\ell}\right)<\nu\left(f_{3-\ell}\right)$.
Consider the restriction $j_{d}\,:\,E_{d+1}\twoheadrightarrow E_{d}$
of $J_{d}$, each one yielding a canonical section $E_{d}\hookrightarrow E_{d}\oplus\ker j_{d}=E_{d+1}$.
For that identification we have 
\begin{align}
\varphi_{d+1}|_{E_{d}} & =\varphi_{d}\label{eq:matrix_embed}
\end{align}
 and $\varphi_{d+1}\left(\ker j_{d}\right)<\ker j_{d+1}$. Write $\kappa_{d}\text{ : }\ker\varphi_{d+1}\rightarrow\ker\varphi_{d}$
the restriction of $j{}_{d}$. We choose a direct system of one-to-one
maps $\left(\kappa_{d}\,:\,K_{d+1}\hookrightarrow K_{d}\right)_{d\in\ww N}$
of complement subspaces of $\ker\varphi_{d}\cap J_{d}\left(\ker J_{\nu-1}\right)$
in $\ker\varphi_{d}$ in the following fashion:
\begin{itemize}
\item $K_{d}:=\ker\varphi_{d}$ if $d\leq\nu-1$
\item define $K:=\kappa_{d}^{-1}\left(K_{d}\right)$ and, observing that
$K\cap\ker\kappa_{d}=\ker j_{d}\cap\ker\kappa_{d}$, define $K_{d+1}$
as some complement in $K$ of $K\cap\ker\kappa_{d}$.\end{itemize}
\begin{fact}
\label{fact_comp_d_dim_Kd}A collection $\left(f_{\ell}\right)_{\ell}$
is composite at some rank $d$ if, and only if, 
\begin{align*}
\dim K_{d} & \neq0\,.
\end{align*}

\end{fact}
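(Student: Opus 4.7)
The plan is to reformulate "composite at rank $d$" as a linear-algebraic statement about $\ker\varphi_{d}$, and then to verify by induction on $d$ that $K_{d}$ is a complement of the "trivial" part of $\ker\varphi_{d}$.

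Concretely, introduce the subspace $V_{d}:=J_{d}\left(\ker J_{\nu-1}\right)\cap E_{d}$, consisting of pairs of polynomials of degree at most $d$ each with valuation at least $\nu$. Unpacking the definitions, the system~\eqref{eq:composite_coeff} for $\left|\mathbf{n}\right|\leq d+1$ is exactly $\varphi_{d}\left(h_{1},h_{2}\right)=0$, and the condition "at least one $h_{\ell,\mathbf{q}}\neq 0$ for $\left|\mathbf{q}\right|<\nu$" is exactly $\left(h_{1},h_{2}\right)\notin V_{d}$. Hence $\left(f_{1},f_{2}\right)$ is composite at rank $d$ if, and only if, $\ker\varphi_{d}\not\subset V_{d}$, i.e. $\ker\varphi_{d}\cap V_{d}$ is a proper subspace of $\ker\varphi_{d}$.

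The key auxiliary claim is that, for every $d\geq 0$, the space $K_{d}$ is a direct complement of $\ker\varphi_{d}\cap V_{d}$ in $\ker\varphi_{d}$. The base case $d\leq\nu-1$ is automatic, since $V_{d}=\left\{ 0\right\}$ (no non-zero polynomial of degree $<\nu$ can have valuation $\geq\nu$) and $K_{d}=\ker\varphi_{d}$ by definition. For the inductive step, I would first establish the commutation $\varphi_{d}\circ j_{d}=\varphi_{d+1}$ on the whole of $E_{d+1}$, which follows from $\nu\left(f_{\ell}\right)\geq\nu\geq 1$: indeed $f_{\ell}\cdot\left(h_{\ell}-J_{d}\left(h_{\ell}\right)\right)$ has valuation strictly greater than $d+1$, so its $J_{d+1}$ vanishes. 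Given $u\in\ker\varphi_{d+1}$, the inductive hypothesis writes $j_{d}\left(u\right)=k+v$ with $k\in K_{d}$ and $v\in\ker\varphi_{d}\cap V_{d}$; embed $v$ into $E_{d+1}$ via the natural section so that $u-v\in\kappa_{d}^{-1}\left(K_{d}\right)=K$, and split $u-v=k''+v''$ with $k''\in K_{d+1}$ and $v''\in K\cap\ker\kappa_{d}=\ker j_{d}\cap\ker\varphi_{d+1}$. Since elements of $\ker j_{d}$ are pairs homogeneous of degree $d+1\geq\nu$, one has $v''\in V_{d+1}$, hence $u=k''+\left(v''+v\right)\in K_{d+1}+\left(\ker\varphi_{d+1}\cap V_{d+1}\right)$. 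Directness of this sum comes from the observation that any $w$ in the intersection satisfies $j_{d}\left(w\right)\in K_{d}\cap V_{d}\cap\ker\varphi_{d}=\left\{ 0\right\}$, so $w\in K_{d+1}\cap\ker\kappa_{d}=\left\{ 0\right\}$ by choice of $K_{d+1}$.

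Once the auxiliary claim is in hand, the fact is immediate: $\dim K_{d}\neq 0$ if and only if $\ker\varphi_{d}\cap V_{d}$ is strictly contained in $\ker\varphi_{d}$, if and only if $\left(f_{1},f_{2}\right)$ is composite at rank $d$. The only genuine obstacle is the inductive bookkeeping of the direct-sum decomposition; everything else is a translation of definitions. The recursive construction of the $K_{d}$'s given in the text was tailored precisely to carry this induction through.
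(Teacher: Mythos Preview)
There is a genuine gap in the inductive step. The relation you invoke, $\varphi_{d}\circ j_{d}=\varphi_{d+1}$, is not correct: your own justification (that $f_{\ell}\cdot\left(h_{\ell}-J_{d}h_{\ell}\right)$ has valuation strictly greater than $d+1$) only yields $\varphi_{d}\circ j_{d}=J_{d+1}\circ\varphi_{d+1}$, which drops the homogeneous component of degree $d+2$. Consequently, when you embed $v\in\ker\varphi_{d}\cap V_{d}$ into $E_{d+1}$ via the section, $\varphi_{d+1}\left(v\right)$ equals the degree-$(d+2)$ part of $f_{1}v_{1}-f_{2}v_{2}$ and has no reason to vanish. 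Hence $u-v$ need not lie in $\ker\varphi_{d+1}$, and your splitting $u-v=k''+v''$ inside $K=\kappa_{d}^{-1}\left(K_{d}\right)\subset\ker\varphi_{d+1}$ is not available. Your argument for $K_{d+1}\cap V_{d+1}=0$ is fine, so the implication $\dim K_{d}\neq0\Rightarrow$ ``composite at rank $d$'' does go through; it is the converse that breaks.

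This is not a repairable detail: the auxiliary claim is false for arbitrary admissible choices in the recursion. Take $m=1$, $f_{1}=z^{2}+z^{4}$, $f_{2}=z^{2}$ (so $\nu=2$, and the pair is composite at every rank). With the legitimate choice $K_{2}=\left\{ \left(a+bz,\,a+bz\right):a,b\in\ww C\right\} $ one finds $\kappa_{2}^{-1}\left(K_{2}\right)=\left\{ \left(bz+d_{1}z^{3},\,bz+d_{2}z^{3}\right)\right\} $, whence $\dim K_{3}=1$, whereas $\dim\ker\varphi_{3}-\dim\left(\ker\varphi_{3}\cap V_{3}\right)=2$. One step further, choosing $K_{3}=\left\{ \left(bz,bz\right)\right\} $ gives $\kappa_{3}^{-1}\left(K_{3}\right)=\ker\kappa_{3}$, hence $K_{4}=0$ although the pair is composite at rank $4$. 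So both your auxiliary claim and the Fact itself fail for this run of the recipe. (The paper's identity $\varphi_{d+1}|_{E_{d}}=\varphi_{d}$ is imprecise for the same reason.) The only reading under which the Fact holds is to \emph{impose} that each $K_{d}$ be a complement of $\ker\varphi_{d}\cap V_{d}$ in $\ker\varphi_{d}$ from the start---in which case the Fact becomes a tautology---rather than to deduce this from an unconstrained execution of the recursive construction.
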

Now if $\left(f_{1},f_{2}\right)$ is composite at every rank $d\in\ww N$
then the inverse limit $K_{\infty}$ is embeddable as a linear subspace
of finite positive dimension in $\frml{\mathbf{z}}^{2}$, solving
our problem.

\subsubsection{Proof of (1)$\Rightarrow$(3)}

~

If $f_{1}=f_{2}=0$ then $\left(f_{1},f_{2}\right)$ is composite
and $\varphi_{d}=0$. Assume now that, say, $f_{1}\neq0$ and $\nu=\nu\left(f_{1}\right)$.
We introduce a notation regarding the number of multi-indexes whose
length satisfy some property $\mathcal{P}$ 
\begin{align*}
\coef{\mathcal{P}} & :=\#\left\{ \mathbf{n}\in\ww N^{m}\,:\mathcal{P}\left(\left|\mathbf{n}\right|\right)\right\} \,.
\end{align*}
For instance the number of multi-indexes of length less than or equal
to $d$ is
\begin{align*}
\coef{\leq d} & =\binom{m+d}{d}\text{ ,}
\end{align*}
which incidentally is the sought bound on $\rank{\varphi_{d}}$ for
composite collections. We observe that $E_{d}$ has dimension $2\coef{\leq d}$,
hence the
\begin{fact}
\label{fact_compd_with_ker}Condition~(3) holds if, and only if,
for all $d\in\ww N$
\begin{align*}
\dim\ker\varphi_{d} & \geq\coef{\leq d}\,.
\end{align*}

\end{fact}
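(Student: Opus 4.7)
The plan is to observe that this fact is essentially a restatement of the rank--nullity theorem applied to the finite-dimensional linear map
\[
\varphi_d\,:\,E_d=\pol{\mathbf{z}}{\leq d}^{2}\longrightarrow\pol{\mathbf{z}}{\leq d+1}\cap\frak Z .
\]
First I would record the dimension of the source: since we are in the case $k=2$ and each factor $\pol{\mathbf{z}}{\leq d}$ has dimension $\coef{\leq d}$ (the cardinality of the set of multi-indexes $\mathbf{n}\in\ww N^{m}$ with $\left|\mathbf{n}\right|\leq d$), we get $\dim E_d=2\coef{\leq d}$.

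Next I would invoke rank--nullity: $\rank{\varphi_d}+\dim\ker\varphi_d=2\coef{\leq d}$. From this the equivalence is immediate, since
\[
\rank{\varphi_d}\leq\coef{\leq d}\;\Longleftrightarrow\;\dim\ker\varphi_d\geq 2\coef{\leq d}-\coef{\leq d}=\coef{\leq d}.
\]
Applying this equivalence for every $d\in\ww N$ yields the claimed reformulation of condition~(3) of Proposition~\ref{prop:composite_eqv}.

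There is no genuine obstacle here; the only mild subtlety is making sure one counts the dimension of $E_d$ correctly (using that $k=2$, so the factor $2$ in front of $\coef{\leq d}$ is what makes the bound $\coef{\leq d}$ on $\rank{\varphi_d}$ correspond to the \emph{same} bound from below on $\dim\ker\varphi_d$). Note that we do not need any information about the dimension of the target space, which is $\coef{\leq d+1}-1$; the equivalence is purely a statement about the source and the kernel.
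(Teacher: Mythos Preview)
Your proof is correct and matches the paper's approach exactly: the paper simply observes that $\dim E_d=2\coef{\leq d}$ and deduces the fact, which is precisely the rank--nullity argument you spell out.
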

Because $E_{d}\cap\ker J_{d+1-\nu}<\ker\kappa_{d}$ contributes for
a subspace of dimension $2\coef{d+2-\nu\leq\bullet\leq d}$ we are
only interested in studying the dimension of a complement $A_{d}$
of $E_{d}\cap\ker J_{d+1-\nu}$ in $\ker\varphi_{d}$. Notice also
that
\begin{align*}
\left(\forall d<2\left(\nu-1\right)\right)\qquad\,\,\,\,\,\,2\coef{d+2-\nu\leq\bullet\leq d} & =\coef{\leq d}+\left(\coef{\leq d}-2\coef{\leq d+1-\nu}\right)\\
 & \geq\coef{\leq d}+\coef{\leq2\left(\nu-1\right)}-2\coef{\leq\nu-1}\\
 & \geq\coef{\leq d}
\end{align*}
so that (3) holds for all such $d$. In the sequel we thus assume
$d\geq2\left(\nu-1\right)$. As before we choose:
\begin{itemize}
\item two direct systems $\left(\kappa_{d}\,:\,K_{d+1}\hookrightarrow K_{d}\right)_{d\in\ww N}$
and $\left(\kappa_{d}\,:\,A_{d+1}\to A_{d}\right)_{d\in\ww N}$,
\item for each $d$, a subspace $C_{d}<\ker\varphi_{d}\cap\ker J_{\nu-1}$
such that 
\begin{align}
A_{d} & =\kappa_{d}\left(A_{d+1}\right)\oplus C_{d}\,.\label{eq:comp_decomp_ker}
\end{align}
\end{itemize}
\begin{lem}
Assume that $K_{\infty}\neq0$ and $d\geq2\left(\nu-1\right)$. Then
\begin{align*}
\dim A_{d} & \geq\coef{\leq d-2\left(\nu-1\right)}\,.
\end{align*}

\end{lem}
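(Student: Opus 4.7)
The plan is to build, for each $d\geq2(\nu-1)$, an explicit injective linear map from $\pol{\mathbf{z}}{\leq d-2(\nu-1)}$ into a vector space canonically isomorphic to $A_d$, by multiplying a single formal relation taken from $K_\infty$ by arbitrary polynomials of bounded degree.

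First, I extract the relation. The hypothesis $K_\infty\neq0$, combined with the inverse-limit construction preceding the lemma together with Fact~\ref{fact_comp_d_dim_Kd}, produces a nonzero pair $(H_1,H_2)\in\frml{\mathbf{z}}^2$ with $f_1H_1=f_2H_2$ and $\nu(H_\ell)<\nu(f_{3-\ell})$ for $\ell=1,2$. Assuming without loss of generality that $\nu(f_1)\leq\nu(f_2)$, so that $\nu=\nu(f_1)$, the composite condition delivers the crucial inequality $\mu_2:=\nu(H_2)\leq\nu-1$.

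Second, I define $\Psi(P):=\bigl(J_d(PH_1),J_d(PH_2)\bigr)$ on $\pol{\mathbf{z}}{\leq d-2(\nu-1)}$ and check that $\Psi$ takes values in $\ker\varphi_d$: using $f_1PH_1=f_2PH_2$, the discrepancy
\[
f_1J_d(PH_1)-f_2J_d(PH_2)=-f_1\bigl(PH_1-J_d(PH_1)\bigr)+f_2\bigl(PH_2-J_d(PH_2)\bigr)
\]
has every summand of valuation at least $\nu+(d+1)>d+1$, and is therefore annihilated by $J_{d+1}$.

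Third, I compose $\Psi$ with the quotient map $\ker\varphi_d\twoheadrightarrow\ker\varphi_d/(\ker\varphi_d\cap\ker J_{d+1-\nu})$, which is canonically isomorphic to $A_d$ since the latter is chosen as a linear complement of $\ker\varphi_d\cap\ker J_{d+1-\nu}$ inside $\ker\varphi_d$. Injectivity reduces to a valuation estimate: if $\Psi(P)$ vanishes in the quotient then $\nu\bigl(J_d(PH_2)\bigr)\geq d+2-\nu$, and since $d+1-\nu\leq d$ this forces $\nu(PH_2)\geq d+2-\nu$; integrality of $\germ{\mathbf{z}}$ yields $\nu(P)+\mu_2\geq d+2-\nu$, so $\nu(P)\geq d-2(\nu-1)+1>\deg P$, whence $P=0$. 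The stated bound $\dim A_d\geq\coef{\leq d-2(\nu-1)}$ follows. The only subtlety, and the place where tightness must be watched, is that the sharp inequality $\mu_2\leq\nu-1$ combines with the convention $J_{d+1-\nu}(g)=0\Leftrightarrow\nu(g)\geq d+2-\nu$ exactly to produce the exponent $2(\nu-1)$ without any slack; everything else is bookkeeping.
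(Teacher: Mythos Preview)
Your proof is correct and follows essentially the same route as the paper's: exploit a single nonzero element of $K_\infty$ and multiply it by polynomials to manufacture enough independent elements in $A_d$. The paper carries this out slightly less directly---multiplying only by elements of the maximal ideal $\frak{Z}$ to obtain $\coef{\leq d-2(\nu-1)}-1$ and then recovering the missing unit from $\dim K_d\geq1$---whereas you allow constant multipliers from the start and get the bound in one stroke via your injectivity argument.
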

Now we have
\begin{align*}
\ker\varphi_{d}\geq2\coef{d+2-\nu\leq\bullet\leq d}+\coef{\leq d-2\left(\nu-1\right)} & =\coef{\leq d}+\sum_{j=d+2-\nu}^{d}\left(\coef{=j}-\coef{=j-\nu+1}\right)\\
 & \geq\coef{\leq d}
\end{align*}
as expected.
\begin{proof}
We begin with the trivial remark that $J_{d+1-\nu}\left(\frak{Z}K_{\infty}\right)\cap\ker J_{\nu-1}<A_{d}\cap\ker J_{\nu-1}$
then bound from below the dimension of the leftmost space. The «worst-case
scenario» corresponds to the support of $J_{\nu-1}\left(K_{\infty}\right)$
consisting in a single point $\mathbf{n}_{0}$ located on the homogeneous
segment $\left\{ \mathbf{n}\,:\,\left|\mathbf{n}\right|=\nu-1\right\} $.
Yet in that case $\frak{Z}K_{\infty}$ contains an embedding of the
ideal $\mathbf{z}^{\mathbf{n}_{0}}\frak{Z}$ so that
\begin{align*}
\dim J_{d+1-\nu}\left(\frak{Z}K_{\infty}\right)\cap\ker J_{\nu-1}\geq\dim J_{d+1-\nu}\left(\mathbf{z}^{\mathbf{n}_{0}}\frak{Z}\right) & =\sum_{p=1}^{d-2\left(\nu-1\right)}\coef{=j}\\
 & =\coef{\leq d-2\left(\nu-1\right)}-1\,.
\end{align*}
Finally 
\begin{align*}
\dim A_{d} & \geq\left(\coef{\leq d-2\left(\nu-1\right)}-1\right)+\dim K_{d}\\
 & \geq\coef{\leq d-2\left(\nu-1\right)}\,.
\end{align*}

\end{proof}

\subsubsection{Proof of (3)$\Rightarrow$(2)}

~

This is the most involved part of the proof. We start from the decomposition~\eqref{eq:comp_decomp_ker}.
Because the restriction $\hat{\kappa}_{d}:=\kappa_{d}|_{A_{d}}$ is
still a projector we have $A_{d+1}=\ker\hat{\kappa}_{d}\oplus\img{\hat{\kappa}_{d}}$.
Whence:
\begin{fact}
\label{fact_ker_decompos}For any collection $\left(f_{1},f_{2}\right)$
and any $d\in\ww N$
\begin{align*}
A_{d}\oplus\ker\hat{\kappa}_{d} & =A_{d+1}\oplus C_{d}\,.
\end{align*}
In particular
\begin{align*}
\dim A_{d}+\dim\ker\hat{\kappa}_{d} & =\dim A_{d+1}+\dim C_{d}\,.
\end{align*}

\end{fact}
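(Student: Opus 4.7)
The plan is to recognize the identity as a reshuffling of two direct-sum decompositions already available, combined with the projector nature of the jet map.

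First I would establish that $\hat{\kappa}_{d}=\kappa_{d}|_{A_{d+1}}$ is a genuine idempotent endomorphism of $A_{d+1}$. The jet map $j_{d}\colon E_{d+1}\twoheadrightarrow E_{d}$ is a projector onto the canonical image of $E_{d}$ in $E_{d+1}$ along $\ker j_{d}$, hence fixes every element of $E_{d}$ pointwise. Once one has ensured, by a compatible choice made in the construction of the direct system, that the canonical image of $A_{d}$ in $E_{d+1}$ lies inside $A_{d+1}$, the range $\img\hat{\kappa}_{d}=\kappa_{d}(A_{d+1})$ is a subspace of $A_{d}\subset A_{d+1}$. Idempotency then supplies the standard splitting
\begin{align*}
A_{d+1} & =\ker\hat{\kappa}_{d}\oplus\kappa_{d}(A_{d+1}).
\end{align*}

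Next I would invoke the decomposition~\eqref{eq:comp_decomp_ker}, namely $A_{d}=\kappa_{d}(A_{d+1})\oplus C_{d}$, and substitute into each side of the claimed identity. Both $A_{d}\oplus\ker\hat{\kappa}_{d}$ and $A_{d+1}\oplus C_{d}$ rewrite as the threefold sum $\ker\hat{\kappa}_{d}\oplus\kappa_{d}(A_{d+1})\oplus C_{d}$, which proves the set-theoretic equality. Directness of this triple sum reduces to $A_{d}\cap\ker\hat{\kappa}_{d}=0$: any $a$ in the intersection lies in $A_{d}\subset E_{d}$, is therefore fixed by $j_{d}$, while it is also killed by $\hat{\kappa}_{d}=j_{d}|_{A_{d+1}}$, forcing $a=0$. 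The dimensional identity then follows at once by counting.

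The only real obstacle is the preliminary set-up step, namely justifying that the inductively built direct system $\left(\kappa_{d}\colon A_{d+1}\to A_{d}\right)_{d\in\ww N}$ can be arranged so that the canonical embedding $E_{d}\hookrightarrow E_{d+1}$ restricts to an inclusion $A_{d}\subset A_{d+1}$. This is possible because $A_{d}$ sits in $\ker\varphi_{d+1}$ by~\eqref{eq:matrix_embed} and meets $E_{d+1}\cap\ker J_{d+2-\nu}$ trivially (since $A_{d}$ is chosen as a complement of $E_{d}\cap\ker J_{d+1-\nu}$ in $\ker\varphi_{d}$); hence at each step one may enlarge the complement to include $A_{d}$. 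Once this compatibility is in place, the algebraic manipulation above closes the proof.
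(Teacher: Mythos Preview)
Your manipulation of the two decompositions $A_{d+1}=\ker\hat{\kappa}_{d}\oplus\kappa_{d}(A_{d+1})$ and $A_{d}=\kappa_{d}(A_{d+1})\oplus C_{d}$ is exactly the paper's one-line argument, and the dimension identity indeed drops out from these two relations.

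The problem lies in your ``preliminary set-up step''. Arranging $A_{d}\subset A_{d+1}$ forces $C_{d}=0$. Once $A_{d}\subset A_{d+1}$, every $a\in A_{d}\subset E_{d}$ is fixed by $j_{d}$, hence $\kappa_{d}(a)=a$; thus $A_{d}\subset\kappa_{d}(A_{d+1})$, and since by construction $\kappa_{d}(A_{d+1})\subset A_{d}$ you get $\kappa_{d}(A_{d+1})=A_{d}$, whence $C_{d}=\{0\}$ from~\eqref{eq:comp_decomp_ker}. The Fact then collapses to the tautology $A_{d}\oplus\ker\hat{\kappa}_{d}=A_{d+1}$, harmless in isolation but fatal for the surrounding proof: the very next lemma hinges on $\dim C_{d}>0$. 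So the compatible choice you engineer is precisely the one that must be avoided.

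The remedy is to drop the nesting $A_{d}\subset A_{d+1}$ and stop insisting that $\hat{\kappa}_{d}$ be a genuine idempotent endomorphism. The map $\hat{\kappa}_{d}\colon A_{d+1}\to A_{d}$ is just a linear map between two finite-dimensional spaces, and rank--nullity already gives $\dim A_{d+1}=\dim\ker\hat{\kappa}_{d}+\dim\kappa_{d}(A_{d+1})$; combined with~\eqref{eq:comp_decomp_ker} this yields the dimension formula with no further hypothesis. The first line of the Fact should be read as an external direct-sum isomorphism (or simply as shorthand for this count), not as an internal identity inside $A_{d+1}$.
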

As before we would like to use the image $Z_{p}:=J_{d+p+1-\nu}\left(\frak{Z}C_{d}\right)$
to bound from below $\dim C_{d+p}$, but $Z_{p}$ need not be included
in $C_{d+p}$:
\begin{itemize}
\item an element $h\in Z_{p}$ might not lie in $\ker J_{d+1-\nu}$, and
$J_{d+2-\nu}\left(\varphi_{d+p}\left(h\right)\right)$ has no reason
to vanish,
\item contrary to what happened in the previous paragraph's lemma, we do
not dispose of a global object like $K_{\infty}$ and there is no
reason why the question 
\begin{align*}
0=J\,_{d+p+2-\nu}\left(g\varphi_{d}\left(h\right)\right) & \overset{?}{=}\varphi_{d+p}\left(J_{d+p+1-\nu}\left(gh\right)\right)
\end{align*}
should admit a positive answer when $g\in\frak{Z}$ and $h\in C_{d}$.
\end{itemize}
Yet both conditions can be resolved if on the one hand 
\begin{align*}
p & \geq\hat{d}:=d+1-\nu
\end{align*}
 and on the other hand we replace $\frak{Z}$ by its trace $\frak{Z}_{=p}$
on the space of homogeneous polynomials of degree $d$. 
\begin{lem}
If $\dim C_{d}>0$ then $\frak{Z}_{=p}C_{d}$ contains at least a
vector space of dimension $\coef{=p}$, since each map
\begin{align*}
\tau_{\mathbf{n}}\,:\,C_{d} & \longrightarrow C_{d+p}\\
h & \longmapsto\mathbf{z}^{\mathbf{n}}h
\end{align*}
is a monomorphism whenever $\left|\mathbf{n}\right|=p$. \end{lem}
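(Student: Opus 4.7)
The plan rests on a single elementary observation: $\ww C\left[\mathbf{z}\right]$ is an integral domain, so multiplication by any nonzero polynomial is injective. From a single nonzero $h=\left(h_{1},h_{2}\right)\in C_{d}$ I intend to produce $\coef{=p}$ linearly independent translates $\mathbf{z}^{\mathbf{n}}h$ inside $\frak{Z}_{=p}C_{d}$, and the monomorphism statement for each individual $\tau_{\mathbf{n}}$ will fall out of the same argument applied to a single monomial.

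First I would check that $\tau_{\mathbf{n}}\left(h\right):=\mathbf{z}^{\mathbf{n}}h$ lands in the correct ambient space. Because $C_{d}<\ker\varphi_{d}\cap\ker J_{\nu-1}$, each component of $h$ has valuation at least $\nu$, so $\mathbf{z}^{\mathbf{n}}h$ still belongs to $\ker J_{\nu-1}$. On the other hand $h\in\ker\varphi_{d}$ means $f_{1}h_{1}-f_{2}h_{2}$ is supported in homogeneous degrees $\geq d+2$; multiplying by $\mathbf{z}^{\mathbf{n}}$ with $\left|\mathbf{n}\right|=p$ shifts the support to degrees $\geq d+p+2$, so that $J_{d+p+1}\left(\mathbf{z}^{\mathbf{n}}\left(f_{1}h_{1}-f_{2}h_{2}\right)\right)=0$, i.e.\ $\tau_{\mathbf{n}}\left(h\right)\in\ker\varphi_{d+p}\cap\ker J_{\nu-1}$.

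Next I would establish the linear independence, which is where the integral-domain argument does its work. Suppose $\sum_{\left|\mathbf{n}\right|=p}\lambda_{\mathbf{n}}\mathbf{z}^{\mathbf{n}}h=0$ in $\frml{\mathbf{z}}^{2}$ and set $Q\left(\mathbf{z}\right):=\sum_{\left|\mathbf{n}\right|=p}\lambda_{\mathbf{n}}\mathbf{z}^{\mathbf{n}}\in\frak{Z}_{=p}$. The assumption $h\neq0$ guarantees that at least one coordinate $h_{i}$ is a nonzero element of $\ww C\left[\mathbf{z}\right]$; the equation $Qh_{i}=0$ in that integral domain then forces $Q=0$, hence every $\lambda_{\mathbf{n}}$ vanishes. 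This delivers the promised $\coef{=p}$-dimensional subspace of $\frak{Z}_{=p}C_{d}$. The very same reasoning, carried out with $Q$ a single monomial $\mathbf{z}^{\mathbf{n}}$, shows that each $\tau_{\mathbf{n}}$ is injective.

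The only genuine wrinkle I anticipate is not the algebra above but the tacit claim that $\tau_{\mathbf{n}}$ actually takes values in $C_{d+p}$ rather than merely in the larger space $\ker\varphi_{d+p}\cap\ker J_{\nu-1}$. Since $C_{d+p}$ was only fixed as some complement of $\kappa_{d+p}\left(A_{d+p+1}\right)$ in $A_{d+p}$, this either requires arranging the inductive choice of the $C_{d+p}$ in a way compatible with the multiplications $\tau_{\mathbf{n}}$, or else rephrasing the conclusion in terms of the projection of $\mathbf{z}^{\mathbf{n}}h$ onto $C_{d+p}$ along $\kappa_{d+p}\left(A_{d+p+1}\right)$; in either case the injectivity and independence arguments survive intact, so the dimension bound persists.
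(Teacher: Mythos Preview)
Your first two steps are correct and match the paper's implicit reasoning: the check that $\mathbf{z}^{\mathbf{n}}h\in\ker\varphi_{d+p}\cap\ker J_{\nu-1}$ is right, and the integral-domain argument for independence of the $\mathbf{z}^{\mathbf{n}}h$ is clean. Where your proposal falls short is precisely the ``wrinkle'' you flag at the end. You correctly observe that nothing automatic places $\mathbf{z}^{\mathbf{n}}h$ inside the \emph{chosen} complement $C_{d+p}$, but you then assert without argument that ``in either case the injectivity and independence arguments survive intact''. They do not survive automatically: injectivity of $\tau_{\mathbf{n}}$ as a map into $\ker\varphi_{d+p}$ says nothing about injectivity of its composition with the projection onto $C_{d+p}\cong A_{d+p}/\kappa_{d+p}(A_{d+p+1})$, and it is exactly this latter injectivity that the subsequent dimension count needs.

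The paper closes this gap with a concrete contradiction argument that you are missing. Suppose some nonzero $h_{\mathbf{n}}:=\mathbf{z}^{\mathbf{n}}h$ lies in $\kappa_{d+p}(A_{d+p+1})$, say $h_{\mathbf{n}}=\kappa_{d+p}(\hat{h}_{\mathbf{n}})$ with $\hat{h}_{\mathbf{n}}\in A_{d+p+1}$. The key observation is that $h_{\mathbf{n}}$ has all its support in the shifted orthant $\{\mathbf{m}:m_{j}\geq n_{j}\text{ for all }j\}$, and one may take $\hat{h}_{\mathbf{n}}$ with support in the same orthant; hence $\hat{h}_{\mathbf{n}}=\mathbf{z}^{\mathbf{n}}\hat{h}$ for some $\hat{h}$. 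From $\varphi_{d+p+1}(\hat{h}_{\mathbf{n}})=0$ one reads off $\mathbf{z}^{\mathbf{n}}\varphi_{d+1}(\hat{h})=0$, so $\hat{h}\in A_{d+1}$, and then $\kappa_{d}(\hat{h})=h$. This forces $h\in\kappa_{d}(A_{d+1})$, contradicting $0\neq h\in C_{d}$. That divisibility-by-$\mathbf{z}^{\mathbf{n}}$ trick is the substantive idea your proposal lacks; once you have it, either of your suggested resolutions (compatible choice of complements, or working modulo $\kappa_{d+p}(A_{d+p+1})$) goes through.
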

\begin{proof}
Take any $h\in C_{d}\backslash0$ and show that $h_{\mathbf{n}}:=\mathbf{z}^{\mathbf{n}}h\in C_{d+p}\backslash0$.
Suppose now, by contradiction, that $h_{\mathbf{n}}\notin C_{d+p}$:
there exists $\hat{h}_{\mathbf{n}}\in A_{d+p+1}$ such that $\kappa_{d+p}\left(\hat{h}_{\mathbf{n}}\right)=h_{\mathbf{n}}$.
Because $h_{\mathbf{n}}$ has support inside $\left\{ \mathbf{m}\,:\,m_{j}\geq n_{j}\right\} $
we may require without loss of generality that $\hat{h}_{\mathbf{n}}$
also has. Then $\hat{h}_{\mathbf{n}}=\mathbf{z}^{\mathbf{n}}\hat{h}$
for some $\hat{h}\in A_{d+1}$, as indeed $\mathbf{z}^{\mathbf{n}}\varphi_{d+1}\left(\hat{h}\right)=\varphi_{d+\alpha+1}\left(\hat{h}_{\mathbf{n}}\right)=0$.
We reach the contradiction $h=\kappa_{d}\left(\hat{h}\right)\in C_{d}\backslash0$. 
\end{proof}
Assume now that (2) does not hold: there exists $d\geq2\left(\nu-1\right)$
such that $\dim K_{d+1}<\dim K_{d}$, so that $\dim C_{d}>0$. Then
for all $p\geq\hat{d}$
\begin{align*}
\dim A_{d+p} & \leq\dim A_{\hat{d}}+\sum_{j=\hat{d}}^{p}\left(\dim\ker\hat{\kappa}_{d+j-1}-\coef{=j}\right)\,.
\end{align*}

\begin{lem}
For every $d\geq2\left(\nu-1\right)$ the following estimate holds:
\begin{align*}
\dim\ker\hat{\kappa}_{d} & \leq\coef{=d+2-\nu}\,.
\end{align*}
\end{lem}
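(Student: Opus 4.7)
The plan is to unpack $\ker \hat{\kappa}_d$ explicitly, then split the argument according to the valuation $\nu$.

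First I would observe that, since $\hat{\kappa}_d$ is the restriction of the truncation $j_d$, an element $(h_1,h_2) \in \ker \hat{\kappa}_d \subset A_{d+1}$ lies in $\ker j_d \cap E_{d+1}$, so both $h_1$ and $h_2$ are homogeneous of degree exactly $d+1$. Membership in $\ker \varphi_{d+1}$ reads $J_{d+2}(f_1 h_1 - f_2 h_2) = 0$; combined with the fact that $f_\ell h_\ell$ has valuation at least $d+1+\nu$, this leaves at most one homogeneous degree of constraint to analyze.

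If $\nu \geq 2$, then $d+1+\nu \geq d+3$, so the $\varphi_{d+1}$-constraint is automatic. Moreover $d+1 > d+2-\nu$ shows that every homogeneous polynomial of degree $d+1$ belongs to $\ker J_{d+2-\nu}$; hence $(h_1,h_2) \in E_{d+1} \cap \ker J_{d+2-\nu}$. Since $A_{d+1}$ is by construction a complement of this subspace inside $\ker \varphi_{d+1}$, the intersection is trivial and $\ker \hat{\kappa}_d = \{0\}$, which yields the bound vacuously.

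The substantive case is $\nu = 1$. Here $d+2-\nu = d+1$ and $E_{d+1} \cap \ker J_{d+1} = \{0\}$, so $A_{d+1} = \ker \varphi_{d+1}$. Writing $g_\ell$ for the linear part of $f_\ell$, the only degree-$\leq d+2$ contribution to $f_\ell h_\ell$ is the homogeneous piece $g_\ell h_\ell$, and the constraint reduces to the exact identity $g_1 h_1 = g_2 h_2$ in $\frml{\mathbf{z}}$. Because $\nu = 1$, at least one $g_\ell$ is nonzero; assuming without loss of generality $g_1 \neq 0$, the integrality of $\frml{\mathbf{z}}$ forces $h_1$ to be uniquely determined by $h_2$, so the assignment $(h_1,h_2) \mapsto h_2$ is injective on $\ker \hat{\kappa}_d$. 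This yields $\dim \ker \hat{\kappa}_d \leq \dim \pol{\mathbf{z}}{=d+1} = \coef{=d+1} = \coef{=d+2-\nu}$, and this bound is sharp (consider $g_1 = g_2$).

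I expect the only real obstacle to be the upfront bookkeeping of the complement subspaces $A_{d+1}$ and $E_{d+1} \cap \ker J_{d+2-\nu}$; once the case split is set up cleanly, each case reduces to an elementary observation (automatic valuation-based vanishing for $\nu \geq 2$; integrality of the formal power series ring for $\nu = 1$).
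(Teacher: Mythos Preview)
Your argument is correct. The paper's own proof is a single line, ``This is clear from the matrix representation of $\varphi_{d}$ given in Section~\ref{sub:Computations}'', so you have in effect supplied the details the paper omits.

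The underlying content is the same: in the block of $\varphi_{d+1}$ acting on pairs homogeneous of degree $d+1$, only the degree-$\nu$ part of $(f_{1},f_{2})$ appears, and one of these is nonzero. The paper reads this off from the block structure of the Macaulay-type matrix $M_{d+1}(f_{1},f_{2})$; you instead package it as a case split on $\nu$, using the complement property of $A_{d+1}$ to dispose of $\nu\geq 2$ outright, and invoking integrality of $\frml{\mathbf{z}}$ for $\nu=1$ to get the injection $(h_{1},h_{2})\mapsto h_{2}$. Your route is more self-contained (it does not require the matrix bookkeeping of Section~\ref{sub:Computations}) and additionally records that the bound is sharp when $\nu=1$ and strictly slack when $\nu\geq 2$, which the paper's one-liner leaves implicit.
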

\begin{proof}
This is clear from the matrix representation of $\varphi_{d}$ given
if Section~\ref{sub:Computations}.
\end{proof}
Now we can write for $p\geq\hat{d}$
\begin{align*}
\dim A_{d} & \leq\coef{\leq\hat{d}+p}-\coef{\leq p}+c\,,
\end{align*}
where $c$ is some term constant with respect to $p$. Whence 
\begin{align*}
\dim\ker\varphi_{d+p} & \leq2\coef{\hat{d}+p+1\leq\bullet\leq d+p}+\coef{\leq\hat{d}+p}-\coef{\leq p}+c\\
 & \leq\coef{\leq d+p}+\left(\coef{\hat{d}+p+1\leq\bullet\leq d+p}-\coef{\leq p}+c\right)\,.
\end{align*}
Because $\coef{\leq p}=\frac{p^{m}}{p!}+O\left(p^{m-1}\right)$ and
$\coef{\hat{d}+p+1\leq\bullet\leq d+p}=O\left(p^{m-1}\right)$ we
deduce that the parenthesized, rightmost term is eventually negative.

\subsection{\label{sub:Computations}Computations ($k=2$)}

~

The set $\ww N^{m}$ of multi-indexes of given dimension $m$ comes
with the lexicographic order $\lll$, that is $\left(0,2\right)\lll\left(1,0\right)\lll\left(1,1\right)$.
Define the «lexicomogeneous» order $\preceq$ on $\ww N^{m}$ by 
\begin{align*}
\mathbf{a}\preceq\mathbf{b} & \Longleftrightarrow\begin{cases}
 & \left|\mathbf{a}\right|<\left|\mathbf{b}\right|\\
\tx{or} & \left|\mathbf{a}\right|=\left|\mathbf{b}\right|\mbox{ and }\mathbf{a}\lll\mathbf{b}
\end{cases}
\end{align*}
so that \emph{e.g.} $\left(2,0\right)\succeq\left(1,1\right)\succeq\left(0,2\right)\succeq\left(1,0\right)\succeq\left(0,1\right)\succeq\left(0,0\right)$.
For $d\in\ww N$ we form the matrix of multi-indexes 
\begin{align*}
M_{d} & :=\left[\mathbf{n}_{\mathbf{p},\mathbf{q}}\right]_{0\leq\left|\mathbf{p}\right|\leq d\text{ , }0<\left|\mathbf{q}\right|\leq d+1}
\end{align*}
defined by the relation
\begin{align*}
\mathbf{n}_{\mathbf{p},\mathbf{q}} & :=\begin{cases}
\mathbf{q}-\mathbf{p} & \mbox{if }\mathbf{q}-\mathbf{p}\in\ww N^{m}\\
\mathbf{0} & \mbox{otherwise}
\end{cases}
\end{align*}
where $\mathbf{p}$, $\mathbf{q}$ are ordered counter-lexicomogeneously
with $\mathbf{0}\preceq\mathbf{p}\preceq d\oplus\mathbf{0}$ and $\mathbf{0}\prec\mathbf{q}\preceq\left(d+1\right)\oplus\mathbf{0}$.
See Table~\ref{tab:matrix_index} below for an example. This matrix
is upper $\left(d+1\right)\times\left(d+1\right)$ block-triangular,
the blocks $\left[\mathbf{n}_{\mathbf{p},\mathbf{q}}\right]$ corresponding
to constant lengths $\left|\mathbf{p}\right|$ and $\left|\mathbf{q}\right|$.
Its size is $\coef{\leq d+1}\times\coef{\leq d}$. Each $M_{d}$ is
naturally nested within $M_{d+1}$ as the right-bottom-most sub-matrix
formed by the last $d$ blocks both vertically and horizontally, accounting
for the relation~\eqref{eq:matrix_embed}.

\begin{table}[H]
\begin{raggedright}
\hfill{}%
\begin{tabular}{|llllll|lll|l|}
\hline 
$(1,0,0)$ 
 &  &  &  &  &  & $(2,0,0)$ 
 &  &  & $(3,0,0)$ 
\tabularnewline
$(0,1,0)$ 
 & $(1,0,0)$ 
 &  &  &  &  & $(1,1,0)$ 
 & $(2,0,0)$ 
 &  & $(2,1,0)$ 
\tabularnewline
$(0,0,1)$ 
 &  & $(1,0,0)$ 
 &  &  &  & $(1,0,1)$ 
 &  & $(2,0,0)$ 
 & $(2,0,1)$ 
\tabularnewline
 & $(0,1,0)$ 
 &  & $(1,0,0)$ 
 &  &  & $(0,2,0)$ 
 & $(1,1,0)$ 
 &  & $(1,2,0)$ 
\tabularnewline
 & $(0,0,1)$ 
 & $(0,1,0)$ 
 &  & $(1,0,0)$ 
 &  & $(0,1,1)$ 
 & $(1,0,1)$ 
 & $(1,1,0)$ 
 & $(1,1,1)$ 
\tabularnewline
 &  & $(0,0,1)$ 
 &  &  & $(1,0,0)$ 
 & $(0,0,2)$ 
 &  & $(1,0,1)$ 
 & $(1,0,2)$ 
\tabularnewline
 &  &  & $(0,1,0)$ 
 &  &  &  & $(0,2,0)$ 
 &  & $(0,3,0)$ 
\tabularnewline
 &  &  & $(0,0,1)$ 
 & $(0,1,0)$ 
 &  &  & $(0,1,1)$ 
 & $(0,2,0)$ 
 & $(0,2,1)$ 
\tabularnewline
 &  &  &  & $(0,0,1)$ 
 & $(0,1,0)$ 
 &  & $(0,0,2)$ 
 & $(0,1,1)$ 
 & $(0,1,2)$ 
\tabularnewline
 &  &  &  &  & $(0,0,1)$ 
 &  &  & $(0,0,2)$ 
 & $(0,0,3)$ 
\tabularnewline
\hline 
 &  &  &  &  &  & $(1,0,0)$ 
 &  &  & $(2,0,0)$ 
\tabularnewline
 &  &  &  &  &  & $(0,1,0)$ 
 & $(1,0,0)$ 
 &  & $(1,1,0)$ 
\tabularnewline
 &  &  &  &  &  & $(0,0,1)$ 
 &  & $(1,0,0)$ 
 & $(1,0,1)$ 
\tabularnewline
 &  &  &  &  &  &  & $(0,1,0)$ 
 &  & $(0,2,0)$ 
\tabularnewline
 &  &  &  &  &  &  & $(0,0,1)$ 
 & $(0,1,0)$ 
 & $(0,1,1)$ 
\tabularnewline
 &  &  &  &  &  &  &  & $(0,0,1)$ 
 & $(0,0,2)$ 
\tabularnewline
\cline{7-10} 
 &  &  &  &  & \multicolumn{1}{l}{} &  &  &  & $(1,0,0)$ 
\tabularnewline
 &  &  &  &  & \multicolumn{1}{l}{} &  &  &  & $(0,1,0)$ 
\tabularnewline
 &  &  &  &  & \multicolumn{1}{l}{} &  &  &  & $(0,0,1)$ 
\tabularnewline
\hline 
\end{tabular}\hfill{}
\par\end{raggedright}

\protect\caption{\label{tab:matrix_index}The matrix $M_{2}$ when $m=3$. Null entries
are not shown. Homogeneous blocks are drawn in solid lines.}
\end{table}

We build a matrix $M_{d}\left(f_{1},f_{2}\right)$ from $M_{d}$,
that of the mapping $\varphi_{d}$. It has twice as many rows and
is obtained from the latter by replacing each column $\left(\mathbf{n}_{\mathbf{p},\mathbf{q}}\right)_{0<\left|\mathbf{q}\right|\leq d+1}$
with the pair of columns whose respective entries are $\left(-1\right)^{\ell+1}f_{\ell,\mathbf{n}_{\mathbf{p},\mathbf{q}}}$
for $\ell\in\left\{ 1,2\right\} $. 

\begin{table}[H]
\begin{raggedright}
\hfill{}%
\begin{tabular}{|llllll|llll|ll|}
\hline 
0 & -1 & 0 & 0 & 0 & 0 & -1 & 0 & 0 & 0 & 1 & 0\tabularnewline
0 & -1 & 0 & -1 & 0 & 0 & -2 & 3 & -1 & 0 & 2 & 0\tabularnewline
0 & 0 & 0 & -1 & 0 & -1 & -1 & 3 & -2 & 3 & 1 & 0\tabularnewline
0 & 0 & 0 & 0 & 0 & -1 & 0 & 0 & -1 & 3 & 0 & 0\tabularnewline
\hline 
0 & 0 & 0 & 0 & 0 & 0 & 0 & -1 & 0 & 0 & -1 & 0\tabularnewline
0 & 0 & 0 & 0 & 0 & 0 & 0 & -1 & 0 & -1 & -2 & 3\tabularnewline
0 & 0 & 0 & 0 & 0 & 0 & 0 & 0 & 0 & -1 & -1 & 3\tabularnewline
\cline{7-12} 
0 & 0 & 0 & 0 & 0 & \multicolumn{1}{l}{0} & 0 & 0 & 0 & 0 & 0 & -1\tabularnewline
0 & 0 & 0 & 0 & 0 & \multicolumn{1}{l}{0} & 0 & 0 & 0 & 0 & 0 & -1\tabularnewline
\hline 
\end{tabular}\hfill{}
\par\end{raggedright}

\protect\caption{\label{tab:matrix_fumctions}The matrix $M_{2}\left(\left(x-1\right)\left(x+y\right)^{2}\text{ , }\left(1-3y\right)\left(x+y\right)\right)$
when $m=2$. Its rank is $6=\rho\left(2,3,2\right)$ and $\nu=1$.}
\end{table}

\begin{rem}
When $m=2$ the first non-trivial case occurs for $d=2\left(\nu-1\right)$.
The condition $\dim\ker\varphi_{d}\geq\binom{m+d}{d}$ is equivalent
to the vanishing of the determinant of the matrix occupying the homogeneous
bloc $\left(2\nu-1\right)\times\nu-1$. This matrix is nothing but
the Sylvester matrix (up to columns/rows permutation) of the homogeneous
part of degree $\nu$ of $\left(f_{1},f_{2}\right)$. Indeed if $\left(f_{1},f_{2}\right)$
is composite then also is its part of lowest homogeneous degree, which
can be written $\frac{1}{x^{\nu}}\left(f_{1}\left(1,t\right),f_{2}\left(1,t\right)\right)$
with $t=\frac{y}{x}$. This situation persists for other values of
$m$ as the Macaulay matrix of the homogeneous part of $\left(f_{1},f_{2}\right)$
is embedded in $M_{2\left(\nu-1\right)}\left(f_{1},f_{2}\right)$.
I am aware that the topic discussed here is well-known in the setting
of commutative algebra on the $\ww C$-module of polynomials, but
I have not been able to locate a similar construction in the framework
of formal power series.
\end{rem}

\subsection{\label{sub:general_case}Coprime families of $\protect\frml{\mathbf{z}}^{k}$}

~

The general case is not more difficult. Proposition~\ref{prop:composite_eqv}
holds actually for any $k\geq2$ as we explain now.
\begin{fact}
The collection $f=\left(f_{\ell}\right)_{1\leq\ell\leq k}$ is composite
if, and only if, there exists $\left(h_{1},\ldots,h_{k}\right)\in\frml{\mathbf{z}}^{k}$
such that $f_{\ell}h_{\ell}$ does not depend on $\ell$ and for every
$1\leq\ell\leq k$
\begin{align*}
\nu\left(h_{\ell}\right) & <\varepsilon_{\ell}\left(f\right):=\sum_{j\neq\ell}\nu\left(f_{j}\right)-\nu\left(\frac{\prod_{j=1}^{k}f_{j}}{\tx{gcd}\left(f\right)^{k-1}\tx{lcm}\left(f\right)}\right)\,.
\end{align*}
Notice that when $k=2$ the right-hand side equals $\nu\left(f_{3-\ell}\right)$
as before.
\end{fact}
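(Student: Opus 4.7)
The plan is to run the whole argument inside the unique factorization domain $\frml{\mathbf{z}}$ and reduce it to a single bookkeeping identity. Set $g := \tx{gcd}(f)$ and $L := \tx{lcm}(f)$; I would first rewrite $\varepsilon_{\ell}$ in a more transparent form. Since
\[
\nu\left(\frac{\prod_{j} f_{j}}{g^{k-1} L}\right) \;=\; \sum_{j} \nu(f_{j}) \;-\; (k-1)\nu(g) \;-\; \nu(L),
\]
one reads off immediately
\[
\varepsilon_{\ell}(f) \;=\; (k-1)\,\nu(g) \;+\; \nu(L/f_{\ell}).
\]
Being composite means precisely $\nu(g) > 0$, so the claim reduces to: $\nu(g) > 0$ if and only if there exists $(h_{1},\ldots,h_{k}) \in \frml{\mathbf{z}}^{k}$ with $f_{\ell} h_{\ell}$ independent of $\ell$ and $\nu(h_{\ell}) - \nu(L/f_{\ell}) < (k-1)\nu(g)$ for every $\ell$.

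For the forward implication I would assume $\nu(g) > 0$ and exhibit the canonical choice $h_{\ell} := L/f_{\ell}$, which is a genuine element of $\frml{\mathbf{z}}$ by definition of the lowest common multiple; the common value is $F := L$. The valuation estimate
\[
\nu(h_{\ell}) \;=\; \nu(L/f_{\ell}) \;<\; (k-1)\nu(g) + \nu(L/f_{\ell}) \;=\; \varepsilon_{\ell}(f)
\]
is then automatic, the strict inequality being supplied by $(k-1)\nu(g) > 0$.

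For the converse I would exploit the UFD structure of $\frml{\mathbf{z}}$. Suppose $(h_{\ell})$ exists with $f_{\ell} h_{\ell} = F$ independent of $\ell$ and $\nu(h_{\ell}) < \varepsilon_{\ell}(f)$. Since each $f_{\ell}$ divides $F$ in the UFD, the lowest common multiple $L$ divides $F$ as well; write $F = L \cdot H$ with $H \in \frml{\mathbf{z}}$. Then $h_{\ell} = H \cdot L/f_{\ell}$, so the valuation hypothesis rearranges into $\nu(H) < (k-1)\nu(g)$. Because $\nu(H) \geq 0$, this forces $\nu(g) > 0$, i.e.\ $g \in \mathfrak{Z}$, which is precisely the compositeness of $f$. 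I expect no substantial obstacle: the whole proof is elementary UFD arithmetic once $\varepsilon_{\ell}$ is put in the right form. The only mild subtlety is that the strict bound $\nu(h_{\ell}) < \varepsilon_{\ell}(f)$ automatically disposes of the degenerate configurations (some $f_{\ell} = 0$, $F = 0$, or some $h_{\ell} = 0$) that would otherwise put $+\infty$ on only one side of the inequality, so no separate case analysis is needed.
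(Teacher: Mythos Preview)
Your argument is correct. The key simplification $\varepsilon_{\ell}(f)=(k-1)\nu(g)+\nu(L/f_{\ell})$ is valid (and one checks that $g^{k-1}L$ always divides $\prod_{j}f_{j}$ in the UFD, so the quotient in the definition of $\varepsilon_{\ell}$ is a genuine element of $\frml{\mathbf{z}}$). Both directions then go through exactly as you wrote: the choice $h_{\ell}=L/f_{\ell}$ for the forward implication, and the factorisation $F=LH$ yielding $\nu(H)<(k-1)\nu(g)$ for the converse.

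The paper does not actually supply a proof of this Fact: it is stated as a self-evident extension of the $k=2$ case and then used to set up the linear maps $\hat{\varphi}_{d}$. So there is nothing to compare against; your write-up simply fills in what the paper leaves implicit. The only place where you are slightly breezy is the handling of degenerate configurations (some $f_{\ell}=0$), where $L$ and the quotient defining $\varepsilon_{\ell}$ cease to make sense as stated; but since the paper itself glosses over this and the non-degenerate case carries all the content, this is not a defect in your argument relative to the paper's standard.
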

The result follows from the study of the linear maps 
\begin{align*}
\hat{\varphi}_{d}\,:\,E_{d} & \longrightarrow\frml{\mathbf{z}}_{\leq d+1}^{\times\binom{k}{2}}\\
\left(h_{1},\ldots,h_{k}\right) & \longmapsto\left(J_{d+1}\left(f_{p}h_{p}-f_{q}h_{q}\right)\right)_{1\leq p<q\leq k}
\end{align*}
and the property of being composite is expressed in terms of the dimension
of $\hat{K}_{d}:=\nf{\ker\hat{\varphi}_{d}}{\ker J_{\varepsilon}}$
where the extra parameter
\begin{align*}
\varepsilon & :=\min_{\ell}\varepsilon_{\ell}\left(f\right)
\end{align*}
may in general not be equal to $\nu=\min_{\ell}\nu\left(f_{\ell}\right)$.
This in turn is equivalent to $\dim\ker\hat{\varphi}_{d}\geq\left(k-1\right)\binom{m+d}{d}$,
as can be recovered from repeating the arguments developed in the
case $k=2$.

\section{\label{sec:application_ODE}Application to differential equations}

In this section every space of germs is given the factorial topology
(Definition~\ref{def_useful_topo}). In particular $\germ{\mathbf{z}}$
becomes a metric space.

\subsection{Analyticity of the flow of a vector field: proof of Theorem~C}

~
\begin{thm}
\label{thm:flow_is_strong_anal}Fix $m\in\ww N$ and consider the
space $\mathtt{VF}$ of germs at $\mathbf{0}\in\ww C^{m}$ of a holomorphic
vector field, identified with $\germ{\mathbf{z}}^{m}$. For $X\in\mathtt{VF}$
we name $\flow X{}$ the flow of $X$, that is the unique germ of
a holomorphic mapping near $\left(\mathbf{0},0\right)$ 
\begin{align*}
\flow X{}\,:\,\ww C^{m}\times\ww C & \longrightarrow\ww C^{m}\\
\left(\mathbf{p},t\right) & \longmapsto\Phi_{X}^{t}\left(\mathbf{p}\right)
\end{align*}
that is solution of the differential system 
\begin{align*}
\begin{cases}
\dot{\mathbf{z}}\left(\mathbf{p},t\right) & =X\left(\mathbf{z}\left(\mathbf{p},t\right)\right)\\
\mathbf{z}\left(\mathbf{p},0\right) & =\mathbf{p}\,.
\end{cases}
\end{align*}
Then the <<flow mapping>>
\begin{align*}
\mathtt{VF} & \longrightarrow\germ{\mathbf{z},t}^{m}\\
X & \longmapsto\flow X{}\,,
\end{align*}
where the target space is also given the factorial topology (see Remark~\ref{rem_facto_topo_vector}),
is strongly analytic.
\end{thm}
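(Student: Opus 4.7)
The plan is to invoke Proposition~\ref{prop:anal_and_radius_minor_iff_strong_anal}, which reduces strong analyticity of the flow mapping $X \mapsto \flow X{\bullet}$ to checking two things: (a)~the mapping is analytic, and (b)~for every analytic finite-dimensional family $X_{\mathbf{x}} := \lambda(\mathbf{x})$ with $\lambda \in \mathcal{O}((\ww C^{m'},0) \to \mathtt{VF})$, the radius $\rad{\flow{X_{\mathbf{x}}}{\bullet}}$ stays bounded below and the sup-norm $\norm{\flow{X_{\mathbf{x}}}{\bullet}}{r\ww D}$ bounded above as $(\mathbf{x},r) \to (\mathbf{0},0)$. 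Analyticity itself decomposes, by Theorem~\ref{thm:anal_G_F_eqv}, into ample boundedness plus G-holomorphy, and Proposition~\ref{prop:holo_is_G-anal} delivers G-holomorphy from quasi-strong analyticity since the target factorial topology is finer than the inductive one by Proposition~\ref{prop:topo_compare}. So the real tasks are quasi-strong analyticity (which will also take care of (b)) and ample boundedness.

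First I would dispose of quasi-strong analyticity and the bounds~(b) simultaneously by invoking the classical Cauchy theorem on analytic dependence of ODEs on parameters. Given $\lambda \in \mathcal{O}((\ww C^{m'},0) \to \mathtt{VF})$, the joint map $(\mathbf{x},\mathbf{z}) \mapsto \lambda(\mathbf{x})(\mathbf{z})$ is holomorphic and bounded on some polydisc $\rho \ww D^{m'} \times \sigma \ww D^{m}$, and finite-dimensional Picard iteration in the Banach space of continuous $\ww C^{m}$-valued maps on a slightly smaller polydisc produces, for $\eta > 0$ small enough, an analytic solution $\Phi(\mathbf{x},\mathbf{z},t)$ of $\partial_t \Phi = \lambda(\mathbf{x})(\Phi)$ with $\Phi|_{t=0} = \mathbf{z}$. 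This joint analyticity on a fixed polydisc around $(\mathbf{0},\mathbf{0},0)$ immediately yields quasi-strong analyticity of $X \mapsto \flow X{\bullet}$ and both uniform bounds of~(b).

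It remains to establish ample boundedness. Fixing $X_0 \in \mathtt{VF}$ and $k \in \ww N_{>0}$, I would show there is a factorial-ball neighborhood $W$ of $0$ in $\mathtt{VF}$ on which $\sup_{Y \in W}\norm{\flow{X_0+Y}{\bullet}}{a(1/k)} < \infty$. The natural tool is the Picard iteration
\begin{align*}
\Phi^{(0)}(\mathbf{z},t) & := \mathbf{z},\\
\Phi^{(n+1)}(\mathbf{z},t) & := \mathbf{z} + \int_0^{t} (X_0+Y)\bigl(\Phi^{(n)}(\mathbf{z},s)\bigr)\,\dd s,
\end{align*}
carried out directly in $\germ{\mathbf{z},t}^{m}$ with the factorial $a(1/k)$-norm. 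The three estimates of Proposition~\ref{prop:facto_useful}---submultiplicativity of $\norm{\bullet}{a(1/k)}$, the composition bound at the cost of a slightly larger factorial exponent, and the decrease by a factor of order $(n+1)^{-1/k}$ on each $t$-coefficient induced by integration---should combine to give a geometric contraction on $\norm{\Phi^{(n+1)} - \Phi^{(n)}}{a(1/k)}$ uniform in $Y \in W$.

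The main obstacle is the loss-of-derivatives encoded in the amplitude factor $\amp{a(1/k - \beta)}(\bullet)$ of the composition inequality~(2) in Proposition~\ref{prop:facto_useful}, which forbids iterating Picard at a single factorial level. I would overcome this by using nested levels $\nf{1}{k} < \nf{1}{(k-1)}$: measure the iterates at level $1/k$, control $X_0 + Y$ at the finer level $1/(k-1)$ (allowed because the factorial topology on $\mathtt{VF}$ incorporates all levels, so shrinking $W$ in the $a(1/(k-1))$-norm still gives a factorial neighborhood), and restrict $t$ to a sufficiently small disc so that the $t$-integration gain outweighs the amplitude factor applied to $\norm{\Phi^{(n)} - \mathbf{z}}{a(1/k)}$ throughout the iteration. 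The fixed point of the resulting contraction is the flow, its $a(1/k)$-norm is uniformly bounded on $W$, and strong analyticity follows by Proposition~\ref{prop:anal_and_radius_minor_iff_strong_anal}.
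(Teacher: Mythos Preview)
Your quasi-strong analyticity argument is exactly the paper's: incorporate the parameter $\mathbf{x}$ into the phase space and apply Cauchy--Lipschitz to the enlarged system.  For ample boundedness, however, the paper takes a different route.  Instead of Picard iteration it uses Lie's formula
\[
\flow{X}{t}(\mathbf{p})=\sum_{k\geq0}\frac{t^{k}}{k!}\,X\!\cdot^{k}\!\id\,(\mathbf{p}),
\]
so that no composition ever needs to be estimated: each $X\!\cdot^{k}\!\id$ is a finite sum of products of derivatives of the components of $X$, and the only tools required are the sub-multiplicativity of the factorial norm and the explicit derivative bound of Proposition~\ref{prop:facto_useful}(3).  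This gives ample boundedness in a neighbourhood of the zero vector field, and the paper then passes to an arbitrary $X_{0}$ by the rescaling identity $\flow{\lambda X}{\bullet}=\flow{X}{\lambda\bullet}$ together with continuity of $t\mapsto\lambda t$ in the factorial topology.

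Your Picard scheme is a reasonable alternative, but as written it has two soft spots.  First, the level bookkeeping is inverted: in Proposition~\ref{prop:facto_useful}(2) one needs $\beta<\alpha$, so to estimate $\norm{X\circ\Phi}{a(1/k)}$ you must control $X$ in $\norm{\bullet}{a(1/(k+1))}$ (a \emph{larger} norm, hence a genuinely finer condition), not in $\norm{\bullet}{a(1/(k-1))}$.  Second, and more seriously, the iteration does not close at a general base point $X_{0}$.  Integration in $t$ is merely norm non-increasing in $a(\alpha)$, not contractive, and the invariant-ball condition $\norm{X_{0}+Y}{a(\beta)}\,\amp{a(\alpha-\beta)}(m+R)\leq R$ has no solution $R$ once $\norm{X_{0}}{a(\beta)}$ is beyond a threshold, because $R\mapsto R/\amp{a(\alpha-\beta)}(m+R)$ tends to $0$.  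Your ``restrict $t$ to a small disc'' is really the rescaling $\flow{X_{0}}{t}=\flow{\rho X_{0}}{t/\rho}$ in disguise: one first proves the bound for $\rho X_{0}$ small and then transports it back, at the cost of one factorial level, via Proposition~\ref{prop:facto_useful}(2).  That last step---reducing to a neighbourhood of $0$ and rescaling---is precisely what the paper does, and it is the missing hinge in your argument.
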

I believe this theorem should hold for every useful topology. Yet
the proof of this result is simplified by the fact that we know the
norms of differentiation operators in the factorial topology.
\begin{proof}
Let $\chi\in\mathcal{O}\left(\left(\ww C^{q},0\right)\to\mathtt{VF}\right)$
as in Definition~\ref{def_complete_holo}; for the sake of keeping
notations unobtrusive we write $\chi_{\mathbf{x}}$ instead of $\chi\left(\mathbf{x}\right)$.
From the differential system $\dot{\mathbf{z}}=\chi_{\mathbf{x}}\left(\mathbf{z}\right)$
we build a new differential system incorporating the extra parameter
$\mathbf{x}\in\left(\ww C^{q},0\right)$ in the new variable $\mathbf{w}=\mathbf{z}\oplus\mathbf{x}$
\begin{align*}
\begin{cases}
\dot{\mathbf{w}}\left(\mathbf{p},\mathbf{x},t\right) & =\chi_{\mathbf{x}}\left(\mathbf{z}\right)\\
\mathbf{w}\left(\mathbf{p},\mathbf{x},0\right) & =\left(\mathbf{p},\mathbf{x}\right)
\end{cases} & .
\end{align*}
By assumption on $\chi$ the vector field $\left(\mathbf{z},\mathbf{x}\right)\mapsto\chi_{\mathbf{x}}\left(\mathbf{z}\right)$
is holomorphic on a neighborhood of $\mathbf{0}\oplus\mathbf{0}$.
Therefore the Cauchy-Lipschitz theorem applies to this system and
yields a flow which is a germ of a holomorphic mapping with respect
to $\left(\mathbf{p},\mathbf{x},t\right)$. In particular $\chi^{*}\flow{\bullet}{}\,:\,\mathbf{x}\mapsto\flow{\chi_{\mathbf{x}}}{}$
belongs to $\mathcal{O}\left(\left(\ww C^{q},0\right)\to\germ{\mathbf{z},t}\right)$
and the flow mapping is quasi-strongly analytic in the sense of Definition~\ref{def_complete_holo}. 

To complete the proof that the flow is strongly analytic on $\mathtt{VF}$
we need to establish that $\chi^{*}\flow{\bullet}{}$ is amply bounded
for the factorial topology. In fact we show that $\flow{\bullet}{}$
is amply bounded on the whole $\mathtt{VF}$, so that Theorem~\ref{thm:removable_sing}
will automatically apply. This can be achieved using Lie's formula
for the flow:
\begin{align*}
\flow Xt\left(\mathbf{p}\right) & =\left(\sum_{k=0}^{\infty}\frac{t^{k}}{k!}X\cdot^{k}\id\right)\left(\mathbf{p}\right)
\end{align*}
with uniform convergence, as a power series in $\left(\mathbf{p},t\right)$,
on a neighborhood of $\mathbf{0}\times0$. The iterated Lie derivatives
are given for germs of functions by $X\cdot^{0}f=f$ and $X\cdot^{k+1}:=X\cdot\left(X\cdot^{k}f\right)$,
and then extended to vectors of functions by acting component-wise.
The factorial topology on $\germ{\mathbf{z},t}$ is induced by $\mathbf{a}\left(\bullet\right)=a\left(\bullet\right)\oplus\left(k!^{-\bullet}\right)_{k\in\ww N}$
while $a\left(\bullet\right)$ induces the factorial topology on $\mathtt{VF}$.
We thus have for $\alpha>0$
\begin{align*}
\norm{\flow X{}}{\mathbf{a}\left(\alpha\right)} & =\sum_{k=0}^{\infty}\frac{1}{\left(k!\right)^{\alpha+1}}\norm{X\cdot^{k}\id}{a\left(\alpha\right)}
\end{align*}
and want to show that this quantity is uniformly bounded when $X$
lies in some $a\left(\beta\right)$-ball for $\alpha>\beta>0$. In
all the following the letter $C$ denotes a positive number depending
only on $\alpha$ and $\beta$, whose exact value does not matter
much and may vary from place to place.

Let us show the claim when $m=1$, the general case being not very
more difficult but hampered with cumbersome notations. We write 
\begin{align*}
X\left(z\right) & :=\xi\left(z\right)\pp{}z\,.
\end{align*}
It is easy to check that $X\cdot^{k}z$ is of the form, for $k>0$,
\begin{align*}
X\cdot^{k}z & =\xi\pp{}z\left(\xi\pp{}z\left(\cdots\xi\right)\right)\\
 & =\xi\sum_{\mathbf{j}\in J_{k}}\prod_{\ell}\left(\pp{^{j_{\ell}}\xi}{z^{j_{\ell}}}\right)
\end{align*}
with $J_{k}$ a finite set of multi-indexes $\mathbf{j}\in\ww N^{k-1}$
of length $\left|\mathbf{j}\right|=k-1$. Proposition~\ref{prop:facto_useful}~(3)
taken into account we can write
\begin{align*}
\norm{\left(\pp{^{j_{\ell}}\xi}{z^{j_{\ell}}}\right)}{a\left(\alpha\right)} & \leq C\exp\left(\sigma\left(\alpha-\beta\right)j_{\ell}^{\nf{\beta+1}{\alpha+1}}-\left(\beta+1\right)j_{\ell}\right)\sigma^{\left(\alpha+1\right)j_{\ell}}j_{\ell}^{\left(\beta+1\right)j_{\ell}}\norm{\xi}{a\left(\beta\right)}
\end{align*}
for some positive $\sigma$. We use Lemma~\ref{lem:facto_estim},
more specifically the relation $\left|\mathbf{j}\right|!\geq\mathbf{j}!$,
and relation~\eqref{eq:stirling_estim} to derive (with the convention
$0^{0}=1$)
\begin{align*}
\norm{\flow X{}}{a\left(\alpha\right)} & \leq\sum_{k=0}^{\infty}\left(C\norm{\xi}{a\left(\beta\right)}\right)^{k}\sum_{\left|\mathbf{j}\right|=k-1}\left(\prod_{\ell}j_{\ell}^{j_{\ell}}\right)^{\beta-\alpha}\,.
\end{align*}
 Define 
\begin{align*}
E_{n,k} & :=\sum_{\mathbf{j}\in\ww N^{k}\,,\,\left|\mathbf{j}\right|=n}\prod_{\ell}j_{\ell}^{\left(\beta-\alpha\right)j_{\ell}}\,,
\end{align*}
the $n^{\tx{th}}$ Taylor coefficient of $\varphi^{k}$, where $\varphi$
is the entire function
\begin{align*}
\varphi\,:\,z\in\ww C & \longmapsto\sum_{n\geq0}z^{n}n^{\left(\beta-\alpha\right)n}\,.
\end{align*}
For any $\rho>0$ Cauchy's formula yields
\begin{align*}
\rho^{n}E_{n,k} & \leq\varphi\left(\rho\right)^{k}\\
E_{k-1,k-1} & \leq\left(\frac{\varphi\left(\rho\right)}{\rho}\right)^{k-1}\,.
\end{align*}
Consequently 
\begin{align*}
\norm{\flow X{}}{a\left(\alpha\right)} & \leq\sum_{k=0}^{\infty}\left(C\norm{\xi}{a\left(\beta\right)}\right)^{k}\,,
\end{align*}
with convergence of the right-hand side if $\norm{\xi}{a\left(\beta\right)}$
is small enough. 

We have just proved that the flow is amply bounded near the null vector
field. Let us show now that this property implies that of the flow
being amply bounded near any point of $\mathtt{VF}$. Since for any
$\lambda\in\ww C$ 
\begin{align*}
\flow{\lambda X}{\bullet} & =\flow X{\lambda\bullet}
\end{align*}
we can rescale any given vector field $X$ so that $\lambda X$ belongs
to a convenient neighborhood of the null vector field. Since the composition
$\lambda^{*}\,:\,\flow X{\bullet}\mapsto\flow X{\lambda\bullet}$
is continuous (Proposition~\ref{prop:facto_useful}) we derive the
sought property of ample boundedness near $X$.
\end{proof}

\subsection{\label{sec:foliations}Solvability of first-order, ordinary differential
equations: proof of Corollary~C}

~

In this section we consider a germ, defined near $\left(0,0\right)\in\ww C^{2}$,
of an ordinary differential equation
\begin{align*}
y' & =f\left(x,y\right)\tag{\ensuremath{\lozenge}}
\end{align*}
where $f\in\mero{x,y}$ is a germ of a meromorphic function. We refer
to Remark~\ref{rem_mero} for the description of the analytic structure
we put on $\mero{x,y}$.

The geometric object underlying this equation is the foliation it
defines: local solutions of $(\lozenge)$ define the local leaves
of the foliation. If $\left(P,Q\right)$ is a proper representative
of $f$ the singular locus of the germ of a foliation is $P^{-1}\left(0\right)\cap Q^{-1}\left(0\right)$
and consists of at most the origin. As was discussed in the introduction
we are only concerned with singular germs of foliations whose first
jet vanishes at $\left(0,0\right)$, that is 
\begin{itemize}
\item $f$ is purely meromorphic (\emph{i.e. }$P\left(0,0\right)=Q\left(0,0\right)=0$),
see Definition~\ref{def_pure_mero},
\item the matrix
\begin{align*}
L\left(P,Q\right) & :=\left[\begin{array}{cc}
\frac{\partial P}{\partial x}\left(0,0\right) & \frac{\partial P}{\partial y}\left(0,0\right)\\
\frac{\partial Q}{\partial x}\left(0,0\right) & \frac{\partial Q}{\partial y}\left(0,0\right)
\end{array}\right]
\end{align*}
is zero. In particular it is non-reduced in the sense that $L\left(P,Q\right)$
is nilpotent.
\end{itemize}
Notice that these conditions do not depend on the choice of a proper
representative of $f$. In particular the set $\mathtt{ZLP}$ (for
Zero Linear Part) of those foliations can be identified with the analytic
subset $\mero{x,y}_{1}$ of $\pmero{x,y}$ defined by the vanishing
locus of
\begin{align*}
\frac{P}{Q}\in\mero{x,y} & \longmapsto\left(J_{1}\left(P\right),J_{1}\left(Q\right)\right)\in\pol{x,y}{}^{2}\,,
\end{align*}
 which is the range of the analytic mapping
\begin{align*}
M_{1}\,:\,\left(P,Q\right)\in\germ{x,y}\times\left(\germ{x,y}\backslash\left\{ 0\right\} \right)\backslash\mu^{-1}\left(\infty\right) & \longmapsto\frac{P-J_{1}\left(P\right)}{Q-J_{1}\left(Q\right)}\in\mero{x,y}_{1}\,,
\end{align*}
and we equip from now $\mathtt{ZLP}$ with the analytic structure
induced by this map (Definition~\ref{def_anal_space}).

\bigskip{}

The usual process when one is faced with such a singularity is to
study a foliated complex surface obtained by pulling-back the germ
of a foliation by the standard blow-up of the origin. This foliation
has <<simpler>> singularities, and the repetition of the process
at any successive singular point eventually stops at the stage where
every singular point is reduced~\cite{Seiden}. We are interested
in those foliations for which the reduction procedure stops after
the first step. Let us describe this process in more details. The
complex manifold $\mathcal{M}$ obtained by blowing-up the origin
of $\ww C^{2}$ is defined by the gluing of 
\begin{align*}
\mathcal{M}_{x} & :=\left\{ \left(x,u\right)\in\ww C^{2}\right\} \\
\mathcal{M}_{y} & :=\left\{ \left(v,y\right)\in\ww C^{2}\right\} 
\end{align*}
through the map
\begin{align*}
\mathcal{M}_{x}\backslash\left\{ u=0\right\}  & \longrightarrow\mathcal{M}_{y}\backslash\left\{ v=0\right\} \\
\left(x,u\right) & \longmapsto\left(\frac{1}{u},xu\right)\,.
\end{align*}
The blow-up morphism $\pi$ is therefore given in these charts by
\begin{align*}
\pi_{x}\,:\,\mathcal{M}_{x} & \longrightarrow\ww C^{2}\\
\left(x,u\right) & \longmapsto\left(x,xu\right)
\end{align*}
and
\begin{align*}
\pi_{y}\,:\,\mathcal{M}_{y} & \longrightarrow\ww C^{2}\\
\left(v,y\right) & \longmapsto\left(vy,y\right)\,.
\end{align*}
The exceptional divisor $\mathcal{E}$ of $\mathcal{M}$ is the projective
line $\pi^{-1}\left(0\right)$ of self-intersection $-1$, and $\mathcal{M}\backslash\mathcal{E}$
is biholomorphic to $\ww C^{2}\backslash\left\{ 0\right\} $. The
pulled-back foliation $\mathcal{F}$ is well-defined on a neighborhood
of $\mathcal{E}$ in $\mathcal{M}$, which we still call $\mathcal{M}$
for the sake of simplicity. $\mathcal{F}$ is said dicritic if $\mathcal{E}$
is not a leaf, otherwise $\mathcal{F}$ only admits a finite number
of singularities on $\mathcal{E}$. A finite set of algebraic conditions
in the second jet of $P$ and $Q$ can be included to ensure that
$\mathcal{F}$ is reduced and non-dicritic:
\begin{prop}
\label{prop:red-1_is_zariski}The set $\mathtt{RND}$ (for Reduced
Non-Dicritic) of non-dicritic foliations reduced after one blow-up
is Zariski-full and open in $\mathtt{ZLP}$. Besides the following
conditions describe a Zariski-full open set $\mathtt{RND}^{*}\subset\mathtt{RND}$:
\begin{itemize}
\item $\mathcal{F}$ admits exactly three distinct singular points on $\mathcal{E}$
and $\xi_{*}:=\left(0,0\right)\in\mathcal{M}_{x}$ is not among them,
\item the linear part of the foliation at every singular point admits two
non-zero eigenvalues.
\end{itemize}
\end{prop}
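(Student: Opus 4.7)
The plan is to reduce every condition appearing in the statement to the non-vanishing of a polynomial in the coefficients of the $2$-jet $(P_2,Q_2)$ of a proper representative $(P,Q)\in\mathfrak{Z}^{2}$ of the meromorphic germ. Since $J_{2}\colon\germ{x,y}\to\pol{x,y}{\leq 2}$ is continuous linear, hence analytic, and since the parameterization $M_{1}$ of $\mathtt{ZLP}$ supplied by Remark~\ref{rem_mero} preserves jets of order $\geq 2$, the preimage of any proper algebraic subset of $\pol{x,y}{\leq 2}^{2}$ is a proper analytic subset of the source of $M_{1}$. Its complement then yields an open, Zariski-full subset of $\mathtt{ZLP}$, provided at least one foliation satisfies all the strict inequalities.

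First I compute, in the chart $\mathcal{M}_{x}$ of $\pi_{x}(x,u)=(x,xu)$, the pull-back of the $1$-form $\omega:=P\,dx-Q\,dy$. Setting $\nu:=\min(\nu(P),\nu(Q))$, it factors as $x^{\nu}\tilde\omega$ with
\[
\tilde\omega=\bigl[R(u)+O(x)\bigr]\,dx-x\bigl[S(u)+O(x)\bigr]\,du,
\]
where $R(u):=P_{\nu}(1,u)-uQ_{\nu}(1,u)$ and $S(u):=Q_{\nu}(1,u)$; a symmetric formula holds in $\mathcal{M}_{y}$, involving $\tilde R(v):=Q_{\nu}(v,1)-vP_{\nu}(v,1)$. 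Non-dicriticality is equivalent to $R\not\equiv 0$, the singular points of $\pi^{*}\mathcal{F}$ on $\mathcal{E}\cap\mathcal{M}_{x}$ are precisely the zeros of $R$, and the linearization of a vector field dual to $\tilde\omega$ at $(0,u_{0})$ is triangular with diagonal entries $S(u_{0})$ and $R'(u_{0})$. Thus reducedness at $(0,u_{0})$ holds iff $\max(|S(u_{0})|,|R'(u_{0})|)\neq 0$, and both eigenvalues are non-zero iff $S(u_{0})R'(u_{0})\neq 0$. The possible extra singular point at $(0,0)\in\mathcal{M}_{y}$ (i.e.\ $u=\infty$) is detected by the vanishing of the leading coefficient of $R$.

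From this I read off the relevant open polynomial non-vanishing conditions on $(P_{2},Q_{2})$. For $\mathtt{RND}^{*}$ the five conditions $(P_{2},Q_{2})\neq 0$, $\deg R=3$ (controlled by the coefficient of $y^{2}$ in $Q_{2}$), $\mathrm{disc}(R)\neq 0$, $R(0)\neq 0$, and $\mathrm{Res}(R,S)\neq 0$ exactly encode the two bullet points; for $\mathtt{RND}$ the weaker conditions $R\not\equiv 0$ together with one resultant-type condition encoding that no root of $R$ is a common zero of both $R'$ and $S$ (and similarly for $\tilde R$ at $v=0$) suffice. An explicit example such as $P=x^{2}$, $Q=y^{2}-xy$, for which $R(u)=1+u^{2}-u^{3}$ has three distinct non-zero simple roots with $u^{2}-u\neq 0$ at each of them, witnesses non-emptiness of $\mathtt{RND}^{*}$ (and hence of $\mathtt{RND}$). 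The main obstacle is the chart-change bookkeeping: since $\deg R\leq\nu+1$, a deficiency of degree corresponds to a singularity at $u=\infty$ which must be handled in $\mathcal{M}_{y}$, so one must carefully collect the polynomial conditions in $(P_{\nu},Q_{\nu})$ on \emph{both} charts to cover every singular point on $\mathcal{E}$. Once these are assembled, transfer to $\mathtt{ZLP}$ through $M_{1}$ is automatic.
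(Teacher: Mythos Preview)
Your proposal is correct and follows essentially the same route as the paper: both reduce the problem to explicit polynomial non-vanishing conditions on the second jet of a proper representative, obtained by computing the pulled-back foliation in the chart $\mathcal{M}_{x}$ and reading off the cubic $R(u)$ (the paper's $\phi$) whose roots locate the singularities on $\mathcal{E}$, together with the quadratic $S(u)$ giving the other eigenvalue. The only notable difference is that you include the resultant condition $\mathrm{Res}(R,S)\neq 0$ to guarantee that \emph{both} eigenvalues at each singular point are non-zero, whereas the paper's three conditions ($Q_{0,2}\neq 0$, $\mathrm{disc}(\phi)\neq 0$, $P_{2,0}\neq 0$) only secure $\phi'(u_{0})\neq 0$ and hence reducedness; in that sense your treatment of the second bullet is more careful, and your explicit witness example is a welcome addition absent from the paper.
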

We postpone the proof of this property till Subsection~\ref{sub:proof_prop_red-1_zariski}
below. Since $\mathcal{F}$ is transverse, outside $\mathcal{E}$
and maybe three regular analytic curves\footnote{They correspond to the local separatrices near singular points on
$\mathcal{E}$.}, to the fibers of the natural projection $\sigma\,:\,\left(x,u\right)\in\mathcal{M}_{x}\mapsto u$,
we can build the holonomy group $\hol f$ of $\mathcal{F}$ based
at $\xi_{*}$, which is canonically identified with a subgroup of
$\diff{}$ spanned by two generators $\holo 1$ and $\holo 2$. Simply
identifying the group $\left\langle \holo 1,\holo 2\right\rangle $
with the pair $\left(\holo 1,\holo 2\right)\in\diff{}^{2}$ will not
do, since one can follow a loop in $\mathtt{RND}^{*}$ in order to
exchange $\holo 1$ and $\holo 2$. This difficulty is overcome by
considering the quotient $\nf{\diff{}^{2}}{\frak{S}_{2}}$ of the
pairs $\left(a,b\right)$ \emph{modulo} the action of the $2$-symmetric
group
\begin{align*}
\left(a,b\right) & \longmapsto\left(b,a\right)\,.
\end{align*}
This quotient space is endowed with the induced analytic structure.
\begin{thm}
\label{thm:holo_is_analytic}The map
\begin{align*}
\hol{\bullet}\,:\,\mathtt{RND}^{*} & \longrightarrow\nf{\diff{}^{2}}{\frak{S}_{2}}\\
f & \longmapsto\left\{ \holo 1,\holo 2\right\} 
\end{align*}
is strongly analytic. Its image contains the open set $\left\{ \holo 1'\left(0\right)\notin\exp\left(\ii\ww R\right)\,\,\mbox{or}\,\,\holo 2'\left(0\right)\notin\exp\left(\ii\ww R\right)\right\} $.\end{thm}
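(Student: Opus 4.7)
The plan is to assemble $\hol{\bullet}$ as a composition of strongly analytic pieces, each one reducible to Theorem~C, and then to realize the targeted open set through an explicit finite-dimensional sub-family of $\mathtt{RND}^{*}$. Fix $f_{0} \in \mathtt{RND}^{*}$ with proper representative $(P_{0},Q_{0})$, set $p_{f}(x,u):=P(x,xu)/x^{2}$ and $q_{f}(x,u):=Q(x,xu)/x^{2}$, and note that $\pi^{*}\mathcal{F}_{f}$ is defined on $\mathcal{M}_{x}$ by the vector field $\hat{X}_{f}=-x\,q_{f}\,\partial_{x}+(u\,q_{f}-p_{f})\,\partial_{u}$, whose singularities on $\mathcal{E}=\{x=0\}$ are the simple zeros of the cubic $\chi_{f}(u):=u\,q_{f}(0,u)-p_{f}(0,u)$. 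The implicit function theorem, applied to $\chi_{f}$, produces three analytic germs $f\mapsto s_{\ell}(f)\in\mathcal{E}$ on a neighborhood $U\ni f_{0}$ in $\mathtt{RND}^{*}$; shrinking $U$, I choose once and for all a base point $u_{*}$ near $\xi_{*}$ and two loops $\gamma_{1},\gamma_{2}\subset\mathcal{E}$ based at $u_{*}$, winding once around $s_{1}(f_{0})$ and $s_{2}(f_{0})$ respectively, and avoiding $\{s_{1}(f),s_{2}(f),s_{3}(f)\}$ uniformly for $f\in U$.

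Away from the singular locus $\pi^{*}\mathcal{F}_{f}$ is also tangent to
\[
Y_{f}=\partial_{u}-\frac{x\,q_{f}(x,u)}{u\,q_{f}(x,u)-p_{f}(x,u)}\,\partial_{x},
\]
whose denominator stays uniformly bounded away from zero on a tubular neighborhood $\mathcal{T}_{\ell}$ of $\gamma_{\ell}$, for every $f\in U$. Since $p_{f},q_{f}$ depend strongly analytically on $f$ and the rational combination forming $Y_{f}$ only sees the ratio $P/Q$ (making the construction insensitive to the unit ambiguity in proper representatives), the map $f\mapsto Y_{f}|_{\mathcal{T}_{\ell}}$ is strongly analytic into germs of vector fields centered at any fixed interior base point of $\mathcal{T}_{\ell}$. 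Covering $\gamma_{\ell}$ by finitely many short arcs $\gamma_{\ell}^{(1)},\ldots,\gamma_{\ell}^{(N_{\ell})}$, each contained in a bi-disc chart of $\mathcal{T}_{\ell}$ centered at a fixed point of $\gamma_{\ell}$, Theorem~C applies to the translated field on each chart and produces the local flow as a strongly analytic map $U\to\germ{x,u,t}^{2}$, whose evaluation at the arc parameter yields a strongly analytic transport germ in $\diff 2$. Chaining these local transports through the $f$-independent transition germs between consecutive charts, and using that composition is strongly analytic (Proposition~\ref{prop:compo_is_ana} together with the analyticity of the group law on $\diff{}$ recalled in Section~\ref{sub:anal_space}), one obtains the first-return map on the transversal $\Sigma=\{u=u_{*}\}$, that is $\holo_{\ell}(f)\in\diff{}$, as a strongly analytic function of $f\in U$. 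The labeling between $s_{1}$ and $s_{2}$ being only locally coherent on $\mathtt{RND}^{*}$, the pair $(\holo_{1},\holo_{2})$ descends to a strongly analytic map $\hol{\bullet}:\mathtt{RND}^{*}\to\nf{\diff{}^{2}}{\frak{S}_{2}}$.

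For the image property I would exhibit a finite-dimensional analytic sub-family of $\mathtt{RND}^{*}$ covering the stated open set. When $\holo_{\ell}'(0)=\mu_{\ell}\notin\exp(\ii\ww R)$ the corresponding singularity of $\pi^{*}\mathcal{F}$ lies in the Poincaré domain, is analytically linearizable, and its holonomy is linearly conjugate to $x\mapsto\mu_{\ell}x$ for $\mu_{\ell}=\exp(2\ii\pi\lambda_{\ell})$ with $\lambda_{\ell}$ the eigenvalue ratio at $s_{\ell}$. A standard reduced-foliation realization argument---for instance via quadratic $(P,Q)$ parameterized by the three residues $\lambda_{1},\lambda_{2},\lambda_{3}$ and their non-linear perturbations in the spirit of Mattei--Moussu---produces, for every generic target, an actual $f\in\mathtt{RND}^{*}$ realizing it. The main obstacle is not the strong analyticity, which flows transparently once Theorem~C is in hand, but rather the surjectivity step: one must match the classical realization results for reduced holomorphic foliations in the Poincaré domain with the analytic structure put on $\mathtt{RND}^{*}$ by $M_{1}$ so as to hit precisely the open set claimed, a bookkeeping issue that lies outside the infinite-dimensional machinery developed in the rest of the paper.
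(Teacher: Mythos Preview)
Your argument for strong analyticity is essentially the paper's: fix loops $\gamma_{1},\gamma_{2}$ avoiding the singular locus uniformly on a neighborhood of $f_{0}$, normalize the blown-up vector field so that it projects to $\partial_{u}$, cover each loop by a fixed finite number of charts, invoke Theorem~C on each chart to get the local transport as a strongly analytic map, and compose. The paper packages the uniformity step as a lemma (the adapted covering $(s_{\ell})_{\ell}$ for $X^{0}$ remains adapted for all $X$ in a neighborhood $V$), but the content is the same. The descent to $\diff{}^{2}/\frak{S}_{2}$ is also handled the same way.

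The genuine gap is in the image statement. Your proposed route---an explicit finite-dimensional sub-family of $\mathtt{RND}^{*}$, say quadratic $(P,Q)$ parameterized by residues plus perturbations ``in the spirit of Mattei--Moussu''---does not do what you claim. A quadratic family only controls the linear part $\holo_{\ell}'(0)$ of the holonomy, not the full germ $\holo_{\ell}\in\diff{}$; realizing an \emph{arbitrary} pair $\{\holo_{1},\holo_{2}\}$ with, say, $\left|\holo_{1}'(0)\right|\neq 1$ is not a bookkeeping issue but a genuine existence theorem. The paper does not attempt any direct construction here: it simply invokes Lins~Neto's realization theorem (built on Grauert's theorem), which says that any finitely generated subgroup $\langle\holo_{1},\ldots,\holo_{n}\rangle<\diff{}$ with $\bigcirc_{k}\holo_{k}=\id$ and at least one analytically linearizable generator arises as the projective holonomy of some foliation reduced after one blow-up. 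The condition $\holo_{\ell}'(0)\notin\exp(\ii\ww R)$ forces hyperbolicity, hence linearizability, of $\holo_{\ell}$, and Lins~Neto applies. You should replace your finite-dimensional-family sketch by this citation; the rest of your proof stands.
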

\begin{rem}
The map $\hol{\bullet}$ is not onto according to a result by \noun{Y.~Il'Yashenko}~\cite{Ilya},
which states that every group $\left\langle \holo 1,\holo 2\right\rangle $
generated by non-linearizable, small-divisors-impaired diffeomorphisms
(such that $\holo 1\circ\holo 2$ is equally not linearizable) cannot
be realized as the projective holonomy of any germ of a foliation
at the origin of $\ww C^{2}$. The range of $\hol{\bullet}$ however
contains a non-empty open set, so we say for short that it is \textbf{quasi-onto}.
\end{rem}
The proof of the theorem splits into three parts:
\begin{itemize}
\item first we recall how the <<projective>> holonomy group of $\mathcal{F}$
is built in Subsection~\ref{sub:holonomy},
\item we then study in Subsection~\ref{sub:holo_is_loc_anal} the local
strong analyticity of the holonomy mapping with respect to $\frac{P}{Q}$
by restricting ourselves to a small neighborhood $V\subset\mathtt{RND}^{*}$
of some $\frac{P^{0}}{Q^{0}}$,
\item we perform next in Subsection~\ref{sub:holo_is_anal_and_onto} its
analytic continuation to obtain a globally strongly analytic, quasi-onto
map on $\mathtt{RND}^{*}$.
\end{itemize}
This theorem is the key to Theorem~D: for $\mathtt{RND}$ foliations
the differential equation~$(\lozenge)$ is solvable by quadratures
(in the sense of Liouville) only if its image by $\hol{\bullet}$
is solvable. It is indeed classical that a <<solution>> lying in
a Liouvillian extension of the differential field $\mero{x,y}$ must
have a solvable monodromy, property which translates into a solvable
projective holonomy. Using Corollary~A we then deduce that the former
condition defines a proper analytic set of $\mathtt{RND}$ (since
the image of $\hol{\bullet}$ contains a non-empty open set) whose
complement is a Zariski-full open set of $\mathtt{ZLP}$ and consists
only of non-solvable equations.

\subsubsection{\label{sub:proof_prop_red-1_zariski}Proof of Proposition~\ref{prop:red-1_is_zariski}}

~

Let $f\in\mathtt{ZLP}$ be given and fix a proper representative $\left(P,Q\right)$
of $f$; for the sake of clarity we write $P\left(x,y\right):=\sum_{p+q>1}P_{p,q}x^{p}y^{q}$
and $Q\left(x,y\right):=\sum_{p+q>1}Q_{p,q}x^{p}y^{q}$. To compute
the pull-back $\mathcal{F}$ of the foliation defined by $\left(\lozenge\right)$
we consider the vector field
\begin{align*}
X_{\mathcal{F}}\left(x,y\right) & :=Q\left(x,y\right)\pp{}x+P\left(x,y\right)\pp{}y
\end{align*}
whose integral curve $t\in\left(\ww C,0\right)\mapsto\flow Xt\left(x,y\right)$
coincides with the local leaf of $\mathcal{F}$ passing through $\left(x,y\right)$.
Its pull-back by \emph{e.g. }$\pi_{x}$ is given by
\begin{align*}
\pi_{x}^{*}X_{\mathcal{F}} & =xQ\left(x,xu\right)\pp{}x+\left(P\left(x,xu\right)-uQ\left(x,xu\right)\right)\pp{}u
\end{align*}
whose components belong to $\germ{x,u}$. The affine trace $\left\{ x=0\right\} $
of $\mathcal{E}$ on $\mathcal{M}_{x}$ is a line of singularities
for $\pi_{x}^{*}X_{\mathcal{F}}$, since both of its components belong
to the ideal $\mathcal{I}\left(x^{2}\right)<\germ{x,u}$ spanned by
$x^{2}$. We indeed have
\begin{align*}
xQ\left(x,xu\right) & =x^{3}\left(Q_{0,2}u^{2}+Q_{1,1}u+Q_{2,0}\right)+O\left(x^{4}\right)\\
uQ\left(x,xu\right)-P\left(x,xu\right) & =x^{2}\left(Q_{0,2}u^{3}+\left(Q_{1,1}-P_{0,2}\right)u^{2}+\left(Q_{2,0}-P_{1,1}\right)u-P_{2,0}\right)+O\left(x^{3}\right)\,.
\end{align*}
If $\mathcal{F}$ is dicritic then 
\begin{align*}
uQ\left(x,xu\right)-P\left(x,xu\right) & \in\mathcal{I}\left(x^{3}\right)\,.
\end{align*}
If we require that 
\begin{align}
Q_{0,2} & =0\label{eq:dicritic_cond_1}
\end{align}
does not hold then we ensure that the foliation is non-dicritic. Notice
that $\mathcal{F}$ is given in the chart $\mathcal{M}_{x}$ by the
holomorphic vector field with isolated singularities
\begin{align}
X & :=\frac{1}{x^{2}}\pi^{*}X_{\mathcal{F}}\,.\label{eq:blown-up_vector-field}
\end{align}
This vector field admits three singularities (counted with multiplicity)
on $\mathcal{E}$ whose affine coordinates are given by the roots
of the polynomial
\begin{align}
\phi\left(u\right):= & -Q_{0,2}u^{3}+\left(P_{0,2}-Q_{1,1}\right)u^{2}+\left(P_{1,1}-Q_{2,0}\right)u+P_{2,0}\,.\label{eq:dicritic_poly}
\end{align}
 The discriminant of $\phi$ is a polynomial in the variables $\left(P_{\mathbf{n}},Q_{\mathbf{n}}\right)_{\left|\mathbf{n}\right|=2}$:
requiring that it vanishes describes the analytic set defined by
\begin{align}
Q_{0,2}\left(27P_{0,2}Q_{0,2}-4\left(Q_{2,0}-P_{1,1}\right)^{3}-18\left(Q_{1,1}-P_{0,2}\right)\left(Q_{2,0}-P_{1,1}\right)P_{2,0}\right)\nonumber \\
+\left(Q_{1,1}-P_{0,2}\right)^{2}\left(\left(Q_{2,0}-P_{1,1}\right)^{2}+4\left(Q_{1,1}-P_{0,2}\right)P_{2,0}\right) & =0\,.\label{eq:dicritic_cond_3}
\end{align}
Under the condition that this quantity does not vanish we particularly
derive that the linearized part of $\mathcal{F}$ at any singular
point of $\mathcal{E}\cap\mathcal{M}_{x}$, the matrix
\begin{align*}
\left[\begin{array}{cc}
Q_{0,2}u^{2}+Q_{1,1}u+Q_{2,0} & \star\\
0 & \phi'\left(u\right)
\end{array}\right] & ,
\end{align*}
cannot be nilpotent. The condition $Q_{0,2}\neq0$ also ensures that
$\mathcal{F}$ has no singularity at $\left(0,\infty\right)=\mathcal{E}\backslash\mathcal{M}_{x}$,
and the foliation is reduced. As announced above, if we assume that
\begin{align}
P_{2,0} & =0\label{eq:dicritic_cond_2}
\end{align}
is not fulfilled then $0$ is never a root of $\phi$. 

The union of the analytic sets defined by each one of the three conditions~\ref{eq:dicritic_cond_1},
\ref{eq:dicritic_cond_3} and \ref{eq:dicritic_cond_2} forms a proper
analytic subset $\Omega$ of $\left\{ \left(P,Q\right)\,:\,J_{1}\left(P\right)=J_{1}\left(Q\right)=0\right\} $.
Since any other proper representative of $\frac{P}{Q}$ defines the
same foliation on a (maybe smaller) neighborhood of $\mathcal{E}$,
in particular the roots (and their multiplicity) of the polynomial
$\phi$ remain unchanged. Therefore $\Omega$ corresponds to a proper
analytic set of $\mathtt{ZLP}$, whose complement $\mathtt{RND}^{*}$
is Zariski-full.

\subsubsection{\label{sub:holonomy}Building the holonomy}

~

\begin{figure}[H]
\includegraphics[width=0.75\columnwidth]{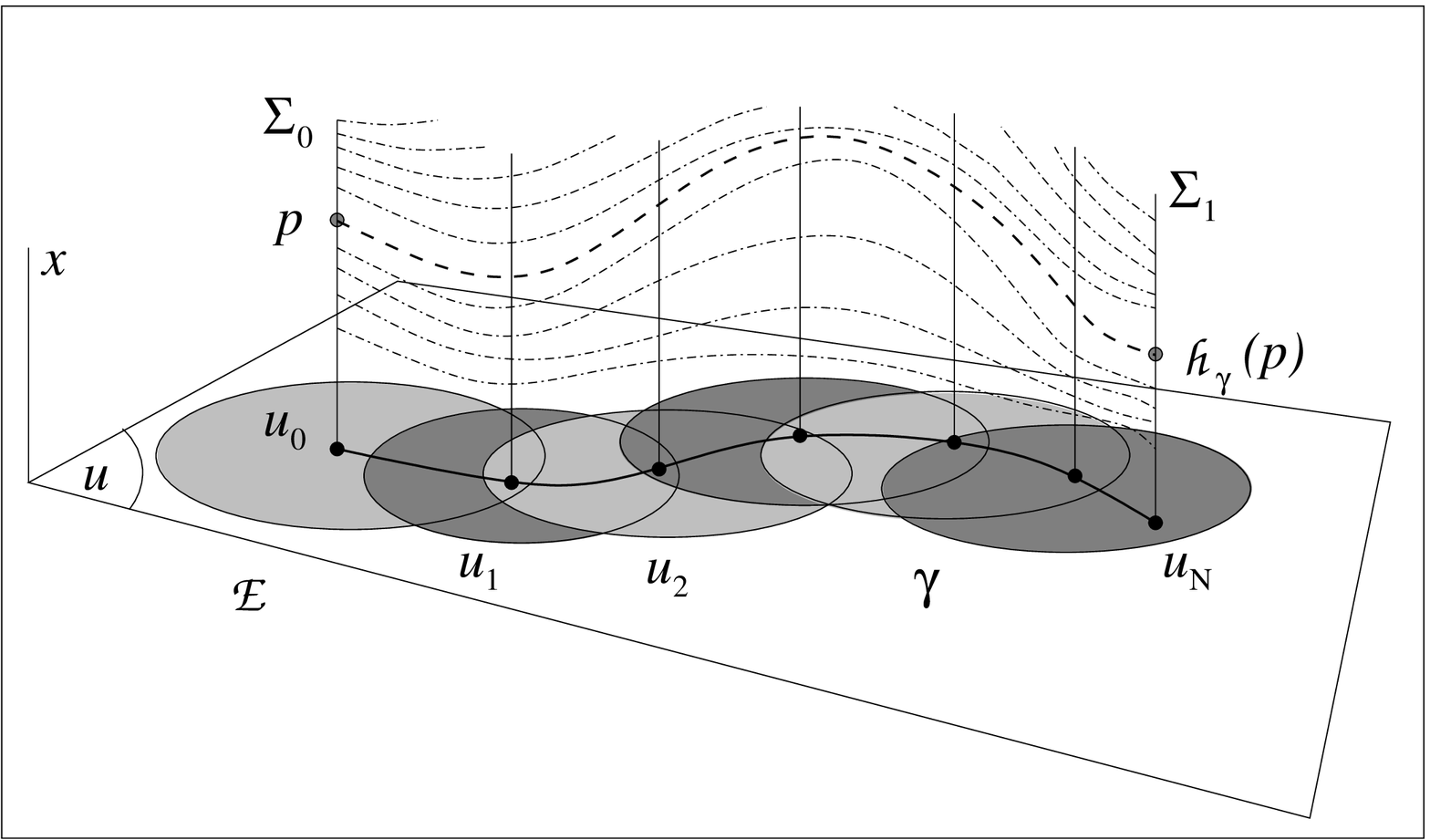}

\protect\caption{\label{fig:holo_construction}Computing the holonomy by composing
local flows.}
\end{figure}
Choose a vector field $X=a\pp{}x+b\pp{}u$ holomorphic on a neighborhood
of $\mathcal{E}$ and a path $\gamma\,:\,\left[0,1\right]\to\mathcal{E}\backslash\sing X$
of base-point $\xi_{*}=\left(0,u_{0}\right)$, such that $X$ is tangent
to $\mathcal{E}$. We require that $X$ be transverse to the lines
$\left\{ u=\tx{cst}\right\} $ outside its singular locus and define
\begin{align*}
\hat{X} & :=\frac{a}{b}\pp{}x+\pp{}u\,,
\end{align*}
whose restriction to $\mathcal{E}$ is holomorphic outside $\sing X$.
It defines the same foliation as $X$. The image of $\gamma$ does
not meet any singularity of $X$ therefore at each point $\gamma\left(s\right)$
there exists a polydisc $\Sigma_{s}\times D_{s}$ on which the mapping
\begin{align}
h_{s}\,:\,\left(x,u\right) & \longmapsto\flow{\hat{X}}u\left(x,u\left(s\right)\right)=\left(\eta_{s}\left(x,u\right),u+u\left(s\right)\right)\label{eq:local_holonomy}
\end{align}
 is holomorphic. From the open cover $\left(\Sigma_{s}\times D_{s}\right)_{s\in\left[0,1\right]}$
of the image of $\gamma$ we can extract a finite sub-cover corresponding
to discs centered at points $\gamma\left(s_{\ell}\right)$ for $0\leq\ell\leq N$
with
\begin{itemize}
\item $s_{0}=0$ and $s_{N}=1$,
\item $s_{\ell+1}>s_{\ell}$,
\item $\gamma\left(s_{\ell+1}\right)\in\left\{ 0\right\} \times D_{s_{\ell}}$.
\end{itemize}
We say that the collection of times $\left(s_{\ell}\right)_{0\leq\ell\leq N}$
is \textbf{adapted} to $\left(X,\gamma\right)$, and write
\begin{align*}
\gamma\left(s_{\ell}\right) & =\left(0,u_{\ell}\right)\,.
\end{align*}
Since each $h_{s}$ is holomorphic there exists $\varepsilon>0$ such
that the mapping 
\begin{align*}
\holo{\gamma}\,:\,\Sigma_{0}\times\left\{ u_{0}\right\}  & \longrightarrow\Sigma_{1}\times\left\{ u_{N}\right\} \\
p=\left(x,u_{0}\right) & \longmapsto\mbox{\ensuremath{\holo{\gamma}}}\left(p\right):=h_{s_{N-1}}\left(h_{s_{N-2}}\left(\cdots h_{0}\left(x,u_{1}-u_{0}\right)\cdots\right),u_{N}-u_{N-1}\right)
\end{align*}
is a well-defined mapping. We call this germ of a function the \textbf{holonomy}
\textbf{of $X$} (or of the underlying foliation) \textbf{along $\gamma$
with respect to the projection} $\left(x,u\right)\mapsto u$. Although
$\holo{\gamma}$ should depend on the particular choice of an adapted
collection this is not so, as is asserted in the well-known folk theorem:
\begin{thm}
\textbf{\emph{(Holonomy theorem)}} \label{thm:holonomy_theorem}Let
a vector field $X$ be holomorphic on a neighborhood of $\mathcal{E}$,
transverse to the fibers of the projection $\left(x,u\right)\mapsto u$
outside the singular locus. For every path $\gamma$ of $\mathcal{E}\backslash\sing X$
the following properties hold:
\begin{enumerate}
\item the holonomy $\holo{\gamma}$ of $X$ along $\gamma$ depends only
on the homotopy class of $\gamma$ in $\mathcal{E}\backslash\sing X$,
\item $\holo{\gamma}$ is a germ of a biholomorphism of $\Sigma:=\Sigma_{0}\times\left\{ u_{0}\right\} $
and $\holo{\gamma}\left(0,u_{0}\right)=\left(0,u_{N}\right)$,
\item if $\xi_{*}$ is a given base-point in $\mathcal{E}\backslash\sing X$
the mapping
\begin{align*}
\holo{\bullet}\,:\,\pi_{1}\left(\mathcal{E}\backslash\sing X,\xi_{*}\right) & \longrightarrow\tx{Diff}\left(\Sigma,\xi_{*}\right)\simeq\diff{}\\
\gamma & \longmapsto\holo{\gamma}
\end{align*}
is a group morphism. Its image is the \textbf{holonomy group} of $X$
(or better, of the induced foliation).
\end{enumerate}
\end{thm}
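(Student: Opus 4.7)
The plan is to set up property~(2) first because it provides the existence of the object $\holo{\gamma}$ as a biholomorphism and will be needed in the discussion of (1). Each local mapping $h_s$ in~\eqref{eq:local_holonomy} is a time-$(u-u(s))$ flow of the holomorphic vector field $\hat{X}$ in a polydisc, hence a local biholomorphism. Crucially, since $X$ is tangent to $\mathcal{E}=\{x=0\}$, the component $a$ vanishes on $\mathcal{E}$, so $\hat{X}|_{\mathcal{E}}=\pp{}u$. Consequently the flow preserves $\mathcal{E}$ and acts on it as a translation in $u$. Hence $\eta_s(0,u)=0$ for all admissible $(s,u)$, which chained together gives $\holo{\gamma}(0,u_0)=(0,u_N)$. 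The composition of finitely many germs of biholomorphisms mapping $\xi_*$ to points of $\mathcal{E}$ remains a biholomorphism onto its image.

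Next I would establish that the construction does not depend on the choice of adapted collection. The main tool is the one-parameter group property $\flow{\hat{X}}{t_1+t_2}=\flow{\hat{X}}{t_1}\circ\flow{\hat{X}}{t_2}$ of flows of a fixed vector field, which is valid wherever all involved quantities lie inside a common domain of holomorphy of $\hat{X}$. Given two adapted collections I would form a common refinement (inserting additional points $s_\ell$ is always possible by compactness of $\gamma$), and show that inserting one extra point leaves $\holo{\gamma}$ unchanged thanks to the flow identity. A standard Lebesgue-number argument along the way handles the requirement that each inserted piece fits inside a polydisc in which $\hat{X}$ is holomorphic and transverse to $\left\{u=\tx{cst}\right\}$. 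This settles well-definedness of $\holo{\gamma}$ as a germ at $\xi_*$.

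For homotopy invariance (property~(1)), I would use a standard compactness argument on a continuous homotopy $H\colon[0,1]^{2}\to\mathcal{E}\backslash\sing X$ with fixed endpoints. By compactness of $H([0,1]^2)$ and local holomorphy of $\hat{X}$, I cover the image by finitely many polydiscs $\Sigma\times D$ in which~\eqref{eq:local_holonomy} applies. A Lebesgue-number argument supplies a grid subdivision of $[0,1]^{2}$ fine enough that every grid square lies inside one such polydisc, and the previous step then shows that adjacent column-paths in the grid yield the same holonomy, one square at a time; iterating through all columns moves from $\gamma_0$ to $\gamma_1$. I expect this to be the main technical step, since one must carefully verify that each elementary deformation lies inside a single polydisc, where the flow identity applies directly.

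Finally, property~(3) follows essentially by construction: concatenating a collection adapted to $\gamma_{1}$ with a collection adapted to $\gamma_{2}$ produces a collection adapted to the concatenation $\gamma_{1}\cdot\gamma_{2}$, and the defining composition of local flows then reads as $\holo{\gamma_{2}}\circ\holo{\gamma_{1}}$ after relocating the base-point of the second part via the first one. Together with the homotopy invariance proved in (1), this promotes $\holo{\bullet}$ to a well-defined map on $\pi_{1}(\mathcal{E}\backslash\sing X,\xi_*)$ and the identification $\tx{Diff}\left(\Sigma,\xi_{*}\right)\simeq\diff{}$ (via the transversal parameter $x$) yields a group morphism into $\diff{}$, whose image is by definition the holonomy group.
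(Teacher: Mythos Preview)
The paper does not actually prove this statement: it is stated as a ``well-known folk theorem'' and no proof is given. Your proposal supplies exactly the standard argument one would expect---flows as local biholomorphisms, independence of the adapted collection via the one-parameter group law and common refinements, homotopy invariance via a Lebesgue-number grid argument, and the morphism property from concatenation of adapted collections---so there is nothing to compare and your outline is correct.
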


\subsubsection{\label{sub:holo_is_loc_anal}Local study}

~

\begin{figure}[H]
\includegraphics[width=0.75\columnwidth]{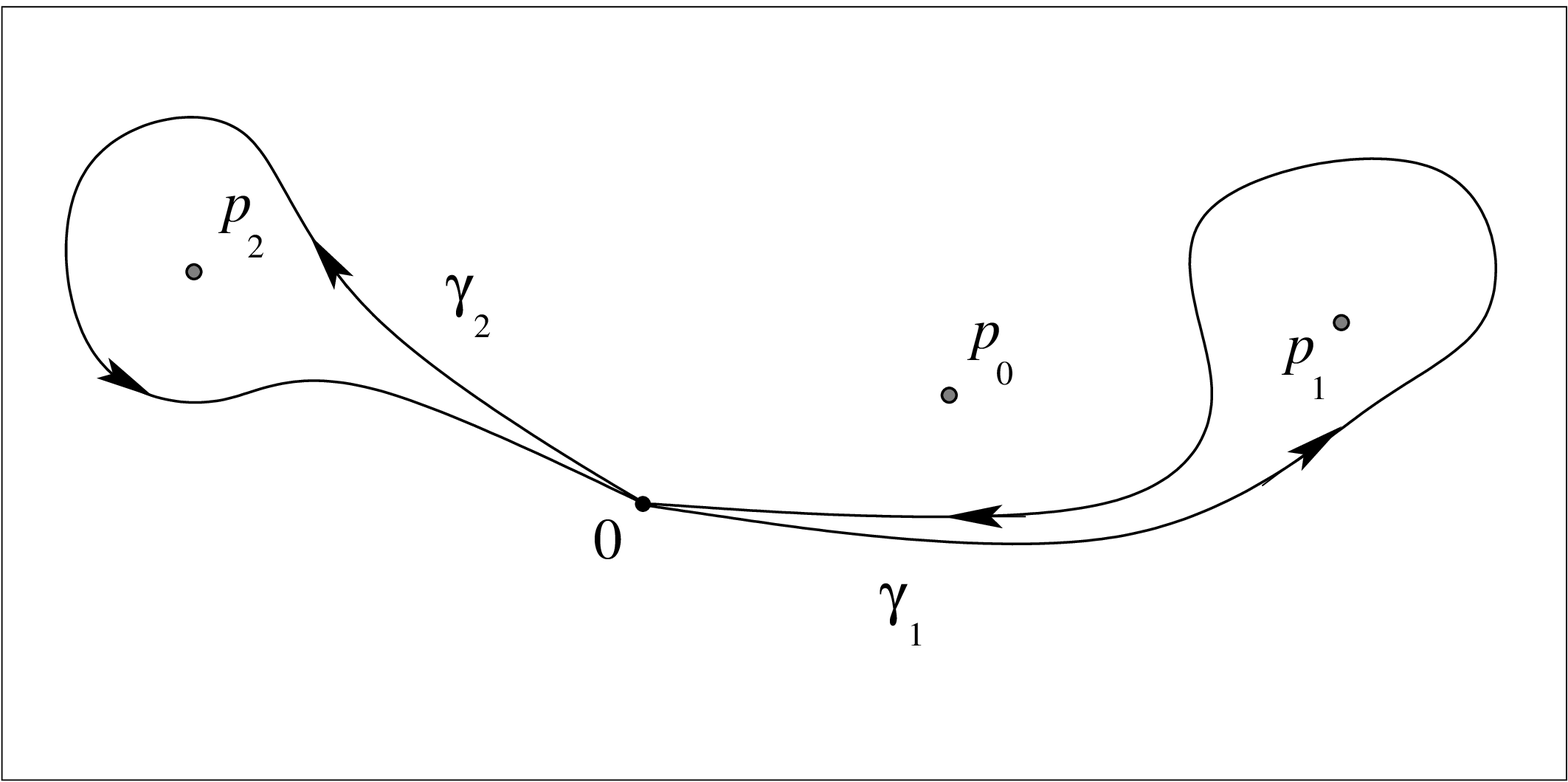}

\protect\caption{\label{fig:holo_generators}Generators of the fundamental group of
$\protect\ww P_{1}\left(\protect\ww C\right)\backslash\left\{ p_{0},p_{1},p_{2}\right\} $.}
\end{figure}

We continue to work in the affine coordinates $\left(x,u\right)$
through the chart $\pi_{x}$ and we fix the base-point $\zeta_{*}$
whose coordinates in $\mathcal{M}_{x}$ is $\left(0,0\right)$. For
$\frac{P}{Q}\in\mathtt{RND}^{*}$ we fix a proper representative $\left(P,Q\right)$
and consider the holomorphic vector field $X$ defined in~\eqref{eq:blown-up_vector-field}
by
\begin{align*}
X\left(x,u\right) & :=\frac{Q\left(x,xu\right)}{x}\pp{}x+\frac{P\left(x,xu\right)-uQ\left(x,xu\right)}{x^{2}}\pp{}u\,.
\end{align*}
Its integral curves define the foliation $\mathcal{F}$. The (isolated)
singular set $\sing X$ of $X$ coincides with that of $\mathcal{F}$,
and because $X\left(0,u\right)=\phi\left(u\right)\pp{}u$ the singularities
located on $\mathcal{E}$ are given by the three simple roots of the
polynomial $\phi$ defined by~\eqref{eq:dicritic_poly}. It is obvious
that the map
\begin{align*}
\left(P,Q\right)\in\quot^{-1}\left(\mathtt{RND}^{*}\right) & \longmapsto X\in\mathtt{VF}
\end{align*}
is strongly analytic, and from now on we only work with vector fields
$X$ instead of foliations $\mathcal{F}$.

In the sequel the superscript <<$0$>> refers to objects computed
from a fixed proper representative $\left(P^{0},Q^{0}\right)$ of
a meromorphic germ belonging to $\mathtt{RND}^{*}$. We also fix a
particular set of generators $\gamma_{1}$, $\gamma_{2}$ of the fundamental
group $\pi_{1}\left(\mathcal{E}\backslash\sing{X^{0}},\zeta_{*}\right)$,
where $\sing{X^{0}}=\left\{ p_{0}^{0},p_{1}^{0},p_{2}^{0}\right\} $
is the singular locus of $X^{0}$. For instance one can choose two
simple loops, each turning once around the singular point $p_{j}^{0}$.
Because $\mathtt{RND}^{*}$ does not cross the vanishing locus of
the discriminant~\eqref{eq:dicritic_cond_3} there exists a biholomorphic
map in the variables $\left(P_{\mathbf{n}}\right)_{\left|\mathbf{n}\right|=2}$
and $\left(Q_{\mathbf{n}}\right)_{\left|\mathbf{n}\right|=2}$, well-defined
in a neighborhood of $\left(P_{\mathbf{n}}^{0},Q_{\mathbf{n}}^{0}\right)_{\left|\mathbf{n}\right|=2}$,
whose range is the collection of roots of $\phi$. It is in particular
possible to find a neighborhood $U'$ of $\left(P_{\mathbf{n}}^{0},Q_{\mathbf{n}}^{0}\right)_{\left|\mathbf{n}\right|=2}\in\ww C^{6}$
so that the roots of $\phi$ does not cross the images of $\gamma_{1}$
and $\gamma_{2}$ for values of the second jet of $\left(P,Q\right)$
in $U'$. Let $V':=J_{2}^{-1}\left(U'\right)$ be the corresponding
neighborhood of $\left(P^{0},Q^{0}\right)$. 

Take $\left(P,Q\right)\in V'$; by construction $\gamma_{1}$ and
$\gamma_{2}$ generates $\pi_{1}\left(\mathcal{E}\backslash\sing{\mathcal{F}},\zeta_{*}\right)$.
Therefore the holonomy group of $\mathcal{F}$ is spanned by $\holo 1:=\holo{\gamma_{1}}$
and $\holo 2:=\holo{\gamma_{2}}$. In the neighborhood $V'$ of $\left(P^{0},Q^{0}\right)$
it is possible to keep track of the pair $\left(\holo 1,\holo 2\right)$,
with ordering, since the paths $\gamma_{1}$ and $\gamma_{2}$ do
not vary. The mapping
\begin{align*}
\left(P,Q\right)\in V' & \longmapsto\left(\holo 1,\holo 2\right)\in\diff{}^{2}
\end{align*}
is thereby well-defined. Let us show it is strongly analytic on a
maybe smaller $V\subset V'$. The way the holonomy is constructed
guarantees that it is $\quot$-invariant, meaning that the map $\frac{P}{Q}\in\quot\left(V'\right)\mapsto\left(\holo 1,\holo 2\right)$
also is well-defined and strongly analytic.
\begin{lem}
Let $\gamma$ be a loop of $\mathcal{E}\backslash\sing{X^{0}}$ with
base-point $\xi_{*}$ and consider a sequence $\left(s_{\ell}\right)_{0\leq\ell\leq N}$
adapted to $\left(X^{0},\gamma\right)$. There exists a neighborhood
$V\subset V'$ of $\left(P^{0},Q^{0}\right)$ such that the sequence
$\left(s_{\ell}\right)_{0\leq\ell\leq N}$ continues to be adapted
to $\left(X,\gamma\right)$.
\end{lem}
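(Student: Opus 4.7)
The plan is to exploit the finiteness of the adapted sequence and combine it with the strong analyticity of the flow (Theorem~C) to propagate the adaptedness from $X^{0}$ to nearby $X$.

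First, I would unpack what the adapted condition requires for the perturbed $X$. Since the times $(s_{\ell})$ and the loop $\gamma$ are fixed, the only nontrivial conditions are: (i) the image of $\gamma$ must avoid $\sing X$; (ii) for each $\ell \in \{0,\ldots,N-1\}$, the vector field $\hat X := \frac{a}{b}\partial_{x} + \partial_{u}$ must be holomorphic on a polydisc $\Sigma_{\ell} \times D_{\ell}$ centered at $(0,u_{\ell})$, with $D_{\ell}$ large enough to contain $u_{\ell+1} - u_{\ell}$ as an admissible flow time. Condition~(i) is handled by the choice of $V'$: on $V'$ the three roots of the polynomial $\phi$ depend holomorphically (hence continuously) on the second jet of $(P,Q)$, remain close to the $p_{j}^{0}$, and thus continue to avoid the compact $\gamma([0,1])$.

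Next, for each $\ell$, the point $\gamma(s_{\ell}) = (0,u_{\ell})$ is non-singular for $X^{0}$, so $b^{0}(0,u_{\ell}) \neq 0$. Since the map $(P,Q) \mapsto b$ is analytic, and $b^{0}$ is non-vanishing on a compact polydisc around $(0,u_{\ell})$, there exists an open neighborhood $V_{\ell} \subset V'$ of $(P^{0},Q^{0})$ such that $b$ stays non-vanishing on that polydisc for every $(P,Q) \in V_{\ell}$, so $\hat X$ is well-defined and analytically dependent on $(P,Q)$ there. Theorem~C applied to $\hat X$ (as a germ at $(0,u_{\ell})$, treating the $u$-variable as the time parameter) gives that the flow $\Phi^{t}_{\hat X}(x,u_{\ell})$ is a strongly analytic function of $(P,Q)$ with values in $\germ{x,t}$. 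By Proposition~\ref{prop:anal_and_radius_minor_iff_strong_anal}, the radius of convergence of this flow germ is lower semi-continuous in $(P,Q)$; shrinking $V_{\ell}$ if necessary, we may therefore ensure that $\Phi^{t}_{\hat X}(x,u_{\ell})$ is holomorphic on a polydisc $\tilde\Sigma_{\ell} \times \tilde D_{\ell}$, slightly smaller than $\Sigma_{\ell}^{0} \times D_{\ell}^{0}$ but still containing the point $u_{\ell+1}-u_{\ell} \in D_{\ell}^{0}$ (which is an interior point, so a small shrinking preserves it). The adaptedness condition at step $\ell$ is thereby preserved on $V_{\ell}$.

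Finally, set $V := \bigcap_{\ell=0}^{N-1} V_{\ell}$; as a finite intersection of open neighborhoods of $(P^{0},Q^{0})$, it is itself an open neighborhood of $(P^{0},Q^{0})$ contained in $V'$, and on $V$ the sequence $(s_{\ell})$ is adapted to $(X,\gamma)$. The only delicate step is the second one: one must be careful that the polydisc on which $\hat X$ is holomorphic, together with the one on which its flow is holomorphic, shrink only slightly under perturbation and still contain the required point $u_{\ell+1}-u_{\ell}$. This is where the semi-continuity of the radius of convergence built into the strong analyticity framework does the crucial work, avoiding any need for quantitative flow estimates.
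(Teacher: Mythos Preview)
Your argument has a genuine gap at the step where you invoke Proposition~\ref{prop:anal_and_radius_minor_iff_strong_anal} to conclude that the radius of convergence of the flow germ is ``lower semi-continuous in $(P,Q)$''. That proposition only controls the radius along \emph{finite-dimensional} holomorphic families $\lambda\in\mathcal{O}\left(\left(\ww C^{m},0\right)\to U\right)$; it says nothing about uniform lower bounds on an open neighborhood in the infinite-dimensional space $\germ{x,y}^{2}$. In fact Proposition~\ref{prop:convergence_radius} states explicitly that $\mathcal{R}$ can \emph{never} be positively lower-bounded on any domain of a sequential space: on any open $V_{\ell}$ you will find $(P,Q)$ whose radius of convergence is arbitrarily small, so that $\hat X$ and its flow are holomorphic only on a polydisc with arbitrarily small $x$-radius. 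Your claimed polydisc $\tilde\Sigma_{\ell}\times\tilde D_{\ell}$ ``slightly smaller'' than $\Sigma_{\ell}^{0}\times D_{\ell}^{0}$ therefore cannot exist uniformly on $V_{\ell}$, and even pointwise the positive $\liminf$ from strong analyticity need not exceed the fixed threshold $|u_{\ell+1}-u_{\ell}|$.

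What makes the lemma true is an asymmetry that your abstract argument does not see: adaptedness only requires the \emph{time} disc $D_{\ell}$ (the $u$-direction) to contain $u_{\ell+1}-u_{\ell}$, while the transverse disc $\Sigma_{\ell}$ (the $x$-direction) may shrink freely with $(P,Q)$. The paper exploits this by observing that the restriction of $\hat X$ to $\mathcal{E}=\{x=0\}$ is governed by the cubic $\phi$, which depends only on the second jet of $(P,Q)$; hence one can fix $\eta>|u_{1}-u_{0}|$ uniformly on a neighborhood $V$. A direct Gr\"onwall-type estimate $|z(t)|\leq |x|\exp(C|t|)$ then shows that, for $|x|$ small enough (with a bound depending on the particular $(P,Q)$), the trajectory remains in the domain of $\hat X$ for all $|t|<\eta$. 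So the quantitative flow estimate you hoped to avoid is precisely what decouples the uniform $u$-domain from the shrinking $x$-domain, and it cannot be replaced by the semi-continuity built into the strong-analyticity framework.
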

Before we prove this lemma we describe briefly how to derive Theorem~\ref{thm:holo_is_analytic}
from it. Consider a neighborhood $V\subset V'$ of $\left(P^{0},Q^{0}\right)$
such that $\gamma_{1}$ and $\gamma_{2}$ both admit an adapted covering
uniform with respect to $\left(P,Q\right)\in V$. Each holonomy generator
$\holo 1$ and $\holo 2$ is obtained by composing $N$ local flows,
and because $N$ does not depend on $\left(P,Q\right)$ this composition
is strongly analytic with respect to $\left(P,Q\right)$ whenever
each flow is. But the former statement precisely is the content of
Theorem~\eqref{thm:flow_is_strong_anal}, which ends our proof.
\begin{proof}
Since this property is local, and because there is only finitely many
times $\left(s_{k}\right)_{0\leq k<N}$, we prove it only for the
first point $u_{0}=0$. We can request without loss of generality
that $\eta>\left|u_{1}\right|$ be chosen so small that 
\begin{align*}
\sup_{\left|u\right|=\eta}\left|\phi\left(u\right)-\phi\left(0\right)\right| & <\frac{1}{4}\left|\phi\left(0\right)\right|\,,
\end{align*}
with a suitable, fixed choice of a thinner adapted covering of the
image of $\gamma$. Since $\phi$ depends only on the second jet of
$\left(P,Q\right)$ we can find a neighborhood $V\subset V'$ such
that the above estimate holds for fixed $\eta$ and all $\left(P,Q\right)\in V$.

For $\left(P,Q\right)\in V$ let us introduce 
\begin{align*}
\hat{X}\left(x,u\right) & :=\xi\left(x,xu\right)x\pp{}x+\pp{}u\\
\xi\left(x,u\right) & :=\frac{Q\left(x,xu\right)}{P\left(x,xu\right)-uQ\left(x,xu\right)}=\frac{Q\left(x,xu\right)}{\phi\left(u\right)+x\left(f\left(x,xu\right)+ug\left(x,xu\right)\right)}\,.
\end{align*}
Because of the form of the arguments in the functions making $\xi$
up, we can always find $\varepsilon>0$ sufficiently small so that
$\xi$ is holomorphic on $\varepsilon\ww D\times\eta\ww D$, in particular
because there exists $\varepsilon>0$ such that for all $\left|x\right|<\varepsilon$
and all $\left|u\right|\leq\eta$ we have
\begin{align*}
\left|\phi\left(u\right)-\phi\left(0\right)+x\left(f\left(x,xu\right)+ug\left(x,xu\right)\right)\right| & <\frac{1}{2}\left|\phi\left(0\right)\right|\,.
\end{align*}
Let us define
\begin{align*}
C & :=\norm{\xi}{\varepsilon\ww D\times\eta^{0}\ww D}\,.
\end{align*}
We need to prove that the parameterization of the integral curves
of $\hat{X}$ by the flow, starting from points of $\left\{ u=0\,,\,\left|x\right|<\varepsilon\right\} $,
is holomorphic on a domain which contains the disc $\eta\ww D$ for
a maybe smaller $\varepsilon>0$. Let $t\mapsto z\left(t,x,u\right)$
be the $x$-component of the flow $\flow{\hat{X}}t\left(x,0\right)$,
\emph{i.e.} the solution of
\begin{align*}
\begin{cases}
\dot{z}\left(t\right) & =z\left(t\right)\xi\left(z\left(t\right),tz\left(t\right)\right)\\
z\left(0\right) & =x
\end{cases} & .
\end{align*}
Setting $t:=e^{\ii\theta}\tau$ with $\tau,\,\theta\in\ww R_{\geq0}$
and differentiating $\left|z\right|^{2}=z\overline{z}$ with respect
to $\tau$ brings the equation
\begin{align*}
\frac{\dd{\left|z\left(t\right)\right|^{2}}}{\dd{\tau}} & =2\left|z\left(t\right)\right|^{2}\re{e^{\ii\theta}\xi\left(z\left(t\right),tz\left(t\right)\right)}\,,
\end{align*}
therefore
\begin{align*}
\left|z\left(t\right)\right| & \leq\left|x\right|\exp\left(C\left|t\right|\right)\,.
\end{align*}
In particular the integral curve $t\mapsto\flow{\hat{X}}t\left(x,0\right)$
does not escape the polydisc $\varepsilon\ww D\times\eta\ww D$ provided
$\left|x\right|<\varepsilon\exp\left(-\eta C\right)$ and $\left|t\right|<\eta$,
which means that the local holonomy $h_{0}$ defined in~\eqref{eq:local_holonomy}
is holomorphic on $\Sigma_{0}\times D_{0}$ where $\Sigma_{0}:=\varepsilon\exp\left(-\eta C\right)\ww D$
and $D_{0}:=\eta\ww D$. Since $\eta$ depends only on $V$ and not
on a particular choice of $\left(P,Q\right)\in V$ the result is proved. 
\end{proof}

\subsubsection{\label{sub:holo_is_anal_and_onto}$\protect\hol{\bullet}$ is globally
analytic and quasi-onto}

~

The local strong-analyticity we just established implies that $\hol{\bullet}$
is a well-defined strongly analytic map on the universal covering
$\mathcal{C}\,:\,\widetilde{\mathtt{RND}^{*}}\to\mathtt{RND}^{*}$.
We perform the analytic continuation of $\hol{\bullet}$ by deforming
continuously the loops $\gamma_{1}$ and $\gamma_{2}$ so that no
singular point ever crosses the image of any loop. The fiber of $\mathcal{C}$
thereby corresponds to foliations with same singular points on the
exceptional divisor but with generators of the holonomy group that
may not appear in the same order. Yet the group generated is the same,
so the mapping $\hol{\bullet}$ is well-defined as a map from $\mathtt{RND}^{*}$
to the orbifold quotient $\nf{\diff{}^{2}}{\frak{S}_{2}}$.

\bigskip{}

To show the holonomy map is quasi-onto we use a result by \noun{A.\,Lins-Neto}:
\begin{thm}
\label{thm:realisation_Lins-Neto}\cite{LinsNeto} Any finitely-generated
subgroup $\left\langle \holo 1,\cdots,\holo n\right\rangle $ of $\diff{}$
such that $\bigcirc_{k=1}^{n}\holo k=\id$ and at least one generator
is analytically linearizable, can be obtained as the projective holonomy
group computed along the exceptional divisor of a foliation reduced
after one blow-up.
\end{thm}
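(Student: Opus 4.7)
The plan is to realize $\langle \holo 1, \dots, \holo n \rangle$ as the projective holonomy of a foliation built by suspending the representation $\rho: \gamma_k \mapsto \holo k$ along an embedded $\mathbb{P}_1$, plugging in reduced singular models at finitely many punctures, and contracting. First I would fix $n$ distinct points $p_1, \ldots, p_n$ on $\mathcal{E} \simeq \mathbb{P}_1(\mathbb{C})$ with base-point $\xi_* \notin \{p_1, \ldots, p_n\}$ and loops $\gamma_k$ based at $\xi_*$ encircling each $p_k$; setting $\mathcal{E}^\circ := \mathcal{E} \setminus \{p_1, \ldots, p_n\}$, the sole relation $\gamma_1 \cdots \gamma_n = 1$ in $\pi_1(\mathcal{E}^\circ, \xi_*)$ exactly matches the hypothesis $\bigcirc_{k=1}^n \holo k = \id$, so $\rho$ extends unambiguously to a morphism $\pi_1(\mathcal{E}^\circ, \xi_*) \to \diff{}$. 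The suspension $B^\circ := (\widetilde{\mathcal{E}}^\circ \times (\mathbb{C}, 0)) / \pi_1$, where $\pi_1$ acts by deck transformations on the first factor and via $\rho$ on the second, is a flat holomorphic disc-bundle over $\mathcal{E}^\circ$ equipped with a horizontal foliation $\mathcal{F}^\circ$ transverse to the fibers, whose transverse holonomy along $\gamma_k$ is $\holo k$ by construction.

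Second, I would extend $\mathcal{F}^\circ$ across each puncture by choosing a reduced local foliation germ $\mathcal{F}_k$ at $(0,0) \in \mathbb{C}^2$ with separatrix $\{x=0\}$ whose transverse holonomy around $\{x=0\}$ is conjugate to $\holo k$. For the analytically linearizable generator the explicit linear model $\mu x \, \dd u - u \, \dd x = 0$, with $\holo k'(0) = e^{2\ii\pi\mu}$, does the job. For each remaining $\holo k$ I would invoke the classification of reduced singularities: pick a formal normal form (linear, resonant saddle, or saddle-node) matching the formal class of $\holo k$, then tune the analytic Ecalle-Voronin / Martinet-Ramis invariants so as to realize $\holo k$ exactly. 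The Mattei-Moussu holonomy theorem then furnishes a biholomorphic gluing on a punctured disc neighborhood of $p_k$ between $\mathcal{F}_k$ and $\mathcal{F}^\circ$, compactifying $B^\circ$ into a smooth complex surface $\mathcal{M}$ in which $\mathcal{E}$ sits as a compact rational curve carrying a singular foliation $\widehat{\mathcal{F}}$ with reduced singularities exactly at $p_1, \ldots, p_n$.

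The final step is to identify $\mathcal{M}$ with the blow-up of a smooth surface at a point, which requires $\mathcal{E} \cdot \mathcal{E} = -1$ in $\mathcal{M}$. The self-intersection can be computed by a Camacho-Sad index formula in terms of the linear parts of $\widehat{\mathcal{F}}$ at each $p_k$, and the freedom to rescale the transverse trivializations (by local biholomorphisms of the disc) provides enough slack to tune the sum of indices to $-1$. Once that is achieved, Castelnuovo's contraction theorem yields a germ of singular holomorphic foliation at $(\mathbb{C}^2, 0)$ whose blow-up is $\widehat{\mathcal{F}}$; by construction its projective holonomy group is $\langle \holo 1, \ldots, \holo n \rangle$. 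The hard part will be the simultaneous realization of the prescribed holonomies by local \emph{holomorphic} (not merely formal) models while keeping control over the Camacho-Sad sum. The linearizable-generator hypothesis is precisely what breaks this deadlock: it provides a rigid canonical local model at one puncture which anchors the transverse coordinates and dodges small-divisor obstructions there, leaving enough degrees of freedom at the other punctures to match both the prescribed holonomies and the self-intersection constraint.
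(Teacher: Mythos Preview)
The paper does not prove this statement; it is quoted from Lins~Neto with only the remark that the construction is ``based on Grauert's theorem''. Your outline reproduces that strategy (suspend the representation over the punctured projective line, plug in reduced local singular models at the punctures, arrange self-intersection $-1$, contract), so there is essentially nothing in the paper to compare against beyond that single hint.

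Two technical points in your sketch are worth correcting. First, in the analytic category the contraction of a smooth rational curve of self-intersection $-1$ to a smooth point is Grauert's theorem, not Castelnuovo's criterion (which is algebraic); this is precisely the ingredient the paper names. Second, your mechanism for tuning the self-intersection is not the right one: conjugating by a local biholomorphism of the transverse disc leaves the Camacho--Sad index unchanged. The actual freedom is that the index at $p_k$ is any $\lambda_k$ with $\exp\left(2\ii\pi\lambda_k\right)=\holo k'\left(0\right)$, hence determined only modulo $\ww Z$; since $\bigcirc_{k}\holo k=\id$ forces $\sum_k\lambda_k\in\ww Z$, one shifts a single $\lambda_k$ by the appropriate integer to reach $\sum_k\lambda_k=-1$. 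The linearizable generator is what makes this shift harmless, because its model $\lambda x\,\dd u - u\,\dd x=0$ is explicitly holomorphic for \emph{every} admissible $\lambda$, so the integer adjustment can be absorbed there without disturbing the analytic realization of the remaining holonomies.
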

The construction, based on Grauert's theorem, also ensures that the
foliation realizing a given subgroup $\left\langle \holo 1,\holo 2\right\rangle $
belongs to $\mathtt{RND}^{*}$. Besides it is well known that if $\frac{1}{2\ii\pi}\log\holo 1'\left(0\right)$
or $\frac{1}{2\ii\pi}\log\holo 2'\left(0\right)$ is not real then
$\holo 1$ or $\holo 2$ is hyperbolic and therefore linearizable.

{\footnotesize{}\bibliographystyle{preprints}
\bibliography{biblio}
}{\footnotesize \par}
\end{document}